\documentclass{article}
\usepackage{amsmath,amssymb,amsthm}
\usepackage[margin=1in]{geometry}
\usepackage{xcolor}
\usepackage{booktabs}
\usepackage{tikz}

\usetikzlibrary{arrows.meta,positioning}
\usepackage[hidelinks]{hyperref}
\usepackage{mathpazo}

\newtheorem{theorem}{Theorem}[section]
\newtheorem{proposition}[theorem]{Proposition}
\newtheorem{lemma}[theorem]{Lemma}
\newtheorem{corollary}[theorem]{Corollary}
\newtheorem{assumption}[theorem]{Assumption}
\newtheorem{definition}[theorem]{Definition}
\newtheorem{remark}[theorem]{Remark}
\newtheorem{example}[theorem]{Example}

\newcommand{\bR}{\mathbb{R}}

\newcommand{\bT}{\mathbb{T}}
\newcommand{\Lip}{\operatorname{Lip}}

\newcommand{\Vnl}{\mathcal V_{\mathrm{nl}}}
\newcommand{\dep}{\operatorname{dep}}

\title{Neural Network Approximation and Structural \\
Complexity of Semilinear PDEs with Lower Bounds}
\author{Hayden Schaeffer}

\begin{document}

\date{}

\maketitle
\begin{abstract}
Neural networks are commonly used to learn the solution operators of PDEs such as the Navier-Stokes equations and viscous conservation laws. On standard benchmarks, moderately sized and often shallow networks achieve small errors that decay at algebraic rates with the network size. These observations are not well described by the current approximation theory, which treats the solution operator as a generic map and derives worst-case bounds over large classes of operators. We show that the structure of the equations and their properties explain the observed behavior. For semilinear parabolic equations, the approximation costs separate into two terms, where one is controlled by an invariant of the governing law and the other by the data. The constitutive law can be parameterized as a single block whose size for accuracy $\eta$ is $O(\eta^{-p\chi}\log\eta^{-1})$ for every $p>p_f$, where the structural exponent $p_f$ only depends on the computation graph of $f$. This exponent does not depend on the dimension and is in fact zero for polynomial laws, so for the two-dimensional Navier-Stokes equations the nonlinear block costs only $O(\log\eta^{-1})$ parameters. The data enters through encoding layers with a bounded number of learned weights per mode, so neither term grows with the grid resolution or the time step. While the analysis only uses the max-norm error of the pointwise block, we show that one obtains approximation bounds in a stronger norm along the whole trajectory. For viscous Hamilton-Jacobi equations with a $C^s$ Hamiltonian, the approximation requires on the order of $\eta^{-1/s}$ parameters, and we prove a matching lower bound, up to logarithms. This shows that solution operators for this class of problems have low structural complexity, even though they are nonlinear maps between infinite-dimensional spaces.
\end{abstract}

\section{Introduction}
\label{sec:intro}

Let $\Omega\subset\bR^{d_x}$ be a bounded spatial domain and consider the semilinear evolution equation
\begin{equation}
u_t+Lu=f(u,\nabla u),
\qquad
u(0)=u_0,
\label{eqn:intro_pde}
\end{equation}
where $L$ is a linear differential operator and $f$ the nonlinear constitutive law depending on the state and its derivative. This class of partial differential equations (PDE) covers a wide range of important dynamical models in science and engineering, for example, viscous Burgers and viscous conservation laws, viscous Hamilton--Jacobi equations, reaction-diffusion systems e.g. Allen-Cahn and the diffusive FitzHugh-Nagumo system, Cahn--Hilliard, and the two-dimensional incompressible Navier-Stokes equations \cite{henry2006geometric,evans2022partial, pazy2012semigroups, temam2024navier,temam2012infinite}. In scientific machine learning (SciML), these systems and their associated solution operators have become benchmark examples to test architectures and algorithms \cite{li2020fourier,lu2021learning,bhattacharya2021model,kovachki2023neural,liu2024prose,sun2025towards,herde2024poseidon,hao2024dpot}.

The goal of these methods is to approximate the solution operator $G_T:u_0\mapsto u$, which maps the initial data either to the solution at the final time $t=T$ \cite{li2020fourier,kovachki2021universal,chandramoorthy2026spectral} or to the full trajectory on $[0,T]$ \cite{liu2024prose,liu2024prosefd, sun2025towards, furuya2025quantitative,taniguchi2026generalization,liu2025bcat}. We adopt the latter formulation throughout this work, which is a stronger requirement. Since these equations serve as test cases throughout the literature for learning solution operators with neural networks, we study the structural complexity of approximating their solution operators \cite{li2020fourier,lu2021learning,kovachki2023neural,liu2024prose}. In particular, we quantify how many trainable parameters are needed to approximate the solution operator to accuracy $\eta$, in the sense of the parametric complexity for operator learning \cite{lanthaler2026parametric}. Our aim is to analyze these systems in a way that allows for a nuanced separation between the quantities that control the cost of approximation. This error leads to terms that depend on the complexity of the nonlinear governing law, the derived regularity from the dynamics, the spatial and temporal resolution, the encoding of the input, and the contraction of the iterates.
We show that the first two terms require trainable parameters, and that their total number does not grow with the grid size or resolution. The remaining error terms can affect the size of the network; however, they do not change the number of parameters that must be learned. In this way, we show that the solution operators of semilinear PDEs have low \emph{structural complexity}, which provides a theoretical explanation for why learned neural approximations perform well on them, even though they are seemingly complicated nonlinear operators between infinite-dimensional spaces.

In practice neural operators \cite{lu2021learning,li2020fourier,kovachki2023neural,bhattacharya2021model,zhang2023belnet} utilize an encoder to map the inputs into a latent space. For Fourier Neural Operators (FNO), this is the spectral domain given by the fast Fourier transform (FFT), and for PCA-based networks, it is the span of the leading principal components of the data generated by a PCA layer \cite{li2020fourier,bhattacharya2021model}. Most neural operators utilize linear layers (affine or convolutional) and point-wise nonlinear activation functions applied with the same weights at every spatial location \cite{kovachki2023neural,li2020fourier,lu2021learning,bhattacharya2021model}. In empirical studies, neural operators are more parameter efficient than what the theory of generic nonlinear operators shows. 
Specifically, in operator learning theory the rates have a curse of parametric complexity \cite{lanthaler2026parametric, liu2024neural} which yields approximation and sampling bounds that are exponentials of power laws. However, simulations show that the errors can decay at algebraic rates, for example in the size of the training set \cite{lu2021learning, de2022cost, subramanian2023towards}, and can be efficient over larger multi-operator settings \cite{liu2024prose, liu2025bcat, mccabe2024multiple,herde2024poseidon,hao2024dpot, cao2024vicon}. 

One theoretical limitation is that the universal approximation results \cite{chen1995universal, weihs2026deep} and the quantitative complexity results for the class of Lipschitz or smooth operators  \cite{lanthaler2026parametric,liu2024neural, weihs2026multiple} take the solution operator $G_T$ to be a generic map between Banach spaces. Since these operators are only characterized by their Lipschitz or $C^r$ regularity on infinite-dimensional spaces, the metric entropy grows like $\exp(\eta^{-p})$, and the associated cost for standard architectures is the same. In particular, this leads to exponential scalings for the parameter count \cite{lanthaler2026parametric,lanthaler2024operator, liu2024neural} and generalization rates that are only logarithmic in terms of the number of samples \cite{kovachki2024data,adcock2025towards, liu2024neural, weihs2026generalization,weihs2026multiple}. To get algebraic approximation rates for Lipschitz operators, one needs non-standard architectures such as super-expressive activation functions or non-standard ReLU architectures \cite{yarotsky2021elementary, schwab2026deep} which is consistent with the metric-entropy bound for encoding information \cite{lanthaler2024operator, liu2024neural}.

Utilizing the structure of the operator or data can help to avoid the curse of parametric complexity. One approach is to assume holomorphic dependence on the input, which yields algebraic rates \cite{schwab2019deep,opschoor2022exponential,herrmann2024neural,marcati2023exponential,reinhardt2026statistical}.
For elliptic problems, the size of networks that can approximate the solution scales like the size of networks representing the coefficients or the nonlinearity \cite{marwah2023neural, marwah2021parametric}. Another direction is to relate the architecture to the PDE by showing that it can emulate a numerical scheme. Examples include the Picard iteration for scalar reaction laws \cite{furuya2025quantitative,taniguchi2026generalization, furuya2026approximation}, learned time-stepping operators \cite{chen2025error}, iterated one-step operators for convex monotone semigroups \cite{blessing2026neural}, and emulating stable spectral schemes \cite{chandramoorthy2026spectral}. For FNO that emulate stable spectral schemes, \cite{chandramoorthy2026spectral} showed that a polynomial nonlinearity only adds a logarithmic cost, while the number of retained Fourier modes determines the overall rate.

Compared with these works, we take a complementary approach that applies to laws that are nonlinear in $(u,\nabla u)$. We separate the parameter count into a term that is controlled by the governing equation and a term controlled by the derived regularity of the data. We show that both terms are necessary. We use three properties of semilinear parabolic PDEs that are absent for a generic Lipschitz operator:

\begin{enumerate}
\item \textbf{Fixed-point Structure:} On a short horizon, the solutions are the unique fixed points of the mild map
$$\Phi[u](t)=S(t)u_0+\int_0^tS(t-s)f(u(s),\nabla u(s))\,ds,$$
which is a contraction on a ball in $C([0,T];X^\alpha)$. Thus, a solution operator can be taken as the limit of an iteration whose only nonlinearity comes from $f$.

\item \textbf{Parabolic Smoothing:} The semigroup $S(t)=e^{-tL}$ is analytic and smoothing, so certain high frequencies are damped. Therefore, we can quantify the truncation and quadrature errors with explicit algebraic rates induced directly from the dynamics itself.

\item \textbf{Algebraic Parametric Complexity of the Governing Law:} The term $f$ can be written as a finite directed acyclic graph (DAG) made from elementary operations, and is independent of any discretization. The cost of approximating the governing law is determined by the structure of the DAG rather than by the number of variables of $f$ or the number of space-time points.
\end{enumerate}

Combining these properties allows a standard architecture to use one small point-wise neural network whose parameters are shared across grid points, time levels and iterations, and thus keeps the number of trainable parameters small.


\subsection{Main result}
\label{sec:main}

The analysis uses the following pipeline to chain the approximations (see Figure~\ref{fig:pipeline}). The Galerkin truncation, temporal quadrature, and spatial sampling are used to form a discrete Picard map that approximates the mild map. The governing law $f$ is represented by its DAG form and is replaced by a deep ReLU network which approximates each node in the DAG. To apply it over the grid, weight sharing through a $1\times1$ convolution (Lemma~\ref{lem:sharing}) is used. This results in a looped structure that mimics a fixed-point method.

\begin{figure}[ht]
\centering
\begin{tikzpicture}[>=Latex, font=\small,
  bx/.style={draw, rounded corners, align=center, minimum height=1.05cm, inner sep=4pt, text width=2.25cm}]
\node[bx] (a) at (0,0) {mild map\\ $\Phi$};
\node[bx] (b) at (3.1,0) {Galerkin\\ $\Phi_N$};
\node[bx] (c) at (6.2,0) {time quadrature\\ $\Phi_{N,Q}$};
\node[bx] (d) at (9.3,0) {sampling\\ $\Phi^h_{N,Q}$};
\node[bx] (e) at (12.4,0) {shared network\\ $\Phi^h_{N,Q,\varepsilon}$};
\draw[->] (a) -- (b);
\draw[->] (b) -- (c);
\draw[->] (c) -- (d);
\draw[->] (d) -- (e);
\node[font=\footnotesize] at (1.55,-1.05) {$\lambda_{N+1}^{-(\gamma_{\mathrm s}-\alpha)/2}$};
\node[font=\footnotesize] at (4.65,-1.05) {$(\Delta t)^{\zeta_{\mathrm t}}$};
\node[font=\footnotesize] at (7.75,-1.05) {$h^{r}$};
\node[font=\footnotesize] at (10.85,-1.05) {$\varepsilon\,\xi_\alpha(N,\Delta t)$};
\node[font=\footnotesize, align=center] at (6.2,1.15)
  {Iteration error $\kappa^{n}$, with contraction constant $\kappa<1$ uniform in $N,h,Q$.};
\end{tikzpicture}
\caption{\textbf{Approximation Pipeline and Associated Errors.}
All steps are discretizations except the last one, which replaces the governing law by a shared ReLU network and the corresponding trainable parameters. The linear steps utilize fixed transforms, learned spectral weights, and parameter sharing. This leads to a loop, which is illustrated in Figure~\ref{fig:loop}.}
\label{fig:pipeline}
\end{figure}

Let $X^s=D(L^{s/2})$ be the space generated by $L$, $\gamma_{\mathrm s}$ be the spatial regularity of the data, and $\zeta_{\mathrm t}\le1$ be the resulting temporal order of accuracy (see Section~\ref{sec:notation} for notation). The main complexity result is informally stated below and summarizes the rigorous results Theorems~\ref{thm:main} and \ref{thm:total_cost}.

\begin{theorem}[Main result, informal]
\label{thm:main_informal}
Let $u$ be the mild solution of \eqref{eqn:intro_pde} on $[0,T]$ with spatial regularity $\gamma_{\mathrm s}$. If $f$ can be written as a finite DAG whose nonlinear nodes are multilinear or component-wise $C^{s_j}$, with $p_{\max}=1/s_{\min}$, then for every sufficiently small $\eta>0$, the architecture from Definition~\ref{def:loop}, with
$$    \lambda_{N+1} \asymp \eta^{-2/(\gamma_{\mathrm s}-\alpha)},
\qquad
h \asymp \eta^{1/(\gamma_{\mathrm s} - \alpha)},
\qquad
  \Delta t \asymp \eta^{1 / \zeta_{\mathrm t}},
\qquad
 n \asymp  \log\eta^{-1},$$
admits a choice of trainable parameters for which $\|\widehat u-u\|_{C([0,T];X^\alpha)}\le\eta$ holds. Its trainable parameters scale as
$$\underbrace{W(\widehat f_\varepsilon)\ \le\ C\,\eta^{-p_{\max}\chi_{\mathrm s}}\log\eta^{-1}}_{\text{law}}
\qquad  +     \qquad
\underbrace{W_{\mathrm{spec}}\   \le    \ C\,\eta^{-d_x/(\gamma_{\mathrm s}-\alpha)}}_{\text{data}}, $$
and are independent of the number of grid points, time levels, and sweeps. The full network without parameter sharing scales like
$$W(\mathcal A_n)\ \le\ C\,\eta^{-\upsilon}\left(\log\eta^{-1}\right)^2,
\qquad \upsilon = \frac{d_x}{\gamma_{\mathrm s} - \alpha} + \frac{1}{\zeta_{\mathrm t}} + p_{\max} \chi_{\mathrm s},
\qquad \dep(\mathcal A_n)\ \le\ C\left(\log\eta^{-1}\right)^2,$$
where $\chi_{\mathrm s}\ge1$ equals $1$ for $\alpha<2$ and decreases to $1$ as the data becomes smoother, with an extra factor $(\log\eta^{-1})^{p_{\max}}$ when $\alpha=2$.

If every node is multilinear, then $\upsilon=\frac{d_x}{\gamma_{\mathrm s}-\alpha}+\frac{1}{\zeta_{\mathrm t}}$ and the complexity is set only by the space-time resolution.
\end{theorem}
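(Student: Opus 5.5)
The plan is to follow the pipeline of Figure~\ref{fig:pipeline} and split the error into five additive pieces. Let $u^{(n)}$ be the $n$-th iterate of the fully discretized, network-based Picard map $\Phi^h_{N,Q,\varepsilon}$, and let $\widehat u$ be its output. By the triangle inequality,
\[
\|\widehat u-u\|_{C([0,T];X^\alpha)}\le \kappa^n C_0 + C\bigl(\lambda_{N+1}^{-(\gamma_{\mathrm s}-\alpha)/2}+(\Delta t)^{\zeta_{\mathrm t}}+h^{r}+\varepsilon\,\xi_\alpha(N,\Delta t)\bigr).
\]
The first step is to show that $\Phi$, $\Phi_N$, $\Phi_{N,Q}$, $\Phi^h_{N,Q}$ and $\Phi^h_{N,Q,\varepsilon}$ all map a fixed ball of $C([0,T];X^\alpha)$ into itself. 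They should also be contractions there, with one constant $\kappa<1$ that does not depend on $N,h,Q$. This uses the analytic smoothing bound $\|L^{\alpha/2}S(t)\|\lesssim t^{-\alpha/2}$, for which integrability requires $\alpha<2$. It also uses the local Lipschitz property of $f$ on the ball, with the horizon split into short subintervals if needed.

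Once the contraction is uniform, the standard perturbation estimate $\|u^\ast-v^\ast\|\le(1-\kappa)^{-1}\sup\|\Phi u-\Psi u\|$ turns each one-step consistency error into a fixed-point error. The consistency errors come out as follows. The Galerkin tail of a $\gamma_{\mathrm s}$-regular solution gives $\lambda_{N+1}^{-(\gamma_{\mathrm s}-\alpha)/2}$. The quadrature error uses the Hölder-in-time regularity of the integrand, which yields $\zeta_{\mathrm t}$. Sampling and interpolation give $h^r$. Replacing $f$ by $\widehat f_\varepsilon$ in max-norm gives $\varepsilon\,\xi_\alpha$, where $\xi_\alpha$ is the operator norm of the synthesis map from grid values into $X^\alpha$.

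Next, each of the five terms is set equal to $\eta/5$, and this fixes the stated scalings of $\lambda_{N+1}$, $h$, $\Delta t$ and $n\asymp\log\eta^{-1}$. It also gives $\varepsilon\asymp\eta/\xi_\alpha$. The exponent $\chi_{\mathrm s}$ comes from bounding $\xi_\alpha(N,\Delta t)$. For $\alpha<2$ the synthesis constant is harmless, giving $\chi_{\mathrm s}=1$. In general it grows like a power of $\lambda_N^{\alpha/2}$ times a quadrature factor, which after substitution is a power of $\eta^{-1}$ that tends to $1$ as $\gamma_{\mathrm s}$ grows. At $\alpha=2$ the factor $\int t^{-1}$ produces the extra $\log$.

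For the law, I will use the DAG approximation result stated earlier in the paper. Multilinear nodes cost $O(\log\varepsilon^{-1})$ through ReLU product approximation, and $C^{s_j}$ nodes cost $\varepsilon^{-1/s_j}\log\varepsilon^{-1}$. Errors propagate through the finite DAG with Lipschitz constants fixed by the bounded ball. This gives $W(\widehat f_\varepsilon)\lesssim\eta^{-p_{\max}\chi_{\mathrm s}}\log\eta^{-1}$.

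Lemma~\ref{lem:sharing} (the $1\times1$ convolution) applies the same weights at every node and every iteration, so these parameters are counted only once. The spectral weights are one per retained mode, and there are $\sim\lambda_{N+1}^{d_x/2}$ such modes by Weyl's law, which gives $\eta^{-d_x/(\gamma_{\mathrm s}-\alpha)}$. The unshared count multiplies the shared block by the number of grid points $h^{-d_x}$, the number of time levels $(\Delta t)^{-1}$, and the $n$ sweeps. The depth is $n$ times the block depth, and both factors are $O(\log\eta^{-1})$.

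I expect the main obstacle to be proving that the contraction constant is uniform while $f$ is replaced by a ReLU surrogate. The surrogate must be locally Lipschitz on the ball uniformly in $\varepsilon$, and its inputs $(u,\nabla u)$ must stay in the compact set where $f$ was approximated. I would handle this by clipping the inputs to the network to that compact range, which is representable exactly by ReLU. I would also take an approximation in $W^{1,\infty}$ for the DAG nodes, or else argue through the ball-invariance and error bound directly instead of through contraction of $\Phi_\varepsilon$ itself.
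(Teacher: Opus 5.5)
Your plan has a genuine gap at its central step. You ask for a contraction constant for the neural map $\Phi^h_{N,Q,\varepsilon}$ that is uniform in $N,h,Q$, and this is not available in the regime the statement is about. Your justification, the smoothing $\|L^{\alpha/2}S(t)\|\lesssim t^{-\alpha/2}$, is integrable only for $\alpha<2$, so it confines the whole argument to the case with no amplification. For gradient-dependent laws with $d_x\ge2$ the $C^1$ embedding forces $\alpha>2$. The $\alpha\ge2$ cases of the statement (the factor $\chi_{\mathrm{s}}>1$ and the extra logarithm at $\alpha=2$) are precisely the ones your argument cannot reach.

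In the paper only the exact discrete map $\Psi^h_{N,Q}$ contracts. It does so through smoothing from $X^\beta$ to $X^\alpha$ with $\alpha-\beta<2$, made uniform in $(N,h)$ by the duality estimate $\|S_N(\tau)\mathcal C^h_Nv\|_{\ell^2_\alpha}\le CC_{\mathrm{stab}}\tau^{-(\alpha-\beta)/2}\|v\|_{X^\beta}$ coming from quadrature stability. This step needs $u\mapsto f(u,\nabla u)$ to be Lipschitz from $X^\alpha$ into $X^\beta$.

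A ReLU surrogate has no $\varepsilon$-uniform bound of that kind: a superposition with a kink is not Lipschitz on Sobolev spaces of order above $1/2$, and stability requires $\beta>d_x/2$. A $W^{1,\infty}$ approximation plus clipping only controls differences in $\ell^\infty$. The discrete Duhamel sum then amplifies them by $\xi_\alpha(N,\Delta t)$, which is unbounded for $\alpha\ge2$, so that route also works only when $\alpha<2$.

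The missing idea is the one you list only as a fallback, and it should be the main argument. Never contract $\Psi^h_{N,Q,\varepsilon}$. Instead prove $\sup_{a\in B^{N,Q}_R}\|\Psi^h_{N,Q,\varepsilon}(a)-\Psi^h_{N,Q}(a)\|\le C\varepsilon\,\xi_\alpha=:E_{\mathrm{net}}$ and induct: $\|a^n_\varepsilon-a^n\|\le E_{\mathrm{net}}+\kappa\|a^{n-1}_\varepsilon-a^{n-1}\|\le E_{\mathrm{net}}/(1-\kappa)$. Use $\Psi^h_{N,Q}(B_R^{N,Q})\subseteq B_{R_0}^{N,Q}$ together with the margin $E_{\mathrm{net}}/(1-\kappa)\le R-R_0$ to keep the neural iterates in the ball. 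By the embedding, this also keeps the network inputs in $K$, so no clipping is needed. The comparison with $u$ is then made once, $\|a_\infty-b\|\le(1-\kappa)^{-1}\|\Psi^h_{N,Q}(b)-b\|$ at the sampled solution $b$, rather than through a chain of fixed points of intermediate maps.

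Your identification of $\xi_\alpha$ is also wrong in a way that changes the exponent. It is not the norm of the synthesis map from grid values into $X^\alpha$. That norm is of order $\lambda_N^{\alpha/2}$ for every $\alpha>0$; it would give $\varepsilon\asymp\eta^{1+\alpha/(2\zeta_{\mathrm{s}})}$, and it is not harmless for $\alpha<2$.

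What $\xi_\alpha$ actually measures is the Duhamel amplification of a max-norm perturbation. The map $\mathcal C^h_N$ sends that perturbation to $\ell^2_0$ with norm $\lesssim\varepsilon$. Then $\Delta t\sum_{j\le Q}\min\{\lambda_N^{\alpha/2},(j\Delta t)^{-\alpha/2}\}$ is $O(1)$ for $\alpha<2$, logarithmic for $\alpha=2$, and $O(\min\{\lambda_N,\Delta t^{-1}\}^{\alpha/2-1})$ for $\alpha>2$. With $u_0\in X^{\gamma_{\mathrm{s}}}$ one has $\lambda_N\lesssim\eta^{-1/\zeta_{\mathrm{s}}}$, hence $\varepsilon\gtrsim\eta^{\chi_{\mathrm{s}}}$ with $\chi_{\mathrm{s}}=1+(\alpha-2)_+/(2\zeta_{\mathrm{s}})$.

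Finally, your full-network count covers only the shared block, and one sweep's depth is not just the block depth. The bounds $W(\mathcal A_n)\lesssim\eta^{-\upsilon}(\log\eta^{-1})^2$ and $\dep(\mathcal A_n)\lesssim(\log\eta^{-1})^2$ also require the linear layers to be cheap. You need fast transforms with $O(M\log M)$ weights in $O(\log M)$ layers. You also need the Duhamel sum to be realized through the diagonal recurrence $v^{q+1}=S_N(\Delta t)(v^q+\Delta t\,\widehat g^q)$ by a parallel scan, which costs $O(Q\log Q)$ weights per mode at depth $O(\log Q)$. A dense lower-triangular time layer costs $NQ^2$ weights and a sequential recurrence costs depth $Q$; either one breaks the stated bounds.
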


The exponent $p_{\max}=1/s_{\min}$ is the classical ReLU rate \cite{yarotsky2017error,devore2021neural} applied to the least smooth nonlinear node in the DAG form of $f$. It is the only place where the smoothness of $f$ appears explicitly in the bounds. It is multiplied by the stability factor $\chi_{\mathrm s}$, which comes from the lack of smoothing by the Duhamel kernel when we measure the error in  $X^\alpha$, $\alpha>2$. This is sharp (see Proposition~\ref{prop:xi_sharp}), but under some assumptions it can be improved (see Section~\ref{sec:amplification}).

\subsection{Contributions}
\label{sec:contributions}
We list some of the contributions of this work. 

\begin{enumerate}
\item \textbf{Separation of the Parameter Count (Theorems~\ref{thm:main} and \ref{thm:total_cost}).} We split the number of trainable parameters needed to obtain an accuracy of $\eta$ in $C([0,T];X^\alpha)$ into a term controlled by the governing law, i.e. $C\eta^{-p\chi_{\mathrm s}}\log\eta^{-1}$, and a term controlled by the regularity of the data, i.e. $C\eta^{-d_x/(\gamma_{\mathrm s}-\alpha)}$.  The terms do not grow with the grid, the time step, or the number of iterations of the looped network due to parameter sharing.  These results also apply to popular operator learning approaches, for example, those based on FNO or DeepONet (see Remarks~\ref{rem:loop_fno} and \ref{rem:loop_deeponet}).

\item \textbf{The Structural Exponent (Section~\ref{sec:dag}).} For governing laws whose DAG depends on inputs $(u,\nabla u)$, the complexity is set by the least smooth nonlinear node (see Theorem~\ref{thm:relu_f}), building on \cite{schmidt2020nonparametric,poggio2024compositional}. Taking the infimum over all DAG forms of $f$ yields the structural exponent $p_f$ which is an invariant of the system and does not depend on $d$, $d_x$ or the number of state components. For polynomial systems, it is equal to zero.

\item \textbf{A Derived Error Decomposition (Section~\ref{sec:total}).} All of the sources of error and assumptions on the regularity are derived from the mild formulation. They are bounded separately rather than assumed through the accuracy of a particular scheme (Theorem~\ref{thm:total_error}). The error is measured in $C([0,T];X^\alpha)$ which is a stronger norm than is typically used \cite{furuya2025quantitative,taniguchi2026generalization,chandramoorthy2026spectral} but still achieves a scaling for the depth that is only $C(\log\eta^{-1})^2$ (see Theorem~\ref{thm:realization}), the same as in \cite{furuya2025quantitative,furuya2026approximation}. The decomposition is written in the interpretable form of
$$\underbrace{\kappa^{n}}_{\text{iteration}}+\underbrace{C\varepsilon\,\xi_\alpha(N,\Delta t)}_{\text{network}}+\underbrace{\varepsilon^{\mathrm{tail}}_N(u_0)+\varepsilon^{\mathrm{enc}}_{N,h}(u_0)}_{\text{encoding}}+\underbrace{C\lambda_{N+1}^{-(\gamma_{\mathrm s}-\alpha)/2}}_{\text{Galerkin}}+\underbrace{Ch^r}_{\text{spatial}}+\underbrace{C(\Delta t)^{\zeta_{\mathrm t}}}_{\text{temporal}}.$$
For $\alpha>2$ the network error is amplified, which multiplies its exponent by $\chi_{\mathrm s}$. This amplification is sharp by Proposition~\ref{prop:xi_sharp} in general, but can be removed by certain formulations for gradient-free laws when $d_x\le3$ (see Section~\ref{sec:amplification}).

\item \textbf{Lower Bounds (Section~\ref{sec:lower_bounds}).} 
Any architecture whose weights depend continuously on the governing laws needs at least $c\,\eta^{-1/(s+1)}$ parameters for gradient-free governing laws and at least $c(\eta\log\eta^{-1})^{-1/s}$ over a class of gradient dependent governing laws.  For gradient dependent laws and $s\ge2$, the lower bound rates match the upper bound rates up to logs (see Theorems~\ref{thm:operator_lower_bound} and \ref{thm:hj_lower_bound}). If the spectrum of $L$ is trainable, at least one weight mode is required, so the data term in our analysis is sharp (Proposition~\ref{prop:spectral_lower}).
\end{enumerate}

In addition, our results are extended to longer time intervals with the same network (Proposition~\ref{prop:continuation}). For gradient-free laws, the error is controlled by a one-sided rate rather than only using contraction (Lemma~\ref{lem:allen_cahn_window}). This also extends to the solution operator using just one set of weights for all data in some fixed ball (Theorem~\ref{thm:operator_rates}). We compute the exponents for seven commonly used PDEs and examine the constants in the Allen-Cahn case, where the block has a few hundred parameters on a single window and the trainable count is dominated by the spectral weights (Section~\ref{sec:examples}).

\subsection{Related work}
\label{sec:related}

There are several quantitative results for learning the solutions to PDEs using neural networks.  In \cite{lanthaler2022error}, error estimates are provided for DeepONet, and in \cite{liu2024neural}, approximation and generalization rates are provided over the class of Lipschitz operators.  Generic error bounds for operator learning that apply to nonlinear parabolic equations were shown in \cite{de2022generic}. Error bounds were derived for FNO in \cite{kovachki2021universal}, and upper and lower complexity bounds for PCA-Net can be found in \cite{lanthaler2023operator}. Approximation and generalization bounds for encoder-decoder architectures were analyzed in \cite{yang2026efficient}. In \cite{zhang2025discretization}, universal approximation was derived for a discretization-invariant extension of DeepONet, where the network has an additional component to resolve the grids, and general universal approximation results appeared in \cite{weihs2026deep}.  In \cite{lanthaler2022nonlinear}, they proved lower bounds showing that operator learning approaches that use a linear reconstruction step, e.g. DeepONet and PCA-Net, cannot efficiently approximate discontinuous solutions.
In \cite{furuya2024can}, they showed that bijective neural operators between infinite-dimensional Hilbert spaces may not admit continuous finite-dimensional discretizations.
In \cite{lanthaler2024discretization}, the authors examined the discretization error associated with evaluating an FNO on a grid and proved an algebraic rate with respect to the resolution.

As shown in \cite{lanthaler2026parametric}, the curse of parametric complexity over Lipschitz or $C^s$ classes leads to parameter counts of $\exp(\varepsilon^{-r})$ for operator learning, including DeepONet, PCA-Net, and FNO. However, using structures or other information can alleviate this restriction.  FNO can emulate pseudo-spectral methods for periodic domains using $C^3$ activation functions (not ReLU), and so can approximate the Darcy and Navier-Stokes solution operators with explicit bounds \cite{kovachki2021universal}. In \cite{lanthaler2022error}, they used the Cole-Hopf transformation to obtain explicit DeepONet bounds for viscous Burgers. In our results, the bound follows from the bilinear structure of the law. In \cite{lanthaler2023operator}, it is shown that PCA-Net can avoid the curse of parametric complexity for Darcy flow by using the structure of the parametrized coefficient and avoids it for the Navier-Stokes equations through emulation of a numerical method. Their results are measured in the averaged error. For the setting of self-adjoint linear operators, \cite{de2023convergence} showed that the generalization rates are algebraic and depend on the eigenvalue decay and data smoothness. In \cite{chandramoorthy2026spectral}, under the assumption that the solution operator has a stable and accurate Fourier spectral discretization, they proved that FNO learns an accurate approximation with polynomial sample complexity. They show that the complexity for the problem $u_t+\mathcal Au+\mathcal D_s(\mathcal G(u))=f$ on the torus, under mild conditions, leads to algebraic rates in $L^2$ and provide generalization bounds.

In \cite{furuya2025quantitative,taniguchi2026generalization}, neural operators were constructed using Picard iterations in order to approximate the solution operators of scalar reaction equations $u_t+ Lu=F(u)$. Within these networks, the nonlinear scalar $F$ is a Lipschitz function and can be approximated with order $\epsilon^{-1}$ neurons (up to logs), thus a polynomial and a Lipschitz nonlinearity scale the same. In \cite{furuya2025quantitative,taniguchi2026generalization}, the authors noted that these Picard layers share weights and that the networks' depth scales like $(\log\epsilon^{-1})^2$. In both of these papers, the linear term is approximated without a rate and discretization is not analyzed. In the recent work \cite{furuya2026approximation}, Picard layers with a truncated eigenbasis were utilized for reaction-diffusion systems, with explicit bounds in the number of retained modes and depth scaling $C(\log\epsilon^{-1})^2$.

Our work complements these prior results, since we consider governing equations that can be nonlinear in $\nabla u$ on domains with an explicit eigenbasis, and derive the explicit conditions for the numerical emulator. In addition, we show the error bound over the whole trajectory in stronger norms induced by the dynamics, with upper and lower rates. 

The architecture has loop structure due to the unrolling of fixed-point iteration, as seen in the learned iterative shrinkage \cite{gregor2010learning}, algorithm unrolling \cite{monga2021algorithm} and deep equilibrium models \cite{bai2019deep}, including weight-tied Fourier neural operators for steady-state PDEs \cite{marwah2023deep}. Banach fixed-point iterations unrolled into a loop were shown to be general operator networks in \cite{feischl2025neural}, where their depth is logarithmic in accuracy given an accurate approximation of the iteration steps. The loop structure also allows for parameter sharing across layers, utilized in neural ordinary differential equations \cite{chen2018neural,sun2020neupde}. In \cite{chen2025error}, they estimated the generalization error of networks that learn time-stepping operators of implicit schemes, and the authors of \cite{blessing2026neural} approximated convex monotone semigroups using a learned one-step operator. In both of these results, the learned time-stepper can be composed to generate predictions. Multilevel Picard methods and the associated neural network emulator can approximate single solutions of high-dimensional semilinear parabolic equations with only polynomially many parameters \cite{hutzenthaler2021multilevel,hutzenthaler2020proof,neufeld2024rectified}.  Recent work has extended the learning task to families of governing laws, either through in-context and multimodal operator learning \cite{cao2024vicon,liu2024prose}, multi-task operators and PDE foundation models \cite{sun2025towards,liu2024prosefd,liu2025bcat,mccabe2024multiple,herde2024poseidon,hao2024dpot}, and operators with heterogeneous inputs \cite{jin2022mionet,zhang2024d2no}. The analysis of multi-task operator learning, including approximation and generalization was studied in \cite{weihs2026deep,weihs2026generalization,weihs2026multiple}. Specifically, in \cite{weihs2026multiple},  the theoretical rates for learning families of operators are equivalent to those of learning one operator with the same Lipschitz regularity. In \cite{weihs2026kernel}, the rate for multi-task kernel learning is controlled by the hardest constituent task, which is similar to the least regular node in our analysis. 

\subsection{Organization and Notation}
\label{sec:notation}

Section~\ref{sec:spaces} introduces the function spaces, the mild map, and the systems' derived regularity. Section~\ref{sec:dag} provides the DAG approximation and the structural exponent used through the main results. In Section~\ref{sec:discrete}, we analyze the discretization and the neural Picard map. The main results for one solution are in Section~\ref{sec:total}, which shows the total error, the complexity theorem, and the size of the full network, and extend to  operator learning  in Section~\ref{sec:operator}. Sharpness and lower bounds are derived in Section~\ref{sec:lower_bounds}. Lastly, we show that the theory applies to a range of examples in Section~\ref{sec:examples}. A collection of proofs is deferred to Appendices~\ref{sec:deferred} and \ref{sec:appendix}.

We use the following notation throughout this work. First, $H$ is a real Hilbert space, $L$ is as above with eigenpairs $(\lambda_k,e_k)$, and define the spaces $X^s=D(L^{s/2})$, $X_T^s=C([0,T];X^s)$. The exponents satisfy $0\le\beta<\alpha<\gamma$ with $\alpha-\beta<2$ and $\gamma-\beta<2$, and we set
$$ \sigma:=\frac{\alpha-\beta}{2},
\qquad    \theta:=1-\sigma\in(0,1),
\qquad   \zeta:=\frac{\gamma-\alpha}{2},
\qquad   \bar\sigma:=\frac{\gamma-\beta}{2},
\qquad  \chi:=1+\frac{(\alpha-2)_+}{2\zeta},$$
so $\sigma+\zeta=\bar\sigma$ and $\zeta<\theta$. The term $\chi$ is the amplification of the network error by the discrete Duhamel kernel (see Theorem~\ref{thm:neural_consistency}). It is equal to  $1$ when $\alpha \le 2$ and equals $\frac{\gamma-2}{\gamma-\alpha}$ for $\alpha>2$. The state variable's dimension is $d$, i.e. $u(t,x)\in\bR^d$, and the spatial dimension is denoted by $d_x$. Thus the gradient is $\nabla u(t,x)\in\bR^{d\times d_x}$. For a ReLU network $\mathcal N$,  we denote the depth by $\dep(\mathcal N)$ and the number of nonzero parameters by $W(\mathcal N)$.   The term $\mathcal D$ denotes the Duhamel operator and $D(L)$ the domain of $L$. The main symbols are summarized in Table~\ref{tab:notation}.

\begin{table}[ht]
\centering
\small
\setlength{\tabcolsep}{4pt}
\begin{tabular}{@{}ll@{\quad}ll@{}}
\toprule
Symbol & Def. & Symbol & Def.\\
\midrule
$L,\ \lambda_k,\ e_k$ & elliptic operator, eigenpairs & $N$ & retained Galerkin modes\\
$X^s=D(L^{s/2})$ & Hilbert spaces & $h,\ M(h)$ & mesh size, sample points\\
$X^s_T=C([0,T];X^s)$ & trajectory space & $Q,\ \Delta t=T/Q$ & time levels, step\\
$\beta<\alpha<\gamma$ & forcing, well-posedness, regularity & $n$ & Picard iterations\\
$\sigma=\frac{\alpha-\beta}{2}$, $\theta=1-\sigma$ & Duhamel improvement & $\varepsilon$ & local network tolerance\\
$\zeta=\frac{\gamma-\alpha}{2}$, $\bar\sigma=\sigma+\zeta$ & regularity improvement & $\eta$ & target accuracy\\
$\gamma_{\mathrm s}\ge\gamma$, $\zeta_{\mathrm s}=\frac{\gamma_{\mathrm s}-\alpha}{2}$, $\zeta_{\mathrm t}$ & spatial index, temporal order & $W_{\mathrm{spec}}$ & learned spectral weights\\
$\chi=1+\frac{(\alpha-2)_+}{2\zeta}$ & error amplification exponent & $\upsilon$ & full-network exponent\\
$\Phi,\ \mathcal D$ & mild map, Duhamel operator & $\widehat f_\varepsilon$ & shared local network\\
$\Psi^h_{N,Q},\ \Psi^h_{N,Q,\varepsilon}$ & discrete, neural Picard maps & $\xi_\alpha(N,\Delta t)$ & network-error amplification\\
$a,b,a_\infty$ & coefficient sequences & $\kappa_0,\ \kappa$ & contraction constants\\
$\mathcal L_{N,Q},\ \mathcal J_Q$ & piecewise-linear lifting & $\mathcal L^S_{N,Q}$ & semigroup reconstruction\\
$\delta$ & DAG node tolerance & $p_j,\ p_{\max},\ p_f$ & DAG node, graph, structural exponents\\
$\mathcal A_n$ & $n$-sweep network & $\Pi_{N,h}$ & discrete encoder\\
\bottomrule
\end{tabular}
\caption{Notation: The exponents $\sigma,\theta,\zeta,\bar\sigma$ are fixed by $\beta,\alpha,\gamma$ but are used throughout. We also use $\sigma$ to denote the ReLU activation, which should be clear from context.}
\label{tab:notation}
\end{table}

We chose to use $\eta$ for the target accuracy of the trajectory and reserve $\varepsilon$ for the accuracy of the point-wise block or local networks, which is smaller when $\alpha>2$.


\section{Preliminaries} \label{sec:spaces}

We first discuss the functional analytic results that will be used throughout the work, specifically, the Hilbert spaces generated by a positive self-adjoint operator $L$ \cite{lunardi2018interpolation, temam2012infinite}.

\begin{definition}[Variable Hilbert spaces] \label{def:variable_hilbert}
Let $H$ be a Hilbert space and let $L:D(L) \subset H\to H$
be a densely defined, self-adjoint, strictly positive operator with compact inverse. The eigenvalues of $L$ are denoted by
$$0 < \lambda_1 \leq \lambda_2 \leq \cdots,$$
where $\lambda_k\to\infty$, and the corresponding orthonormal eigenfunctions are denoted by
$\{e_k\}_{k=1}^\infty$. For $s\ge 0$, define the function spaces generated by $L$ by $X^s: = D(L^{s/2})$, equipped with the norm
$$\|u\|_{X^s} := \|L^{s/2}u\|_H =
\left( \sum_{k=1}^\infty \lambda_k^{s} |\langle u,e_k\rangle_H|^2 \right)^{1/2}. $$
For fixed indices $0 \leq \beta < \alpha < \gamma$ with $\alpha \ge 1$, the following three spaces,
 $X^\gamma$, $X^\alpha$, and $X^\beta$, denote the regularity space, the well-posedness space, and the space for the nonlinear governing equation, respectively. In particular, the spaces are continuously embedded,
$X^\gamma\hookrightarrow X^\alpha\hookrightarrow X^\beta$.
For $s\geq0$ and $T>0$, define $X_T^s:=C([0,T];X^s)$
equipped with the norm
$$\|u\|_{X_T^s} := \sup_{0\leq t\leq T} \|u(t)\|_{X^s}, $$
which preserves the same embedding order
$X_T^\gamma\hookrightarrow X_T^\alpha\hookrightarrow X_T^\beta.$
\end{definition}

For simplicity, we assume that $X^\alpha$ is smooth enough so that point-wise evaluation of $u$ and $\nabla u$ is well-defined. 
\begin{assumption}[$C^1$ Embedding] \label{assumption:regularity_embedding}
For the well-posedness index $\alpha$, assume that $X^\alpha \hookrightarrow C^1(\overline{\Omega}; \mathbb{R}^d)$
continuously, i.e., there exists a constant $C_{\mathrm{emb}}>0$ such that
$$\|u\|_{C^1(\overline{\Omega})}\leq C_{\mathrm{emb}} \|u\|_{X^\alpha}.$$
\end{assumption}
With this assumption, for any $u\in X^\alpha$, the inputs to the network are well-defined for standard computations. One can weaken this assumption at the cost of additional density and approximation terms. 

 For an explicit example, consider the space $H=L^2(\Omega)$ and the standard Laplacian $L=-\Delta$ on a bounded domain $\Omega \subset \bR^{d_x}$, with boundary conditions that force $L$ to be self-adjoint and strictly positive with compact inverse (e.g. Dirichlet BC with $D(L)=H^2(\Omega)\cap H^1_0(\Omega)$). Up to norm equivalence,  $X^s$ is the Sobolev space $H^s(\Omega)$ with the corresponding boundary conditions, and $X^\gamma\hookrightarrow X^\alpha\hookrightarrow X^\beta$ is the Sobolev embedding. Since $H^\alpha(\Omega)\hookrightarrow C^1(\overline\Omega)$ for $\alpha>1+\frac{d_x}2$ \cite{evans2022partial}, Assumption~\ref{assumption:regularity_embedding} holds. The same characterization holds if we extend $L$ to be a uniformly elliptic second-order operator \cite{lions1969quelques,lunardi2018interpolation}. We will keep the abstract formulation in terms of $L$, without committing to a domain, boundary condition, or eigenbasis.

For a semigroup $S(t)$, the Duhamel operator is defined by
$$ \mathcal{D}[g](t) := \int_0^t S(t-s)g(s)\,ds.$$
The following properties for the Duhamel operator lead to estimates and regularity bounds. 

\begin{assumption}[Analytic Semigroup Regularity]
\label{assumption:semigroup_regular}
The operator $L$ generates an analytic semigroup on $H$ denoted by
$S(t)=e^{-tL}$ for $t\geq 0$. The semigroup preserves the Hilbert spaces $\{X^s\}_{s\geq 0}$, i.e., for $s\geq 0$,
$$\|S(t)v\|_{X^s}\leq \|v\|_{X^s},$$
for $t\geq0$, and for $t>0$ it smooths in the sense
$$\|S(t)v\|_{X^r} \leq C_{r-s}\,t^{-(r-s)/2}\|v\|_{X^s},
$$
where $r\geq s$. Thus, if $0\leq s<r$ and $r-s<2$, then the
Duhamel operator satisfies
$$ \|\mathcal D[g]\|_{X_T^r} \leq C\,T^{1-(r-s)/2}\|g\|_{X_T^s},
$$
where $C>0$ is independent of $T$.
\end{assumption}

 Although stated as an assumption, any $L$ from Definition~\ref{def:variable_hilbert} will satisfy Assumption~\ref{assumption:semigroup_regular}. Taking the spectral decomposition, $S(t)u=\sum_k e^{-\lambda_kt}\langle u,e_k\rangle e_k$ is contractive on every $X^s$, and $$\sup_{\lambda>0}\lambda^{(r-s)/2}e^{-\lambda t}=C_{r-s}t^{-(r-s)/2}$$ where the constant is defined as $C_{r-s}=\sup_{x>0}x^{(r-s)/2}e^{-x}$. After integration, we get the Duhamel bound with $C=C_{r-s} \, (1-\frac{r-s}{2})^{-1}$.

We also need that $\mathcal{D}[g]\in C([0,T];X^r)$ for $g\in C([0,T];X^s)$ and $0\le r-s<2$, so that the mild map acts on $X^\alpha_T$.  To see this, note that since $g([0,T])$ is compact in $X^s$ and the projections $P_K$ (see Definition~\ref{def:spectral_galerkin}) converge strongly to the identity with norm bounded by 1, we have  $\sup_t\|(I-P_K)g(t)\|_{X^s}\to0$. The function $\mathcal{D}[P_Kg]$ is continuous and lives on a finite-dimensional space, so applying the Duhamel bound yields
$$\sup_t \|\mathcal{D}[(I-P_K)g](t)\|_{X^r} \le CT^{1-(r-s)/2}\sup_t\|(I-P_K)g(t)\|_{X^s}\to0,$$
for $t \in [0,T]$ and $T$ fixed.
Therefore,  the functions converge, i.e. $\mathcal{D}[P_Kg]\to\mathcal D[g]$ in the norm of $X^r_T$, and since $C([0,T];X^r)$ is complete, we have $\mathcal D[g]\in C([0,T];X^r)$. This is needed so that $\Phi$ is a map from $X_T^\alpha$ into itself in the Banach fixed point arguments. 

\subsection{Mild map and Fixed Points}

The approximation theory uses the fixed-point of the mild map (i.e., the mild solution) as the target function. The regularity and behavior of the mild solution can be directly characterized by the semigroup formulation for semilinear evolution equations \cite{pazy2012semigroups}. In particular, this covers popular examples including: viscous Burgers-type equations, reaction-diffusion systems, and the Navier-Stokes equations (under certain settings) \cite{temam2024navier}.  We use the mild map and the associated Picard iteration rather than a general time-stepping scheme, so that we can control the bounds via consistency and contraction only, with constants independent of the discretization. This is due to the fact that every time level is updated simultaneously which will lead to a logarithmic depth. 

\begin{definition}[Mild solution]
\label{def:mild}
Consider the initial value problem
\begin{equation}
u_t + Lu = f(u,\nabla u),\qquad u(0)=u_0.
\label{eqn:PDE}
\end{equation}
Let $S(t)=e^{-tL}$ denote the semigroup generated by the linear operator $-L$.  The mild map $\Phi:X^\alpha_T\to X^\alpha_T$ is defined as
$$ \Phi[u](t):=S(t)u_0 +\int_0^tS(t-s)f(u(s),\nabla u(s))\,ds.$$
for $0\leq t\leq T$. A function $u\in X^\alpha_T$ is called a mild solution of \eqref{eqn:PDE} if and only if it is a fixed point of the mild map $\Phi$, i.e. $u=\Phi[u]$.
\end{definition}

The existence and regularity of the solutions are a consequence of fixed-point analysis. 

\begin{proposition}[Contraction of the Mild Map]
\label{prop:contraction}
Let $S(t)$ be the analytic semigroup generated by $-L$ and $\mathcal{D}[\cdot]$ be the associated Duhamel operator. Assume that the nonlinear map $u\rightarrow f(u,\nabla u)$ maps $X_T^\alpha$ into $X_T^\beta$, where
$0 \leq \beta < \alpha$ and $\alpha-\beta<2$. Assume that for every $R>0$, there exists $L_R>0$ such that
$$
\|f(u,\nabla u)-f(v,\nabla v)\|_{X_T^\beta}
\leq
L_R\|u-v\|_{X_T^\alpha},
$$
for
$\|u\|_{X_T^\alpha},\|v\|_{X_T^\alpha}\leq R$. For simplicity assume that $f(0,0)=0$. Then, for every $u_0\in X^\alpha$, there exist constants $R>0$ and $T_0>0$ such that, for every $T\in(0,T_0]$, the following hold
\begin{enumerate}
\item The ball is invariant under $\Phi$, i.e.,
$\Phi(B_R^\alpha)\subseteq B_R^\alpha$, where
$B_R^\alpha:=\left\{ u\in X_T^\alpha: \|u\|_{X_T^\alpha}\leq R \right\}. $
\item The map is contractive with rate
$\kappa_0=CT^{1-(\alpha-\beta)/2} L_R\leq\frac{1}{2}$,
i.e., $$ \|\Phi[u]-\Phi[v]\|_{X_T^\alpha} \leq \kappa_0\|u-v\|_{X_T^\alpha} $$
for all $u,v\in B_R^\alpha$.
\item The initial value problem has a unique mild solution on $[0,T]$ in $B_R^\alpha$.
\end{enumerate}
\end{proposition}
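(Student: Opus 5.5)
The plan is a standard Banach fixed-point argument on $B_R^\alpha$, using the Duhamel bound of Assumption~\ref{assumption:semigroup_regular} with $r=\alpha$, $s=\beta$. First fix the radius. By contractivity of $S(t)$ on $X^\alpha$, the linear part satisfies $\|S(\cdot)u_0\|_{X_T^\alpha}\le\|u_0\|_{X^\alpha}$ for every $T$, and $t\mapsto S(t)u_0$ is continuous into $X^\alpha$ by strong continuity of the semigroup. Set $R:=2\|u_0\|_{X^\alpha}$, or $R:=1$ if $u_0=0$, so that $R$ depends only on $u_0$. Let $L_R$ be the corresponding local Lipschitz constant and $C$ the constant in the Duhamel bound, which is independent of $T$. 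Since $1-(\alpha-\beta)/2>0$, we can choose $T_0>0$ with $CT_0^{1-(\alpha-\beta)/2}L_R\le\tfrac12$. Monotonicity in $T$ then gives $\kappa_0\le\tfrac12$ for all $T\in(0,T_0]$.

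Next I would verify the contraction estimate (item 2). For $u,v\in B_R^\alpha$, linearity of $\mathcal D$ gives $\Phi[u]-\Phi[v]=\mathcal D[f(u,\nabla u)-f(v,\nabla v)]$. The difference of the nonlinear terms lies in $X_T^\beta$ by hypothesis, so the Duhamel bound and the Lipschitz assumption yield
\[
\|\Phi[u]-\Phi[v]\|_{X_T^\alpha}\le CT^{1-(\alpha-\beta)/2}L_R\|u-v\|_{X_T^\alpha}=\kappa_0\|u-v\|_{X_T^\alpha}.
\]
The continuity argument following Assumption~\ref{assumption:semigroup_regular} guarantees that $\mathcal D[g]\in C([0,T];X^\alpha)$, so $\Phi$ indeed maps into $X_T^\alpha$.

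For invariance (item 1), I would use $f(0,0)=0$. Taking $v=0\in B_R^\alpha$ gives $\|f(u,\nabla u)\|_{X_T^\beta}\le L_R\|u\|_{X_T^\alpha}\le L_RR$. Hence
\[
\|\Phi[u]\|_{X_T^\alpha}\le\|u_0\|_{X^\alpha}+\kappa_0R\le\tfrac R2+\tfrac R2=R.
\]
This step is where the choice $R=2\|u_0\|_{X^\alpha}$ and the condition $\kappa_0\le\frac12$ interact, and it is the only point needing care. In particular $R$ must be fixed before $T_0$, because $L_R$ depends on $R$; choosing $T_0$ first would make the argument circular.

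Finally, for item 3, $B_R^\alpha$ is a closed subset of the Banach space $X_T^\alpha$, hence complete. By items 1 and 2, $\Phi$ is a contraction of $B_R^\alpha$ into itself. Banach's fixed-point theorem then gives a unique $u\in B_R^\alpha$ with $u=\Phi[u]$, which by Definition~\ref{def:mild} is the unique mild solution in $B_R^\alpha$. I do not expect a genuine obstacle beyond the bookkeeping of the order in which $R$, $L_R$ and $T_0$ are chosen.
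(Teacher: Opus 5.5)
Your proposal is correct and follows the paper's proof essentially verbatim. Both arguments run a Banach fixed-point argument on $B_R^\alpha$: fix $R$ first, then choose $T_0$ with $CT_0^{1-(\alpha-\beta)/2}L_R\le\frac12$, get invariance from $f(0,0)=0$ with $v=0$, and get contraction from the linearity of $\mathcal D$. The only difference is cosmetic: you take $R=2\|u_0\|_{X^\alpha}$ with a separate case for $u_0=0$, whereas the paper takes $R=1+2\|u_0\|_{X^\alpha}$, which avoids that special case.
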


The proof is a standard Banach fixed-point argument and appears in Appendix~\ref{sec:app_spaces}. The assumption $f(0,0)=0$ is chosen for simplicity and is
not essential for the proofs. It is sufficient to assume a growth condition. For example, for every $R>0$, there exists a constant $C_R>0$ such that
$$ \|f(u,\nabla u)\|_{X_T^\beta} \leq C_R \left( 1 + \|u\|_{X_T^\alpha} \right)$$
for all $\|u\|_{X_T^\alpha}\leq R$. Then the invariance proof would follow from
$$\|\Phi[u]\|_{X_T^\alpha}
\leq \|u_0\|_{X^\alpha} + C T^{1-(\alpha-\beta)/2} C_R(1+R),$$
and $T>0$ may be chosen sufficiently small to ensure that
$\Phi(B_R^\alpha)\subseteq B_R^\alpha$. The contraction results would not change since it only depends on the local Lipschitz continuity of the nonlinear map $f$. Thus, we could keep the $f(0,0)$ in the bounds:
$$\|f(u,\nabla u)\|_{X_T^\beta}
\leq \|f(0,0)\|_{X_T^\beta} + L_R\|u\|_{X_T^\alpha},$$
and the same argument holds.

We will repeatedly use the implication that, since $f$ acts point-wise in time, the condition on $X^\beta_T$ is the same as 
\begin{equation}
\left\|f(v,\nabla v)-f(w,\nabla w)\right\|_{X^\beta}\le L_R\|v-w\|_{X^\alpha}
\qquad\text{for all }v,w\in X^\alpha \text{ with }\|v\|_{X^\alpha},\|w\|_{X^\alpha} \le R .
\label{eqn:point-wise_lipschitz}
\end{equation}
The equation \eqref{eqn:point-wise_lipschitz} is implied directly from the $X_T$ form when applied to the constant trajectories $\widetilde v\equiv v$ and $\widetilde w\equiv w$  in $B_R^\alpha$. To show the converse, the $X_T$ formulation follows from \eqref{eqn:point-wise_lipschitz} after taking a supremum over $t$. This will be used without reference in several results.

Under certain conditions on the spaces, the solutions have the standard parabolic smoothing property for $t>0$ \cite{pazy2012semigroups, lunardi2018interpolation}. The bounds degrade as $t$ approaches zero.

\begin{proposition}[Regularity]
\label{prop:instantaneous_regularity}
Let $u\in X_T^\alpha$ be the mild solution of \eqref{eqn:PDE} from Proposition~\ref{prop:contraction}, where $u_0\in X^\alpha$. Assume that for some $\gamma>\alpha$, we have $\gamma-\beta<2$. Then, for every $t\in(0,T]$, the mild solution is in
$X^\gamma$ with
$$
\|u(t)\|_{X^\gamma}
\leq
C t^{-(\gamma-\alpha)/2}\|u_0\|_{X^\alpha}
+
C t^{1-(\gamma-\beta)/2}
\|f(u,\nabla u)\|_{X_T^\beta},
$$
where $C>0$ is independent of $t$. In particular,
$u\in C((0,T];X^\gamma)$.
\end{proposition}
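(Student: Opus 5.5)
We start from the fixed-point identity $u=\Phi[u]$ from Proposition~\ref{prop:contraction}. For fixed $t\in(0,T]$ we write
$$u(t)=S(t)u_0+\int_0^t S(t-s)g(s)\,ds,\qquad g(s):=f(u(s),\nabla u(s)).$$
We then estimate the two terms in $X^\gamma$ separately, using only the smoothing estimate of Assumption~\ref{assumption:semigroup_regular}.

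\textbf{Linear term.} The smoothing estimate with $r=\gamma$ and $s=\alpha$ gives
$$\|S(t)u_0\|_{X^\gamma}\le C_{\gamma-\alpha}\,t^{-(\gamma-\alpha)/2}\|u_0\|_{X^\alpha}.$$
This is the first term of the bound.

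\textbf{Duhamel term.} Since $u\in B_R^\alpha$ and $f$ maps $X^\alpha_T$ into $X^\beta_T$, we have $g\in X_T^\beta$. Applying the smoothing estimate from $X^\beta$ to $X^\gamma$ pointwise in $s$ gives
$$\Big\|\int_0^tS(t-s)g(s)\,ds\Big\|_{X^\gamma}\le C_{\gamma-\beta}\int_0^t(t-s)^{-(\gamma-\beta)/2}\,ds\;\|g\|_{X^\beta_T}.$$
The condition $\gamma-\beta<2$ makes the kernel integrable. The integral equals $(1-\bar\sigma)^{-1}t^{1-\bar\sigma}$, which produces the second term with constant $C=C_{\gamma-\beta}(1-\bar\sigma)^{-1}$, independent of $t$.

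One technical point is that the Bochner integral must be taken in $X^\gamma$. The cleanest way is to work mode by mode. The coefficient $\langle u(t),e_k\rangle$ equals $e^{-\lambda_k t}\langle u_0,e_k\rangle+\int_0^te^{-\lambda_k(t-s)}\langle g(s),e_k\rangle ds$. We bound the $\ell^2$ norm weighted by $\lambda_k^{\gamma/2}$ using Minkowski's integral inequality, which reproduces the same estimate. Alternatively, we can use the $P_K$ truncation argument from the preliminaries.

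\textbf{Continuity into $X^\gamma$ (main obstacle).} We must show $u\in C((0,T];X^\gamma)$. Fix $t_0>0$ and use the semigroup property to write, for $t\ge t_0$,
$$u(t)=S(t-t_0/2)u(t_0/2)+\int_{t_0/2}^tS(t-s)g(s)\,ds.$$
For the first piece, the bound just proven gives $u(t_0/2)\in X^\gamma$, and $S(\cdot)$ is strongly continuous on $X^\gamma$, so this piece is continuous in $X^\gamma$. For the second piece, it is a Duhamel operator applied to $g\in C([t_0/2,T];X^\beta)$ with shifted origin. The argument given after Assumption~\ref{assumption:semigroup_regular}, with $r=\gamma$, $s=\beta$ and $r-s<2$, shows it lies in $C([t_0/2,T];X^\gamma)$. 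That argument combines finite-dimensional projections, compactness of $g([t_0/2,T])$, and a uniform Duhamel bound.

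Since $t_0>0$ is arbitrary, $u\in C((0,T];X^\gamma)$. Continuity at $t=0$ is not claimed, and it fails in general because of the blow-up factor $t^{-(\gamma-\alpha)/2}$.
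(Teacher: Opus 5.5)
Your proposal is correct and follows essentially the same argument as the paper. You split the mild formula into the same two terms, estimate both with the smoothing bounds of Assumption~\ref{assumption:semigroup_regular}, and get continuity on $(0,T]$ from strong continuity of the semigroup on $X^\gamma$ together with the Duhamel continuity argument stated after that assumption. The only difference is cosmetic: the paper writes the linear part as $S(\cdot-\delta)S(\delta)u_0$ and keeps the Duhamel term on all of $[0,T]$, which already lies in $C([0,T];X^\gamma)$ because $\gamma-\beta<2$. Your restart of the mild formulation at $t_0/2$ is equally valid.
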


\noindent The proof is given in Appendix~\ref{sec:app_spaces}.

When approximating the system by the fully discrete form, we will need the following bounds and smoothness, which only require that $u_0\in X^\alpha$.

\begin{lemma}[Derived Regularity]
\label{lem:derived_regularity}
Let $u_0\in X^\alpha$ with $\|u_0\|_{X^\alpha}\le R$, let $u\in B^\alpha_R$ be the mild solution of \eqref{eqn:PDE} on $[0,T]$ obtained by Proposition~\ref{prop:contraction}, and set
$$g(t):=f\left(u(t),\nabla u(t)\right),
\qquad
F_R:=\sup_{v\in B^\alpha_R}\left\|f(v,\nabla v)\right\|_{X^\beta_T}.$$
Recall the exponents: $\sigma=\frac{\alpha-\beta}{2}$, $\theta=1-\sigma$, $\zeta=\frac{\gamma-\alpha}{2}$ and $\bar\sigma=\sigma+\zeta=\frac{\gamma-\beta}{2}<1$. Then the following holds:
\begin{enumerate}
\item $\displaystyle\sup_{0\le t\le T} \|g(t)\|_{X^\beta} \le F_R$;
\item $\displaystyle\sup_{0<t\le T} t^{\zeta} \|u(t)\|_{X^\gamma}\ \le \ R_\gamma := C\left(R+T^{\theta}F_R\right)$;
\item for all $0<s<t\le T$,
$$\left\|u(t)-u(s)\right\|_{X^\alpha}  \le  C (t-s)^{\zeta} s^{-\zeta} \left(R_\gamma+T^{\theta}F_R\right),$$
and thus using the weighted H\"older seminorm
\[
[g]_{\zeta}:=\sup_{0<s<t\le T}\frac{\left\|g(t)-g(s)\right\|_{X^\beta}}{(t-s)^{\zeta}s^{-\zeta}},
\]
we have $[g]_\zeta \le C L_R \left(R_\gamma+T^{\theta}F_R\right)<\infty$.
\end{enumerate}
The constants in each bound only depend on the differences $\alpha-\beta$ and $\gamma-\alpha$, and the constants of Assumption~\ref{assumption:semigroup_regular}.
\end{lemma}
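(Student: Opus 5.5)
Item 1 is immediate from the definition of $F_R$. By Proposition~\ref{prop:contraction} the mild solution lies in $B^\alpha_R$, so $g=f(u,\nabla u)$ is one admissible element in the supremum defining $F_R$. Hence $\sup_t\|g(t)\|_{X^\beta}=\|g\|_{X^\beta_T}\le F_R$.

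For item 2 I will use Proposition~\ref{prop:instantaneous_regularity}, which gives
\[
\|u(t)\|_{X^\gamma}\le Ct^{-\zeta}\|u_0\|_{X^\alpha}+Ct^{1-\bar\sigma}\|g\|_{X^\beta_T}.
\]
Multiplying by $t^\zeta$ and using $1-\bar\sigma+\zeta=1-\sigma=\theta$, with $t\le T$ and $\theta>0$, I get
\[
t^\zeta\|u(t)\|_{X^\gamma}\le CR+Ct^{\theta}F_R\le C(R+T^\theta F_R)=R_\gamma .
\]

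For item 3, fix $0<s<t\le T$ and write the mild solution from the time $s$:
\[
u(t)=S(t-s)u(s)+\int_s^tS(t-\tau)g(\tau)\,d\tau .
\]
This identity follows from the semigroup property applied to the mild formula. Then $u(t)-u(s)=(S(t-s)-I)u(s)+\int_s^t S(t-\tau)g(\tau)\,d\tau$, and I bound the two terms separately.

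The first term is the main point. By the spectral representation, $\|(S(\tau)-I)v\|_{X^\alpha}\le C\tau^{\zeta}\|v\|_{X^{\alpha+2\zeta}}$ for $0<\zeta<1$. This holds because $\sup_{\lambda>0}\lambda^{-\zeta}(1-e^{-\lambda\tau})=C\tau^{\zeta}$, using $1-e^{-x}\le \min(1,x)\le x^\zeta$. Since $\alpha+2\zeta=\gamma$, item 2 then gives
\[
\|(S(t-s)-I)u(s)\|_{X^\alpha}\le C(t-s)^\zeta s^{-\zeta}R_\gamma .
\]
One must check that $\zeta<1$; this holds because $\gamma-\alpha<\gamma-\beta<2$.

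The second term uses the smoothing estimate from $X^\beta$ to $X^\alpha$:
\[
\Big\|\int_s^tS(t-\tau)g(\tau)\,d\tau\Big\|_{X^\alpha}\le C\int_s^t(t-\tau)^{-\sigma}F_R\,d\tau=C(t-s)^{\theta}F_R .
\]
I then need $(t-s)^\theta\le (t-s)^\zeta s^{-\zeta}T^{\theta}$. Since $\theta>\zeta$, I write $(t-s)^\theta=(t-s)^\zeta(t-s)^{\theta-\zeta}$. The factor $(t-s)^{\theta-\zeta}\le T^{\theta-\zeta}$, and $T^{-\zeta}\le s^{-\zeta}$ because $s\le T$, so $T^{\theta-\zeta}\le T^{\theta}s^{-\zeta}$. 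This gives the bound $C(t-s)^\zeta s^{-\zeta}T^\theta F_R$, and adding the two terms yields the stated estimate with constant $C(R_\gamma+T^\theta F_R)$.

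The Hölder bound on $g$ follows from the pointwise Lipschitz estimate \eqref{eqn:point-wise_lipschitz}. Since $u(t),u(s)\in B^\alpha_R$,
\[
\|g(t)-g(s)\|_{X^\beta}\le L_R\|u(t)-u(s)\|_{X^\alpha}.
\]
Dividing by $(t-s)^\zeta s^{-\zeta}$ gives $[g]_\zeta\le CL_R(R_\gamma+T^\theta F_R)$.

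The constants come only from $C_{r-s}$ in Assumption~\ref{assumption:semigroup_regular} and from the $(1-\sigma)^{-1}$ integration factor, so they depend only on $\alpha-\beta$ and $\gamma-\alpha$. The only delicate step is the first term in item 3: it requires the interpolation bound on $S(\tau)-I$ to be paired with the weighted $X^\gamma$ bound of item 2. That pairing is what produces the $s^{-\zeta}$ weight, and it is why the Hölder seminorm has to be weighted in this way.
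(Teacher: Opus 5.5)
Your proof is correct and follows the paper's argument essentially step for step. Item 2 comes from Proposition~\ref{prop:instantaneous_regularity} multiplied by $t^\zeta$. For item 3 you restart the mild formula at $s$, pair the spectral bound $1-e^{-x}\le x^\zeta$ with item 2 for the $(S(t-s)-I)u(s)$ term, use $X^\beta\to X^\alpha$ smoothing together with $T^{\theta-\zeta}\le s^{-\zeta}T^\theta$ for the Duhamel term, and obtain $[g]_\zeta$ from the pointwise Lipschitz estimate \eqref{eqn:point-wise_lipschitz}. The only point you leave implicit is that $F_R<\infty$, which holds since $f(0,0)=0$ gives $F_R\le L_R R$; this is what makes the seminorm bound finite.
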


The proof uses the regularity results in Proposition~\ref{prop:instantaneous_regularity} with an intermediate time point $s$ and is derived in Appendix~\ref{sec:app_spaces}.

Lemma~\ref{lem:derived_regularity} only uses that $u_0\in X^\alpha$ and by construction has the restriction $\gamma<\beta+2$. This is a consequence of the fact that the Duhamel operator smooths by fewer than two derivatives. For smoother data, this restriction is an artifact since the forcing inherits the regularity of the solution, and the smoothing can be iterated upon. We use a bootstrap argument to show this.

\begin{assumption}[Higher Regularity]
\label{assumption:bootstrap}
There is a $\gamma'\ge\gamma$ such that for every $R>0$ there is $K_R$ with
$$\left \|f(v,\nabla v)\right\|_{X^{s-(\alpha-\beta)}} \le K_R \left(1+\|v\|_{X^s}\right), $$
for all $s \in[\alpha,\gamma']$ and $v\in X^s$ with $\|v\|_{X^\alpha} \le R$.
\end{assumption}

As an example, on a torus with $X^s=H^s$, this should hold for every governing equation that is smooth in $(u,\nabla u)$, by applying the Moser estimates for compositions \cite{bahouri2011fourier,runst2011sobolev}. In the setting where the governing equation has sufficient but finite smoothness, this will hold for some $\gamma'$ up to a limit defined by the governing equation's smoothness.

\begin{lemma}[Regularity Bootstrap]
\label{lem:bootstrap}
If Assumption~\ref{assumption:bootstrap} holds, $u_0\in X^{\gamma'}$ with $\|u_0\|_{X^\alpha} \le R$, and $u\in B^\alpha_R$ is the mild solution on $[0,T]$, then
$$\sup_{0\le t\le T} \|u(t)\|_{X^{\gamma'}} \le R_{\gamma'}:=C\left(1+\|u_0\|_{X^{\gamma'}} \right),$$
and
$$\sup_{0\le t\le T} \left \|f(u(t),\nabla u(t)) \right\|_{X^{\gamma'-(\alpha-\beta)}} \le K_R \left( 1 + R_{\gamma'} \right),$$
where $C$ depends only on $R$, $T$, $K_R$, $\gamma'-\alpha$, $\alpha-\beta$, $\lambda_1$, and the constants of Assumption~\ref{assumption:semigroup_regular}.
\end{lemma}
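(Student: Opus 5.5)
We prove the bound by a finite bootstrap in steps of size strictly less than $2-(\alpha-\beta)$. The key point is that the step size is fixed: each step gains a fixed amount of regularity, so finitely many steps reach $\gamma'$.

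\textbf{Setup.} Fix a step $\delta$ with $0<\delta<\min\{2-(\alpha-\beta),\,\gamma'-\alpha\}$ when $\gamma'>\alpha$; the case $\gamma'=\alpha$ is trivial. Define the ladder $s_0=\alpha$, $s_{k+1}=\min\{s_k+\delta,\gamma'\}$, which reaches $\gamma'$ after $m=\lceil(\gamma'-\alpha)/\delta\rceil$ steps.

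\textbf{Inductive hypothesis.} We prove by induction that $A_k:=\sup_{0\le t\le T}\|u(t)\|_{X^{s_k}}\le C_k(1+\|u_0\|_{X^{\gamma'}})$. The base case $k=0$ holds since $u\in B_R^\alpha$ gives $A_0\le R$.

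\textbf{Forcing bound.} Since $u(t)\in X^{s_k}$ and $\|u(t)\|_{X^\alpha}\le R$, Assumption~\ref{assumption:bootstrap} gives
$$\|g(t)\|_{X^{s_k-(\alpha-\beta)}}\le K_R(1+A_k).$$

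\textbf{Inductive step.} Write the mild formula $u(t)=S(t)u_0+\mathcal D[g](t)$.
\begin{itemize}
\item For the linear term, contractivity of $S$ on $X^{s_{k+1}}$ and $\|u_0\|_{X^{s_{k+1}}}\le \lambda_1^{-(\gamma'-s_{k+1})/2}\|u_0\|_{X^{\gamma'}}$ control it by the initial data. This is where $\lambda_1$ enters.
\item For the Duhamel term, apply the Duhamel bound of Assumption~\ref{assumption:semigroup_regular} with $r=s_{k+1}$ and $s=s_k-(\alpha-\beta)$. The gap is $r-s\le\delta+(\alpha-\beta)<2$, so
$$\|\mathcal D[g]\|_{X_T^{s_{k+1}}}\le C\,T^{1-(r-s)/2}K_R(1+A_k).$$
\end{itemize}
Combining the two pieces gives $A_{k+1}\le \lambda_1^{-(\gamma'-s_{k+1})/2}\|u_0\|_{X^{\gamma'}}+CT^{\cdot}K_R(1+A_k)$. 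This closes the induction with constants depending only on the listed quantities.

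\textbf{Justifying the norms.} The step above assumes $u(t)\in X^{s_k}$, which must be established before Assumption~\ref{assumption:bootstrap} can be applied. The induction provides it: at step $k$ we already know $u\in C([0,T];X^{s_k})$, because the mild formula with forcing $g\in C([0,T];X^{s_{k-1}-(\alpha-\beta)})$ lands in $X^{s_k}_T$. Two facts are needed here.
\begin{itemize}
\item Continuity of $\mathcal D[g]$ into $X^r_T$ follows from the argument given after Assumption~\ref{assumption:semigroup_regular}.
\item $t\mapsto S(t)u_0$ is continuous in $X^{s_k}$ since $u_0\in X^{\gamma'}$.
\end{itemize}
Continuity of $g$ in the lower norm requires care, since Assumption~\ref{assumption:bootstrap} only states a growth bound. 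If needed, we avoid this by using the Duhamel integral bound, which only needs $\sup_t\|g(t)\|$ and Bochner measurability. Measurability holds because $g$ is continuous into $X^\beta$ and the bound is uniform.

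\textbf{Final bound.} After $m$ steps we obtain $\sup_t\|u(t)\|_{X^{\gamma'}}\le R_{\gamma'}$. One more application of Assumption~\ref{assumption:bootstrap} with $s=\gamma'$ then yields the bound on $f(u,\nabla u)$ in $X^{\gamma'-(\alpha-\beta)}$.

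The main obstacle is the regularity bookkeeping in the step above: ensuring each iterate genuinely lies in $X^{s_k}$ with a uniform-in-time bound before the assumption is invoked. A secondary obstacle is that $T$ is not small relative to the higher norms, so the argument must not rely on contraction. It does not: each step is a direct linear estimate, and the resulting constants, though growing geometrically in $m$, remain finite.
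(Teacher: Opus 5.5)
Your proposal is correct and follows essentially the same route as the paper. Both use a finite ladder $s_0=\alpha$, $s_{j+1}=\min\{s_j+\text{step},\gamma'\}$. At each rung, Assumption~\ref{assumption:bootstrap} bounds the force in $X^{s_j-(\alpha-\beta)}$, and the Duhamel smoothing is applied with a gap strictly below $2$. The linear term is controlled by contractivity together with $\lambda_1^{-(\gamma'-s_{j+1})/2}\|u_0\|_{X^{\gamma'}}$, and a final application of the assumption at $s=\gamma'$ gives the force bound.

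The differences are cosmetic. The paper fixes the step to $\theta=1-\frac{\alpha-\beta}{2}$, which is one admissible choice of your $\delta$. It also justifies the Duhamel bound for a forcing not known to be continuous in the higher norm by applying the smoothing to the truncations $P_K g$ and letting $K\to\infty$, rather than through Bochner measurability.
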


The proof iterates the Duhamel estimate over steps of size $\theta$ (see Appendix~\ref{sec:app_spaces}).

The lemma lets us extend the spatial regularity $\gamma_{\mathrm s}$ up to an amount determined by the governing equation. This is used for measuring the tail of the Galerkin projection and the spatial quadrature. For notation, we set $\gamma_{\mathrm s}:=\gamma'$ to be the exponent in Assumption~\ref{assumption:bootstrap} when $u_0\in X^{\gamma'}$, and $\gamma_{\mathrm s}:=\gamma$ otherwise. Similarly, we denote $\zeta_{\mathrm s}:=\frac{\gamma_{\mathrm s}-\alpha}{2}\ge\zeta$. The order in time is set by the H\"older regularity of the force as a function in time, and Lemma~\ref{lem:derived_regularity} implies the exponent is $\zeta$, with the restriction $\gamma<\beta+2$. This is extended to $\min\{1,\zeta_{\mathrm s}\}$ by Lemma~\ref{lem:bootstrap_time}, which can be at most first order.  We write $\zeta_{\mathrm t}:=\min\{1,\zeta_{\mathrm s}\}$ when $\gamma_{\mathrm s}=\gamma'>\gamma$, and $\zeta_{\mathrm t}:=\zeta$ otherwise. Note that in both cases, the powers have the following ordering $\zeta\le\zeta_{\mathrm t}\le\zeta_{\mathrm s}$.

\begin{lemma}[Time Bootstrap]
\label{lem:bootstrap_time}
If the hypotheses of Lemma~\ref{lem:bootstrap} hold,  $\zeta':=\frac{\gamma'-\alpha}{2}$, and $\kappa:=\min\{1,\zeta'\}$, then for $0\le s<t\le T$

$$\|u(t)-u(s)\|_{X^\alpha}\ \le\ C_{\mathrm t}\,(t-s)^{\kappa},$$
and
$$\left\|f(u(t),\nabla u(t))-f(u(s),\nabla u(s))\right\|_{X^\beta}\ \le\ L_R\,C_{\mathrm t}\,(t-s)^{\kappa},$$
with $C_{\mathrm t}$ depending only on $R_{\gamma'}$, $K_R$, $T$, $\lambda_1$, $\alpha-\beta$, $\kappa$ and the constants of Assumption~\ref{assumption:semigroup_regular}. 
\end{lemma}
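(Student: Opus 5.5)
We prove the time-Hölder bound for $u$ in $X^\alpha$ directly from the mild formulation, using the uniform $X^{\gamma'}$ bounds of Lemma~\ref{lem:bootstrap}. The bound for $f$ then follows at once from the pointwise Lipschitz estimate \eqref{eqn:point-wise_lipschitz}, since $u(t),u(s)\in B^\alpha_R$. The key observation is that, unlike Lemma~\ref{lem:derived_regularity}, we no longer need the singular weight $s^{-\zeta}$: the data sits in $X^{\gamma'}$ uniformly on $[0,T]$, including at $t=0$.

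For $0\le s<t\le T$ we write the semigroup identity
$$u(t)-u(s)=\bigl(S(t-s)-I\bigr)u(s)+\int_s^t S(t-\tau)g(\tau)\,d\tau,\qquad g(\tau)=f(u(\tau),\nabla u(\tau)),$$
which follows from the mild formulation by splitting $\int_0^t=\int_0^s+\int_s^t$ and using $S(t)=S(t-s)S(s)$.

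\textbf{First term.} Spectrally, $\|(S(\delta)-I)v\|_{X^\alpha}\le \sup_{\lambda>0}\lambda^{-m}(1-e^{-\lambda\delta})\,\|v\|_{X^{\alpha+2m}}$ for $0\le m\le1$. Since $1-e^{-x}\le \min\{1,x\}\le x^{m}$, this is at most $\delta^{m}\|v\|_{X^{\alpha+2m}}$. Take $m=\kappa=\min\{1,\zeta'\}$, so that $\alpha+2\kappa\le\gamma'$. Then $\|v\|_{X^{\alpha+2\kappa}}\le \lambda_1^{-(\gamma'-\alpha-2\kappa)/2}\|v\|_{X^{\gamma'}}$, and Lemma~\ref{lem:bootstrap} bounds this by $C R_{\gamma'}$. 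The first term is therefore $\le C(t-s)^\kappa R_{\gamma'}$.

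\textbf{Second term.} By Lemma~\ref{lem:bootstrap}, $g$ is bounded in $X^{\gamma'-(\alpha-\beta)}$ by $K_R(1+R_{\gamma'})$, and we measure the integral in $X^\alpha$ via the smoothing estimate of Assumption~\ref{assumption:semigroup_regular}. Set $r-s_0=\alpha-(\gamma'-(\alpha-\beta))$, the gap between the target index $\alpha$ and the index of $g$. There are two cases.

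If $\gamma'-(\alpha-\beta)\ge\alpha$, then $g(\tau)\in X^\alpha$ with a bound up to $\lambda_1$ powers, and contractivity gives $\|\int_s^t\|_{X^\alpha}\le C(t-s)$. This is at most $C T^{1-\kappa}(t-s)^\kappa$.

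Otherwise the gap is $2\sigma-2\zeta'\in(0,2)$, and the integral is bounded by
$$C\int_s^t (t-\tau)^{-(\sigma-\zeta')}\,d\tau\;\|g\|\le C(t-s)^{1-\sigma+\zeta'}\|g\|.$$
Since $1-\sigma+\zeta'\ge\zeta'\ge\kappa$ when $\zeta'\le1$ (and $\ge\kappa$ in the remaining range as well), this is $\le C\,T^{1-\sigma+\zeta'-\kappa}(t-s)^\kappa$.

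Combining the two terms gives $\|u(t)-u(s)\|_{X^\alpha}\le C_{\mathrm t}(t-s)^\kappa$, with $C_{\mathrm t}$ depending on $R_{\gamma'},K_R,T,\lambda_1,\alpha-\beta,\kappa$ and the semigroup constants, as claimed.

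\textbf{Main obstacle.} The delicate point is bookkeeping the exponents in the second term uniformly over the cases $\zeta'<1$ and $\zeta'\ge1$. One must check that whenever the index gap is positive it stays below $2$, so the smoothing integral converges, and that the resulting power of $(t-s)$ dominates $\kappa$. Positive powers of $T$ absorb any excess. The $\lambda_1$ factors come from the embeddings $X^{a}\hookrightarrow X^{b}$ for $a\ge b$, which is why the constant depends on $\lambda_1$.
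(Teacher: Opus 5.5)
Your proposal is correct and follows the paper's proof essentially step for step. You use the same splitting of $u(t)-u(s)$ at time $s$, the same spectral bound $\delta^\kappa\|u(s)\|_{X^{\alpha+2\kappa}}$ followed by the $\lambda_1$-embedding into $X^{\gamma'}$, and the same two-case treatment of the Duhamel piece according to the sign of $\sigma-\zeta'$, with resulting exponent $1-(\sigma-\zeta')_+=\min\{1,\theta+\zeta'\}\ge\kappa$ and excess powers absorbed by $T$; the $f$ bound then comes from the pointwise Lipschitz estimate.
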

The bound holds for all $0\le s<t\le T$, including $s=0$, and the proof is given in Appendix~\ref{sec:app_spaces}.

The derived regularity from Lemma~\ref{lem:derived_regularity} only assumed $u_0\in X^\alpha$ so the additional smoothness comes from parabolic smoothing.  Smoother data improves the regularity (up to 1) in time through Lemma~\ref{lem:bootstrap_time}, which is the limit for our temporal quadrature arguments.  For analytic semigroups, the weighted bound is the classical regularity of the mild solution \cite{lunardi2018interpolation}. In (iii), we utilize prior works to make it a modulus of continuity for the force term in order to improve the temporal quadrature rates from Section~\ref{sec:discrete}.

We will need to estimate the error incurred when projecting the continuous solution into a finite-dimensional space. One projection is onto the subspace spanned by the first $N$ eigenfunctions of the linear operator $L$ via a spectral Galerkin approximation \cite{lord2014introduction}.

\begin{definition}[Galerkin projection associated with $L$]
\label{def:spectral_galerkin}
Let $H$, $L$, $\{e_k\}_{k=1}^\infty$, and $\{\lambda_k\}_{k=1}^\infty$ be defined from Definition~\ref{def:variable_hilbert}. For $N\in\mathbb N$, define the finite-dimensional spectral subspace
$E_N:=\text{span}\{e_1,\dots,e_N\}\subset H,$
and let $P_N:H\to E_N$ be the orthogonal projection defined as
$$ P_N u:=\sum_{k=1}^N \langle u,e_k\rangle_H\,e_k. $$
Equivalently, if we define
$\mathcal C_N:H\to\mathbb R^N$, $\mathcal C_N u:= \left(\langle u,e_1\rangle_H,\dots,\langle u,e_N\rangle_H\right)$,
and
$\mathcal S_N:\mathbb R^N\to E_N$, $\mathcal S_N a := \sum_{k=1}^N a_k e_k$,
then the projection is
$P_N=\mathcal S_N\circ \mathcal C_N$.

For $s\geq0$, the restriction of the projection $P_N$ on $X^s$ is a map of $X^s$ into $E_N\subset X^s$. For $u\in X^s$, the Galerkin approximation is defined by $u_N:=P_Nu$.  When the gradient is well-defined, we define the gradient of the Galerkin approximation by
$\nabla u_N:=\nabla(P_Nu)$.
\end{definition}

The projection $P_N$ is contractive on $X^s$ since $\|P_Nu\|_{X^s}\leq \|u\|_{X^s}$.

\begin{proposition}[Galerkin projection error]
\label{prop:galerkin_approximation}
If $\alpha>\beta\geq 0$ and $P_N$ is the Galerkin projection, then for every $u\in X^\alpha$,
$$
\|u-P_Nu\|_{X^\beta}
\leq
\lambda_{N+1}^{-(\alpha-\beta)/2}
\|u\|_{X^\alpha}.
$$
\end{proposition}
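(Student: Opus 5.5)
The plan is to compute the error directly in the eigenbasis of $L$, using the spectral characterization of the $X^s$ norms from Definition~\ref{def:variable_hilbert}. Since $P_N$ is the orthogonal projection onto $E_N=\mathrm{span}\{e_1,\dots,e_N\}$, the remainder is the tail of the expansion,
$$u-P_Nu=\sum_{k>N}\langle u,e_k\rangle_H\,e_k,$$
so its $X^\beta$ norm involves only the modes with index $k\ge N+1$.

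Writing out the norm gives
$$\|u-P_Nu\|_{X^\beta}^2=\sum_{k>N}\lambda_k^{\beta}|\langle u,e_k\rangle_H|^2=\sum_{k>N}\lambda_k^{-(\alpha-\beta)}\,\lambda_k^{\alpha}|\langle u,e_k\rangle_H|^2 .$$
Because the eigenvalues are nondecreasing and $\alpha-\beta>0$, each factor satisfies $\lambda_k^{-(\alpha-\beta)}\le\lambda_{N+1}^{-(\alpha-\beta)}$ for $k\ge N+1$. Pulling this factor out bounds the sum by $\lambda_{N+1}^{-(\alpha-\beta)}\sum_{k>N}\lambda_k^\alpha|\langle u,e_k\rangle_H|^2$. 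The remaining tail sum is at most $\|u\|_{X^\alpha}^2$, and taking square roots gives the stated estimate.

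There is no real obstacle here; this is the standard spectral tail estimate. The only points to check are the following:
\begin{itemize}
\item The series converge, since $u\in X^\alpha$ means $\sum_k\lambda_k^\alpha|\langle u,e_k\rangle_H|^2<\infty$.
\item $\lambda_{N+1}>0$, so the negative power is well defined, which holds by strict positivity of $L$.
\item The monotone ordering $\lambda_1\le\lambda_2\le\cdots$ is what makes the uniform bound on the tail weights valid.
\end{itemize}
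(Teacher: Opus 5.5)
Your proposal is correct and follows essentially the same route as the paper: expand the tail $u-P_Nu$ in the eigenbasis, write $\lambda_k^{\beta}=\lambda_k^{-(\alpha-\beta)}\lambda_k^{\alpha}$, bound the weights by $\lambda_{N+1}^{-(\alpha-\beta)}$ using the monotone ordering of the eigenvalues, and dominate the remaining tail sum by $\|u\|_{X^\alpha}^2$. Your side checks on convergence, positivity of $\lambda_{N+1}$, and monotonicity are exactly the points the argument relies on.
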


\noindent The proof is given in Appendix~\ref{sec:app_spaces}.

Let $\Lambda_N:=\operatorname{diag}(\lambda_1,\ldots,\lambda_N)$, then the Galerkin mild map is a map on the time-dependent coefficient in the Galerkin space, i.e. $a(t)\in\bR^N$, defined by
\[
\Psi_N[a](t):=e^{-t\Lambda_N}\mathcal C_Nu_0+\int_0^te^{-(t-s)\Lambda_N}\mathcal C_Nf\left(\mathcal S_Na(s),\nabla\mathcal S_Na(s)\right)\,ds .
\]
Through direct computation, we see that $S(t)\mathcal S_N=\mathcal S_Ne^{-t\Lambda_N}$ and $\mathcal C_N\mathcal S_N=I$, and thus using $\mathcal S_N$, the Galerkin map is equivalent to the $E_N$-valued map $$\Phi_N[u](t)=S(t)P_Nu_0+\int_0^tS(t-s)P_Nf(P_Nu(s),\nabla P_Nu(s))\,ds$$
with $\Phi_N[\mathcal S_Na]=\mathcal S_N\Psi_N[a]$. Instead of bounding the difference $\Phi_N-\Phi$ separately, we use the $\Psi_N$ operator. 
In Theorem~\ref{thm:fully_discrete_consistency}, we analyze the difference between the exact mild map and the fully discrete one, the spectral truncation is used in Proposition~\ref{prop:galerkin_approximation} and Lemma~\ref{lem:sampling}. 

\section{Neural Network and DAG Approximation}
\label{sec:dag}

 One part of the approximability of the solution operator comes from the structure of the constitutive law through $f$. In this section, we show that the smoothness and low-dimensionality of $f$ allow for the construction of low parameter count deep ReLU approximations for the solution operator. The DAG construction in this section is compositional, akin to \cite{schmidt2020nonparametric,poggio2024compositional}, but uses the computation graph of the governing law. In \cite{chen2025error}, compositions of linear maps and low-dimensional nonlinearities was used, where the rates were determined by Lipschitz constants rather than by the smoothness of the nodes. For component-wise nodes, $p_j=1/s_j$ is the ratio of input dimension to smoothness that governs the rates for compositions in \cite{schmidt2020nonparametric}. The difference in this section is that multilinear nodes only cost $\log\varepsilon^{-1}$, and we use the infimum over all representations of the law, i.e. the invariant $p_f$

\subsection{Deep ReLU Networks} \label{sec:networks}

The main quantity we would like to measure is the complexity of approximating the solution operator using a Deep ReLU network. The main nonlinearity approximations in our constructive arguments come from $f$ where we utilize the ReLU activation, dense affine layers, and $1\times1$ convolutional layers.

\begin{definition}[Neural Network Components]
\label{def:layers}
Let $\sigma(y):=\max\{y,0\}$ denote the ReLU activation, applied element-wise.

A fully connected layer with input dimension $n$ and output dimension $n'$ is defined as the affine map $\mathcal A_{A,b}:\bR^{n}\to\bR^{n'}$ given by
$$ \mathcal A_{A,b}(z):=Az+b,
\qquad
A \in \bR^{n'\times n},\ b\in\bR^{n'}.$$
Its trainable parameters are the components $A$ and $b$ with a total of $\|A\|_0+\|b\|_0$ nonzero weights.

For spatial discretizations, we define a variable on the grid that has $c$ channels as
$$X=(X_1,\ldots,X_M)\in\left(\bR^{c}\right)^{M}$$
where $X_i\in\bR^{c}$ is the state evaluated at grid point $x_i$. A $1\times1$ convolution layer with $c$ input and $c'$ output channels is given by the map
\begin{equation}
\mathcal K_{A,b}:\left(\bR^{c}\right)^{M}\to\left(\bR^{c'}\right)^{M},
\ \ \
\left(\mathcal K_{A,b}X\right)_i:=AX_i+b
\quad\text{for every } 1\le i \le M,
\label{eqn:conv1x1}
\end{equation}
where the parameters $A \in \bR^{c'\times c}$ and $b \in \bR^{c'}$ are shared for each $i$. The parameter count is that of a single dense layer $\mathcal A_{A,b}$ on $\bR^c$, i.e. $\|A\|_0+\|b\|_0$ total parameters independent of $M$. If the convolution is written as a single affine layer on $(\bR^c)^M$, with matrix $I_M \otimes A$ and bias $\mathbf 1_M\otimes b$, it would have $M(\|A\|_0+\|b\|_0)$ nonzero weights.
\end{definition}

Next, we define the full network class considered, which is used in Theorem~\ref{thm:relu_f} to count the total parameter complexity.

\begin{definition}[Neural Network Class]
\label{def:network}
Set the depth $L_0\in\mathbb N$, the input dimension $n_0$, the output dimension $n_{L_0}$, and the intermediate widths $n_1,\ldots,n_{L_0-1}\in\mathbb N$. We define a local ReLU neural network 
$\mathcal{N}:\bR^{n_0}\to\bR^{n_{L_0}}$, which maps $z$ to $\widehat z^{(L_0)}$ by the following three steps.
\begin{enumerate}
\item \emph{(Input layer.)} $\widehat z^{(1)} = \left(\widehat z^{(1)}_1,\ldots,\widehat z^{(1)}_{n_1}\right)$ is given by
$$\widehat z^{(1)}:=\sigma\left(A_1z+b_1\right),$$
where $A_1\in\bR^{n_1\times n_0}$ and $b_1\in\bR^{n_1}$.
\item \emph{(Hidden layers.)} For $1\le{l}\le L_0-2$, the arrays $\widehat z^{({l}+1)}=\left(\widehat z^{({l}+1)}_1,\ldots,\widehat z^{({l}+1)}_{n_{{l}+1}}\right)$ are given recursively by
$$\widehat z^{({l}+1)}:=\sigma\left(A_{{l}+1}\widehat z^{({l})}+b_{{l}+1}\right),$$
where $A_{{l}+1}\in\bR^{n_{{l}+1}\times n_{l}}$ and $b_{{l}+1}\in\bR^{n_{{l}+1}}$.

\item \emph{(Output layer.)} The output is an affine map given by
$$\widehat z^{(L_0)}:=A_{L_0}\widehat z^{(L_0-1)}+b_{L_0}$$
where  $A_{L_0}\in\bR^{n_{L_0}\times n_{L_0-1}}$ and $b_{L_0}\in\bR^{n_{L_0}}$.
\end{enumerate}
Altogether the Deep ReLU network is:
\begin{equation}
\mathcal{N}(z)=A_{L_0}\,\sigma\left(A_{L_0-1}\,\sigma\left(\cdots\sigma(A_1z+b_1)\cdots\right)+b_{L_0-1}\right)+b_{L_0} .
\label{eqn:relu_network}
\end{equation}
The weight matrices $A_{l}$ and the bias vectors $b_{l}$ together form the parameter vector $\{(A_{l},b_{l})\}_{{l}=1}^{L_0}$.  We measure the size by the following:
$$ \dep(\mathcal{N}):=L_0, \qquad
\operatorname{wid}(\mathcal{N}) := \max_{1\le{l}<L_0} n_{l}, \qquad
H(\mathcal{N}) := \sum_{{l}=1}^{L_0-1} n_{l}, \qquad
W(\mathcal{N}) := \sum_{{l}=1}^{L_0} \left(\|A_{l}\|_0+\|b_{l}\|_0\right),
$$
referred to as the depth, the width, the number of neurons, and the number of nonzero weights and biases. 

The network class is defined as follows. For a compact set $K \subset \bR^{n_0}$ and constants $L_0, p, W, H\in\mathbb N$ and $\Theta,R>0$, we define the class by:
$$\mathcal{F}^{L_0,H,\sigma}_{n_0,n_{L_0}}\left(p,W,\Theta,R;K\right)
:=\left\{\mathcal{N}\ \text{as in \eqref{eqn:relu_network}}:\
\begin{array}{l}
\dep(\mathcal{N})\le L_0,\ \ \operatorname{wid}(\mathcal{N})\le p,\ \ H(\mathcal{N})\le H,\ \ W(\mathcal{N})\le W,\\[1mm]
\max_{l}\|A_{l}\|_{\infty},\ \max_{l}\|b_{l}\|_\infty\le\Theta,
\ \ \|\mathcal{N}\|_{L^\infty(K)}\le R
\end{array}\right\}$$
which are deep ReLU networks with depth at most $L_0$, at most $H$ neurons, sparsity at most $W$, parameter magnitude at most $\Theta$, and max-norm bounded by $R$ on $K$. 

For grid-based functions, we replace each dense layer $z\mapsto A_{l} z+b_{l}$ in the three steps above by the point-wise convolutional layer $\mathcal K_{A_{l},b_{l}}$ of \eqref{eqn:conv1x1}, with no change to the activation, so that map $\mathcal{N}^h:\left(\bR^{n_0}\right)^{M}\to\left(\bR^{n_{L_0}}\right)^{M}$
has the same depth, with the same parameter vector $\{(A_{l},b_{l})\}_{l=1}^{L_0}$. 

 A layer is called fixed if its weights are pre-determined and trainable otherwise.
\end{definition}

We will utilize several properties of the ReLU networks, used throughout the analysis \cite{bolcskei2019optimal,petersen2018optimal}. We summarize them in the lemma below.

\begin{lemma}[Operations on ReLU networks]
\label{lem:nn_calculus}
Let $\mathcal{N}_1,\mathcal{N}_2$ be two ReLU networks from Definition~\ref{def:network} with compatible dimensions. Each operation produces a new parameter vector $\{(A_{l},b_{l})\}$, and by Lemma~\ref{lem:sharing} each statement holds directly for grid realizations, with $W$ multiplied by the number of points and the parameter count unchanged.
\begin{enumerate}
\item \emph{(Affine layers.)} A map $z\mapsto Az+b$ is a network of depth $1$ with $\|A\|_0+\|b\|_0$ weights. Composing two networks with an affine map does not increase the depth.
\item \emph{(Composition.)} The network $\mathcal{N}_2\circ\mathcal{N}_1$ has $\dep \le \dep(\mathcal{N}_1)+\dep(\mathcal{N}_2)$ and $W \le 2 W(\mathcal{N}_1) + 2 W(\mathcal{N}_2)$.
\item \emph{(Parallelization.)} The map $z \mapsto (\mathcal{N}_1(z), \mathcal{N}_2(z))$ is a network with $\dep = \max_i \dep(\mathcal{N}_i)$. If the two depths are the same, $\operatorname{wid} \le \sum_i\operatorname{wid}(\mathcal{N}_i)$ and $W\le\sum_iW(\mathcal{N}_i)$. If the depths differ, then we extend the shallower network $\mathcal{N}_i$ by padding with $\Delta:=|\dep(\mathcal{N}_1)-\dep(\mathcal{N}_2)|$ identity layers which adds at most $W(\mathcal{N}_i) + 2m_i \Delta$ weights and $2 m_i$ in width, where $m_i$ is the output dimension of $\mathcal N_i$.
\item \emph{(Identity and Skip Connections.)} The identity on $\bR^{k}$ can be realized exactly by $y=\sigma(y)-\sigma(-y)$, with $4k$ nonzero weights per layer. Mapping a vector of dimension $k$ through ${l}$ layers has a total cost of at most $4kl$ weights and no added depth.
\item \emph{(Clipping.)} For threshold $\mathrm{a}>0$, the clipping operation $\operatorname{Clip}_{\mathrm{a}}(y)=\min\{\max(y,-\mathrm{a}),\mathrm{a}\}=\sigma(y+\mathrm{a})-\sigma(y-\mathrm{a})-\mathrm{a}$ can be realized exactly by one hidden layer that has $7$ nonzero weights and biases per coordinate ($4$ hidden, $3$ output).
\end{enumerate}
\end{lemma}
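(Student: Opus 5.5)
The five items of Lemma~\ref{lem:nn_calculus} are standard, and each is verified directly from the layer form \eqref{eqn:relu_network}. I will check them in the order affine, identity, clipping, parallelization, composition, because composition uses the identity construction for its bookkeeping. The statement about grid realizations is handled at the end, uniformly for all five items.

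\textbf{Affine layers, identity, clipping.} For the affine item, a depth-$1$ network is by definition just its output affine map, so it has $\|A\|_0+\|b\|_0$ weights. Composing an affine map with the first or last affine layer of a network multiplies two matrices, for example $A'(A_{L_0}z+b_{L_0})+b'$. This gives a single affine layer, so the depth does not change.

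For the identity, I write $y=\sigma(y)-\sigma(-y)$ coordinatewise. The hidden weights are $[I;-I]$ ($2k$ nonzeros) and the output weights are $[I,-I]$ ($2k$ nonzeros), which gives $4k$ per layer and $4kl$ over $l$ layers. When this is inserted alongside existing layers, it is merged into their hidden layers and adds no depth.

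For clipping, one checks the three cases $y\le-\mathrm a$, $|y|\le\mathrm a$ and $y\ge\mathrm a$ in $\sigma(y+\mathrm a)-\sigma(y-\mathrm a)-\mathrm a$. The parameters per coordinate are $2$ hidden weights and $2$ hidden biases, and on the output side $2$ weights and $1$ bias, for a total of $7$.

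\textbf{Parallelization.} When the depths are equal, I stack the matrices block-diagonally, except the first layer, where $A_1$ is stacked vertically because the input is shared. Nonzeros add, widths add, and the depth is unchanged. When the depths differ, I compose the shallower network's output with $\Delta$ identity layers on $\bR^{m_i}$. Each added layer has width $2m_i$ and carries the $4m_i$-type cost above, so the count is $W(\mathcal N_i)+O(m_i\Delta)$, matching the stated bound once the merged output layer is accounted for as in the composition step. I then apply the equal-depth case.

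\textbf{Composition.} Here lies the only real subtlety, which is avoiding a product of nonzero counts. Directly merging $A^{(2)}_1$ with $A^{(1)}_{L_1}$ can make $\|A^{(2)}_1A^{(1)}_{L_1}\|_0$ as large as a product. Instead I insert the identity $y=\sigma(y)-\sigma(-y)$ between the two networks. The last affine layer of $\mathcal N_1$ is doubled into $[A_{L_1};-A_{L_1}]$ with bias $[b_{L_1};-b_{L_1}]$, followed by ReLU. The first layer of $\mathcal N_2$ becomes $[A^{(2)}_1,-A^{(2)}_1]$. This at most doubles the weights of those two layers, so $W\le2W(\mathcal N_1)+2W(\mathcal N_2)$. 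The depth is $\dep(\mathcal N_1)+\dep(\mathcal N_2)$: the $L_1$ and $L_2$ affine layers share one merged hidden layer, giving $L_1+L_2-1$ layers plus at most one more.

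\textbf{Grid realizations.} I invoke Lemma~\ref{lem:sharing}. Every construction above is written in terms of layer maps $(A_l,b_l)$, and replacing each dense layer by $\mathcal K_{A_l,b_l}$ commutes with stacking, composition and ReLU, since everything acts pointwise in $i$. The realization on $(\bR^c)^M$ then has $M$ times the nonzeros when written as a single dense map, while the shared parameter vector is unchanged.
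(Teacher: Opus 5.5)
Your proposal follows the paper's proof essentially item by item. It uses stacking on a shared first layer for parallelization, and for composition the split $(\sigma(Az+b),\sigma(-Az-b))$ recombined inside the first layer of $\mathcal N_2$, so that only the two interface layers double and no product of sparse matrices is formed. It also verifies the identity and clipping counts directly, and your treatment of grid realizations via Lemma~\ref{lem:sharing} is the same as well.

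The one place where your bookkeeping does not reach the stated bound is the unequal-depth case of item 3. Charging each of the $\Delta$ padding layers the full split-and-recombine cost $4m_i$ gives $4m_i\Delta$. Merging with the output layer of the shallower network only produces the $W(\mathcal N_i)$ term; it does not recover the factor of two. And if each padding step were a full identity network composed via item 2, the depth would grow by $2\Delta$ instead of $\Delta$.

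What is needed, and what the paper does, is to split only once:
\begin{itemize}
\item The output layer $Az+b$ of $\mathcal N_i$ becomes the hidden layer $(\sigma(Az+b),\sigma(-Az-b))$. This at most doubles that layer, i.e.\ costs at most $W(\mathcal N_i)$ extra weights.
\item The resulting $2m_i$ channels are nonnegative, so they pass through the next $\Delta-1$ hidden layers with diagonal identity weights. This costs $2m_i$ per layer, since $\sigma(x)=x$ for $x\ge0$.
\item A single output layer $[I,\,-I]$ with $2m_i$ weights recombines them.
\end{itemize}
This gives $W(\mathcal N_i)+2m_i(\Delta-1)+2m_i=W(\mathcal N_i)+2m_i\Delta$ added weights, a width increase of $2m_i$, and a depth increase of exactly $\Delta$.
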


\noindent The proof is given in Appendix~\ref{sec:app_networks}.

\begin{lemma}[Parameter Sharing]
\label{lem:sharing}
Let $g:\bR^{c}\to\bR^{c'}$ be a ReLU network as in Definition~\ref{def:network}, with parameters $\{(A_{l},b_{l})\}_{l=1}^{L_0}$, and let
$$\left(G^h(X)\right)_i:=g(X_i),\qquad i=1,\ldots,M,$$
be its point-wise application on a grid. Then $G^h$ is the grid realization $g^h$ from Definition~\ref{def:network}, obtained by replacing each dense layer $\mathcal A_{A_l,b_l}$ by the point-wise convolution $\mathcal K_{A_l,b_l}$ of \eqref{eqn:conv1x1} without changing the activation function. Thus
$$\dep(G^h)=\dep(g),\ \ 
\#\{\text{distinct parameters of }G^h\}=W(g),\ \
W(G^h)=M\,W(g),$$
where $W(G^h)$ counts the nonzero weights of $G^h$ written without parameter sharing, with block-diagonal matrices $I_M\otimes A_l$ and biases $\mathbf 1_M\otimes b_l$. 
\end{lemma}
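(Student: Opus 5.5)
The argument is an induction on the layers of $g$, checking at each step that the layer-by-layer replacement of $\mathcal A_{A_l,b_l}$ by $\mathcal K_{A_l,b_l}$ acts on each grid point separately and produces $g$'s intermediate state there. The inductive claim is that the output of the first $l$ layers of $g^h$ at a grid input $X=(X_1,\ldots,X_M)$ is the array whose $i$-th entry is $\widehat z^{(l)}(X_i)$, the $l$-th intermediate state of $g$ at input $X_i$.

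For $l=1$, by \eqref{eqn:conv1x1} we have $(\mathcal K_{A_1,b_1}X)_i=A_1X_i+b_1$. The ReLU is applied element-wise, so it commutes with the block structure, and the $i$-th block of $\sigma(\mathcal K_{A_1,b_1}X)$ is $\sigma(A_1X_i+b_1)=\widehat z^{(1)}(X_i)$. The same computation handles the hidden layers $l\to l+1$. The final affine layer has no activation, and it gives $(g^h(X))_i=g(X_i)=(G^h(X))_i$ for every $i$, so $G^h=g^h$. The depth statement follows because exactly one convolutional layer replaces each dense layer and the activations are unchanged, so $\dep(G^h)=L_0=\dep(g)$.

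For the parameter counts, note that by Definition~\ref{def:layers} the layer $\mathcal K_{A_l,b_l}$ is specified by the single pair $(A_l,b_l)$ for all $i$. The distinct parameters of $G^h$ are therefore exactly the nonzero entries of $\{(A_l,b_l)\}_{l=1}^{L_0}$, and there are $\sum_l(\|A_l\|_0+\|b_l\|_0)=W(g)$ of them. Written without sharing, the convolution becomes the block-diagonal matrix $I_M\otimes A_l$ with bias $\mathbf 1_M\otimes b_l$. Their supports are disjoint copies of the supports of $A_l$ and $b_l$, so $\|I_M\otimes A_l\|_0=M\|A_l\|_0$ and $\|\mathbf 1_M\otimes b_l\|_0=M\|b_l\|_0$. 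Summing over $l$ gives $W(G^h)=M\,W(g)$.

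There is no real obstacle here; the whole argument is careful bookkeeping. The one step that needs explicit attention is that the element-wise ReLU preserves the block decomposition $(\bR^{n_l})^M$, because this is what allows the induction to close.
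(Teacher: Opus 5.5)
Your proposal is correct and follows essentially the same route as the paper: the layers act independently at each grid point because $\mathcal K_{A_l,b_l}$ applies the same $(A_l,b_l)$ pointwise and $\sigma$ is element-wise, the layers correspond one-to-one, and the unshared form stores one copy of each layer per grid point. Your explicit layer induction and the support count $\|I_M\otimes A_l\|_0=M\|A_l\|_0$ simply spell out the bookkeeping the paper states in a few lines.
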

\noindent The grid-based saving comes from considering it as a convolution layer, $W(g)$ total shared weights, instead of a dense ReLU layer. The proof is a direct computation and given in Appendix~\ref{sec:app_networks}.

\subsection{The DAG formulation}

We will form the approximation of $f$ by first writing it as a directed acyclic graph (DAG) and show that the DAG can be approximated by a deep ReLU network. This allows us to probe how the complexity depends on the actual structure of $f$.

\begin{assumption}[Local constitutive map with finite DAG representation]
\label{assumption:structured_f}
Let $R>0$ and let
$$B_R^\alpha := \left\{ u \in X_T^\alpha: \|u\|_{X_T^\alpha} \le R \right\} $$
be the ball from Proposition~\ref{prop:contraction}. Under Assumption~\ref{assumption:regularity_embedding}, the set of admissible point-wise states
$$K:=\overline{ \left\{ \left(u(t,x),\nabla u(t,x) \right)
\ : \ u \in B_R^\alpha,\ t\in[0,T],\ x\in\overline\Omega \right\}} \ \subset\ \bR^d\times\bR^{d\times d_x}$$
is bounded, since $\|u(t,\cdot)\|_{C^1(\overline\Omega)} \le C_{\mathrm{emb}}R$ for every $u\in B_R^\alpha$ and $t\in[0,T]$, and therefore $K$ is compact.

Let $U\subset\bR^d\times\bR^{d\times d_x}$ be an open neighborhood of $K$ and let $f:U\to\bR^m$ be the law from the PDE \eqref{eqn:PDE}, where $m=d$ for the law itself and $m\ge d$ for a local part $f_{\mathrm{loc}}$ as from Lemma~\ref{lem:linear_postcomposition}. Assume that $f$ admits a representation by a single finite directed acyclic graph $G=(\mathcal V,\mathcal E)$, whose input nodes are the coordinates of $z=(u,\nabla u)$ and whose (unique) output node $j_{\mathrm{out}}$ produces the vector $f(z)\in\bR^m$, and where each node takes one of the following three forms.
\begin{enumerate}
\item \emph{Affine maps:} $A:\bR^n\to\bR^p$, $A(x)=Mx+b$, with $M\in\bR^{p\times n}$ and $b\in\bR^p$.
\item \emph{Multilinear maps:} $T:(\bR^n)^{r}\to\bR^p$ for some $r\ge2$, linear in each argument separately.
\item \emph{Component-wise nonlinearities:} $G:\bR^n\to\bR^n$ with
$G(x_1,\dots,x_n)=(g(x_1),\dots,g(x_n))$, where $g\in C^{s}(I)$ for some finite integer $s\ge1$ and some open bounded interval $I\subset\bR$.
\end{enumerate}
For every $j$ that is not an input node, let $K_j$ denote the range of the intermediate variable at node $j$ over all inputs $z\in K$. Denote the set of all parent nodes of $j$ by $\operatorname{pa}(j)$, and define 
$$K_j^{\mathrm{in}}:=\prod_{i\in\operatorname{pa}(j)}K_i.$$

Assume that there exists an open set $U_j$ contained in the domain of the corresponding node map with $K_j^{\mathrm{in}}\Subset U_j$, where $\Subset$ means compact containment. Let $\mathcal V_{\mathrm{in}}$ be the set of input nodes and let $\Vnl$ be the set of non-affine (multilinear or component-wise) nodes, where we assume $\Vnl\neq\emptyset$.
\end{assumption}

For the main results, the exponents will depend on the DAG formulation, which is not unique for a given $f$. For example, the cubic map $f(u)=u^3$ can be written as two multilinear nodes, as one component-wise $C^\infty$ node, or as one component-wise $C^3$ node. Each of these DAGs are valid under Assumption~\ref{assumption:structured_f}, but they lead to different rates. To avoid this issue, we define the following admissible representation.

\begin{definition}[Admissible Representations and the Structural Exponent]
\label{def:structural_exponent}
An admissible representation of $f$ is a finite DAG $G$ satisfying the conditions of the assumption on $K$. For all admissible $G$, we define
$$p_{\max}(G):=\max_{j\in\Vnl(G)}p_j,
\qquad
p_j:=\begin{cases}0,&j\text{ multilinear},\\ 1/s_j,&j\text{ component-wise }C^{s_j},\end{cases}$$
where $p_{\max}(G):=0$ if $G$ has no component-wise node. 

The structural exponent of $f$ (on $K$) is defined by
$$p_f:=\inf \left\{ p_{\max}(G)\ : \ G\text{ an admissible representation of } f\right\}.$$
\end{definition}

The infimum defines a unique exponent that reveals the structure of $f$ directly. If $f$ is a polynomial map, then $p_f=0$ and the infimum is attained by the DAG using each monomial as a tree of multilinear nodes. If $f$ is a component-wise map $x\mapsto(g(x_1),\ldots,g(x_n))$ where $g\in C^s$, then $p_f\le1/s$ and if $g\in C^\infty$ then $p_f=0$.

Theorem~\ref{thm:main} uses a given representation, i.e. for $p_{\max}(G)$. Note that since it holds for every admissible $G$ and since the bound is monotone in the exponent, it will hold when $p_{\max}$ is replaced by any $p>p_f$. If the infimum is attained by the graph, then $p_{\max}$ can be replaced by $p=p_f$. The difference is that $p_{\max}$ refers to a particular representation used in the proofs while the main structural results will use the best one obtained through $p_f$. For the explicit examples in Section~\ref{sec:examples}, we can obtain the infimum for all but the Arrhenius case (Example~\ref{ex:arrhenius}), since $p_f=0$ is approached but not attained. Figure~\ref{fig:dags} illustrates the computation graphs for two of the examples of Section~\ref{sec:examples}.

\begin{figure}[ht]
\centering
\begin{tikzpicture}[>=Latex, font=\small,
  nd/.style={draw, rounded corners=2pt, minimum height=7mm, minimum width=9mm, inner sep=3pt},
  aff/.style={nd, fill=black!4},
  nl/.style={nd, very thick}]
\node at (2.1,1.9) {\textbf{(a)} $f=-(u\cdot\nabla)u$};
\node[aff] (u1)  at (0,1.1) {$u$};
\node[aff] (du1) at (0,0.1) {$\nabla u$};
\node[nl]  (b1)  at (2.1,0.6) {$\otimes$};
\node[aff] (o1)  at (4.2,0.6) {$-\sum_k$};
\draw[->] (u1)  -- (b1);
\draw[->] (du1) -- (b1);
\draw[->] (b1)  -- (o1);
\node at (9.4,1.9) {\textbf{(b)} $f=-F'(u)\cdot\nabla u$};
\node[aff] (u2)  at (7.0,1.1) {$u$};
\node[aff] (du2) at (7.0,0.1) {$\nabla u$};
\node[nl]  (g2)  at (9.0,1.1) {$F'$};
\node[nl]  (b2)  at (10.8,0.6) {$\otimes$};
\node[aff] (o2)  at (12.7,0.6) {$-\sum_k$};
\draw[->] (u2)  -- (g2);
\draw[->] (g2)  -- (b2);
\draw[->] (du2) -- (b2);
\draw[->] (b2)  -- (o2);
\end{tikzpicture}
\caption{Computation Graphs for Two Constitutive Equations. The bold boxes are the nonlinear nodes, the shaded boxes are affine and realized exactly. In (a) the only nonlinear node is multilinear, yielding $(p,q)=(0,1)$ and a network of size $O(\log\varepsilon^{-1})$. In (b) the node $F'\in C^{s-1}$ leads to $(p,q)=(1/(s-1),1)$.}
\label{fig:dags}
\end{figure}

The next theorem constructs a deep ReLU network that approximates a DAG map node-wise. To avoid the trivial case, we assume that there is at least one nonlinear node. 

\begin{theorem}[Deep ReLU Approximation for Finite DAG]
\label{thm:relu_f}
Let $f$ have the structure from Assumption~\ref{assumption:structured_f}. If the computation graph contains no nonlinear nodes, then $f$ is affine and is exactly representable by a ReLU network of depth and size bounded independently of $\varepsilon$.

If $f$'s DAG has at least one nonlinear node, then there exist constants $\varepsilon_0\in(0,1)$ and $C>0$, depending only on the computation graph, the compact set \(K\), the node dimensions, and the node maps, such that for every
$\varepsilon\in(0,\varepsilon_0]$
there exists a ReLU network 
$\mathcal{N}_\varepsilon: \bR^d\times\bR^{d\times d_x}
\to \bR^m$ satisfying 
$$\sup_{z\in K} \| f(z)-\mathcal{N}_\varepsilon(z) \|_{\bR^m}
\le \varepsilon.$$
Additionally, there exist constants $c_j>0$, $p_j \ge 0$, and $q_j \ge 0$, indexed by the nonlinear nodes $j \in \Vnl$, such that
$$\dep(\mathcal{N}_\varepsilon) \le \sum_{j\in\Vnl} c_j \log(\varepsilon^{-1}),
\qquad
W(\mathcal{N}_\varepsilon) \le \sum_{j\in\Vnl} c_j \, \varepsilon^{-p_j} \left(\log(\varepsilon^{-1})\right)^{q_j},$$
where $(p_j,q_j)=(0,1)$ for multilinear nodes and
$(p_j,q_j)=(1/s_j,1)$ for nodes $j$ induced by a scalar function $g_j\in C^{s_j}$.
\end{theorem}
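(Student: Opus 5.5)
The plan is to build $\mathcal N_\varepsilon$ node by node along a topological order of $G$. Each nonlinear node is replaced by a ReLU surrogate, and the accumulated error is propagated through the graph with local Lipschitz bounds. The affine case is immediate from Lemma~\ref{lem:nn_calculus}(i): a DAG consisting only of affine nodes composes to a single affine map, which is a depth-one network with $\varepsilon$-independent size.

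\emph{Node surrogates.} Three standard ingredients are used.
\begin{enumerate}
\item Each affine node is realized exactly; these are absorbed into adjacent layers without adding depth.
\item Each multilinear node $T:(\bR^n)^r\to\bR^p$ on a bounded box is a finite sum of monomials $x^{(1)}_{i_1}\cdots x^{(r)}_{i_r}$. Each monomial is a product of $r$ scalars, built as a binary tree of Yarotsky's approximate multiplication. On a box of side $B$ and accuracy $\delta$, each multiplication costs depth and size $O(\log\delta^{-1})$. This gives $(p_j,q_j)=(0,1)$.
\item Each component-wise node with $g\in C^{s}(I)$ is handled by the one-dimensional Yarotsky/DeVore--Hanin--Petrova bound. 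It gives a ReLU approximant of $g$ on a compact subinterval of $I$ with accuracy $\delta$, depth $O(\log\delta^{-1})$, and size $O(\delta^{-1/s}\log\delta^{-1})$. Copying it in parallel over the $n$ coordinates (Lemma~\ref{lem:nn_calculus}(iii)) gives $(p_j,q_j)=(1/s_j,1)$.
\end{enumerate}
Each surrogate is followed by a clipping layer (Lemma~\ref{lem:nn_calculus}(v)) onto a slightly enlarged box $K_j^\rho$ with $K_j^\rho\subset U_j$ for the child nodes. This is where the compact containment $K_j^{\mathrm{in}}\Subset U_j$ is used. The clipping keeps every subsequent surrogate evaluated inside the region where its guarantee and the Lipschitz bound of the exact node map hold.

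\emph{Error propagation.} I will show by induction over the topological order that the network's value $\widehat y_j$ at node $j$ satisfies $\|\widehat y_j-y_j\|\le E_j$, with $E_j=0$ at the inputs. The inductive step uses
$$E_j\le \delta_j+\Lip(F_j;K_j^{\mathrm{in},\rho})\max_{i\in\operatorname{pa}(j)}E_i .$$
Clipping onto a box containing $K_i$ is $1$-Lipschitz and fixes $y_i$, so it does not increase $E_i$. Because the graph is finite, unrolling the recursion gives $E_{j_{\mathrm{out}}}\le \Lambda\max_j\delta_j$, where $\Lambda$ depends only on the graph, the Lipschitz constants of the node maps on the enlarged sets (finite because the nodes are $C^1$ on $U_j$), and the depths. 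Choosing every $\delta_j=\varepsilon/\Lambda$ gives the uniform bound $\varepsilon$. I also need $\varepsilon\le\varepsilon_0$ small enough that $E_i\le\rho$, so every perturbed parent value lies in $K_i^\rho$. This condition defines $\varepsilon_0$.

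\emph{Assembly and counting.} The node networks are stacked in topological order. Parent values needed by later nodes are carried forward with identity channels (Lemma~\ref{lem:nn_calculus}(iv)), and branches of unequal depth are padded via Lemma~\ref{lem:nn_calculus}(iii). The depth is at most the sum of the nonlinear node depths plus $O(1)$ clipping and affine layers, which is $\sum_j c_j\log\varepsilon^{-1}$. The weights are $\sum_j c_j\delta_j^{-p_j}(\log\delta_j^{-1})^{q_j}$ plus padding. The padding is the main obstacle to control. Identity channels cost $4k$ weights per layer, and the number of carried layers is at most the total depth $O(\log\varepsilon^{-1})$, while the carried widths $k$ are bounded by the fixed node dimensions. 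The padding therefore adds $O(\log\varepsilon^{-1})$ weights per node, which is absorbed into the terms with $q_j=1$. Since $\Vnl\neq\emptyset$, the constants can be redistributed onto the nonlinear nodes, and the factor $\Lambda$ is absorbed into the $c_j$.
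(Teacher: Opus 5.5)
Your proposal is correct and follows essentially the same route as the paper. Both arguments use a topological ordering, exact affine nodes, Yarotsky-type surrogates for multilinear nodes and for component-wise nodes (the latter copied in parallel), the error recursion $E_m\le\Lambda_1E_{m-1}+\delta$ with $\delta=\varepsilon/C_1$ and $\varepsilon_0=\min\{1,\rho/C_1\}$, and an $O(\log\varepsilon^{-1})$ identity-padding cost absorbed into the nonlinear nodes. The only difference is your clipping layers, which the paper does not use. Once you impose $E_i\le\rho$, the perturbed parent values already lie in the buffer sets $\overline U_j$, so the clipping is harmless but redundant.
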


\noindent The proof approximates each nonlinear node by Theorem~\ref{thrm:quantitative_smooth_scalar} or Theorem~\ref{thrm:quantitative_multilinear}, and propagates the node-wise errors along the topological ordering of the graph. One subtlety in the proof is the use of a buffer around the input set for each node so that perturbations remain within a region where the operations are Lipschitz. The detailed analysis is given in Appendix~\ref{sec:app_networks}.

\begin{remark}[Dependence on $d$ and $d_x$]
\label{rem:dx_dependence}
The exponents $p_j$ in Theorem~\ref{thm:relu_f} are dimension-free.  Only the coefficients $c_j$ depend on the node dimensions and thus are dependent on the input dimension. A component-wise node on $\bR^n$ is just $n$ copies of one scalar network, so $c_j=O(n)$. By Theorem~\ref{thrm:quantitative_multilinear}, multilinear node $T:(\bR^n)^r\to\bR^p$ costs $O(\#\{\text{nonzero coefficients}\})\le O(p\,n^r)$.

For standard examples, the cost is $C\,d\,d_x\log\varepsilon^{-1}$ for the term $-(u\cdot\nabla)u$, $C\,d\log\varepsilon^{-1}$ for a polynomial reaction term, and $C\,d\,d_x\varepsilon^{-1/(s-1)} \log\varepsilon^{-1}$ for flux terms $-F'(u)\cdot\nabla u$ with $F\in C^s$. They are all linear in the dimension. If we applied a generic result for $C^s$ map $\bR^{d+dd_x}\to\bR^d$, the expected rate would be $\varepsilon^{-(d+dd_x)/s}$. So informally, we say that the cost or complexity is $d_x$-independent if the exponents do not directly depend on the dimension.
\end{remark}

Several important physical systems have constitutive equations that include nonlocal effects, i.e., they are in the form $\mathcal M f_{\mathrm{loc}}(u,\nabla u)$ where $f_{\mathrm{loc}}$ is local and $\mathcal M$ is a bounded nonlocal linear operator. For example, for the incompressible Navier-Stokes equation the operator is the Leray projection or the Biot-Savart operator. This additional cost is proportional to the number of spectral weights per mode, where we show in Lemma~\ref{lem:linear_postcomposition} of Section~\ref{sec:postcomposition} that it can be absorbed into the spectral layers. Note also that finite characterization of the governing PDE was also used in the context of \cite{schaeffer2017learning}.

\section{Discretization and the Neural Picard Approximation}
\label{sec:discrete}

In this section, we construct a discrete mild map and bound the error over each step of discretization. The discretization is a theoretical tool rather than a numerical method. We utilize Galerkin theory \cite{thomee2007galerkin,lord2014introduction,temam2024navier} as the approximating technique. We would like to show the existence of such networks with uniform bounds, and thus utilize an approximation that can be taken around the continuous solution $u$. We first replace the Duhamel operator by a quadrature in time and then the inner products by a spatial quadrature which defines the fully discrete neural Picard map. We show that the maps are contractive and consistent (in the numerical analysis sense). Section~\ref{sec:total} combines these results to bound the total error incurred over multiple neural Picard steps. 
\subsection{Temporal Discretization}

As a function of time, forcing along the trajectory is
\[
g(t)=f\left(u(t),\nabla u(t)\right)\in L^\infty\left(0,T;X^\beta\right),
\]
bounded by $F_R$, and by Lemma~\ref{lem:derived_regularity} Part (3), it is H\"older continuous in the weighted norm. These are the assumptions used in the next results, and only require that the data is in $X^\alpha$.

\begin{proposition}[Temporal Consistency]
\label{prop:temporal_quadrature}
Let $t_q:=q\Delta t$, $q=0,\ldots,Q$, be the nodes with time step $\Delta t=T/Q$, let $0\le\beta<\alpha$ with $\alpha-\beta<2$, and set $\sigma:=\frac{\alpha-\beta}{2}$ and $\theta:=1-\sigma>0$. Let $\zeta\in(0,\theta)$ and assume that $g:[0,T]\to X^\beta$ satisfies
$$\|g\|_{\infty,\beta}:=\sup_{0\le t\le T}\|g(t)\|_{X^\beta}<\infty,
\qquad
[g]_{\zeta}:=\sup_{0<s<t\le T}\frac{\|g(t)-g(s)\|_{X^\beta}}{(t-s)^{\zeta}s^{-\zeta}}<\infty . $$
If the discrete Duhamel operator is given by
\[
(\mathcal D_Q[g])^q:=\Delta t\sum_{r=0}^{q-1}S(t_q-t_r)g(t_r),
\]
then there is a constant $C>0$, depending only on $\sigma$ and $\zeta$ and independent of $\Delta t$, $T$ and $Q$, such that
$$\max_{0\le q\le Q}\left\|\mathcal D[g](t_q)-(\mathcal D_Q[g])^q\right\|_{X^\alpha} \le C \,\|g\|_{\infty,\beta}\,(\Delta t)^{\theta} + C\,[g]_{\zeta}\,T^{\theta-\zeta}\,(\Delta t)^{\zeta}.$$
The consistency is $\mathcal{O}((\Delta t)^{\zeta})$ since $\zeta<\theta$.

In addition, if for some $\kappa\in(0,1]$ and some $\beta'\ge\beta$ with $\kappa<\theta+\frac{\beta'-\beta}{2}$, we had
$$\sup_{0\le t\le T}\|g(t)\|_{X^{\beta'}} \le G,
\qquad \|g(t)-g(s)\|_{X^\beta}\le H\,(t-s)^{\kappa}\quad(0\le s<t\le T),$$
then, with $a:=\left(\sigma-\frac{\beta'-\beta}{2}+\kappa\right)_+<1$ and $C$ depending only on $\sigma$, $\kappa$ and $1-a$, and $\lambda_1$ (when $\beta'-2\kappa>\alpha$ due to embedding), we have
$$\max_{0\le q\le Q}\left\|\mathcal D[g](t_q)-(\mathcal D_Q[g])^q\right\|_{X^\alpha}\ \le\ C\left(T^{\theta}H+T^{1-a}G\right)(\Delta t)^{\kappa}.$$
\end{proposition}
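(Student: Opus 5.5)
We split the error interval by interval. Since $t_q=q\Delta t$, we have
$$\mathcal D[g](t_q)-(\mathcal D_Q[g])^q=\sum_{r=0}^{q-1}\int_{t_r}^{t_{r+1}}\Big(S(t_q-s)g(s)-S(t_q-t_r)g(t_r)\Big)\,ds .$$
On each subinterval we split the integrand into two pieces. The first is a \emph{kernel} term $\big(S(t_q-s)-S(t_q-t_r)\big)g(t_r)$. The second is a \emph{forcing} term $S(t_q-s)\big(g(s)-g(t_r)\big)$. Throughout, we use Assumption~\ref{assumption:semigroup_regular} in spectral form, i.e. $\|S(\tau)v\|_{X^\alpha}\le C\tau^{-\sigma}\|v\|_{X^\beta}$. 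We also use the standard analytic-semigroup difference bound
$$\|(S(\tau)-S(\tau'))v\|_{X^\alpha}\le C\,(\tau'-\tau)^{\mu}\tau^{-\sigma-\mu}\|v\|_{X^\beta},\qquad 0<\tau<\tau',\ \mu\in[0,1].$$
This follows from $\sup_\lambda \lambda^{\sigma}(e^{-\lambda\tau}-e^{-\lambda\tau'})\le \sup_\lambda\lambda^\sigma e^{-\lambda\tau}\min\{1,\lambda(\tau'-\tau)\}$.

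\textbf{First part.} The last interval $r=q-1$ is handled crudely. There $t_q-s\in(0,\Delta t]$, and both pieces are bounded by $C\|g\|_{\infty,\beta}\int_0^{\Delta t}\tau^{-\sigma}d\tau+C\|g\|_{\infty,\beta}(\Delta t)^{1-\sigma}$, which is $O((\Delta t)^\theta)$. For $r\le q-2$, the kernel term uses the difference bound with $\mu=1$:
$$\Delta t\,\|g\|_{\infty,\beta}\int_{0}^{t_q-\Delta t}\tau^{-\sigma-1}\,d\tau\ \lesssim\ \Delta t\,(\Delta t)^{-\sigma}\|g\|_{\infty,\beta}.$$
This again gives $(\Delta t)^\theta$. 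For the forcing term, we use the weighted H\"older seminorm, which gives $\|g(s)-g(t_r)\|_{X^\beta}\le[g]_\zeta(\Delta t)^\zeta t_r^{-\zeta}$ for $r\ge1$. For $r=0$ there is no seminorm control, so the first interval is bounded by $\|g\|_{\infty,\beta}$ times $\int_0^{\Delta t}(t_q-s)^{-\sigma}ds\lesssim(\Delta t)^{\theta}$ when $q\ge2$, using the same crude estimate as above. The remaining sum is
$$[g]_\zeta(\Delta t)^\zeta\sum_{r=1}^{q-2}\int_{t_r}^{t_{r+1}}(t_q-s)^{-\sigma}t_r^{-\zeta}\,ds\ \lesssim\ [g]_\zeta(\Delta t)^\zeta\int_0^{t_q}(t_q-s)^{-\sigma}s^{-\zeta}\,ds,$$
where we used $t_r\ge s/2$ for $r\ge1$. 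The final integral is a Beta integral equal to $B(\theta,1-\zeta)\,t_q^{\theta-\zeta}\le CT^{\theta-\zeta}$. It is finite because $\sigma<1$ and $\zeta<1$. Collecting terms gives the first estimate, with a constant depending only on $\sigma$ and $\zeta$.

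\textbf{Second part.} We use the same splitting with the stronger hypotheses. For the forcing term we take $\|g(s)-g(t_r)\|_{X^\beta}\le H(\Delta t)^\kappa$ uniformly, including $r=0$. Summing $\int_0^{t_q}(t_q-s)^{-\sigma}ds\lesssim T^\theta$ gives $CT^\theta H(\Delta t)^\kappa$, and this time the last interval needs no special treatment. For the kernel term we exploit the extra spatial regularity $\beta'$. Measuring $g(t_r)$ in $X^{\beta'}$, the smoothing exponent needed to reach $X^\alpha$ is $\sigma-\frac{\beta'-\beta}{2}$. If this is negative, we use the embedding $X^{\beta'}\hookrightarrow X^{\alpha+\cdots}$, whose constant involves $\lambda_1$. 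We apply the difference bound with $\mu=\kappa$, which yields a factor $(\Delta t)^\kappa(t_q-s)^{-a}$ with $a=(\sigma-\frac{\beta'-\beta}{2}+\kappa)_+<1$. This is integrable, and integrating over $[0,t_q]$ gives $CT^{1-a}G(\Delta t)^\kappa$.

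\textbf{Main obstacle.} The delicate step is the bookkeeping near the singularities. Near $s=t_q$ the kernel is singular, so the last interval must be handled separately in the first part. Near $s=0$ the weight $t_r^{-\zeta}$ vanishes at $r=0$, which forces the separate treatment of the first interval. One must then check that both boundary contributions are of order $(\Delta t)^\theta$, which is dominated by $(\Delta t)^\zeta$, with constants independent of $T$ and $Q$.
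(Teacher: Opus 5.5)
Your proposal is correct and follows essentially the paper's proof. It uses the same split into a forcing term and a kernel term. In the first part it treats the first subinterval (no seminorm control) and the last one (kernel singularity) separately, as the paper does. In the second part it uses the same $X^{\beta'}\to X^{\alpha}$ smoothing with exponent $a$. Your continuous Beta integral via $t_r\ge s/2$ and your single spectral difference bound are tidier stand-ins for the paper's discrete Beta sum and its $\int LS(v)\,dv$ identity.

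Two small fixes are needed. First, the $r\le q-2$ kernel estimate should read $\int_{\Delta t}^{t_q}\tau^{-\sigma-1}\,d\tau\le\sigma^{-1}(\Delta t)^{-\sigma}$; as written, $\int_0^{t_q-\Delta t}\tau^{-\sigma-1}\,d\tau$ diverges. Second, the $\lambda_1$-dependent embedding is needed exactly when $\sigma-\frac{\beta'-\beta}{2}+\kappa<0$, not whenever $\sigma-\frac{\beta'-\beta}{2}<0$. Otherwise the direct spectral bound already gives $(t_q-s)^{-a}$, and embedding first would only worsen the exponent.
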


 The proof, given in Appendix~\ref{sec:app_discrete}, splits the error into two parts. One freezes the kernel on each subinterval, which needs only boundedness of $g$ and gives the term $\|g\|_{\infty,\beta}(\Delta t)^\theta$, and the second freezes $g$ which gives the term $[g]_\zeta(\Delta t)^\zeta$. The order of accuracy is determined by the time regularity of the forcing term, see Remark~\ref{rem:gamma_free}.

  Note that the time discretization is a version of the Lawson-Euler method \cite{lawson1967generalized,hochbruck2010exponential}, which uses the exact semigroup (implicitly) and is explicit in forcing term. We use this as opposed to exponential Euler \cite{cox2002exponential} since it will allow for extension by energy estimates in Section~\ref{sec:long_time} using just a one-sided growth bound on the governing law.

\begin{remark}
\label{rem:gamma_free}
Any $\gamma$ with $\alpha<\gamma<\beta+2$ is admissible in Lemma~\ref{lem:derived_regularity}. As $\gamma$ approaches the upper limit $\beta+2$, $\zeta\to\theta$ and $\chi\to1+\frac{(\alpha-2)_+}{2\theta}$. This is the same choice that improves the spectral requirement and the limit in \eqref{eqn:quad_order_cap}, so no exponent is needed to trade off. The additional theoretical cost is the factor $(1-\bar\sigma)^{-1}=(\theta-\zeta)^{-1}$, which then appears in later constant, so we cannot set it to $\theta$.  If the force is smoother then the bootstrapping result in Lemma~\ref{lem:bootstrap_time} helps.
\end{remark}

\subsection{Spatial Discretization}

Consider a discretization $\Omega_h$ of the spatial domain $\Omega$, and define the grid spacing by
$$h:=\max_{\Omega'\in\Omega_h}\operatorname{diam}(\Omega'),$$
i.e., the maximum diameter of a subdomain of the mesh. Let
$\{x_i\}_{i=1}^M\subset\Omega$ denote the sampling points and let
$\{\omega_i\}_{i=1}^M$ denote the weights associated with the spatial quadrature rule.

\begin{definition}[Discrete Galerkin operators]
\label{def:discrete_Galerkin_operators}
Let $E_N:=\operatorname{span}\{e_1,\ldots,e_N\}$,
and recall the continuous (Galerkin) coefficient and synthesis operators
$$\mathcal C_N:H\rightarrow\bR^N,
\qquad (\mathcal C_N v)_k:=\langle v,e_k\rangle_H, $$
and
$$\mathcal S_N:\bR^N\rightarrow E_N, \qquad \mathcal S_N a:=\sum_{k=1}^N a_k e_k,$$
so that
$P_N=\mathcal S_N\mathcal C_N$ is the spectral Galerkin projection onto $E_N$. Define the sampling operator
$\mathcal I_h:C(\overline\Omega)\rightarrow\bR^M$
by $(\mathcal I_h v)_i:=v(x_i)$, for $i=1,\ldots,M$ and is defined for every continuous function. For $a\in\bR^N$, the sampled Galerkin function is defined as
$$\mathcal S_N^h a :=
\mathcal I_h\mathcal S_N a \in\bR^M,
\qquad (\mathcal S_N^h a)_i = \sum_{k=1}^N a_k e_k(x_i).$$
So $\mathcal S_N a\in E_N$ denotes the continuous Galerkin function associated with $a$, while $\mathcal S_N^h a$ denotes its values on the spatial grid.

For a grid function $v=(v_1,\ldots,v_M)^T\in\bR^M$,
define the discrete coefficient map
$\mathcal C_N^h:\bR^M\rightarrow\bR^N$
by
$$(\mathcal C_N^h v)_k :=
\sum_{i=1}^M\omega_i v_i e_k(x_i), \qquad k=1,\ldots,N,$$
which approximates the continuous coefficient map
$\mathcal C_N$ by replacing the spatial inner products with a quadrature rule. The lifted discrete Galerkin operator
$$\mathcal P_N^h := \mathcal S_N\mathcal C_N^h :
\bR^M\rightarrow E_N$$
maps grid values to a function in the continuous Galerkin space $E_N$. Its sampled version is
$$P_N^h := \mathcal I_h\mathcal P_N^h = \mathcal S_N^h\mathcal C_N^h : \bR^M\rightarrow\bR^M.$$
 The Galerkin semigroup acting on coefficient vectors is defined as the truncated operator
 $$S_N(t):=e^{-t\Lambda_N},
\qquad
\Lambda_N:=\operatorname{diag}(\lambda_1,\ldots,\lambda_N).$$
Since $S(t)e_k=e^{-\lambda_k t}e_k$,
the continuous synthesis operator satisfies
$S(t)\mathcal S_N = \mathcal S_N S_N(t)$.
For every $s\geq0$, 
$$ \|\mathcal S_N a\|_{X^s} = \|a\|_{\ell_s^2}, \qquad
\|a\|_{\ell_s^2}:= \left( \sum_{k=1}^N\lambda_k^s|a_k|^2 \right)^{1/2},
$$
and thus $\mathcal S_N$ is an isometry from
$(\bR^N,\|\cdot\|_{\ell_s^2})$ onto $E_N$ equipped with the
$X^s$ norm, uniformly in $N$.

Define the sampled gradient operator
$\mathcal I_h^{\text{grad}}:E_N\rightarrow(\bR^{d\times d_x})^M$
by
$$ (\mathcal I_h^{\text{grad}} v)_i:=\nabla v(x_i), \qquad
i=1,\ldots,M, $$
and for $a\in\bR^N$ define
$$\nabla_h\mathcal S_N^h a :=\mathcal I_h^{\text{grad}}\mathcal S_N a
=\left( \sum_{k=1}^N a_k\nabla e_k(x_i)\right)_{i=1}^M.$$
\end{definition}

Both $\mathcal S_N^h a$ and $\nabla_h\mathcal S_N^h a$ are
obtained by first constructing the continuous Galerkin function
$\mathcal S_N a$ and then sampling the function and its gradient,
respectively, at the spatial grid points. When $u$ is vector-valued, say on $\bR^d$ with $d>1$, every operator above acts component-wise and the coefficient space becomes $(\bR^{N})^{d}$. To keep the notation simple, we write $\bR^N$ and $\bR^M$, and only use $\bR^{dN}$ and $\bR^{dM}$ when necessary for examples. 

In general, $P_N^h$ is not a projection, since the quadrature rule does not need to satisfy
$\mathcal C_N^h\mathcal S_N^h = I_{\bR^N}$.
The composition is
$$(\mathcal C_N^h\mathcal S_N^h)_{jk}
=\sum_{i=1}^M \omega_i e_j(x_i)e_k(x_i),$$
and is equal to $I_{\bR^N}$
only when the quadrature rule is exact on the products $e_j e_k$, $1\leq j,k\leq N$.

To discretize the Picard map, the input to $f$ uses the reconstructed Galerkin function so the approximation is in the spatial dimension, not the discretization dimension. This is akin to a pseudo-spectral approximation and thus for $a\in\bR^N$,
$$f\left(\mathcal S_N^h a,\ \nabla_h\mathcal S_N^h a\right)$$
denotes the evaluation of the nonlinearity on the reconstructed function and its discrete gradient at the spatial sample points. The gradient $\nabla_h\mathcal S_N^ha$ is obtained by differentiating the basis elements and evaluating the result on the same grid.

\begin{definition}[Fully Discrete Neural Picard Map]
\label{def:fully_discrete_neural_picard}
Suppose we use the temporal quadrature rule from
Proposition~\ref{prop:temporal_quadrature}, the spatial
discretization and the discrete operators $\mathcal C_N^h$, $\mathcal S_N^h$,
$S_N$, and $\nabla_h\mathcal S_N^h$ from
Definition~\ref{def:discrete_Galerkin_operators}. Let
$$\left(\mathcal F^h(U,P)\right)_i:=f(U_i,P_i),
\qquad \left(\widehat{\mathcal F}_\varepsilon^h(U,P)\right)_i:= \widehat f_\varepsilon(U_i,P_i), \qquad i=1,\ldots,M,$$
denote the point-wise evaluation of the exact constitutive law and of its ReLU approximation $\widehat f_\varepsilon:=\mathcal{N}_\varepsilon$ from Theorem~\ref{thm:relu_f}. The parameters are shared since a single network is applied at every grid point.

Let $a=(a^0,\ldots,a^Q)\in(\bR^N)^{Q+1}$ and let $a_0^h:=\mathcal C_N^h\mathcal I_hu_0$ be the discrete initial coefficients. The fully discrete Picard map in coefficient space is defined as
$$\left(\Psi_{N,Q}^h(a)\right)^q
:=S_N(t_q)\,a_0^h + \Delta t\sum_{r=0}^{q-1}
S_N(t_q-t_r)\,\mathcal C_N^h \left[ \mathcal F^h \left(
\mathcal S_N^h a^r,\ \nabla_h\mathcal S_N^h a^r \right) \right]$$
for $q=0,\ldots,Q$ and where the sum is zero when $q=0$. The neural approximation is given by
$$\left(\Psi_{N,Q,\varepsilon}^h(a)\right)^q
:=S_N(t_q)\,a_0^h + \Delta t\sum_{r=0}^{q-1}
S_N(t_q-t_r)\,\mathcal C_N^h \left[ \widehat{\mathcal F}_\varepsilon^h \left( \mathcal S_N^h a^r,\ \nabla_h\mathcal S_N^h a^r \right) \right].$$
The corresponding $E_N$-valued maps are obtained by applying $\mathcal S_N$ at each time level and interpolating piecewise linearly in time, i.e., for $u\in X^\alpha_T$ we write
$$\left(\Phi_{N,Q,\varepsilon}^h[u]\right)(t_q)
:= \mathcal S_N\left(\Psi_{N,Q,\varepsilon}^h(b(u))\right)^q,
\qquad b(u)^r:=\mathcal C_N^h\mathcal I_h u(t_r),$$
and similarly for $\Phi_{N,Q}^h$. Since $E_N\subset X^\alpha$, this defines operators $\Phi_{N,Q}^h,\Phi_{N,Q,\varepsilon}^h:X_T^\alpha\rightarrow X_T^\alpha$. Lastly, we call a sequence $a_{N,Q}^h$ a fully discrete fixed point if
$a_{N,Q}^h = \Psi_{N,Q}^h(a_{N,Q}^h)$, and similarly in the neural approximation case.
\end{definition}

The difference between $\mathcal S_N$ and $\mathcal S_N^h$ is grid-evaluation and is only an implementation aspect. Definition~\ref{def:fully_discrete_neural_picard} can be stated as an architecture so that  each component of Section~\ref{sec:networks} is clear. The iteration of the Picard maps is equivalent to a parameter shared loop. 

 Similar to other neural operators, we assume that we have access to fast transforms between grid values and coefficients. These transforms are fixed, while all other weights and layers are learned. The following assumption separates the two cases.

\begin{assumption}[Fast Transforms]
\label{assumption:spectral_factorization}
There are fixed linear maps $\mathcal T_{\mathrm{syn}}$, $\mathcal T_{\mathrm{syn},1},\ldots,\mathcal T_{\mathrm{syn},d_x}$ and $\mathcal T_{\mathrm{an}}$, each a fast transform in the sense of Assumption~\ref{assumption:fast_transform}, a partition of the retained modes into blocks of at most $b_0$ modes, and maps $D^{\mathrm{ex}}_{\mathrm{an}},D^{\mathrm{ex}}_1,\ldots,D^{\mathrm{ex}}_{d_x}$ (block-diagonal), such that
$$\mathcal S^h_N=\mathcal T_{\mathrm{syn}},
\qquad \partial_{h,j}\mathcal S^h_N=\mathcal T_{\mathrm{syn},j}\,D^{\mathrm{ex}}_j,
\qquad
\mathcal C^h_N=D^{\mathrm{ex}}_{\mathrm{an}}\,\mathcal T_{\mathrm{an}}.$$
\end{assumption}

As an example, on the torus with the real Fourier basis, $\mathcal T_{\mathrm{syn}}=\mathcal T_{\mathrm{syn},j}$ is the IFFT and $\mathcal T_{\mathrm{an}}$ the FFT. The blocks are the cosine-sine pairs of one frequency, so $b_0=2$ and $D^{\mathrm{ex}}_{\mathrm{an}}$ is the normalized quadrature weights, and $D^{\mathrm{ex}}_j$ maps each pair to its derivative, a $2\times2$ block with two nonzero entries.  On these uniform grids all quadrature weights are equal to the same value $\omega$ (e.g. $\omega=|\Omega|/M$ on the torus). The exact value of $D^{\mathrm{ex}}_{\mathrm{an}}$ is this weight times the $L^2$-normalization constants of the eigenfunctions, so the quadrature weights are absorbed into the learned spectral layer $D_{\mathrm{an}}$ and cost no extra parameters.

\begin{definition}[Learned Spectral Layers]
\label{def:spectral_layers}
The learned spectral layers are maps $D_{\mathrm{an}}$, $D_1,\ldots,D_{d_x}$, $D_S$, $D_W$ and, when the law is post-composed with a fixed operator $\mathcal M$ that is diagonal in the eigenbasis (Lemma~\ref{lem:linear_postcomposition}), $D_{\mathcal M}$, each restricted to the sparsity pattern of its exact counterpart
$$D^{\mathrm{ex}}_{\mathrm{an}},\qquad D^{\mathrm{ex}}_j,\qquad D^{\mathrm{ex}}_S:=S_N(\Delta t),\qquad D^{\mathrm{ex}}_W:=\Delta t\,S_N(\Delta t),\qquad D^{\mathrm{ex}}_{\mathcal M}:=D_\mu.$$
They are used in the neural architecture (Definition~\ref{def:loop}) in place of the exact maps. The spectral weights are the elements of these maps, they are shared across grid points, time levels and sweeps, and their count satisfies
$$W_{\mathrm{spec}} \le\ C_{\mathrm{spec}}\,N,
\qquad
C_{\mathrm{spec}} \le b_0\left(d^2(3+d_x)+d\,m\,\mathbf 1_{\mathcal M}\right),$$
where $C_{\mathrm{spec}}$ is the number of nonzero elements per mode of the exact maps. 
\end{definition}

On the torus, in the component-wise Fourier basis, one can show that $C_{\mathrm{spec}}=d(3+d_x)$, the $d\,d_x$ derivative weights are removed for $f(u)$ without gradient terms. An additional linear term $\mathcal M$ adds a $d\times m$ block per mode, where $m$ is the output dimension of $f_{\mathrm{loc}}$. For the Leray projection $m=d$ and the block is the identity.

\begin{definition}[The Looped Architecture]
\label{def:loop}
Fix an admissible pair $(N,h)\in\mathcal A$ (Assumption~\ref{assumption:quadrature_stability}), $Q$, $n$, the network $\widehat f_\varepsilon$ from Definition~\ref{def:network}, with $n_0=d(1+d_x)$ (or $n_0=d$ for gradient-free and divergence-form laws) and $n_{L_0}=m$ ($m=d$ unless the law has the additional term $\mathcal M$), and learned spectral layers $D_{\mathrm{an}},D_1,\ldots,D_{d_x},D_S,D_W$, and $D_{\mathcal M}$ when needed, from Definition~\ref{def:spectral_layers}. The state propagated through the loop is the entire discrete trajectory,
$$a=(a^0,\ldots,a^Q)\in\left(\bR^{dN}\right)^{Q+1}.$$
The architecture consists of three parts, where the only fixed layers are the fast transforms from Assumption~\ref{assumption:spectral_factorization}.
\begin{enumerate}
\item \emph{(Encoder)} From the sampled data $w=\mathcal I_hu_0\in\bR^{dM}$ compute
$$a^h_0:=D_{\mathrm{an}}\,\mathcal T_{\mathrm{an}}w,
\qquad
A^0:=\left(a^h_0,\ldots,a^h_0\right),$$
which uses the fixed transform, a learned spectral layer, and replication across the $Q+1$ time levels.
\item \emph{(Looping)} One step $a\mapsto\Psi(a)$ is the composition of four layers. For each time level $r=0,\ldots,Q-1$:
\begin{enumerate}
\item[(2a)] \emph{Synthesis:} $U^r:=\mathcal T_{\mathrm{syn}}a^r\in(\bR^{d})^{M}$ and $P^r_j:=\mathcal T_{\mathrm{syn},j}D_ja^r$ for $j=1,\ldots,d_x$, so that $P^r\in(\bR^{d\times d_x})^{M}$. That is, the learned derivative multipliers act on the coefficients, and the fixed transforms then produce the grid values of the state and its gradient.
\item[(2b)] \emph{point-wise block:} $G^r:=\widehat{\mathcal F}^h_\varepsilon(U^r,P^r)$, that is, $(G^r)_i=\widehat f_\varepsilon\left(U^r_i,P^r_i\right)$ at every grid point $i=1,\ldots,M$. By Lemma~\ref{lem:sharing} this layer is the grid realization of $\widehat f_\varepsilon$ in the sense of Definition~\ref{def:network}. Each dense layer $A_{l}\,\cdot+b_{l}$ of the block acts as the $1\times1$ convolution $\mathcal K_{A_{l},b_{l}}$, with the same matrices at every point. This is the only nonlinear layer.
\item[(2c)] \emph{Analysis:} $\widehat g^{r}:=D_{\mathcal M}D_{\mathrm{an}}\,\mathcal T_{\mathrm{an}}G^r\in\bR^{dN}$. That is, a fixed transform maps the grid values to coefficients, and the learned spectral layers then normalize them. If the law is not post-composed with an operator $\mathcal M$, then $D_{\mathcal M}$ is the identity.
\end{enumerate}
Then, across time levels,
\begin{enumerate}
\item[(2d)] \emph{Collect:}
\[
v^0:=a^h_0,
\qquad
v^{q+1}:=D_S\,v^q+D_W\,\widehat g^{q}\quad(q=0,\ldots,Q-1),
\qquad
\left(\Psi(a)\right)^q:=v^q .
\]
This recurrence equation is linear, and is diagonal in the eigenbasis and causal in time. The learned parts are the decay factors and Duhamel weights as coefficients. Since these coefficients are the same at every step, it can be realized in $O(\log Q)$ depth by a parallel scan \cite{blelloch1990prefix} (Theorem~\ref{thm:realization}).
\end{enumerate}
The loop is then $A^k:=\Psi(A^{k-1})$ for $k=1,\ldots,n$, the layers (2a)-(2d) are the same at every sweep.
\item \emph{(Output:)} The output $A^n$ gives coefficients at the time nodes $t_q$. Between nodes, the trajectory is reconstructed by the semigroup lifting in Definition~\ref{def:semigroup_lift}, which applies $S_N(\tau)$ for $0<\tau<\Delta t$. The architecture has no separate layer for $S_N(\tau)$, specifically, it uses $D_S^{\tau/\Delta t}$ instead (powers applied element-wise). At the exact value $D_S=S_N(\Delta t)=e^{-\Delta t\Lambda_N}$ this equals $e^{-\tau\Lambda_N}=S_N(\tau)$. It is well defined whenever $D_S$ has positive diagonal components and it does not add parameters.
\end{enumerate}
\end{definition}

By Lemma~\ref{lem:sharing}, applied to the $MQ$ space-time points of one sweep, the block $\widehat f_\varepsilon$ is constructed by $W(\widehat f_\varepsilon)$ parameters and contributes $MQ\,W(\widehat f_\varepsilon)$ nonzero weights per sweep, thus a cost of $nMQ\,W(\widehat f_\varepsilon)$ over $n$ sweeps.

\begin{remark}[The Loop as an FNO]
\label{rem:loop_fno}
This architecture includes as a special case the FNO. On the torus, using the Fourier basis and a uniform grid, Definition~\ref{def:loop} is an FNO with parameter sharing. First, one can arrange the $Q+1$ time levels as channels. The synthesis, derivative, and analysis layers are just Fourier multipliers. Collecting (2d) is a channel-mixing multiplier (done mode-wise) that is lower triangular in the time index. The point-wise block is a sequence of Fourier layers with zero Fourier part applied block-diagonally on the channels. The result is an FNO with a channel width that is proportional to $Q$ and depth $O(n\dep(\widehat f_\varepsilon))$. Every sweep uses the same layers with the same weights. The bounds from Theorems~\ref{thm:main} and \ref{thm:total_cost} apply to this class of Fourier neural operators, with the trainable parameter count  $W(\widehat f_\varepsilon)+W_{\mathrm{spec}}$. In practice, FNOs share weights across grid points and use an autoregressive rollout in time.  Without sharing across time levels, the trainable count would grow by a factor $Q\asymp\eta^{-1/\zeta_{\mathrm t}}$. Without sharing parameters across the Picard sweeps, an extra factor $n\asymp\log\eta^{-1}$ is needed.
\end{remark}

\begin{remark}[The Loop as an Encoder-Decoder Operator]
\label{rem:loop_deeponet}
The architecture of Definition~\ref{def:loop} can also be viewed as an encoder-decoder operator network, e.g. Chen-Chen, PCA-Net, or DeepONet type. The encoder is just the linear map $w\mapsto a^h_0=D_{\mathrm{an}}\mathcal T_{\mathrm{an}}w$ on the sensor values, i.e. the evaluation $w=\mathcal I_hu_0$. The latent space processes forward using the $n$-sweeps $\mathcal A_n$. The decoder is the semigroup reconstruction from Definition~\ref{def:semigroup_lift}, which can be written as 
$$\tilde u(t,x)=\sum_{q,k}(A^n)^q_k\,\phi_{k,q}(t,x),
\qquad
\phi_{k,q}(t,x):=\mathbf 1_{[t_q,t_{q+1})}(t)\,e^{-\lambda_k(t-t_q)}\,e_k(x),$$
with the final node $t=T$ included as $\phi_{k,Q}(t,x):=\mathbf 1_{\{T\}}(t)\,e_k(x)$.

In terms of the branch-trunk construction used in DeepONet \cite{lu2021learning}, $\mathcal A_n$ is a branch network applied to the $M$ sensor values and the functions $\phi_{k,q}$ are the trunks. The decay factor $e^{-\lambda_k(t-t_q)}$ in the trunk is computed from the learned layer $D_S$, so the trunk has one trainable parameter per mode. 

In terms of PCA-Net, the PCA basis replaces the eigenbasis of $L$. As an encoder-decoder operator, the architecture belongs to the class considered in \cite{lanthaler2026parametric}, where an exponential lower bound over the Lipschitz class was shown. Our work uses the structure of the law to avoid this lower bound. 
\end{remark}

\begin{proposition}[Exact Spectral Layers]
\label{prop:spectral_exact}
Using Assumption~\ref{assumption:spectral_factorization}, if we set every learned spectral layer to its exact counterpart, the loop $\Psi$ from Definition~\ref{def:loop} agrees with the neural Picard map $\Psi^h_{N,Q,\varepsilon}$ from Definition~\ref{def:fully_discrete_neural_picard} (with $\mathcal C^h_N$ replaced by $D_\mu\mathcal C^h_N$ when needed), and the architecture computes exactly the neural Picard iterates and the semigroup reconstruction. Consequently, every associated error bound from Sections~\ref{sec:discrete}-\ref{sec:operator} holds for it, and its trainable parameters are those of $\widehat f_\varepsilon$ together with the $W_{\mathrm{spec}}\le C_{\mathrm{spec}}N$ spectral weights.
\end{proposition}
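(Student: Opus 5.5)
We plan to prove Proposition~\ref{prop:spectral_exact} by direct substitution, checking the loop layer by layer against Definition~\ref{def:fully_discrete_neural_picard}; no analysis is needed beyond the factorization identities of Assumption~\ref{assumption:spectral_factorization}.

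\textbf{Encoder.} With $D_{\mathrm{an}}=D^{\mathrm{ex}}_{\mathrm{an}}$ the encoder gives
$$a^h_0=D^{\mathrm{ex}}_{\mathrm{an}}\mathcal T_{\mathrm{an}}\mathcal I_hu_0=\mathcal C^h_N\mathcal I_hu_0,$$
which is the discrete initial coefficient vector of Definition~\ref{def:fully_discrete_neural_picard}.

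\textbf{One sweep.} Fix a time level $r$.
\begin{enumerate}
\item Step (2a) gives $U^r=\mathcal T_{\mathrm{syn}}a^r=\mathcal S^h_Na^r$. With $D_j=D^{\mathrm{ex}}_j$ it also gives $P^r_j=\mathcal T_{\mathrm{syn},j}D^{\mathrm{ex}}_ja^r=\partial_{h,j}\mathcal S^h_Na^r$. Hence $(U^r,P^r)=(\mathcal S^h_Na^r,\nabla_h\mathcal S^h_Na^r)$.
\item By Lemma~\ref{lem:sharing}, step (2b) computes $\widehat{\mathcal F}^h_\varepsilon(U^r,P^r)$ exactly.
\item Step (2c) gives $\widehat g^r=D_\mu\mathcal C^h_N\widehat{\mathcal F}^h_\varepsilon(\cdot)$, or $\mathcal C^h_N\widehat{\mathcal F}^h_\varepsilon(\cdot)$ when $\mathcal M$ is absent.
\end{enumerate}

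\textbf{Collect.} This is the only step needing an argument: we show by induction on $q$ that the recurrence
$$v^{q+1}=S_N(\Delta t)v^q+\Delta t\,S_N(\Delta t)\widehat g^q,\qquad v^0=a^h_0,$$
unrolls to
$$v^q=S_N(t_q)a^h_0+\Delta t\sum_{r=0}^{q-1}S_N(t_q-t_r)\widehat g^r.$$
The case $q=0$ holds because the sum is empty. For the step, we use the semigroup property
$$S_N(\Delta t)S_N(t_q-t_r)=S_N(t_{q+1}-t_r),$$
which holds since $S_N(t)=e^{-t\Lambda_N}$ is diagonal and $t_{q+1}=t_q+\Delta t$. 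The new term $\Delta t\,S_N(\Delta t)\widehat g^q$ supplies the $r=q$ summand, since $t_{q+1}-t_q=\Delta t$. Therefore $\Psi(a)=\Psi^h_{N,Q,\varepsilon}(a)$, with $\mathcal C^h_N$ replaced by $D_\mu\mathcal C^h_N$ when needed.

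\textbf{Iterates and output.} Since the sweeps reuse identical layers, $A^n=(\Psi^h_{N,Q,\varepsilon})^n(A^0)$. For the output, $D_S^{\tau/\Delta t}=e^{-\tau\Lambda_N}=S_N(\tau)$ elementwise, because the diagonal entries $e^{-\lambda_k\Delta t}$ are positive. This reproduces the semigroup lifting of Definition~\ref{def:semigroup_lift}.

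\textbf{Consequences.} Because the computed map is identical, every error bound stated for $\Psi^h_{N,Q,\varepsilon}$ and its lifting transfers verbatim. The trainable parameters are the shared weights of $\widehat f_\varepsilon$, counted once by Lemma~\ref{lem:sharing}, plus the spectral layers, which number at most $C_{\mathrm{spec}}N$ by Definition~\ref{def:spectral_layers}. The fast transforms are fixed and contribute no trainable parameters.

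The main obstacle is bookkeeping rather than analysis. One point is the vector-valued case, where the operators act componentwise on $(\bR^N)^d$. The other is ensuring the placement of $D_\mu$ commutes with $S_N$. It does, because both are diagonal in the eigenbasis, so $D_\mu$ may sit inside or outside the Duhamel sum.
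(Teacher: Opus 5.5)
Your proposal is correct and follows the paper's proof essentially step for step. Like the paper, you substitute the exact spectral layers via Assumption~\ref{assumption:spectral_factorization} to recover $\mathcal C^h_N$, $\mathcal S^h_N$ and $\nabla_h\mathcal S^h_N$, unroll the diagonal recurrence (2d) using $S_N(t_q-t_r)=S_N(\Delta t)^{q-r}$, and identify $D_S^{\tau/\Delta t}$ with $S_N(\tau)$ because the diagonal entries are positive. Your remark that $D_\mu$ commutes with $S_N$ is harmless but not needed, since $D_\mu$ sits directly after $\mathcal C^h_N$ in both the loop and the modified Picard map.
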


The proof is in Appendix~\ref{sec:app_discrete}.

\begin{figure}[ht]
\centering
\begin{tikzpicture}[>=Latex, font=\small,
  bx/.style={draw, rounded corners, align=center, minimum height=0.95cm, inner sep=3pt},
  fx/.style={bx, fill=black!4},
  tr/.style={bx, very thick}]
\node[fx] (enc) at (0,0) {$D_{\mathrm{an}}\mathcal T_{\mathrm{an}}$\\ \footnotesize encoder};
\node[fx] (syn) at (2.9,0) {$\mathcal T_{\mathrm{syn}},\ \mathcal T_{\mathrm{syn},j}D_j$\\ \footnotesize synthesis (2a)};
\node[tr] (blk) at (6.3,0) {$\widehat f_\varepsilon$ as $1\times1$ conv.\\ \footnotesize point-wise block (2b)};
\node[fx] (ana) at (9.7,0) {$D_{\mathcal M}D_{\mathrm{an}}\mathcal T_{\mathrm{an}}$\\ \footnotesize analysis (2c)};
\node[fx] (acc) at (12.6,0) {$D_S,\ D_W$ scan\\ \footnotesize collect (2d)};
\draw[->] (enc) -- (syn);
\draw[->] (syn) -- (blk);
\draw[->] (blk) -- (ana);
\draw[->] (ana) -- (acc);
\draw[->, thick] (acc.south) -- ++(0,-0.75) -- node[below, font=\footnotesize, pos=0.5]
   {tied weights, $n$ sweeps: $A^{k}=\Psi(A^{k-1})$} ++(-9.7,0) -- (syn.south);
\draw[->, dashed] (enc.north) -- ++(0,0.72) -- node[above, font=\footnotesize, pos=0.5]
   {input connection $D_S^{q}a^h_0$, every sweep} ++(12.6,0) -- (acc.north);
\node[font=\footnotesize, align=left] at (6.3,-2.15)
  {gray: linear, learned transforms \quad$\bullet$\quad bold: nonlinear layer};
\end{tikzpicture}
\caption{The architecture of Definition~\ref{def:loop}, drawn as a loop. One sweep consists of the four layers (2a)-(2d), the loop is applied $n$ times with the same weights, and the encoded data is connected to every sweep. The trainable parameters are the spectral weights of the shaded layers and those of the single block $\widehat f_\varepsilon$, which acts as a $1\times1$ convolution across the $M$ sample points and is reused at every one of the $Q$ time levels entering the Duhamel sum and at every sweep.}
\label{fig:loop}
\end{figure}

By Lemma~\ref{lem:sharing}, the block of step (2b) uses the same parameters at all $M$ grid points, $Q$ time levels and $n$ sweeps, and the spectral layers are shared across time levels and sweeps. Therefore, 
$$\#\{\text{distinct parameters}\}=W(\widehat f_\varepsilon)+W_{\mathrm{spec}},$$
is independent of $M$, $Q$ and $n$, while the weight count without sharing is at least $nMQ\,W(\widehat f_\varepsilon)$ (see Theorem~\ref{thm:realization}).

\begin{definition}[Discrete Solution Space]
\label{def:lifted_discrete_space}
For $a=(a^0,\ldots,a^Q)\in(\bR^N)^{Q+1}$, let $\mathcal J_Q$ be the piecewise-linear interpolation on the temporal grid and define the continuous lifting operation by
$$\mathcal{L}_{N,Q}a:=\mathcal{J}_Q\left(\mathcal S_Na^0,\ldots,\mathcal S_Na^Q\right)\in C([0,T];E_N),
\qquad
\|a\|_{X_{N,Q}^s}:=\|\mathcal L_{N,Q}a\|_{X_T^s},\quad s\ge0 .$$
\end{definition}

The piece-wise linear lifting fits the fixed-point formulation since its norm is the maximum of the norms at the nodes, see Proposition~\ref{prop:discrete_ball_containment}. However,  it ignores the dynamics so we reconstruct the final output using the semigroup itself, which costs one diagonal multiplier.

\begin{definition}[Semigroup Reconstruction]
\label{def:semigroup_lift}
For $a=(a^0,\ldots,a^Q)\in(\bR^N)^{Q+1}$ define $\mathcal L^S_{N,Q}a\in C([0,T];E_N)$ by
\[
\left(\mathcal L^S_{N,Q}a\right)(t):=\mathcal S_NS_N(t-t_q)\,a^q,
\qquad t\in[t_q,t_{q+1}),\ q=0,\ldots,Q-1,
\]
and $\left(\mathcal L^S_{N,Q}a\right)(T):=\mathcal S_Na^Q$. Since $S_N(\tau)$ is a contraction on $\ell^2_\alpha$ we have $\|\mathcal L^S_{N,Q}a\|_{X^\alpha_T}\le\|a\|_{X^\alpha_{N,Q}}$.
\end{definition}

The piecewise linear objects $\mathcal L_{N,Q}$ and $\mathcal J_Q$ are used only to express node-based quantities. For the difference with $u$, we use $\mathcal L^S_{N,Q}$ whose error between nodes is of order $(\Delta t)^\theta$ (see Lemma~\ref{lem:interpolation}) rather than $(\Delta t)^\zeta$. 

\begin{proposition}[Inclusion of the Discrete Ball]
\label{prop:discrete_ball_containment}
For $a\in(\bR^N)^{Q+1}$,
$$\|\mathcal L_{N,Q}a\|_{X_T^\alpha}=\max_{0\leq q\leq Q}\|a^q\|_{\ell_\alpha^2}.$$
Therefore, with $B_R^{N,Q}:=\{a\in(\bR^N)^{Q+1}: \max_q\|a^q\|_{\ell_\alpha^2} \leq R\}$, we have $\mathcal L_{N,Q}B_R^{N,Q}\subseteq B_R^\alpha$ and $\mathcal L^S_{N,Q}B_R^{N,Q}\subseteq B_R^\alpha$.
\end{proposition}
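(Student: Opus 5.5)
The identity follows from two facts. First, $\mathcal S_N$ is an isometry from $(\bR^N,\|\cdot\|_{\ell^2_\alpha})$ onto $(E_N,\|\cdot\|_{X^\alpha})$ (Definition~\ref{def:discrete_Galerkin_operators}). Second, the norm is convex, so its maximum along a segment is attained at an endpoint.

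Fix $t\in[t_q,t_{q+1}]$ and write $t=(1-\mu)t_q+\mu t_{q+1}$ with $\mu\in[0,1]$. By definition of $\mathcal J_Q$,
$$(\mathcal L_{N,Q}a)(t)=(1-\mu)\mathcal S_Na^q+\mu\,\mathcal S_Na^{q+1}=\mathcal S_N\left((1-\mu)a^q+\mu a^{q+1}\right).$$
Using the isometry and the triangle inequality in $\ell^2_\alpha$,
$$\|(\mathcal L_{N,Q}a)(t)\|_{X^\alpha}\le(1-\mu)\|a^q\|_{\ell^2_\alpha}+\mu\|a^{q+1}\|_{\ell^2_\alpha}\le\max_{0\le r\le Q}\|a^r\|_{\ell^2_\alpha}.$$
Taking the supremum over $t\in[0,T]$ gives "$\le$". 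For "$\ge$", evaluate at the nodes: $(\mathcal L_{N,Q}a)(t_q)=\mathcal S_Na^q$, whose $X^\alpha$ norm is exactly $\|a^q\|_{\ell^2_\alpha}$. Hence the supremum is at least the maximum over $q$. We also need $\mathcal L_{N,Q}a\in C([0,T];E_N)$, and this holds because $\mathcal L_{N,Q}a$ is piecewise linear and continuous in a finite-dimensional space.

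For the inclusions, let $a\in B_R^{N,Q}$. The identity gives $\|\mathcal L_{N,Q}a\|_{X^\alpha_T}\le R$, so $\mathcal L_{N,Q}a\in B_R^\alpha$.

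For $\mathcal L^S_{N,Q}$, on $[t_q,t_{q+1})$ we have
$$\|(\mathcal L^S_{N,Q}a)(t)\|_{X^\alpha}=\|S_N(t-t_q)a^q\|_{\ell^2_\alpha}\le\|a^q\|_{\ell^2_\alpha}\le R.$$
This uses that $S_N(\tau)=e^{-\tau\Lambda_N}$ acts diagonally with factors $e^{-\lambda_k\tau}\le1$, so it contracts $\ell^2_\alpha$ (equivalently, the contractivity in Assumption~\ref{assumption:semigroup_regular}). At $t=T$ the value is $\mathcal S_Na^Q$, whose norm is at most $R$.

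The one point needing care is that membership in $B_R^\alpha\subset X^\alpha_T$ requires continuity in time. The semigroup reconstruction jumps at the nodes, since $S_N(\Delta t)a^q\ne a^{q+1}$ in general. So $\mathcal L^S_{N,Q}a$ is only piecewise continuous: on each $[t_q,t_{q+1})$ it is continuous because $\tau\mapsto e^{-\tau\lambda_k}$ is continuous on finitely many modes. I would therefore read the inclusion for $\mathcal L^S_{N,Q}$ as the uniform bound $\sup_t\|(\mathcal L^S_{N,Q}a)(t)\|_{X^\alpha}\le R$, which is what the later estimates use. If strict membership in $C([0,T];X^\alpha)$ is needed, it holds once $a$ is consistent with the semigroup at the nodes. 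Apart from this caveat, the argument is routine.
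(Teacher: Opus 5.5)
Your argument is correct and is essentially the paper's proof: the triangle inequality on each segment combined with the isometry $\mathcal S_N$ gives the upper bound, evaluation at the nodes gives the lower bound, and the $\mathcal L^S_{N,Q}$ bound follows from the contractivity of $S_N(\tau)$ on $\ell^2_\alpha$, as noted in Definition~\ref{def:semigroup_lift}. Your caveat is also correct and goes beyond the paper: $\mathcal L^S_{N,Q}a$ jumps at $t_{q+1}$ unless $a^{q+1}=S_N(\Delta t)a^q$, so the paper's claim that it lies in $C([0,T];E_N)$ is too strong, and the second inclusion should be read as the sup-norm bound, which is all that later results such as Lemma~\ref{lem:interpolation} actually use.
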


\noindent The proof is given in Appendix~\ref{sec:app_discrete}. In particular, the assumptions made on $B_R^\alpha$ apply to lifted discrete sequences, which makes parameter sharing valid.

\begin{corollary}[Discrete Input Set]
\label{cor:inputs_in_K}
Let $a\in B_R^{N,Q}$ and let $K$ be the compact set of Assumption~\ref{assumption:structured_f}. Then
$$\left((\mathcal S_N^ha^q)_i,\ (\nabla_h\mathcal S_N^ha^q)_i\right)\in K
\qquad\text{for all }q=0,\ldots,Q\text{ and }i=1,\ldots,M .$$
\end{corollary}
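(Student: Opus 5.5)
The plan is to reduce the claim to the embedding Assumption~\ref{assumption:regularity_embedding} applied to one lifted Galerkin function. Fix $a\in B_R^{N,Q}$ and a time level $q$. Set $v:=\mathcal S_Na^q\in E_N$, the continuous Galerkin function synthesized from the coefficient vector.

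First, I would identify the grid values with point evaluations of $v$. By Definition~\ref{def:discrete_Galerkin_operators}, $(\mathcal S_N^h a^q)_i=\sum_k a^q_k e_k(x_i)=v(x_i)$. Likewise $(\nabla_h\mathcal S_N^ha^q)_i=\nabla v(x_i)$, since $\nabla_h\mathcal S_N^h=\mathcal I_h^{\mathrm{grad}}\mathcal S_N$. So the pair in question is exactly $(v(x_i),\nabla v(x_i))$ with $x_i\in\Omega$. These evaluations make sense because $E_N\subset X^\alpha\hookrightarrow C^1(\overline\Omega)$.

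Second, I would show that $v$, viewed as a constant or lifted trajectory, lies in the ball $B_R^\alpha$ that defines $K$. The isometry $\|\mathcal S_Na\|_{X^\alpha}=\|a\|_{\ell^2_\alpha}$ gives $\|v\|_{X^\alpha}=\|a^q\|_{\ell^2_\alpha}\le R$. The cleanest route uses Proposition~\ref{prop:discrete_ball_containment}: the trajectory $w:=\mathcal L_{N,Q}a$ belongs to $B_R^\alpha$ and satisfies $w(t_q)=\mathcal S_Na^q=v$, because piecewise-linear interpolation reproduces the nodal values. Alternatively, one can take the constant trajectory $\widetilde v\equiv v$, whose $X_T^\alpha$ norm equals $\|v\|_{X^\alpha}\le R$.

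Third, I would conclude by the definition of $K$ in Assumption~\ref{assumption:structured_f}. $K$ is the closure of the set of pairs $(u(t,x),\nabla u(t,x))$ over $u\in B_R^\alpha$, $t\in[0,T]$ and $x\in\overline\Omega$. Taking $u=w$, $t=t_q$ and $x=x_i$ gives the pair $(v(x_i),\nabla v(x_i))$, which therefore lies in this set and hence in $K$. The argument is uniform in $q$ and $i$.

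I do not expect a real obstacle. The only points needing care are:
\begin{enumerate}
\item the identification of the sampled gradient with the true gradient of the Galerkin function at the nodes, which is immediate from the definitions;
\item the vector-valued case $d>1$, where every operator acts component-wise and the same argument applies.
\end{enumerate}
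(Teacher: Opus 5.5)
Your proposal is correct and follows essentially the same route as the paper: the paper lifts $a$ to $u_a:=\mathcal L_{N,Q}a\in B_R^\alpha$ via Proposition~\ref{prop:discrete_ball_containment}, uses $u_a(t_q)=\mathcal S_Na^q$ to identify the sampled pair with $(u_a(t_q,x_i),\nabla u_a(t_q,x_i))$, and concludes from the definition of $K$. Your alternative via the constant trajectory $\widetilde v\equiv\mathcal S_Na^q$ works just as well.
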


\noindent The proof is given in Appendix~\ref{sec:app_discrete}. Thus a network trained once on $K$ is evaluated only on $K$, at every space-time location and every Picard iterate, uniformly in $N$, $h$, $Q$ and $n$.

\subsection{Spatial Sampling and Learned Quadrature}
\label{sec:quadrature}

In the ideal setting, we would choose the quadrature and sampling to match the spectral method. However, we assume that the model is learned and thus we do not have control over the weights and associated sampling points. In this section, we place some assumptions in order to obtain reasonable approximations. 

\begin{definition}[Negative-order Spaces]
\label{def:negative_spaces}
For $s>0$, let $X^{-s}$ be the completion of $H$ under
$$\|u\|_{X^{-s}}:=\|L^{-s/2}u\|_H=\left(\sum_{k=1}^\infty\lambda_k^{-s}|\langle u,e_k\rangle_H|^2\right)^{1/2}.$$
Equivalently, $X^{-s}$ is the dual of $X^s$, and $|\langle u,v\rangle|\le\|u\|_{X^s}\|v\|_{X^{-s}}$ for $u\in X^s$, $v\in X^{-s}$.
\end{definition}

The spectral computation behind Assumption~\ref{assumption:semigroup_regular} applies directly for all real $r\ge s$, since it only uses the multiplier $\lambda^{r-s}e^{-2\lambda t}$. Thus $\|S(t)v\|_{X^r}\le C_{r-s}t^{-(r-s)/2}\|v\|_{X^s}$ holds on negative-order spaces and, in particular,
$$\|S(t)v\|_{X^{-\beta}}\le C_{\alpha-\beta} t^{-(\alpha-\beta)/2} \|v\|_{X^{-\alpha}}, \qquad v\in X^{-\alpha}.$$

\begin{assumption}[Uniform Quadrature Stability]
\label{assumption:quadrature_stability}
Let $\mathcal A\subset\mathbb N\times(0,\infty)$ be the set of admissible pairs $(N,h)$. For each $(N,h)\in\mathcal A$, let $\{x_i\}_{i=1}^{M(h)}\subset\overline{\Omega}$ be the spatial sampling locations and $\omega_i^{N,h}\ge0$ the weights, with $\sum_i\omega_i^{N,h}\le C_{\mathrm{mass}}$ independent of $(N,h)$. The quadrature rule is written as $Q_{N,h}(v,\phi):=\sum_i\omega_i^{N,h}v(x_i)\phi(x_i)$. Assume there are constants $C^0_{\mathrm{stab}},C^\beta_{\mathrm{stab}},C^\alpha_{\mathrm{stab}}>0$, independent of $(N,h)\in\mathcal A$, such that:
\begin{enumerate}
\item[(i)] For every $\phi\in E_N$, $Q_{N,h}(\phi,\phi)\le C^0_{\mathrm{stab}}\|\phi\|_H^2$. When $f$ depends on $\nabla u$, we also assume $\sum_i\omega_i^{N,h}|\nabla\phi(x_i)|^2\le C^0_{\mathrm{stab}}\|\nabla\phi\|_H^2$;
\item[(ii)] For $\beta>d_x/2$ and for $s\in\{\beta,\alpha\}$
$$\left|Q_{N,h}(v,\phi)\right|\le C^s_{\mathrm{stab}}\|v\|_{X^s}\|\phi\|_{X^{-s}}$$
for every $v\in X^s\cap C(\overline\Omega)$ and every $\phi\in E_N$.
\end{enumerate}
\end{assumption}

We write $C_{\mathrm{stab}}:=\max_s C^s_{\mathrm{stab}}$. For any grid function $v=(v_i)_{i=1}^M$, we also write $Q_{N,h}(v,\phi):=\sum_i\omega_i^{N,h}v_i\,\phi(x_i)$. Since the weights are nonnegative, the Cauchy-Schwarz inequality and (i) give, for every $\phi\in E_N$,
\begin{equation}
\left|Q_{N,h}(v,\phi)\right|\le(C^0_{\mathrm{stab}})^{1/2}\|v\|_{\omega}\|\phi\|_H,
\qquad
\|v\|_\omega:=\left(\sum_i\omega_i^{N,h}|v_i|^2\right)^{1/2}\le C_{\mathrm{mass}}^{1/2}\max_i|v_i| .
\label{eqn:sampling_pairing}
\end{equation}
Assumption~\ref{assumption:quadrature_stability}(ii) with $s=\alpha$ also implies that the discrete encoder is bounded on $X^\alpha$, uniformly in the discretization, shown in the following lemma.

\begin{lemma}[Discrete Encoder]
\label{lem:encoder_bounded}
Let $(N,h)\in\mathcal A$ and $\Pi_{N,h}:=\mathcal S_N\mathcal C_N^h\mathcal I_h:X^\alpha\to E_N$. If Assumptions~\ref{assumption:regularity_embedding} and \ref{assumption:quadrature_stability} hold, then
$$\left\|\mathcal C^h_N\mathcal I_hv\right\|_{\ell^2_\alpha}=\left\|\Pi_{N,h}v\right\|_{X^\alpha}\le C_{\mathrm{stab}}^\alpha\|v\|_{X^\alpha},$$
for every $v\in X^\alpha$.
\end{lemma}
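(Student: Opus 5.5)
The plan is to prove the norm identity first and then get the bound by duality, testing against the dual norm of $X^{-\alpha}$ restricted to $E_N$.

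For the identity, $\mathcal S_N$ is an isometry from $(\bR^N,\|\cdot\|_{\ell^2_\alpha})$ onto $(E_N,\|\cdot\|_{X^\alpha})$ (Definition~\ref{def:discrete_Galerkin_operators}). Since $\Pi_{N,h}v=\mathcal S_N(\mathcal C_N^h\mathcal I_hv)$, the equality $\|\mathcal C^h_N\mathcal I_hv\|_{\ell^2_\alpha}=\|\Pi_{N,h}v\|_{X^\alpha}$ is immediate. Assumption~\ref{assumption:regularity_embedding} guarantees $v\in C(\overline\Omega)$, so the sampling $\mathcal I_hv$ makes sense.

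For the bound, write $c:=\mathcal C^h_N\mathcal I_hv$. By definition $c_k=\sum_i\omega_iv(x_i)e_k(x_i)=Q_{N,h}(v,e_k)$. I express the $\ell^2_\alpha$ norm as a supremum over test vectors $b\in\bR^N$:
\[
\|c\|_{\ell^2_\alpha}=\sup_{b\neq0}\frac{\sum_k c_kb_k}{\|b\|_{\ell^2_{-\alpha}}},\qquad \|b\|_{\ell^2_{-\alpha}}^2:=\sum_{k\le N}\lambda_k^{-\alpha}b_k^2 .
\]
This is the weighted Cauchy--Schwarz duality, with equality attained at $b_k=\lambda_k^\alpha c_k$.

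For a fixed test vector, set $\phi:=\mathcal S_Nb\in E_N$. Bilinearity of the quadrature gives $\sum_kc_kb_k=Q_{N,h}(v,\phi)$. The spectral definition of the negative norm (Definition~\ref{def:negative_spaces}) gives $\|\phi\|_{X^{-\alpha}}=\|b\|_{\ell^2_{-\alpha}}$. Assumption~\ref{assumption:quadrature_stability}(ii) with $s=\alpha$ applies because $v\in X^\alpha\cap C(\overline\Omega)$ and $\phi\in E_N$, and it yields
\[
|Q_{N,h}(v,\phi)|\le C^\alpha_{\mathrm{stab}}\|v\|_{X^\alpha}\|\phi\|_{X^{-\alpha}} .
\]
Taking the supremum over $b$ gives $\|c\|_{\ell^2_\alpha}\le C^\alpha_{\mathrm{stab}}\|v\|_{X^\alpha}$.

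There is no real obstacle here. The only point needing care is to match the dual pairing exactly, checking that the $\lambda_k$ weights in $\ell^2_\alpha$ and $\ell^2_{-\alpha}$ pair correctly. For vector-valued $u$, the same argument runs componentwise.
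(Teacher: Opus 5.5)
Your proof is correct and follows essentially the paper's argument. You get the identity from the isometry of $\mathcal S_N$, and the bound from duality of $X^\alpha$ against $X^{-\alpha}$ on $E_N$ combined with Assumption~\ref{assumption:quadrature_stability}(ii) at $s=\alpha$. The only difference is cosmetic: you write the duality out in coefficient space via $\phi=\mathcal S_N b$, whereas the paper takes the supremum directly over $\phi\in E_N$.
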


\noindent The proof is given in Appendix~\ref{sec:app_discrete}.

\begin{assumption}[Uniform Quadrature Consistency]
\label{assumption:quadrature_consistency}
There exist $r>0$ and $C_{\mathrm{quad}}>0$, independent of $(N,h)\in\mathcal A$, such that
$$\left\|\mathcal C_N^h\mathcal I_h v-\mathcal C_Nv\right\|_{\ell_s^2}\leq C_{\mathrm{quad}}h^r\|v\|_{X^{s+r}},$$
for $s\in\{\alpha,\beta\}$ and every $v\in X^{s+r}$. We require that
\begin{equation}
r\ \le\ \gamma_{\mathrm s}-\alpha ,
\label{eqn:quad_order_cap}
\end{equation}
so that the right-hand side is finite for the regularity in Lemma~\ref{lem:derived_regularity}.
\end{assumption}

The limit imposed by \eqref{eqn:quad_order_cap} come from the regularity lemmas. Without the bootstrap the limit is set by $\gamma<\beta+2$, and with the bootstrap it is set by the data and the law (see Remark~\ref{rem:gamma_free} and Lemma~\ref{lem:bootstrap}). We will show that this makes the exponent $d_x/r$ of Theorem~\ref{thm:realization} at least $d_x/(\gamma_{\mathrm s}-\alpha)$.

\begin{remark}[A uniformly stable and consistent quadrature]
\label{rem:stable_quadrature_example}
On the torus $\Omega=\mathbb T^{d_x}$ take $L=-\nu\Delta+c_0I$ with $\nu,c_0>0$. The term $c_0u$ can be absorbed into the constitutive law. The eigenfunctions are the standard trigonometric functions with $\lambda_k=\nu|k|^2+c_0$. The retained modes form a ball $|k|\le R$, which is in the box $|k|_\infty\le\Lambda$ with $\Lambda\le R$. On the uniform grid with $M_1\ge2\Lambda+1$ points in each spatial direction, the trapezoidal rule is exact on products of these retained modes. Therefore, $\mathcal C^h_N\mathcal S^h_N=I$, (i) holds with constant $1$ (also for the gradients inequality), (ii) follows from the embedding $H^\beta\hookrightarrow C(\mathbb T^{d_x})$, and Assumption~\ref{assumption:quadrature_consistency} holds for every $r>0$. The same holds on the square $(0,\pi)^{d_x}$ with Dirichlet conditions where the eigenfunctions are the sine modes on the interior grid $x_j=\pi j/M_1$, as long as $M_1\ge\Lambda+1$. Since $\lambda_N\ge\nu R^2\ge\nu\Lambda^2$ for the largest retained $|k|=R$, the resolution condition $h\lambda_N^{1/2}\le c_{\mathrm{res}}$ implies the scale $c_{\mathrm{res}}\le\frac{2\pi}{3}\nu^{1/2}$ on the torus or $c_{\mathrm{res}}\le\frac{\pi}{2}\nu^{1/2}$ on the square.
\end{remark}
For a second-order operator $L$, the admissible discretizations will impose a resolution condition $h\lambda_N^{1/2}\le c_{\mathrm{res}}$. In two dimensions, by Weyl's law \cite{weyl1912asymptotische} we expect $\lambda_N\asymp N$ and thus $N\lesssim h^{-2}$. It agrees with the parameter choices of Theorem~\ref{thm:main} precisely when $r\le\gamma_{\mathrm s}-\alpha$.

\subsection{Contraction and Consistency of the Discrete Maps}
\label{sec:discrete_contraction}

We need to control the loop depth of the Picard iteration in the discrete setting, but contractive bounds on the neural Picard map are not available. The neural approximations are in the sup-norm while contractive or fixed-point methods require a Lipschitz bound. Instead, we show contraction of the fully discrete map and consistency between the neural approximation and the fully discrete one.

\begin{theorem}[Fully discrete fixed point]
\label{thm:fully_discrete_fixed_point}
Let $(N,h)\in\mathcal A$ and $Q\in\mathbb N$, and set
$\Delta t=T/Q$ and $t_q=q\Delta t$ for $q=0,\ldots,Q$.
Assume the conditions of Proposition~\ref{prop:contraction} on $B_R^\alpha$ hold, and set $\theta:=1-\frac{\alpha-\beta}{2}>0$.
Assume, in addition, that the spatial quadrature satisfies
Assumption~\ref{assumption:quadrature_stability}. Define $F_R:=\sup_{u\in B_R^\alpha} \|f(u,\nabla u)\|_{X_T^\beta}$ and let
$a_0^h:=\mathcal C_N^h\mathcal I_hu_0$ be in the discrete ball of radius $R$.

Then there exists a constant $C>0$, depending only on the semigroup estimate and on the difference $\alpha-\beta$, but independent of $N$, $h$, and $Q$, such that, for every $a\in B_R^{N,Q}$,
$$\|\Psi_{N,Q}^h(a)\|_{X_{N,Q}^\alpha} \leq \|a_0^h\|_{\ell_\alpha^2}+C C_{\mathrm{stab}}F_RT^\theta,$$
and for every $a,b\in B_R^{N,Q}$,
$$\|\Psi_{N,Q}^h(a)-\Psi_{N,Q}^h(b)\|_{X_{N,Q}^\alpha}
\leq \kappa\|a-b\|_{X_{N,Q}^\alpha},$$
with contraction constant $\kappa:=C C_{\mathrm{stab}}L_RT^\theta$, independent of $N$, $h$, and $Q$.

If $ \|a_0^h\|_{\ell_\alpha^2} + C C_{\mathrm{stab}}F_RT^\theta \leq R$, then $\Psi_{N,Q}^h(B_R^{N,Q}) \subseteq B_R^{N,Q}$. In addition, if $\kappa<1$, then there exists a unique $a_{N,Q}^h\in B_R^{N,Q}$ satisfying $a_{N,Q}^h=\Psi_{N,Q}^h(a_{N,Q}^h)$, i.e., a unique discrete fixed point.
\end{theorem}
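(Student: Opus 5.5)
The plan is to work entirely in coefficient space, using that $\mathcal S_N$ is an isometry from $\ell^2_s$ onto $E_N$. By Proposition~\ref{prop:discrete_ball_containment}, the $X^\alpha_{N,Q}$ norm is the maximum over nodes of the $\ell^2_\alpha$ norms. Hence it suffices to bound, for each $q$, the term $\|(\Psi^h_{N,Q}(a))^q\|_{\ell^2_\alpha}$ and the differences at node $q$.

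The first term $S_N(t_q)a_0^h$ is handled by contractivity of the semigroup on $\ell^2_\alpha$, which gives $\|a_0^h\|_{\ell^2_\alpha}$. For the sum, write $G^r:=\mathcal F^h(\mathcal S_N^h a^r,\nabla_h\mathcal S_N^h a^r)$. Then
\[
\Big\|\Delta t\sum_{r<q}S_N(t_q-t_r)\mathcal C_N^hG^r\Big\|_{\ell^2_\alpha}\le \Delta t\sum_{r<q}\|S_N(t_q-t_r)\mathcal C_N^hG^r\|_{\ell^2_\alpha}.
\]
The smoothing estimate of Assumption~\ref{assumption:semigroup_regular} applies mode-wise, so $\|S_N(\tau)c\|_{\ell^2_\alpha}\le C_{\alpha-\beta}\tau^{-\sigma}\|c\|_{\ell^2_\beta}$.

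The key step is a uniform bound $\|\mathcal C_N^hG^r\|_{\ell^2_\beta}\le C_{\mathrm{stab}}\|f(v,\nabla v)\|_{X^\beta}$ with $v=\mathcal S_Na^r$. The argument goes by duality. Write $\|c\|_{\ell^2_\beta}=\sup\{\langle c,\phi\rangle: \|\mathcal S_N\phi\|_{X^{-\beta}}\le1\}$ with $\phi$ a coefficient vector. Then $\langle \mathcal C_N^hG^r,\phi\rangle=Q_{N,h}(f(v,\nabla v),\mathcal S_N\phi)$, and Assumption~\ref{assumption:quadrature_stability}(ii) with $s=\beta$ yields the bound. Continuity of $f(v,\nabla v)$ holds because $v\in E_N\subset X^\alpha\hookrightarrow C^1$ and $f$ is continuous on $K$, using Corollary~\ref{cor:inputs_in_K}. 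Since $\mathcal S_Na^r\in B^\alpha_R$ as a constant trajectory, this is at most $C_{\mathrm{stab}}F_R$.

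The Riemann sum $\Delta t\sum_{r<q}(t_q-t_r)^{-\sigma}$ is dominated by $\int_0^{t_q}s^{-\sigma}ds=t_q^\theta/\theta$, because $s^{-\sigma}$ is decreasing and $t_q-t_r\ge$ every point in $[t_q-t_{r+1},t_q-t_r]$. Together these give the invariance bound with $C=C_{\alpha-\beta}/\theta$, independent of $N,h,Q$.

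The contraction follows identically with $G^r_a-G^r_b$. The same duality bound gives $C_{\mathrm{stab}}\|f(v_a,\nabla v_a)-f(v_b,\nabla v_b)\|_{X^\beta}$. The pointwise Lipschitz bound \eqref{eqn:point-wise_lipschitz} gives $L_R\|a^r-b^r\|_{\ell^2_\alpha}\le L_R\|a-b\|_{X^\alpha_{N,Q}}$, hence $\kappa=CC_{\mathrm{stab}}L_RT^\theta$. Under the stated radius condition $\Psi^h_{N,Q}$ maps the closed set $B_R^{N,Q}$, which is complete as a finite-dimensional closed set, into itself. With $\kappa<1$, Banach's fixed point theorem gives the unique discrete fixed point.

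The main obstacle is the duality step: making sure Assumption~\ref{assumption:quadrature_stability}(ii) applies to the quadrature of the nonlinear grid values. That requires identifying $G^r$ with samples of a continuous $X^\beta$ function and handling the restriction to test functions in $E_N$, which is exactly why the dual norm over $E_N$ coincides with $\ell^2_\beta$.
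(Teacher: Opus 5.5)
Your proof is correct and follows the paper's argument. It uses duality over $E_N$ with Assumption~\ref{assumption:quadrature_stability}(ii) at $s=\beta$, semigroup smoothing, the bound $\Delta t\sum_{r<q}(t_q-t_r)^{-\sigma}\le T^\theta/\theta$, the pointwise Lipschitz estimate, and Banach's theorem on the closed finite-dimensional ball. The only difference is cosmetic: the paper moves $S(\tau)$ onto the test function and uses negative-order smoothing $X^{-\alpha}\to X^{-\beta}$ in a single duality step, whereas you first bound $\|\mathcal C_N^h\mathcal I_h v\|_{\ell^2_\beta}\le C^\beta_{\mathrm{stab}}\|v\|_{X^\beta}$ and then smooth in coefficient space, which is the same estimate by adjointness.
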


The proof is given in Appendix~\ref{sec:app_discrete}. It differs from standard approximation results in numerical analysis in that the quadrature estimates build in the smoothing of the discrete semigroup.

\begin{theorem}[Neural Consistency]
\label{thm:neural_consistency}
Let $\Psi_{N,Q}^h$ and $\Psi_{N,Q,\varepsilon}^h$ denote the fully discrete and neural Picard maps of Definition~\ref{def:fully_discrete_neural_picard}. Assume that the conditions from
Proposition~\ref{prop:contraction}, Assumption~\ref{assumption:structured_f}, and Assumption~\ref{assumption:quadrature_stability} hold, and  that the neural approximation satisfies
$$\sup_{z\in K}
\left\|f(z)-\widehat f_\varepsilon(z)\right\|_{\bR^m}\leq \varepsilon$$
on the compact set $K$ from Assumption~\ref{assumption:structured_f}. Then there exists $C_R>0$, independent of $N$, $h$, $Q$, and
$\varepsilon$, such that for every $a\in B_R^{N,Q}$
$$\left\| \Psi_{N,Q,\varepsilon}^h(a) - \Psi_{N,Q}^h(a) \right\|_{X_{N,Q}^\alpha} \leq C_R \, \varepsilon\, \, \xi_\alpha(N,\Delta t),$$
where the factor is
$$\xi_\alpha(N,\Delta t):=\begin{cases}\min\left\{\lambda_N,\ \Delta t^{-1}\right\}^{\alpha/2-1}, & \alpha>2,\\[2pt] 1+\log\left(1+T\min\{\lambda_N,\ \Delta t^{-1}\}\right), & \alpha=2,\\ 1, & \alpha<2,\end{cases}$$
and $C_R$ may depend on $T$ when $\alpha\le2$.
The same bound holds for the lifted maps $\Phi^h_{N,Q,\varepsilon}[u]-\Phi^h_{N,Q}[u]$ in $X^\alpha_T$ for every $u\in B_R^\alpha$ with $b(u)\in B^{N,Q}_R$.
\end{theorem}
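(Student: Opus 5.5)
The initial data terms cancel in the difference, so for each time level $q$
$$\left(\Psi_{N,Q,\varepsilon}^h(a)-\Psi_{N,Q}^h(a)\right)^q=\Delta t\sum_{r=0}^{q-1}S_N(t_q-t_r)\,\mathcal C_N^h\left[e^r\right],\qquad e^r:=\widehat{\mathcal F}^h_\varepsilon(U^r,P^r)-\mathcal F^h(U^r,P^r),$$
where $U^r=\mathcal S^h_Na^r$ and $P^r=\nabla_h\mathcal S^h_Na^r$. The first step is to use Corollary~\ref{cor:inputs_in_K}, which applies because $a\in B_R^{N,Q}$. It puts every input pair $(U^r_i,P^r_i)$ in $K$. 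Hence $\max_i|e^r_i|\le\varepsilon$ uniformly in $r$, $N$, $h$ and $Q$.

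The second step bounds the coefficient vector $\mathcal C^h_N e^r$ in a weak norm. For each $k$, $(\mathcal C^h_Ne^r)_k=Q_{N,h}(e^r,e_k)$. By \eqref{eqn:sampling_pairing} and duality over $\phi\in E_N$,
$$\|\mathcal C^h_Ne^r\|_{\ell^2_0}=\sup_{\phi\in E_N,\|\phi\|_H\le1}Q_{N,h}(e^r,\phi)\le (C^0_{\mathrm{stab}}C_{\mathrm{mass}})^{1/2}\varepsilon .$$
So each Duhamel increment is an $O(\varepsilon)$ vector in $H$, not in $X^\beta$. The key point is that the network error is only controlled in sup-norm and carries no smoothness. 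This is exactly why the factor $\xi_\alpha$ appears: the smoothing must lift from $X^0$ to $X^\alpha$, which exceeds two derivatives when $\alpha>2$.

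The third step lifts the $H$-bound to $\ell^2_\alpha$ mode by mode. For each eigenvalue $\lambda=\lambda_k\le\lambda_N$, the $\alpha$-weighted size of the sum is controlled by
$$\Delta t\sum_{j=1}^{q}\lambda^{\alpha/2}e^{-\lambda j\Delta t}.$$
It is cleaner to bound the operator norm of the discrete kernel uniformly over $\lambda\le\lambda_N$. Summing squares over $k$ then uses $\sup_k$ of the multiplier times $\sup_r\|\mathcal C^h_Ne^r\|$, combined with Minkowski over $r$. The geometric sum gives
$$\Delta t\sum_{j\ge1}e^{-\lambda j\Delta t}\le \min\{T,\ \Delta t/(e^{\lambda\Delta t}-1)\}\lesssim \min\{T,\lambda^{-1}\},$$
so the bound is $\lambda^{\alpha/2}\min\{T,\lambda^{-1}\}$. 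For $\lambda\le\Delta t^{-1}$ this is $\lesssim\lambda^{\alpha/2-1}$. For $\lambda>\Delta t^{-1}$ the sum is dominated by the first term, $\Delta t\lambda^{\alpha/2}e^{-\lambda\Delta t}\lesssim\Delta t^{1-\alpha/2}$. Together these give $\min\{\lambda_N,\Delta t^{-1}\}^{\alpha/2-1}$ when $\alpha>2$. When $\alpha<2$ the exponent $\alpha/2-1$ is negative, and the bound is $\le C(\lambda_1,T)$. When $\alpha=2$ a dyadic comparison $\Delta t\sum_j\min\{\lambda, (j\Delta t)^{-1}\}$ produces $1+\log(1+T\min\{\lambda_N,\Delta t^{-1}\})$.

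I expect this third step to be the main obstacle. Minkowski over $r$ followed by a uniform multiplier bound does not directly give the $\ell^2$ sum over modes, because the per-mode bound must hold simultaneously for all modes. I will bound $\|\sum_r M_r c^r\|_{\ell^2_\alpha}\le\sum_r\|M_r\|_{\ell^2_0\to\ell^2_\alpha}\|c^r\|_{\ell^2_0}$ with $\|M_r\|=\sup_{k\le N}\Delta t\,\lambda_k^{\alpha/2}e^{-\lambda_k(t_q-t_r)}$. The sum over $r$ of this supremum is $\Delta t\sum_j\sup_{\lambda\le\lambda_N}\lambda^{\alpha/2}e^{-\lambda j\Delta t}\lesssim\Delta t\sum_j\min\{\lambda_N^{\alpha/2},(j\Delta t)^{-\alpha/2}\}$. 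This is the same kind of sum as above and yields the three cases: the tail $j\Delta t\gtrsim\lambda_N^{-1}$ contributes $\sum (j\Delta t)^{-\alpha/2}\Delta t$, which converges at infinity when $\alpha>2$ and gives a log when $\alpha=2$.

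Finally, the node maximum over $q$ is the $X^\alpha_{N,Q}$ norm by Proposition~\ref{prop:discrete_ball_containment}. The lifted statement follows by taking $a=b(u)$, since the lifting is an isometry on nodes.
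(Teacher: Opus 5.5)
Your proposal is correct and, once you drop the per-mode bound in the third step, it is exactly the paper's argument: the duality bound $\|\mathcal C^h_N w\|_{\ell^2_0}\le (C_{\mathrm{mass}}C^0_{\mathrm{stab}})^{1/2}\|w\|_{\ell^\infty}$, then Minkowski over $r$ with the multiplier bound $\min\{\lambda_N^{\alpha/2},C_\alpha\tau^{-\alpha/2}\}$, then the three-case estimate of $\Delta t\sum_{j\le q}\min\{\lambda_N^{\alpha/2},(j\Delta t)^{-\alpha/2}\}$ split at $j\Delta t\approx\lambda_N^{-1}$, using the integrable-at-zero bound $T^{1-\alpha/2}/(1-\alpha/2)$ when $\alpha<2$. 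The only bookkeeping you skip is the factor $\sqrt m$ from applying $\mathcal C^h_N$ componentwise to the $\bR^m$-valued error, which the paper absorbs into $C_R$.
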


\begin{proof}[Proof of Theorem~\ref{thm:neural_consistency}]
For the time points $r=0,\ldots,Q-1$, the error from the neural network approximation is
$$g_\varepsilon^r := \widehat{\mathcal F}_\varepsilon^h \left( \mathcal S_N^h a^r, \nabla_h\mathcal S_N^h a^r \right)
- \mathcal F^h \left( \mathcal S_N^h a^r, \nabla_h\mathcal S_N^h a^r \right). $$
By Corollary~\ref{cor:inputs_in_K}, the inputs to the local constitutive maps live in the compact $K$, and thus 
$$\|g_\varepsilon^r\|_{{\ell}^\infty} :=\max_{1\le i\le M(h)}\left\|(g^r_\varepsilon)_i\right\|_{\bR^m} \leq \varepsilon.$$
Taking the difference and applying the continuous Galerkin operation leads to
$$\begin{aligned}
\mathcal S_N\left(\Psi_{N,Q,\varepsilon}^h(a) - \Psi_{N,Q}^h(a)\right)^q
&= \mathcal S_N \left[ \Delta t \sum_{r=0}^{q-1} S_N(t_q-t_r)
\mathcal C_N^h g_\varepsilon^r\right],
\end{aligned}$$
where the initial value terms match and cancel out. 
Since $\mathcal S_N$ is an isometry from
$\ell_0^2$ to $E_N$ equipped with the $X^0$-norm,
we first estimate $\mathcal C_N^h$ on bounded grid functions. For $w\in\bR^{M(h)}$,
$$\begin{aligned}
\|\mathcal C_N^h w\|_{\ell_0^2} &=
\sup_{\|c\|_{\ell_0^2}=1} \left| \sum_{i=1}^{M(h)} \omega_i^{N,h} w_i(\mathcal S_Nc)(x_i) \right|
\leq \|w\|_{{\ell}^\infty} \left(\sum_{i=1}^{M(h)}\omega_i^{N,h}\right)^{1/2} \sup_{\substack{\phi\in E_N\\\|\phi\|_{X^0}=1}}
\left( \sum_{i=1}^{M(h)} \omega_i^{N,h}|\phi(x_i)|^2 \right)^{1/2},
\end{aligned}
$$
where we set $\phi:=\mathcal S_Nc$, so that $\|\phi\|_{X^0}=\|c\|_{\ell^2_0}$ by orthonormality of the $e_k$.
By the uniform bound on the total mass and the sampling inequality of
Assumption~\ref{assumption:quadrature_stability}(i),
$$\sum_{i=1}^{M(h)} \omega_i^{N,h}|\phi(x_i)|^2 \leq C_{\mathrm{stab}}^0\|\phi\|_{X^0}^2.$$
Thus taking the sup over norm $1$ functions, $\|\mathcal C_N^h w\|_{\ell_0^2} \leq C_{\mathrm{samp}}\|w\|_{{\ell}^\infty},$ where $C_{\mathrm{samp}}:= (C_{\mathrm{mass}}C_{\mathrm{stab}}^0)^{1/2},$
and therefore, since $\mathcal C^h_N$ acts component-wise on the $m$ components of $g^r_\varepsilon$, each of which has ${\ell}^\infty$ norm at most $\varepsilon$,
\[
\|\mathcal C_N^h g_\varepsilon^r\|_{\ell_0^2}
=\left(\sum_{j=1}^m\|\mathcal C^h_N(g^r_\varepsilon)^{(j)}\|_{\ell^2_0}^2\right)^{1/2}
\leq
\sqrt m\,C_{\mathrm{samp}}\,\varepsilon ,
\]
and the $\sqrt m$ factor can be absorbed by $C_R$ in the next steps.

For $\tau>0$, the spectral form of $S_N(\tau)$ gives
$$\begin{aligned}
\left\|
\mathcal S_NS_N(\tau)\mathcal C_N^h g_\varepsilon^r
\right\|_{X^\alpha} &=
\left( \sum_{k=1}^N \lambda_k^\alpha e^{-2\lambda_k\tau} \left|(\mathcal C_N^h g_\varepsilon^r)_k\right|^2 \right)^{1/2}
\\ 
&\leq \min\left\{ \lambda_N^{\alpha/2}, C_\alpha\tau^{-\alpha/2}
\right\} \|\mathcal C_N^h g_\varepsilon^r\|_{\ell_0^2}\\
 &\leq C_R\varepsilon \min\left\{ \lambda_N^{\alpha/2}, \tau^{-\alpha/2}  \right\}.
\end{aligned}
$$
And so the difference is bounded by:
$$\left\|
\mathcal S_N\left(\Psi_{N,Q,\varepsilon}^h(a) - \Psi_{N,Q}^h(a)\right)^q \right\|_{X^\alpha}
\leq C_R\varepsilon\Delta t \sum_{j=1}^{q} \min\left\{ \lambda_N^{\alpha/2}, (j\Delta t)^{-\alpha/2} \right\}.$$
We estimate the discrete sum, first in the setting where $\alpha>2$, so $\alpha/2>1$. If $\lambda_N\Delta t\leq\frac{1}{2}$, set $J:=\left\lfloor\frac{1}{\lambda_N\Delta t}\right\rfloor$, so that $J\ge1$ and $J\geq \frac{1}{2\lambda_N\Delta t}$.
For the terms with $j\leq J$,
$$\Delta t \sum_{j=1}^{\min\{q,J\}} \lambda_N^{\alpha/2}
\leq \Delta t\,J\,\lambda_N^{\alpha/2}
\leq \lambda_N^{\alpha/2-1},$$
where we used $\Delta t J\le\lambda_N^{-1}$. For the remaining terms, note that since $s^{-\alpha/2}$ is monotone, we have the upper bound to the integral by
$$\Delta t
\sum_{j=J+1}^{q} (j\Delta t)^{-\alpha/2}
\leq \int_{J\Delta t}^{\infty}s^{-\alpha/2}\,ds
= \tfrac{2}{\alpha-2}(J\Delta t)^{1-\alpha/2}
\leq C\lambda_N^{\alpha/2-1},$$
using $J\Delta t\ge(2\lambda_N)^{-1}$. Thus, in the first case,
$$\Delta t \sum_{j=1}^{q}
\min\left\{ \lambda_N^{\alpha/2}, (j\Delta t)^{-\alpha/2} \right\}
\leq C\lambda_N^{\alpha/2-1}.$$
If $\lambda_N\Delta t>\frac{1}{2}$, then
$\min\{\lambda_N^{\alpha/2},(j\Delta t)^{-\alpha/2}\}\leq(j\Delta t)^{-\alpha/2}$,
and so
$$\Delta t \sum_{j=1}^{q} (j\Delta t)^{-\alpha/2}
\leq \Delta t^{1-\alpha/2}\sum_{j=1}^{\infty}j^{-\alpha/2}
\leq C\Delta t^{1-\alpha/2},$$
where we used that $\alpha/2>1$ so the sum is bounded.

Both settings give a bound of the form $C\min\{\lambda_N,\Delta t^{-1}\}^{\alpha/2-1}$. In the first setting, $\lambda_N\le(2\Delta t)^{-1}<\Delta t^{-1}$, so $\min\{\lambda_N,\Delta t^{-1}\} = \lambda_N$. In the second setting, $\Delta t^{-1}< 2 \lambda_N$, so $\Delta t^{-1}\le 2 \min\{\lambda_N,\Delta t^{-1}\}$, and since $\alpha/2-1>0$,
$$\Delta t^{1-\alpha/2}=(\Delta t^{-1})^{\alpha/2-1}\le 2^{\alpha/2-1} \min\{\lambda_N,\Delta t^{-1}\}^{\alpha/2-1}.$$ Therefore, in all cases,
$$\Delta t \sum_{j=1}^{q} \min\left\{ \lambda_N^{\alpha/2},
(j\Delta t)^{-\alpha/2} \right\}
\leq C\,\min\left\{\lambda_N,\ \Delta t^{-1}\right\}^{\alpha/2-1},$$
where $C$ depends only on $\alpha$ and is independent of $N$, $h$, $Q$, and $q$. 

For $\alpha<2$, the summand $(j\Delta t)^{-\alpha/2}$ is integrable at the origin, and since it is decreasing, 
$$\Delta t\sum_{j=1}^{q}(j\Delta t)^{-\alpha/2}\le\int_0^Ts^{-\alpha/2}\,ds=\frac{T^{1-\alpha/2}}{1-\alpha/2}.$$ 

For $\alpha=2$ the same splitting applies. If we have $\lambda_N\Delta t\le\frac{1}{2}$, the terms $j\le J$ contribute at most $\Delta t\,J\lambda_N\le1$, and the rest gives 
$$\sum_{j=J+1}^{q}j^{-1} \le \log(q/J) \le \log(2T\lambda_N),$$
since  $q\le T/\Delta t$ and $J\ge(2\lambda_N\Delta t)^{-1}$. If we have $\lambda_N\Delta t>\frac{1}{2}$, the sum is at most $$\sum_{j=1}^qj^{-1} \le 1+\log(T/\Delta t) \le 1+\log\left(2 T \min\{\lambda_N,\Delta t^{-1}\} \right).$$
 In both cases the sum is at most $3\,\xi_2(N,\Delta t)$. Taking the maximum over $q$ and using Definition~\ref{def:lifted_discrete_space} gives the result, where the statement for the lifted maps comes from applying the estimates to $a=b(u)$.
\end{proof}

We kept the minimum of $\lambda_N$ and $\Delta t^{-1}$ instead of bounding it by one term since the bound is sharp (see Proposition~\ref{prop:xi_sharp}) and under the choices of Theorem~\ref{thm:main} the two entries are comparable.
The cases of $\xi_\alpha$ come from the discrete sum in its proof, specifically, the integrability of $t^{-\alpha/2}$ for different $\alpha$'s. The network error acts as a perturbation of the forcing of size $\varepsilon$ at every time level. The discrete Duhamel sum maps it into $X^\alpha$ with the kernel $\min\{\lambda_N^{\alpha/2},\tau^{-\alpha/2}\}$, which for $\alpha>2$ is not integrable near $\tau=0$ with error scaling with $\min\{\lambda_N,\Delta t^{-1}\}^{\alpha/2-1}$. If we measure the error in $X^s$ with $s<2$, then the kernel is integrable and we avoid this issue.

\begin{theorem}[Consistency of the fully discrete mild map]
\label{thm:fully_discrete_consistency}
Let $u\in B_R^\alpha\subset X_T^\alpha$ be the mild solution of $u=\Phi[u]$ with the conditions from Propositions~\ref{prop:contraction} and \ref{prop:instantaneous_regularity}, and let $\Phi_{N,Q}^h$ be the fully discrete mild map. Assume that the spatial discretization satisfies Assumptions~\ref{assumption:quadrature_stability} and \ref{assumption:quadrature_consistency} with $0<r\leq\gamma_{\mathrm s}-\alpha$, that
$$F(t):=f(u(t),\nabla u(t))\in L^\infty\left(0,T;X^\beta\right),
\qquad \|F\|_{\infty,\beta}+[F]_{\zeta}\leq C_R,$$
and, 
$$\sup_{\substack{(N,h)\in\mathcal A\\0<t\leq T}}
t^{\varrho}\,\|f(P_Nu(t),\nabla P_Nu(t))\|_{X^{\beta+r}} \leq C_R,$$
where $\varrho=0$ if $\gamma_{\mathrm s}=\gamma'>\gamma$ or otherwise $\varrho=r/2$.
Define $E_{\mathrm{init}}^h :=\|a_0^h-\mathcal C_N u_0 \|_{\ell_\alpha^2} $. Then there exists $C_R>0$, independent of $N$, $h$ and $Q$, such that
$$\max_{0\leq q\leq Q}
\left \|\left(\Phi_{N,Q}^h[u]\right)(t_q)-u(t_q)\right\|_{X^\alpha}
\leq C_R \left[E_{\mathrm{init}}^h+\|(I-P_N) u_0\|_{X^\alpha} + \lambda_{N+1}^{-(\gamma_{\mathrm s}-\alpha)/2} + h^r + (\Delta t)^{\zeta_{\mathrm t}}\right]=:E_{\mathrm{cons}}.$$
The same bound holds for $\|\Phi_{N,Q}^h[u]-\mathcal J_Qu\|_{X_T^\alpha}$, where $\mathcal J_Qu$ is the piecewise-linear interpolant of the values $u(t_q)$.
\end{theorem}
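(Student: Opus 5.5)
We compare $\Phi^h_{N,Q}[u](t_q)=\mathcal S_N(\Psi^h_{N,Q}(b(u)))^q$ with $u(t_q)=S(t_q)u_0+\mathcal D[F](t_q)$ in coefficient space. Writing $u(t_q)=P_Nu(t_q)+(I-P_N)u(t_q)$ and using $\mathcal C_N S(t)=S_N(t)\mathcal C_N$, the error splits into five pieces: an \emph{initial-data term} $S_N(t_q)(a_0^h-\mathcal C_Nu_0)$; a \emph{tail term} $(I-P_N)u(t_q)$; a \emph{temporal quadrature term} $\mathcal C_N\mathcal D[F](t_q)-\mathcal C_N(\mathcal D_Q[F])^q$; a \emph{spatial quadrature term} $\Delta t\sum_r S_N(t_q-t_r)(\mathcal C_N^h\mathcal I_h-\mathcal C_N)F_N(t_r)$ with $F_N(t):=f(P_Nu(t),\nabla P_Nu(t))$; and a \emph{Galerkin input term} $\Delta t\sum_r S_N(t_q-t_r)\mathcal C_N(F_N(t_r)-F(t_r))$. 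Here we use that $b(u)^r=\mathcal C_N^h\mathcal I_hu(t_r)$, so $\mathcal S_N^h b(u)^r$ is the sampled function $\Pi_{N,h}u(t_r)$ rather than $P_Nu(t_r)$. For this reason I would insert one more intermediate term, comparing $\Pi_{N,h}u(t_r)$ with $P_Nu(t_r)$ in $X^\alpha$ through Assumption~\ref{assumption:quadrature_consistency} with $s=\alpha$, which costs $h^r\|u(t_r)\|_{X^{\alpha+r}}$. This difference is then pushed through $f$ by the pointwise Lipschitz bound \eqref{eqn:point-wise_lipschitz}, after which the discrete Duhamel sum is bounded as in the proof of Theorem~\ref{thm:fully_discrete_fixed_point}.

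The individual estimates go as follows. The initial term is at most $E^h_{\mathrm{init}}$, since $S_N$ is contractive on $\ell^2_\alpha$. For the tail term, apply Proposition~\ref{prop:galerkin_approximation} with the pair $(\gamma_{\mathrm s},\alpha)$ to get $\lambda_{N+1}^{-(\gamma_{\mathrm s}-\alpha)/2}\|u(t_q)\|_{X^{\gamma_{\mathrm s}}}$. This uses Lemma~\ref{lem:bootstrap} when $\gamma_{\mathrm s}=\gamma'$. Otherwise the weighted bound $t^{\zeta}\|u(t)\|_{X^\gamma}\le R_\gamma$ of Lemma~\ref{lem:derived_regularity} degenerates near $t=0$, and there I would instead write $u(t)=S(t)u_0+\mathcal D[F](t)$. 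The $S(t)(I-P_N)u_0$ piece gives the $\|(I-P_N)u_0\|_{X^\alpha}$ term, and the Duhamel part lies in $X^\gamma$ uniformly, since $\gamma-\beta<2$ gives $\|\mathcal D[F]\|_{X^\gamma_T}\le CT^{1-\bar\sigma}F_R$. The temporal term is exactly Proposition~\ref{prop:temporal_quadrature}. It gives $(\Delta t)^{\zeta}$ using the bounds $\|F\|_{\infty,\beta}+[F]_\zeta\le C_R$, and it gives $(\Delta t)^{\zeta_{\mathrm t}}$ in the bootstrap case by its second part combined with Lemma~\ref{lem:bootstrap_time}. Because the projection $\mathcal C_N$ is contractive, this bound transfers to coefficients.

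For the Galerkin input term, $\|F_N-F\|_{X^\beta}\le L_R\|(I-P_N)u\|_{X^\alpha}$, and the discrete Duhamel sum maps $X^\beta$ into $X^\alpha$ with kernel $(t_q-t_r)^{-\sigma}$. Since $\sigma<1$, this kernel is summable, and the term reduces to the tail estimate already obtained. The time-weighted tail $t^{-\zeta}$ remains summable because $\sigma+\zeta<1$.

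The spatial quadrature term is where I expect the main difficulty. Assumption~\ref{assumption:quadrature_consistency} with $s=\beta$ gives $\|(\mathcal C_N^h\mathcal I_h-\mathcal C_N)F_N(t_r)\|_{\ell^2_\beta}\le C_{\mathrm{quad}}h^r\|F_N(t_r)\|_{X^{\beta+r}}\le C_Rh^rt_r^{-\varrho}$ by the stated hypothesis. Smoothing $S_N(t_q-t_r)$ from $\ell^2_\beta$ to $\ell^2_\alpha$ contributes $(t_q-t_r)^{-\sigma}$, so the resulting discrete sum $\Delta t\sum_r(t_q-t_r)^{-\sigma}t_r^{-\varrho}$ is bounded by a Beta integral provided $\sigma+\varrho<1$. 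When $\varrho=r/2$ this holds because $r\le\gamma-\alpha$ forces $\sigma+r/2\le\bar\sigma<1$. The $r=0$ summand $t_0^{-\varrho}$ is singular and must be handled separately: I would bound it using $s=\alpha$ together with the initial regularity, or else absorb it into the $\Delta t$ weight, since $\Delta t\,(\Delta t)^{-\varrho}$ is still small. The same care is needed for the sampled-input term at $r=0$, where $\|u(0)\|_{X^{\alpha+r}}$ may be infinite without the bootstrap. There I would replace it by comparing $a_0^h$ with $\mathcal C_Nu_0$, which is precisely $E^h_{\mathrm{init}}$.

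Summing the pieces and taking the maximum over $q$ gives $E_{\mathrm{cons}}$. Since $\mathcal J_Q$ interpolates exactly the nodal values and $\|\mathcal L_{N,Q}a\|_{X^\alpha_T}$ equals the nodal maximum (Proposition~\ref{prop:discrete_ball_containment}), the bound for $\|\Phi^h_{N,Q}[u]-\mathcal J_Qu\|_{X^\alpha_T}$ follows from the nodal estimate, because both functions are piecewise linear with the same nodes.
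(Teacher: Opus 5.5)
Your decomposition is sound, and it handles the high modes differently from the paper. The paper compares $\Phi^h_{N,Q}[u](t_q)$ with $u(t_q)$ directly. It therefore carries three tail-type pieces:
\begin{itemize}
\item the initial tail $S(t_q)(P_N-I)u_0$;
\item an unprojected temporal error;
\item a projection term $\Delta t\sum_l S(t_q-t_l)(P_NF_N-F)(t_l)$.
\end{itemize}
Inside the projection term, the forcing tail $(I-P_N)F$ needs the extra decay $e^{-c\lambda_{N+1}\tau}$, which gives $\lambda_{N+1}^{-\theta}$. The remaining piece $P_N(F_N-F)$ is bounded through the weighted tail $\lambda_{N+1}^{-\zeta}t_l^{-\zeta}$.

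You project onto $E_N$ first, so the forcing tail never appears. All high-mode error is collected in $(I-P_N)u(t_q)$, which you bound uniformly on $[0,T]$ via $u=S(t)u_0+\mathcal D[F]$. That uniform bound lets the Galerkin-input term use the unweighted kernel sum. In addition, your identity $\Pi_{N,h}u_0-P_Nu_0=\mathcal S_N(a_0^h-\mathcal C_Nu_0)$ turns the first sampled-input summand into $E^h_{\mathrm{init}}$. The paper bounds both of these first-node summands crudely by $C_R(\Delta t)^\theta$ and absorbs them using $\zeta<\theta$. Your route is therefore slightly cleaner and leaves fewer $(\Delta t)^\theta$ remainders.

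The step that does not work as written is the first-node ($t_0=0$) summand of the spatial-quadrature term. Neither of your suggested fixes is available.
\begin{itemize}
\item The hypothesis controls $t^{\varrho}\|F_N(t)\|_{X^{\beta+r}}$ only for $t>0$. When $u_0\in X^\alpha$ only, $\|F_N(0)\|_{X^{\beta+r}}$ need not be bounded uniformly in $N$. So $C_Rh^rt_0^{-\varrho}$ cannot be replaced by $C_Rh^r(\Delta t)^{-\varrho}$.
\item Using $s=\alpha$ would require $F_N(0)$ in $X^{\alpha}$ or $X^{\alpha+r}$, whereas only $\|F_N(0)\|_{X^\beta}\le F_R$ is known.
\end{itemize}

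For $\varrho=r/2$, drop consistency for this one summand and use stability instead. Assumption~\ref{assumption:quadrature_stability}(ii) with $s=\beta$, through the duality estimate in the proof of Theorem~\ref{thm:fully_discrete_fixed_point}, gives $\|S_N(t_q)\mathcal C^h_N\mathcal I_hF_N(0)\|_{\ell^2_\alpha}\le CC^\beta_{\mathrm{stab}}t_q^{-\sigma}F_R$. Continuous smoothing gives $\|S_N(t_q)\mathcal C_NF_N(0)\|_{\ell^2_\alpha}\le Ct_q^{-\sigma}F_R$. Hence the summand costs $CF_R\,\Delta t\,t_q^{-\sigma}\le CF_R(\Delta t)^\theta\le CF_RT^{\theta-\zeta}(\Delta t)^{\zeta}$. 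This is admissible because in this case $\zeta_{\mathrm t}=\zeta<\theta$, and it is what the paper does.

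In the bootstrap case $\varrho=0$ this fallback is not enough. There $(\Delta t)^\theta$ is not dominated by $(\Delta t)^{\zeta_{\mathrm t}}$ when $\zeta_{\mathrm t}>\theta$ (for instance $\zeta_{\mathrm t}=1$). You must therefore have the consistency bound at $t=0$ itself, namely $\|F_N(0)\|_{X^{\beta+r}}\le C_R$. The paper gets this from lower semicontinuity of the $X^{\beta+r}$ norm as $t\to0^+$. Alternatively, it follows from Assumption~\ref{assumption:bootstrap} with $s=\alpha+r$ applied at $t=0$, since $\|P_Nu_0\|_{X^{\alpha+r}}\le\|u_0\|_{X^{\alpha+r}}\le C\|u_0\|_{X^{\gamma'}}$.
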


The proof is given in Appendix~\ref{sec:app_discrete}.
The bound for $f(P_Nu,\nabla P_Nu)$ follows from Assumption~\ref{assumption:bootstrap} with $\gamma'\ge\alpha+r$. For $\varrho=0$, this is shown in the proof. For $\varrho=r/2$ it follows from the estimate $\|f(v,\nabla v)\|_{X^{\beta+r}} \le K_R(1+\|v\|_{X^{\alpha+r}})$ for $\|v\|_{X^\alpha}\le R$, i.e., Assumption~\ref{assumption:bootstrap} with $s=\alpha+r$, together with the interpolation inequality and Lemma~\ref{lem:derived_regularity} Part 2:
$$\|P_Nu(t)\|_{X^{\alpha+r}}\le\|u(t)\|_{X^\alpha}^{1-r/(2\zeta)}\|u(t)\|_{X^\gamma}^{r/(2\zeta)}\le R^{1-r/(2\zeta)}R_\gamma^{r/(2\zeta)}\,t^{-r/2}.$$
The weighted norm is needed when we only assume $X^\alpha$, since $\|P_Nu_0\|_{X^{\alpha+r}}$ cannot be controlled near zero.

\section{Total Error and Structural Complexity}
\label{sec:total}

In this section, we put together the prior consistency, contraction, and smoothing estimates to obtain an upper bound on the number of parameters needed to construct a neural network approximation to the solution of the class of semilinear equations considered. Throughout this section, we assume the conditions and results from Theorems~\ref{thm:fully_discrete_fixed_point}, \ref{thm:neural_consistency}, \ref{thm:fully_discrete_consistency}, and Lemma~\ref{lem:derived_regularity} hold. Let
$$b=(b^0,\ldots,b^Q),
\qquad
b^q:=\mathcal C_N^h\mathcal I_h u(t_q),$$
denote coefficients of the exact solution after sampling on the grid (so sampling then transformation).  We use just the contraction of the discrete Picard map $\Psi^h_{N,Q}$, whose constant is $\kappa<1$ and thus by Theorem~\ref{thm:fully_discrete_fixed_point} has a unique fixed point  $a_\infty\in B_R^{N,Q}$. We first record two elementary lemmas.

\begin{lemma}[Reconstruction]
\label{lem:interpolation}
Let $u$ be the mild solution with $\|u\|_{X^\alpha_T}\le R$ and $F_R=\sup_{v\in B_R^\alpha} \|f(v,\nabla v)\|_{X^\beta_T}$, and let $a\in(\bR^N)^{Q+1}$. Then
$$\left\|\mathcal L^S_{N,Q}a-u\right\|_{X^\alpha_T}
\le \underbrace{\max_{0\le q\le Q}\left\|\mathcal S_Na^q-u(t_q)\right\|_{X^\alpha}}_{\text{nodal error}}
 + C\, F_R\,(\Delta t)^{\theta},$$
where $C$ depends only on the difference $\alpha-\beta$. If Assumption~\ref{assumption:bootstrap} holds and $u_0\in X^{\gamma'}$, the term $CF_R(\Delta t)^\theta$ improves to $CK_R(1+R_{\gamma'})(\Delta t)^{\min\{1,\theta+\zeta_{\mathrm s}\}}$.
\end{lemma}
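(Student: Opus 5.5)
Fix $t\in[t_q,t_{q+1})$ and split the error as
$$\mathcal L^S_{N,Q}a(t)-u(t)=S(t-t_q)\bigl(\mathcal S_Na^q-u(t_q)\bigr)+\bigl(S(t-t_q)u(t_q)-u(t)\bigr),$$
using $\mathcal S_NS_N(\tau)=S(\tau)\mathcal S_N$ from Definition~\ref{def:discrete_Galerkin_operators}. Since $S(\tau)$ is a contraction on $X^\alpha$ (Assumption~\ref{assumption:semigroup_regular}), the first term is bounded by $\|\mathcal S_Na^q-u(t_q)\|_{X^\alpha}$, which is at most the nodal error. At $t=T$ only the nodal term is present. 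The whole proof therefore reduces to bounding the second, purely continuous term uniformly in $q$ and $\tau:=t-t_q\in[0,\Delta t)$.

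For the second term I use the mild formulation restarted at $t_q$. By the semigroup property, $u(t)=S(\tau)u(t_q)+\int_{t_q}^{t}S(t-s)g(s)\,ds$ with $g(s)=f(u(s),\nabla u(s))$. Then
$$\|S(\tau)u(t_q)-u(t)\|_{X^\alpha}\le\int_0^{\tau}\|S(\tau-s')g(t_q+s')\|_{X^\alpha}\,ds'\le C_{\alpha-\beta}F_R\int_0^\tau(\tau-s')^{-\sigma}ds'=\frac{C_{\alpha-\beta}}{\theta}F_R\,\tau^{\theta}.$$
This uses the smoothing estimate of Assumption~\ref{assumption:semigroup_regular} with $r-s=\alpha-\beta<2$, together with $\|g(s)\|_{X^\beta}\le F_R$ (Lemma~\ref{lem:derived_regularity}(1)). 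Since $\tau\le\Delta t$, this gives $CF_R(\Delta t)^\theta$ with $C$ depending only on $\alpha-\beta$, and taking the supremum over $t$ finishes the main claim.

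For the improved statement, Lemma~\ref{lem:bootstrap} gives $\sup_s\|g(s)\|_{X^{\gamma'-(\alpha-\beta)}}\le K_R(1+R_{\gamma'})$. Here $\gamma'-(\alpha-\beta)=\beta+2\zeta_{\mathrm s}$. I would rerun the same Duhamel integral, now mapping from $X^{\beta+2\zeta_{\mathrm s}}$ into $X^\alpha$. If $2\zeta_{\mathrm s}\ge\alpha-\beta$, no smoothing is needed: the integrand is bounded by $\|g\|_{X^{\alpha}}\le C\|g\|_{X^{\beta+2\zeta_{\mathrm s}}}$ (embedding, constant via $\lambda_1$), which gives order $\tau^1$. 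Otherwise the kernel singularity is $(\tau-s')^{-(\sigma-\zeta_{\mathrm s})}$, whose integral gives $\tau^{\theta+\zeta_{\mathrm s}}$. Combining the two cases yields the exponent $\min\{1,\theta+\zeta_{\mathrm s}\}$.

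The only mild obstacle is this bookkeeping between the smoothing and embedding regimes, and keeping the constants independent of $N$, $Q$ and $a$. The argument never uses the discrete structure beyond the commutation $S\mathcal S_N=\mathcal S_NS_N$.
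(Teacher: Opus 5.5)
Your proposal is correct and follows the paper's proof essentially line by line: the same split via $S(\tau)\mathcal S_N=\mathcal S_NS_N(\tau)$, contraction of $S(\tau)$ for the nodal part, the mild formulation restarted at $t_q$ for the remainder, and the kernel exponent $(\sigma-\zeta_{\mathrm s})_+$ in the bootstrap case. Your explicit split into the embedding regime ($\zeta_{\mathrm s}\ge\sigma$, with a $\lambda_1$-dependent constant) and the smoothing regime just unpacks the paper's $(\cdot)_+$ notation. The $\lambda_1$ dependence is harmless, because the improved bound only claims a generic constant.
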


\noindent The proof uses the mild formulation starting at each node and is given in Appendix~\ref{sec:app_discrete}. The reconstruction error is order $(\Delta t)^\theta$ rather than $(\Delta t)^{\zeta}$, which is better since $\zeta<\theta$. It does not require additional regularity of $u$ beyond what is given through the mild formulation. Using the bootstrap, it is at most a constant times $(\Delta t)^{\zeta_{\mathrm t}}$.

When the dynamics only yields data in $X^\alpha$, we cannot control parts of the error in the $X^\gamma$ norm (which will be unbounded as $t\to 0^+$). Instead we split the error into a part that is controlled by the semi-group smoothing (an algebraic rate) and a part that measures the tail of the data (no explicit rate). We define the tail of the initial data below.

\begin{definition}[Input Data Tails and Encoding]
\label{def:data_moduli}
For $u_0\in X^\alpha$ and admissible $(N,h)$, let
$$\varepsilon^{\mathrm{tail}}_N(u_0):=\left\|(I-P_N)u_0\right\|_{X^\alpha},
\qquad \varepsilon^{\mathrm{enc}}_{N,h}(u_0):=\sup_{0\le t\le T}\left\|\left(\mathcal C^h_N\mathcal I_h-\mathcal C_N\right)S(t)u_0\right\|_{\ell^2_\alpha}.$$
\end{definition}

The supremum over the entire dynamics $S(t)u_0$ is needed since $\mathcal C^h_N\mathcal I_h$ only commutes with $S(t)$ when the quadrature is exact, which can lead to different decay rates. Both terms in the definition tend to $0$ but they do not admit rates directly. When extra regularity is given, we have $\varepsilon^{\mathrm{tail}}_N(u_0)\le\lambda_{N+1}^{-\zeta_{\mathrm s}}\|u_0\|_{X^{\gamma_{\mathrm s}}}$ if $u_0\in X^{\gamma_{\mathrm s}}$, and $\varepsilon^{\mathrm{enc}}_{N,h}(u_0)\le C_{\mathrm{quad}}h^r\|u_0\|_{X^{\alpha+r}}$ if $u_0\in X^{\alpha+r}$.

\begin{lemma}[Sampling Error of Mild Solution]
\label{lem:sampling}
Let $u\in B^\alpha_R$ be the mild solution with initial $u_0 \in X^\alpha$ and let Assumption~\ref{assumption:quadrature_consistency} hold with $0<r\le\gamma_{\mathrm s}-\alpha$. Then
$$E_{\mathrm{samp}} :=\max_{0\le q\le Q}\left\|\mathcal S_Nb^q-u(t_q)\right\|_{X^\alpha}
\le \varepsilon^{\mathrm{tail}}_N(u_0)+\varepsilon^{\mathrm{enc}}_{N,h}(u_0) + C_R \left(\lambda_{N+1}^{-\zeta_{\mathrm s}}+h^r\right),$$
where $C_R$ depends on $R$, $T$, $F_R$ and the constants of Assumptions~\ref{assumption:semigroup_regular} and \ref{assumption:quadrature_consistency}, and when using bootstrap, it also depends on $R_{\gamma'}$ and $\|u_0\|_{X^{\gamma'}}$. 

In particular, $b\in B^{N,Q}_R$ if $\|u\|_{X^\alpha_T}\le R_0<R$ and $E_{\mathrm{samp}}\le R-R_0$.
\end{lemma}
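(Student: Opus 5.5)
We split the nodal sampling error with the triangle inequality. For each node $t_q$ we write
$$\mathcal S_Nb^q-u(t_q)=\underbrace{\mathcal S_N\left(\mathcal C^h_N\mathcal I_h-\mathcal C_N\right)u(t_q)}_{\text{encoding}}+\underbrace{\left(P_N-I\right)u(t_q)}_{\text{tail}},$$
using $\mathcal S_N\mathcal C_N=P_N$. Since $\mathcal S_N$ is an isometry from $\ell^2_\alpha$ onto $E_N$ with the $X^\alpha$ norm, the first term is measured in $\ell^2_\alpha$. We then insert the mild formula $u(t_q)=S(t_q)u_0+\mathcal D[g](t_q)$, with $g=f(u,\nabla u)$, into both terms. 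The linear part $S(t_q)u_0$ produces exactly the two data quantities of Definition~\ref{def:data_moduli}. For the encoding term this is immediate from the supremum over $t$ in $\varepsilon^{\mathrm{enc}}_{N,h}(u_0)$. For the tail term we use that $(I-P_N)$ commutes with $S(t)$ and that $S(t)$ is contractive, so $\|(I-P_N)S(t_q)u_0\|_{X^\alpha}\le\varepsilon^{\mathrm{tail}}_N(u_0)$.

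It remains to bound the Duhamel part $w(t):=\mathcal D[g](t)$ with an explicit rate. For the tail we apply Proposition~\ref{prop:galerkin_approximation} with the pair $(\gamma_{\mathrm s},\alpha)$, giving $\|(I-P_N)w(t_q)\|_{X^\alpha}\le\lambda_{N+1}^{-\zeta_{\mathrm s}}\|w(t_q)\|_{X^{\gamma_{\mathrm s}}}$. We then need a uniform-in-time bound on $\|w(t)\|_{X^{\gamma_{\mathrm s}}}$ that includes $t\to0$. In the non-bootstrap case $\gamma_{\mathrm s}=\gamma<\beta+2$, and Assumption~\ref{assumption:semigroup_regular} gives $\|w\|_{X^\gamma_T}\le CT^{1-\bar\sigma}F_R$, since $\bar\sigma<1$. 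This is why the Duhamel part, unlike $S(t)u_0$, does not blow up at $t=0$. In the bootstrap case, Lemma~\ref{lem:bootstrap} bounds $g$ in $X^{\gamma'-(\alpha-\beta)}$ uniformly in $t$, and the same Duhamel estimate lifts this by $\alpha-\beta<2$ derivatives to $X^{\gamma'}$. For the encoding term we apply Assumption~\ref{assumption:quadrature_consistency} with $s=\alpha$, obtaining $C_{\mathrm{quad}}h^r\|w(t_q)\|_{X^{\alpha+r}}$. Since $r\le\gamma_{\mathrm s}-\alpha$, we have $\|w\|_{X^{\alpha+r}}\le\lambda_1^{-(\gamma_{\mathrm s}-\alpha-r)/2}\|w\|_{X^{\gamma_{\mathrm s}}}$, which is bounded by the previous step.

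The main obstacle is making sure $w(t)$ is controlled in $X^{\gamma_{\mathrm s}}$ uniformly down to $t=0$, which is what allows a rate without regularity of $u_0$ beyond $X^\alpha$. This works precisely because the singular part $t^{-\zeta}$ from Lemma~\ref{lem:derived_regularity} sits entirely in $S(t)u_0$, which we have already assigned to the data moduli. One detail to check is that $\mathcal I_h w(t_q)$ is well defined; this follows from Assumption~\ref{assumption:regularity_embedding}, since $w(t_q)\in X^\alpha$. Collecting the bounds gives $E_{\mathrm{samp}}\le\varepsilon^{\mathrm{tail}}_N+\varepsilon^{\mathrm{enc}}_{N,h}+C_R(\lambda_{N+1}^{-\zeta_{\mathrm s}}+h^r)$. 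The final claim is then a triangle inequality: $\|b^q\|_{\ell^2_\alpha}=\|\mathcal S_Nb^q\|_{X^\alpha}\le\|u(t_q)\|_{X^\alpha}+E_{\mathrm{samp}}\le R_0+(R-R_0)=R$, so $b\in B^{N,Q}_R$.
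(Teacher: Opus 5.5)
Your proposal is correct and follows the paper's proof essentially step for step. It uses the same encoding/tail split and the same mild-form decomposition: $S(t_q)u_0$ is absorbed into $\varepsilon^{\mathrm{tail}}_N(u_0)$ and $\varepsilon^{\mathrm{enc}}_{N,h}(u_0)$, the Duhamel part is bounded uniformly in $X^{\gamma_{\mathrm s}}$, and the ball inclusion follows from the triangle inequality.

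The only difference is in the bootstrap case. There the paper bounds $\|\mathcal D[g](t)\|_{X^{\gamma'}}\le\|u(t)\|_{X^{\gamma'}}+\|S(t)u_0\|_{X^{\gamma'}}\le R_{\gamma'}+\|u_0\|_{X^{\gamma'}}$ directly, instead of re-applying Duhamel smoothing to the bootstrapped force as you do. This avoids having to justify the Duhamel estimate for a force that is only known to be bounded, not continuous, in $X^{\gamma'-(\alpha-\beta)}$. Your route still works if that step is handled by truncation, as in the paper's proof of Lemma~\ref{lem:bootstrap}.
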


\noindent The proof splits the solution into its linear dynamics and the Duhamel operator, and is given in Appendix~\ref{sec:app_discrete}.

\subsection{Total Error of the Neural Picard Iteration}

We look at the total error induced by the Neural Picard iteration and compare the $n$-iterate to the continuous mild solution.
\begin{theorem}[Total error]
\label{thm:total_error}
Let $u$ be the mild solution with $\|u\|_{X^\alpha_T} \le R_0<R$, and assume the following hold:
\begin{enumerate}
\item[(i)] $\kappa<1$ and $\Psi^h_{N,Q}(B^{N,Q}_R)\subseteq B^{N,Q}_{R_0}$;
\item[(ii)] $E_{\mathrm{samp}}\le R-R_0$, so that $b\in B^{N,Q}_R$ by Lemma~\ref{lem:sampling};
\item[(iii)] $E_{\mathrm{net}}:=C_R\,\varepsilon\,\xi_\alpha(N,\Delta t)$, with $\xi_\alpha$ from Theorem~\ref{thm:neural_consistency}, satisfies
\begin{equation}
\frac{E_{\mathrm{net}}}{1-\kappa}\le R-R_0 .
\label{eqn:margin}
\end{equation}
\end{enumerate}
For $a^0\in B^{N,Q}_{R_0}$, define the exact and neural Picard iterates by $a^{n}:=\Psi^h_{N,Q}(a^{n-1})$,
$a^{n}_\varepsilon:=\Psi^h_{N,Q,\varepsilon}(a^{n-1}_\varepsilon)$, where $a^0_\varepsilon:=a^0$. Then $a^n,a^n_\varepsilon\in B^{N,Q}_R$ for every $n\ge0$, and
$$\left\|\mathcal L^S_{N,Q}a^n_\varepsilon-u\right\|_{X^\alpha_T}
\le \underbrace{\kappa^{n}\left\|a^0 -a_\infty\right\|_{X^\alpha_{N,Q}}}_{\text{iteration}}
+\underbrace{\frac{E_{\mathrm{net}}}{1-\kappa}}_{\text{network}}
+\underbrace{\frac{E_{\mathrm{cons}}+E_{\mathrm{samp}}}{1-\kappa}+E_{\mathrm{samp}}}_{\text{discretization}}
+\underbrace{E_{\mathrm{rec}}}_{\text{reconstruction}},$$
where $E_{\mathrm{cons}}$, $E_{\mathrm{samp}}$ and $E_{\mathrm{rec}}$ are the error terms from Theorem~\ref{thm:fully_discrete_consistency}, Lemma~\ref{lem:sampling} and Lemma~\ref{lem:interpolation}. Consequently there is $C_R>0$, independent of $N$, $h$, $Q$, $n$ and $\varepsilon$, such that
$$\left\|\mathcal L^S_{N,Q}a^n_\varepsilon-u\right\|_{X^\alpha_T}
\le C_R\left[ \kappa^{n} +\varepsilon\,\xi_\alpha(N,\Delta t) +\varepsilon^{\mathrm{tail}}_N(u_0) +\varepsilon^{\mathrm{enc}}_{N,h}(u_0) +\lambda_{N+1}^{-\zeta_{\mathrm s}} +h^{r} +(\Delta t)^{\zeta_{\mathrm t}}
\right].$$
\end{theorem}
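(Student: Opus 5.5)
The plan is a standard perturbed-contraction argument on the discrete ball $B^{N,Q}_R$, with $a_\infty$ as the pivot. First I show by induction that both sequences stay in $B^{N,Q}_R$. For the exact iterates this follows from (i): $a^0\in B^{N,Q}_{R_0}$ and $\Psi^h_{N,Q}(B^{N,Q}_R)\subseteq B^{N,Q}_{R_0}$. For the neural iterates I track the deviation $e^n:=a^n_\varepsilon-a^n$. If $a^{n-1},a^{n-1}_\varepsilon\in B^{N,Q}_R$, then
\[
e^n=\left(\Psi^h_{N,Q,\varepsilon}(a^{n-1}_\varepsilon)-\Psi^h_{N,Q}(a^{n-1}_\varepsilon)\right)+\left(\Psi^h_{N,Q}(a^{n-1}_\varepsilon)-\Psi^h_{N,Q}(a^{n-1})\right).
\]
Theorem~\ref{thm:neural_consistency} bounds the first term by $E_{\mathrm{net}}$, and Theorem~\ref{thm:fully_discrete_fixed_point} bounds the second by $\kappa\|e^{n-1}\|$. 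Hence $\|e^n\|_{X^\alpha_{N,Q}}\le E_{\mathrm{net}}\sum_{j<n}\kappa^j\le E_{\mathrm{net}}/(1-\kappa)$.

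The ball invariance for the neural iterate needs more care than the naive triangle inequality, which gives only $\|a^n_\varepsilon\|\le\|a^n\|+\|e^n\|$. This is the main obstacle. The cleaner route writes $a^n_\varepsilon=\Psi^h_{N,Q}(a^{n-1}_\varepsilon)+(\Psi^h_{N,Q,\varepsilon}-\Psi^h_{N,Q})(a^{n-1}_\varepsilon)$. The first piece lies in $B^{N,Q}_{R_0}$ by (i), and the second has norm at most $E_{\mathrm{net}}\le R-R_0$ by \eqref{eqn:margin}. So $a^n_\varepsilon\in B^{N,Q}_R$ using only the inductive hypothesis, which in turn legitimizes applying Theorem~\ref{thm:neural_consistency} at the next step. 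Since Corollary~\ref{cor:inputs_in_K} was stated on $B^{N,Q}_R$, the network is evaluated only on $K$ throughout.

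Next I split
\[
a^n_\varepsilon-b=(a^n_\varepsilon-a^n)+(a^n-a_\infty)+(a_\infty-b).
\]
The middle term is at most $\kappa^n\|a^0-a_\infty\|$ by contraction, since $a_\infty$ is the fixed point and all iterates lie in the ball.

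For the last term I use the contraction once more:
\[
\|a_\infty-b\|\le\kappa\|a_\infty-b\|+\|\Psi^h_{N,Q}(b)-b\|,
\]
which is valid because $b\in B^{N,Q}_R$ by (ii). The residual $\|\Psi^h_{N,Q}(b)-b\|$ is exactly $\max_q\|\Phi^h_{N,Q}[u](t_q)-\mathcal S_Nb^q\|_{X^\alpha}$. I split it as the consistency term $\max_q\|\Phi^h_{N,Q}[u](t_q)-u(t_q)\|\le E_{\mathrm{cons}}$ from Theorem~\ref{thm:fully_discrete_consistency} plus $\max_q\|u(t_q)-\mathcal S_Nb^q\|=E_{\mathrm{samp}}$. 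Dividing by $1-\kappa$ gives $(E_{\mathrm{cons}}+E_{\mathrm{samp}})/(1-\kappa)$.

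Finally I apply Lemma~\ref{lem:interpolation} to $a=a^n_\varepsilon$. The nodal error satisfies
\[
\max_q\|\mathcal S_N(a^n_\varepsilon)^q-u(t_q)\|\le\|a^n_\varepsilon-b\|_{X^\alpha_{N,Q}}+E_{\mathrm{samp}},
\]
using the isometry $\|\mathcal S_Nc\|_{X^\alpha}=\|c\|_{\ell^2_\alpha}$ and Proposition~\ref{prop:discrete_ball_containment}. Adding $E_{\mathrm{rec}}$ yields the displayed decomposition.

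For the consolidated bound, $\|a^0-a_\infty\|\le 2R$. I insert Theorem~\ref{thm:fully_discrete_consistency} for $E_{\mathrm{cons}}$, using $E^h_{\mathrm{init}}\le\varepsilon^{\mathrm{enc}}_{N,h}(u_0)$ (the case $t=0$ in Definition~\ref{def:data_moduli}) and $\|(I-P_N)u_0\|_{X^\alpha}=\varepsilon^{\mathrm{tail}}_N(u_0)$. I insert Lemma~\ref{lem:sampling} for $E_{\mathrm{samp}}$, and Lemma~\ref{lem:interpolation} with $(\Delta t)^\theta\le C(\Delta t)^{\zeta_{\mathrm t}}$ for $E_{\mathrm{rec}}$; in the bootstrap case I use the improved exponent $\min\{1,\theta+\zeta_{\mathrm s}\}\ge\zeta_{\mathrm t}$. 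The term $\lambda_{N+1}^{-(\gamma_{\mathrm s}-\alpha)/2}$ equals $\lambda_{N+1}^{-\zeta_{\mathrm s}}$. All constants, including $(1-\kappa)^{-1}$, are independent of $N,h,Q,n,\varepsilon$.
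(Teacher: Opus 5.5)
Your proposal is correct and follows the paper's proof essentially step for step. It uses the same induction for the neural drift $\|a^n_\varepsilon-a^n\|\le E_{\mathrm{net}}/(1-\kappa)$, the same pivot through $a_\infty$ with $\|a_\infty-b\|\le(1-\kappa)^{-1}\|\Psi^h_{N,Q}(b)-b\|\le(E_{\mathrm{cons}}+E_{\mathrm{samp}})/(1-\kappa)$, and the same final application of Lemma~\ref{lem:interpolation}. The one place you depart from the paper is ball invariance of the neural iterates. There the ``naive'' bound $\|a^n_\varepsilon\|\le\|a^n\|+\|a^n_\varepsilon-a^n\|\le R_0+E_{\mathrm{net}}/(1-\kappa)\le R$ is not actually an obstacle: it is exactly what the paper does, and it is what condition (iii) is designed for. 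Your splitting $a^n_\varepsilon=\Psi^h_{N,Q}(a^{n-1}_\varepsilon)+(\Psi^h_{N,Q,\varepsilon}-\Psi^h_{N,Q})(a^{n-1}_\varepsilon)$ is also valid, and it needs only the weaker margin $E_{\mathrm{net}}\le R-R_0$.
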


Condition (i) holds uniformly in $N$, $h$ and $Q$ once we set  $R_0\ge\max\{\|u\|_{X^\alpha_T},\,2C^\alpha_{\mathrm{stab}}\|u_0\|_{X^\alpha}\}$ and $R>R_0$, with possible shrinking of the final time $T$. This can be seen by applying Theorem~\ref{thm:fully_discrete_fixed_point} which needs $\|a^h_0\|_{\ell^2_\alpha}+CC_{\mathrm{stab}}F_RT^\theta\le R_0$ and $CC_{\mathrm{stab}}L_RT^\theta<1$. We have $\|a^h_0\|_{\ell^2_\alpha}\le C^\alpha_{\mathrm{stab}}\|u_0\|_{X^\alpha}\le R_0/2$ by Lemma~\ref{lem:encoder_bounded}. In the final bound, we use that $E^h_{\mathrm{init}}\le\varepsilon^{\mathrm{enc}}_{N,h}(u_0)$ and $\|(I-P_N)u_0\|_{X^\alpha}=\varepsilon^{\mathrm{tail}}_N(u_0)$, while $E_{\mathrm{rec}}$ can be absorbed into $(\Delta t)^{\zeta_{\mathrm t}}$.

\begin{proof}[Proof of Theorem~\ref{thm:total_error}]
We set the norm for the coefficient sequences as $\|\cdot\|_{X^\alpha_{N,Q}}$ of Definition~\ref{def:lifted_discrete_space}, use $\|\cdot\|$ for short.

We want control over the neural discrete map, but will use the non-neural discrete map as a first approximation. First, we can show that the neural iterates remain admissible and bound its closeness to the discrete iterates. Since $a^0\in B^{N,Q}_{R_0}\subseteq B^{N,Q}_R$ and $\Psi^h_{N,Q}$ maps $B^{N,Q}_R$ into $B^{N,Q}_{R_0}$ by Theorem~\ref{thm:fully_discrete_fixed_point}, induction on $n$ shows that $a^n\in B^{N,Q}_{R_0}$ for every $n\ge1$. For the neural iterates, we claim that
\begin{equation}
a^n_\varepsilon\in B^{N,Q}_R
\qquad\text{and}\qquad
\left\|a^n_\varepsilon-a^n\right\|\ \le\ \frac{E_{\mathrm{net}}}{1-\kappa}.
\label{eqn:neural_drift}
\end{equation}
for all $n\geq 0$. For $n=0$ it is true since $a^0_\varepsilon=a^0\in B^{N,Q}_{R_0}\subseteq B^{N,Q}_R$. Next, if \eqref{eqn:neural_drift} holds for $n-1$, then $a^{n-1}_\varepsilon$ and $a^{n-1}$ both lie in $B^{N,Q}_R$, so we can apply the consistency Theorem~\ref{thm:neural_consistency} to $a^{n-1}_\varepsilon$ and the Lipschitz estimate of Theorem~\ref{thm:fully_discrete_fixed_point} to the difference, to get:
\begin{align}
\left\|a^{n}_\varepsilon-a^{n}\right\|
&\le \left\|\Psi^h_{N,Q,\varepsilon}(a^{n-1}_\varepsilon)-\Psi^h_{N,Q}(a^{n-1}_\varepsilon)\right\|
+\left\|\Psi^h_{N,Q}(a^{n-1}_\varepsilon)-\Psi^h_{N,Q}(a^{n-1})\right\| \label{eqn:total_error_eq1}\\
&\le E_{\mathrm{net}} + \kappa\left\|a^{n-1}_\varepsilon-a^{n-1} \right\|
\label{eqn:total_error_eq2}\\
&\le E_{\mathrm{net}} + \frac{\kappa\,E_{\mathrm{net}}}{1-\kappa}
 =\frac{E_{\mathrm{net}}}{1-\kappa},
\label{eqn:total_error_eq3}
\end{align}
where we use iteration definitions, Theorem~\ref{thm:neural_consistency} and Theorem~\ref{thm:fully_discrete_fixed_point} for \eqref{eqn:total_error_eq2}, and the induction hypothesis for \eqref{eqn:total_error_eq3}. Combining the resulting bound with $a^n\in B^{N,Q}_{R_0}$ and the radius condition \eqref{eqn:margin},
$$\left\|a^n_\varepsilon\right\|\ \le\ \left\|a^n\right\|+\left\|a^n_\varepsilon-a^n\right\|\ \le\ R_0+\frac{E_{\mathrm{net}}}{1-\kappa}\ \le\ R_0+(R-R_0)\ =\ R ,$$
which shows the induction and proves \eqref{eqn:neural_drift}.

Next, we look at the non-neural discrete iteration, which converges to its fixed point by Theorem~\ref{thm:fully_discrete_fixed_point}, $\Psi^h_{N,Q}$ using that it is a $\kappa$-contraction on $B^{N,Q}_R$. Specifically, there exists a unique fixed point $a_\infty$ and 
\begin{equation}
\left\|a^n-a_\infty\right\|\ \le\ \kappa^{n}\left\|a^0-a_\infty\right\|\ \le\ 2R\,\kappa^{n},
\label{eqn:total_error_eq4}
\end{equation}
using that both sequences are in $B^{N,Q}_R$.

We can compare the fixed point with the sampled exact solution $b$ (with components $b^q=\mathcal C^h_N\mathcal I_hu(t_q)$). By Lemma~\ref{lem:sampling} and the condition $E_{\mathrm{samp}}\le R-R_0$, we have $b\in B^{N,Q}_R$. Therefore, the contraction estimate from Theorem~\ref{thm:fully_discrete_fixed_point} can be applied to get:
$$\left\|a_\infty-b\right\|
\le \left\|\Psi^h_{N,Q}(a_\infty)-\Psi^h_{N,Q} (b)\right\|
+\left\|\Psi^h_{N,Q}(b)-b\right\|
\le \kappa\left\|a_\infty-b\right\|
+\left\|\Psi^h_{N,Q}(b)-b\right\|,$$
and since $\kappa<1$:
\begin{equation}
\left\|a_\infty-b\right\|
 \le \frac{1}{1-\kappa}\left\|\Psi^h_{N,Q}(b)-b\right\| .
\label{eqn:total_error_eq5}
\end{equation}
The residual measures the difference when sampling the exact solution and using the discrete map, and needs a consistency and sampling bound. Recall that $\mathcal L_{N,Q}\Psi^h_{N,Q}(b)$ and $\mathcal L_{N,Q}b$ are piecewise linear in time, with $\mathcal S_N\Psi^h_{N,Q}(b)^q$ and $\mathcal S_Nb^q$ at the nodes, respectively. Thus, by Proposition~\ref{prop:discrete_ball_containment} the norm of a piecewise linear function is attained at a node and 
\begin{align}
\left\|\Psi^h_{N,Q}(b)-b\right\|
&=\left\|\mathcal L_{N,Q}\Psi^h_{N,Q}(b)-\mathcal L_{N,Q}b\right\|_{X^\alpha_T}
=\left\|\Phi^h_{N,Q}[u]-\mathcal L_{N,Q}b\right\|_{X^\alpha_T}
\label{eqn:total_error_eq6}\\
&\le \left\|\Phi^h_{N,Q}[u]-\mathcal J_Qu\right\|_{X^\alpha_T}+\left\|\mathcal J_Qu-\mathcal L_{N,Q}b\right\|_{X^\alpha_T}
\le E_{\mathrm{cons}}+E_{\mathrm{samp}},
\label{eqn:total_error_eq7}
\end{align}
where the second equality \eqref{eqn:total_error_eq6} uses $\mathcal L_{N,Q}\Psi^h_{N,Q}(b)=\Phi^h_{N,Q}[u]$ from Definition~\ref{def:fully_discrete_neural_picard}, and the first term in \eqref{eqn:total_error_eq7} is bounded by Theorem~\ref{thm:fully_discrete_consistency} and the second by Lemma~\ref{lem:sampling} (both $\mathcal J_Qu$ and $\mathcal L_{N,Q}b$ having  nodal values of $u(t_q)$ and $\mathcal S_Nb^q$).

 The total error can be chained. By Lemma~\ref{lem:interpolation}, the  reconstruction error of $\mathcal L^S_{N,Q}$ is controlled by $E_{\mathrm{rec}}$ and by the nodal error:
\begin{align}
\max_{0\le q\le Q}\left\|\mathcal S_N(a^n_\varepsilon)^q-u(t_q)\right\|_{X^\alpha}
&\le
\left\|a^n_\varepsilon-a^n\right\|
+\left\|a^n-a_\infty\right\|
+\left\|a_\infty-b\right\|
+\max_{0\le q\le Q}\left\|\mathcal S_Nb^q-u(t_q)\right\|_{X^\alpha}
\label{eqn:total_error_eq8}\\
&\le
\frac{E_{\mathrm{net}}}{1-\kappa}
+2R\,\kappa^{n}
+\frac{E_{\mathrm{cons}}+E_{\mathrm{samp}}}{1-\kappa}
+E_{\mathrm{samp}},
\label{eqn:total_error_eq9}
\end{align}
where \eqref{eqn:total_error_eq8} is the triangle inequality and uses that $\|a\|=\max_q\|\mathcal S_Na^q\|_{X^\alpha}$ from Proposition~\ref{prop:discrete_ball_containment}. For equation \eqref{eqn:total_error_eq9}, we bound each term by \eqref{eqn:neural_drift}, \eqref{eqn:total_error_eq4}, \eqref{eqn:total_error_eq5} with \eqref{eqn:total_error_eq7}, and Lemma~\ref{lem:sampling}. Since $1-\kappa\le1$, the sum is at most
$$\frac{E_{\mathrm{net}}+E_{\mathrm{cons}}+2E_{\mathrm{samp}}}{1-\kappa}+2R\kappa^{n},$$
which gives us the main result (after adding the reconstruction term $E_{\mathrm{rec}}$ for Lemma~\ref{lem:interpolation}). 

To get the rates, we use the bounds in Theorem~\ref{thm:fully_discrete_consistency} and Lemma~\ref{lem:sampling}.
\end{proof}

\begin{remark}[Neural Approximation Only Uses Max-Norm Bounds]
\label{rem:only_exact_contracts}
The proofs were written specifically to only require that the exact discrete map be a Lipschitz contraction. The neural networks errors are included through a point-wise consistency estimate (Theorem~\ref{thm:neural_consistency}), so we do not need to control the Lipschitz constant of $\widehat f_\varepsilon$, which would be impractical. This shows that the approximation can inherit smoother control, $X^\alpha$ bounds, from a max-norm controlled network. This is related to the perturbed fixed-point iterations of \cite{feischl2025neural}, where the approximate iterations are assumed to be Lipschitz.
\end{remark}

\subsection{Parameter Complexity Bounds}

\begin{theorem}[Learnable Parameter Count, One Block]
\label{thm:main}
Assume the conditions from Theorem~\ref{thm:total_error}, Assumption~\ref{assumption:structured_f}, Assumption~\ref{assumption:bootstrap} with $\gamma'\ge\alpha+r$, and \eqref{eqn:admissible_refinement}. Set the exponent to $p_{\max}:=\max_{j\in\Vnl}p_j=1/s_{\min}$, where $p_j$ are the node exponents from Theorem~\ref{thm:relu_f} and $s_{\min}$ is the minimal smoothness exponent of a DAG node (where we set $p_{\max}=0$ if there is none). Let $C_R$ be the constant of Theorem~\ref{thm:total_error} and $\varepsilon_0$ the error threshold for Theorem~\ref{thm:relu_f}, and set
$$c_{\mathrm{err}}:=\frac{1}{7C_R},
\qquad
\eta_0:= \min\left\{1,\ \tfrac72(1-\kappa)(R-R_0),\ \frac{\varepsilon_0}{c_{\mathrm{err}}},\ \frac{T^{\zeta_{\mathrm t}}}{c_{\mathrm{err}}}\right\} $$
and $\eta\in(0,\eta_0]$ such that 
\begin{enumerate}
\item[(i)] $n:=\left\lceil\log(1/(c_{\mathrm{err}}\eta))/\log(1/\kappa)\right\rceil$;
\item[(ii)] $N:=\min\left\{N':\ \lambda_{N'+1}\ge(c_{\mathrm{err}}\eta)^{-1/\zeta_{\mathrm s}}\ \text{and}\ \  \varepsilon^{\mathrm{tail}}_{N'}(u_0)\le c_{\mathrm{err}}\eta\right\}$;
\item[(iii)] $h:=2^{-k}$ where $k$ is smallest integer such that $h^r\le c_{\mathrm{err}}\eta$, $(N,h)\in\mathcal A$ and $\varepsilon^{\mathrm{enc}}_{N,h}(u_0) \le c_{\mathrm{err}}\eta$, and $M:=M(h)$;
\item[(iv)] $Q:=\left\lceil T(c_{\mathrm{err}}\eta)^{-1/\zeta_{\mathrm t}}\right\rceil$ and $\Delta t:=T/Q$;
\item[(v)] $\varepsilon:=c_{\mathrm{err}}\eta/\max\{1,\xi_\alpha(N,\Delta t)\}$.
\end{enumerate}
Then the architecture from Definition~\ref{def:loop} with $\widehat f_\varepsilon$ from Theorem~\ref{thm:relu_f} and the spectral weights from Proposition~\ref{prop:spectral_exact}, satisfies $\|\mathcal L^S_{N,Q}a^n_\varepsilon-u\|_{X^\alpha_T}\le\eta$, and
$$\varepsilon=c_{\mathrm{err}}\eta\quad( \text{if \ } \alpha<2),
\qquad
\varepsilon\ge\frac{c_{\mathrm{err}}\eta}{1+\log\left(1+2T(c_{\mathrm{err}}\eta)^{-1/\zeta}\right)}\quad(\text{if \ } \alpha=2),
\qquad
\varepsilon\ge2^{1-\alpha/2}(c_{\mathrm{err}}\eta)^{\chi}\quad(\text{if \ }  \alpha>2).$$
In addition, if $u_0\in X^{\gamma_{\mathrm s}}$ and $\alpha>2$, then $\varepsilon\ge\max\{1,\lambda_1\}^{1-\alpha/2}m_0^{-(\alpha-2)/(2\zeta_{\mathrm s})}(c_{\mathrm{err}}\eta)^{\chi_{\mathrm s}}$, where $\chi_{\mathrm s} := 1+\frac{(\alpha-2)_+}{2\zeta_{\mathrm s}} \le \chi$ and $m_0 := \max\{1,\|u_0\|_{X^{\gamma_{\mathrm s}}}\}$. Thus the depth is bounded by $\dep(\widehat f_\varepsilon)\le C\log\eta^{-1}$ and the parameter count by
$$W(\widehat f_\varepsilon)\le
\begin{cases}
C\,\eta^{-p_{\max}}\log\eta^{-1}, & \alpha<2,\\
C\,\left(\eta^{-1}\log\eta^{-1}\right)^{p_{\max}}\log\eta^{-1}, & \alpha=2,\\
C\,\eta^{-p_{\max}\chi_{\mathrm s}}\log\eta^{-1}, & \alpha>2,
\end{cases}$$
with $C$ depending on the computation graph of $f$, $c_{\mathrm{err}}$ and $T$, and for $\alpha>2$ also on $m_0$ and $\lambda_1$.
\end{theorem}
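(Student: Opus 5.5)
The proof plugs the parameter choices (i)--(v) into the total error bound of Theorem~\ref{thm:total_error}. Each of the seven bracketed terms is then at most $c_{\mathrm{err}}\eta=\eta/(7C_R)$, so the sum is at most $\eta$. First I check the hypotheses of Theorem~\ref{thm:total_error}.

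\textbf{Hypotheses.} Condition (i) is assumed. For (ii), Lemma~\ref{lem:sampling} gives $E_{\mathrm{samp}}\le 4c_{\mathrm{err}}\eta\,C_R$ after using (ii)--(iii). With $\eta\le\eta_0\le\frac72(1-\kappa)(R-R_0)$ this yields $E_{\mathrm{samp}}\le R-R_0$. The margin condition \eqref{eqn:margin} follows the same way from (v), since $E_{\mathrm{net}}=C_R\varepsilon\xi_\alpha\le C_Rc_{\mathrm{err}}\eta=\eta/7$. The condition $\varepsilon\le\varepsilon_0$ holds because $\eta\le\varepsilon_0/c_{\mathrm{err}}$, so Theorem~\ref{thm:relu_f} is applicable. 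The constraint $\eta\le T^{\zeta_{\mathrm t}}/c_{\mathrm{err}}$ ensures $Q\ge1$.

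\textbf{Error terms.} The remaining checks are term by term. From (i), $\kappa^n\le c_{\mathrm{err}}\eta$. From (ii), $\lambda_{N+1}^{-\zeta_{\mathrm s}}\le c_{\mathrm{err}}\eta$, and the tail term is at most $c_{\mathrm{err}}\eta$ directly. From (iii), $h^r\le c_{\mathrm{err}}\eta$ and the encoding term is at most $c_{\mathrm{err}}\eta$. From (iv), $(\Delta t)^{\zeta_{\mathrm t}}\le c_{\mathrm{err}}\eta$. Finally, $\varepsilon\xi_\alpha\le c_{\mathrm{err}}\eta$ by (v). The existence of the minimal $N$ and $k$ uses $\lambda_k\to\infty$, $\varepsilon^{\mathrm{tail}}_N,\varepsilon^{\mathrm{enc}}_{N,h}\to0$, and the admissible-refinement condition \eqref{eqn:admissible_refinement}.

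\textbf{Lower bounds on $\varepsilon$; the main step.} Since $\xi_\alpha$ is increasing in $\min\{\lambda_N,\Delta t^{-1}\}$, I bound that minimum by $\Delta t^{-1}$. From (iv), $\Delta t\ge T(c_{\mathrm{err}}\eta)^{1/\zeta_{\mathrm t}}/2$, because $Q\le 2T(c_{\mathrm{err}}\eta)^{-1/\zeta_{\mathrm t}}$ once that quantity is at least $1$.
\begin{itemize}
\item[] For $\alpha<2$, $\xi_\alpha=1$, so $\varepsilon=c_{\mathrm{err}}\eta$.
\item[] For $\alpha=2$, the logarithmic bound follows directly, using $\zeta\le\zeta_{\mathrm t}$ so that $(c_{\mathrm{err}}\eta)^{-1/\zeta_{\mathrm t}}\le(c_{\mathrm{err}}\eta)^{-1/\zeta}$.
\item[] For $\alpha>2$, I get $\xi_\alpha\le(2/(T(c_{\mathrm{err}}\eta)^{1/\zeta_{\mathrm t}}))^{\alpha/2-1}$, so the exponent on $\eta$ is $1+(\alpha/2-1)/\zeta_{\mathrm t}$. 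For the stated $\chi=1+\frac{\alpha-2}{2\zeta}$, I use $\zeta_{\mathrm t}\ge\zeta$ together with $c_{\mathrm{err}}\eta\le1$. The $T$-dependence is absorbed by normalizing $T\le1$ or into the constant; for the constant $2^{1-\alpha/2}$ exactly I would rather use the $\lambda_N$ branch.
\end{itemize}
The sharper $\chi_{\mathrm s}$ statement comes from the $\lambda_N$ branch. The minimality of $N$ in (ii) gives $\lambda_N<(c_{\mathrm{err}}\eta)^{-1/\zeta_{\mathrm s}}$, unless $N$ is determined by the tail condition. In that case I would use $\varepsilon^{\mathrm{tail}}_{N-1}\le\lambda_N^{-\zeta_{\mathrm s}}\|u_0\|_{X^{\gamma_{\mathrm s}}}$: failure of the tail condition at $N-1$ forces $\lambda_N\le(m_0/(c_{\mathrm{err}}\eta))^{1/\zeta_{\mathrm s}}$. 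The edge case $N=1$ is covered by the factor $\max\{1,\lambda_1\}$. Then $\xi_\alpha\le\lambda_N^{\alpha/2-1}$ gives exactly the factor $m_0^{-(\alpha-2)/(2\zeta_{\mathrm s})}(c_{\mathrm{err}}\eta)^{\chi_{\mathrm s}}$. This bookkeeping of which constraint determines $N$ is the step most likely to need care.

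\textbf{Network size.} I insert these lower bounds on $\varepsilon$ into Theorem~\ref{thm:relu_f}. This gives $\dep\le C\log\varepsilon^{-1}\le C\log\eta^{-1}$ and $W\le C\varepsilon^{-p_{\max}}\log\varepsilon^{-1}$. Since $\log\varepsilon^{-1}\le C\log\eta^{-1}$ in every case, the three displayed bounds follow.
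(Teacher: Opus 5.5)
Your route is the paper's: you check the hypotheses of Theorem~\ref{thm:total_error}, make each of its seven terms at most $c_{\mathrm{err}}\eta$, and bound $\xi_\alpha$ through $\Delta t^{-1}$ for the unconditional bounds and through $\lambda_N$ for the $\chi_{\mathrm s}$ bound. You then feed the resulting lower bounds on $\varepsilon$ into Theorem~\ref{thm:relu_f}. Your case analysis on which condition in (ii) fails at $N-1$ is an equivalent substitute for the paper's comparison index $N''$. Two computations need fixing, though.

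\textbf{The margin for $E_{\mathrm{samp}}$.} If $C_R$ is the constant of Theorem~\ref{thm:total_error}, then $4c_{\mathrm{err}}\eta C_R=\frac{4\eta}{7}$. The bound $\eta\le\frac72(1-\kappa)(R-R_0)$ then only gives $E_{\mathrm{samp}}\le 2(1-\kappa)(R-R_0)$, which is not $\le R-R_0$ unless $\kappa\ge\frac12$.

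What you need is $E_{\mathrm{samp}}\le 4\max\{1,C_{\mathrm{smp}}\}c_{\mathrm{err}}\eta$, where $C_{\mathrm{smp}}$ is the constant of Lemma~\ref{lem:sampling}. You also need $C_R\ge 2\max\{1,C_{\mathrm{smp}}\}$, which the paper imposes explicitly by enlarging $C_R$ before $c_{\mathrm{err}}$ is fixed. With both, $E_{\mathrm{samp}}\le\frac{2\eta}{7}\le(1-\kappa)(R-R_0)\le R-R_0$.

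\textbf{The time step.} From $Q\le 2T(c_{\mathrm{err}}\eta)^{-1/\zeta_{\mathrm t}}$ you get $\Delta t=T/Q\ge\frac12(c_{\mathrm{err}}\eta)^{1/\zeta_{\mathrm t}}$, with no factor of $T$. The role of $\eta\le T^{\zeta_{\mathrm t}}/c_{\mathrm{err}}$ is to make $T(c_{\mathrm{err}}\eta)^{-1/\zeta_{\mathrm t}}\ge1$, so that the ceiling at most doubles. Its role is not to ensure $Q\ge1$, which is automatic.

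Hence $\Delta t^{-1}\le 2(c_{\mathrm{err}}\eta)^{-1/\zeta_{\mathrm t}}\le 2(c_{\mathrm{err}}\eta)^{-1/\zeta}$. This yields the exact constant $2^{1-\alpha/2}$ for $\alpha>2$ and the exact form $1+\log(1+2T(c_{\mathrm{err}}\eta)^{-1/\zeta})$ for $\alpha=2$. No normalization of $T$ is needed.

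Your proposed fallback to the $\lambda_N$ branch would not work for the unconditional $\chi$ bound. Without $u_0\in X^{\gamma_{\mathrm s}}$, the tail condition in (ii) has no rate, so $N$ and $\lambda_N$ need not be bounded by any power of $\eta^{-1}$. Only the $\Delta t^{-1}$ entry of the minimum controls $\xi_\alpha$ in that case.
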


The min in (ii) and (iii) exist by \eqref{eqn:admissible_refinement} and Lemma~\ref{lem:uniform_encoding}. The conditions on the tail of the input data is the only place without an algebraic rate so those statement are not explicit. The trainable count and the size of the $n$-sweep network are shown in Theorem~\ref{thm:total_cost}. For laws in divergence form, (v) uses $\xi_{\alpha+1}$ instead of $\xi_\alpha$ (see Proposition~\ref{prop:divergence_form}).

\begin{proof}[Proof of Theorem~\ref{thm:main}] The bound in Theorem~\ref{thm:total_error} is $C_R$ times seven terms, thus we can set $C_Rc_{\mathrm{err}}=\frac{1}{7}$, so that each term is at most $c_{\mathrm{err}}\eta$. It holds for $\kappa^n$ by assumption (i) in the theorem. By (v) it holds for $\varepsilon\xi_\alpha\le\varepsilon\max\{1,\xi_\alpha\}$. Condition (ii) gives us control for the data tail bound and $\lambda_{N+1}^{-\zeta_{\mathrm s}}$. The spatial component $h^r$ is given by (iii) and the temporal resolution $(\Delta t)^{\zeta_{\mathrm t}}$ by (iv). Lastly, the reconstruction error from Lemma~\ref{lem:interpolation} is order $(\Delta t)^{\zeta_{\mathrm t}}$ and is absorbed by the other term.

We next check that we have the assumptions for applying Theorem~\ref{thm:total_error}. Since $C_R$ is an upper-bound constant, we may make it larger before choosing the parameters.  We pick $C_R\ge2\max\{1,C_{\mathrm{smp}}\}$, where $C_{\mathrm{smp}}$ is the constant $C_R$ from Lemma~\ref{lem:sampling}. Since $\eta\le\eta_0\le\frac72(1-\kappa)(R-R_0)$, we have $E_{\mathrm{net}}\le\frac\eta7\le\frac12(1-\kappa)(R-R_0)$, which gives \eqref{eqn:margin}. Each of $\varepsilon^{\mathrm{tail}}_N$, $\varepsilon^{\mathrm{enc}}_{N,h}$, $\lambda_{N+1}^{-\zeta_{\mathrm s}}$ and $h^r$ is at most $c_{\mathrm{err}}\eta$, so Lemma~\ref{lem:sampling} and $c_{\mathrm{err}}=1/(7C_R)$ gives us the bound
$$E_{\mathrm{samp}}\le 4\max\{1,C_{\mathrm{smp}}\} c_{\mathrm{err}}\eta\le\frac{2\eta}{7}\le(1-\kappa)(R-R_0)\le R-R_0.$$
Finally $\varepsilon\le c_{\mathrm{err}}\eta_0\le\varepsilon_0$, so Theorem~\ref{thm:relu_f} applies. 

For the lower bounds on $\varepsilon$, since $c_{\mathrm{err}}\eta\le\min\{T^{\zeta_{\mathrm t}},1\}$ and $\zeta_{\mathrm t}\ge\zeta$, we have $\Delta t^{-1}=Q/T\le2(c_{\mathrm{err}}\eta)^{-1/\zeta_{\mathrm t}}\le2(c_{\mathrm{err}}\eta)^{-1/\zeta}$. For $\alpha<2$, the factor is $\xi_\alpha=1$. For $\alpha=2$, the factor is bounded by $\xi_2\le1+\log(1+T\Delta t^{-1})$, which gives the stated result. For $\alpha>2$, we have
$$\xi_\alpha\le(\Delta t^{-1})^{\alpha/2-1}\le2^{\alpha/2-1}(c_{\mathrm{err}}\eta)^{-(\alpha-2)/(2\zeta)}.$$
The right-hand side is at least $1$, so it also bounds $\max\{1,\xi_\alpha\}$. By (v),
$$\varepsilon=\frac{c_{\mathrm{err}}\eta}{ \max\{1,\xi_\alpha\}}\ge2^{1-\alpha/2}(c_{\mathrm{err}}\eta)^{1+\frac{\alpha-2}{2\zeta}} = 2^{1-\alpha/2}(c_{\mathrm{err}}\eta)^{\chi}.$$

If $u_0\in X^{\gamma_{\mathrm s}}$, set $\Lambda:=(m_0/(c_{\mathrm{err}}\eta))^{1/\zeta_{\mathrm s}}>1$ and let $N''$ be the smallest index where the eigenvalues are above $\Lambda$, i.e., $\lambda_{N''+1}\ge\Lambda$. It satisfies both conditions in (ii), the spectral one because $m_0\ge1$ and the tail bound because $\varepsilon^{\mathrm{tail}}_{N''}(u_0)\le\lambda_{N''+1}^{-\zeta_{\mathrm s}}\|u_0\|_{X^{\gamma_{\mathrm s}}}\le c_{\mathrm{err}}\eta$, so $N\le N''$. By construction, $\lambda_{N''}<\Lambda$ (we assume that $N''\ge2$ or set $\lambda_{N''}=\lambda_1$ otherwise), so $\lambda_N\le\max\{1,\lambda_1\}\Lambda$. If we bound $\xi_\alpha$ by the first entry we get $\max\{1,\xi_\alpha\}\le(\max\{1,\lambda_1\}\Lambda)^{\alpha/2-1}$, which is the stated bound with $\chi_{\mathrm s}$.

Lastly, Theorem~\ref{thm:relu_f} gives $\dep(\widehat f_\varepsilon)\le C\log\varepsilon^{-1}$ and $W(\widehat f_\varepsilon)\le C\varepsilon^{-p_{\max}}\log\varepsilon^{-1}$. Therefore, after incorporating the $\varepsilon$ lower bounds and using $\log\varepsilon^{-1}\le2\chi\log\eta^{-1}+C$ we have the result.
\end{proof}

\begin{corollary}[Structural Complexity, Block]
\label{cor:structural}
For every $p>p_f$, the neural block in Theorem~\ref{thm:main} may be chosen so that $W(\widehat f_\varepsilon)\le C_p\,\eta^{-p\chi_{\mathrm s}}\log\eta^{-1}$ and $\dep(\widehat f_\varepsilon)\le C_p\log\eta^{-1}$ for $\alpha\ne2$, with $C_p$ depending on $p$ and the quantities from Theorem~\ref{thm:main}. If the infimum for $p_f$ is attained (e.g. polynomial governing equations), we can take $p=p_f$ and, in particular, $W(\widehat f_\varepsilon)\le C\log\eta^{-1}$ when $p_f=0$ is attained.
\end{corollary}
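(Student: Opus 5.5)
The corollary follows by running Theorem~\ref{thm:main} on a well-chosen admissible representation of $f$, not on a fixed one. By Definition~\ref{def:structural_exponent}, for any $p>p_f$ there is an admissible DAG $G_p$ with $p_{\max}(G_p)<p$; when the infimum is attained we take $G_p$ with $p_{\max}(G_p)=p_f$. The steps before the network count do not depend on the representation. The choices (i)--(v) of $n,N,h,Q,\varepsilon$ in Theorem~\ref{thm:main} and the error control through Theorem~\ref{thm:total_error} use only the law $f$ itself, its Lipschitz constant $L_R$, and the regularity bounds. Neither the target accuracy $\varepsilon$ nor the lower bounds on it change when we switch DAGs. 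So we fix $G_p$ and apply Theorem~\ref{thm:relu_f} to that DAG, obtaining $\widehat f_\varepsilon$ with $\sup_K|f-\widehat f_\varepsilon|\le\varepsilon$.

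Theorem~\ref{thm:relu_f} gives
$$\dep(\widehat f_\varepsilon)\le C_{G_p}\log\varepsilon^{-1},\qquad W(\widehat f_\varepsilon)\le C_{G_p}\varepsilon^{-p_{\max}(G_p)}\log\varepsilon^{-1}.$$
Next we insert the lower bounds on $\varepsilon$ from Theorem~\ref{thm:main}. If $\alpha<2$, then $\varepsilon=c_{\mathrm{err}}\eta$. If $\alpha>2$, then $\varepsilon\ge c\,(c_{\mathrm{err}}\eta)^{\chi_{\mathrm s}}$, with $c$ depending on $m_0$, $\lambda_1$ and $\alpha$; in that case $\chi_{\mathrm s}=1$ exactly when $\alpha<2$. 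This gives $\log\varepsilon^{-1}\le C\log\eta^{-1}$ (using $\eta\le\eta_0<1$, and shrinking $\eta_0$ if needed so that $\log\eta^{-1}\ge1$ absorbs additive constants) and $\varepsilon^{-p_{\max}(G_p)}\le C\eta^{-p_{\max}(G_p)\chi_{\mathrm s}}$.

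The last step is monotonicity in the exponent. Since $\eta\le1$ and $p_{\max}(G_p)\le p$, we have $\eta^{-p_{\max}(G_p)\chi_{\mathrm s}}\le\eta^{-p\chi_{\mathrm s}}$. All constants depend on $G_p$, hence on $p$, which gives the constant $C_p$. In the attained case we set $p=p_f$ directly. When $p_f=0$, every nonlinear node of the optimal DAG is multilinear, each contributes only $c_j\log\varepsilon^{-1}$, and this yields $W\le C\log\eta^{-1}$.

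There is one delicate point: the threshold $\varepsilon_0$ and the constants of Theorem~\ref{thm:relu_f} depend on the DAG, so $\eta_0$ has to be taken relative to $G_p$, which is the $\varepsilon_0/c_{\mathrm{err}}$ entry in its definition. This is harmless because the statement is asymptotic and allows $C_p$ to depend on $p$. The case $\alpha=2$ is excluded because of the extra logarithmic factor $(\log\eta^{-1})^{p}$, which would break the stated form.
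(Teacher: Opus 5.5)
Your proposal is correct and follows the paper's proof. You pick an admissible DAG $G$ with $p_{\max}(G)<p$ (or one attaining $p_f$), apply Theorem~\ref{thm:main} to it, and use $\eta^{-p_{\max}(G)\chi_{\mathrm s}}\le\eta^{-p\chi_{\mathrm s}}$, with the constants depending on $p$ through $G$; the $p_f=0$ case then reduces to a graph whose nonlinear nodes are all multilinear.

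One correction to your closing remark. For strict $p>p_f$ the case $\alpha=2$ does not break the bound: as the paper observes, the extra factor $(\log\eta^{-1})^{p_{\max}(G)}$ is at most $C\eta^{-(p-p_{\max}(G))}$. The logarithm survives only in the attained case $p=p_f>0$.
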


\noindent The proof, given in Appendix~\ref{sec:app_discrete}, applies Theorem~\ref{thm:main} to a representation $G$ with $p_{\max}(G)<p$. The corollary realizes every exponent above $p_f$, which is defined independently of any architecture or discretization. The optimality of the exponent over the smooth classes is discussed in Remark~\ref{rem:block_lower_bound} and Theorems~\ref{thm:operator_lower_bound} and \ref{thm:hj_lower_bound}.

\begin{remark}[Block Lower Bounds on the Trainable Parameters]
\label{rem:block_lower_bound}
Over smooth function classes, the exponent $p_{\max}=1/s$ of a $C^s$ node cannot be improved. By the lower bound on continuous nonlinear widths from \cite{devore1989optimal}, any family of ReLU networks that approximates every function in the unit ball of $W^{s,\infty}(I)$ up to an accuracy of $\varepsilon$ and whose weights depend continuously on the target function, requires at least $c\,\varepsilon^{-1/s}$ weights. 

The bound applies to our construction in Remark~\ref{rem:taylor_layer}, which attains the rate of Theorem~\ref{thrm:quantitative_smooth_scalar} with weights that depend linearly on the Taylor series terms for $g$. This implies that the exponent $1/s$ is optimal for continuous weights. Without continuity, the VC-dimension bound of \cite{yarotsky2017error} gives a lower bound of $c\,\varepsilon^{-1/(2s)}$ up to logs, and for $s=1$ this is attained by deep networks whose weights are not continuous with respect to $g$  \cite{yarotsky2018optimal}. These are worst-case statements over the class and do not contradict the improvements for polynomial governing laws. 
\end{remark}

\subsection{Long Time Horizons}
\label{sec:long_time}

The contraction arguments shrink the time window to $[0,T]$ with $T\le T_0$. If we want a prediction up to $T_{\mathrm{final}}=JT$, we effectively can run the same (pretrained) network on $J$ consecutive time windows, where the output of one interval becomes the input to the next. A block-wise rollout of the same learned model was also analyzed in \cite{taniguchi2026generalization}. If $\Theta_{\mathrm{win}}$ is the neural network amplification factor for each window's error, summing them leads to
$\eta_{\mathrm{win}}(1+\Theta_{\mathrm{win}}+\dots+\Theta_{\mathrm{win}}^{J-1})$, where $\eta_{\mathrm{win}}$ is the error of one window started from exact data. Since $J$ and $\Theta_{\mathrm{win}}$ do not depend on $\eta$, the final time $T_{\mathrm{final}}$ just changes the constant.

We can write this as a time-stepper over the windows. For $c \in \bR^N$ let $G_N(c):=\mathcal C^h_N\left[\mathcal F^h(\mathcal S^h_Nc,\nabla_h\mathcal S^h_Nc)\right]$. For $c$ with $\|c\|_{\ell^2_\alpha}+CC_{\mathrm{stab}}F_RT^\theta\le R_0$, let $x(c)=(x^0(c),\ldots,x^Q(c))$ be the fixed point of $\Psi^h_{N,Q}$ with $a^h_0$ replaced by $c$ (Theorem~\ref{thm:fully_discrete_fixed_point}), and let $\Phi_{\mathrm{win}}(c):=x^Q(c)$ be its final value within the window (the finer time scale). Since $S_N(t_q-t_r)=(S_N(\Delta t))^{q-r}$, the fixed point solves the recursion
\begin{equation}
x^0(c)=c,\qquad x^{q+1}(c)=S_N(\Delta t)\left(x^q(c)+\Delta t\,G_N(x^q(c))\right).
\label{eqn:window_stepper}
\end{equation}
Defining the discrete inner product by $\langle a,b\rangle_s:=\sum_k\lambda_k^sa_kb_k$, the spectral theory gives
\begin{equation}
\|S_N(t)c\|_{\ell^2_s}\le e^{-\lambda_1t}\|c\|_{\ell^2_s},
\qquad
\|S_N(t)c\|_{\ell^2_{s+r}}\le c_\alpha\,t^{-r/2}\|c\|_{\ell^2_s},
\label{eqn:smoothing}
\end{equation}
for $0\le r\le\alpha$, where $c_\alpha:=\max\{1,(\alpha/2e)^{\alpha/2}\}$. Also, for any grid function $v\in\bR^M$ and $z\in\bR^N$,
\begin{equation}
\langle\mathcal C^h_Nv,z\rangle_0=Q_{N,h}(v,\mathcal S_Nz),
\label{eqn:analysis_pairing}
\end{equation}
which maps point-wise inequalities on $f$ (evaluated at the spatial grid points) into inequalities for $G_N$. For two initial vectors $a,b$ we can write the differences as $z^q:=x^q(a)-x^q(b)$ and $d^q:=G_N(x^q(a))-G_N(x^q(b))$.

\begin{proposition}[Continuation Over Windows]
\label{prop:continuation}
Set $T\le T_0$, $(N,h)\in\mathcal A$, $Q$, $n$ and $\widehat f_\varepsilon$ to have same values over all of the windows. Let $u$ be the mild solution on $[0,JT]$, and set $u_j:=u(jT)$ and $\beta_j:=\mathcal C^h_N\mathcal I_hu_j$. Assume that the following holds.
\begin{enumerate}
\item[(i)] Each of the functions $u_j$ satisfies the conditions from Theorem~\ref{thm:total_error} and Assumption~\ref{assumption:uniform_data} with the same constants, so the bound from Theorem~\ref{thm:total_error} with data $u_j$ is at most  $\eta_{\mathrm{win}}$ (uniformly).
\item[(ii)] There is $\delta>0$ such that the conditions from Theorem~\ref{thm:total_error} on the initial coefficients hold for every vector $c$ with $\|c-\beta_j\|_{\ell^2_\alpha} \le \delta$ for some $j$.
\item[(iii)] For some $s \in \{0,\alpha\}$ there are $\Theta_{\mathrm{win}},C_{\mathrm{lift}}\ge0$ such that, for coefficients $c,c'$ $\delta$ close to $\beta_j$ in $\ell^2_\alpha$
$$\|\Phi_{\mathrm{win}}(c)-\Phi_{\mathrm{win}}(c')\|_{\ell^2_s}\le\Theta_{\mathrm{win}}\|c-c'\|_{\ell^2_s},
\qquad
\|\Phi_{\mathrm{win}}(c)-\Phi_{\mathrm{win}}(c')\|_{\ell^2_\alpha}\le C_{\mathrm{lift}}\|c-c'\|_{\ell^2_s}.$$
\end{enumerate}
Let $b_0:=\beta_0$, let $b_{j+1}$ be the terminal value of the $n$-th neural Picard iterate on window $j$ started from $b_j$, and set $\tilde u_j:=\mathcal S_Nb_j$ and $\bar\eta:=\eta_{\mathrm{win}}+\max_j\varepsilon^{\mathrm{enc}}_{N,h}(u_j)$. If $\bar\eta\left(1+C_{\mathrm{lift}}\lambda_1^{-(\alpha-s)/2}\sum_{i<J}\Theta_{\mathrm{win}}^i\right)\le\delta$, then for $1\le j\le J$,
$$\|\tilde u_j-u_j\|_{X^\alpha}\le\eta_{\mathrm{win}}+C_{\mathrm{lift}}\lambda_1^{-(\alpha-s)/2}\,\bar\eta\sum_{i<j-1}\Theta_{\mathrm{win}}^i.$$
\end{proposition}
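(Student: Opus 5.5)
We argue by induction on the window index $j$. The inductive quantity is the coefficient error $e_j:=\|b_j-\beta_j\|_{\ell^2_s}$, with $s\in\{0,\alpha\}$ as in (iii). Alongside it we maintain the admissibility statement $\|b_j-\beta_j\|_{\ell^2_\alpha}\le\delta$, which lets us invoke (ii) and (iii) at $b_j$. For $j=0$ both hold trivially, since $b_0=\beta_0$.

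Write $\mathcal N_{\mathrm{win}}(c)$ for the terminal value of the $n$-th neural Picard iterate on one window started from $c$. The neural map $\Psi^h_{N,Q,\varepsilon}$ has no Lipschitz control (Remark~\ref{rem:only_exact_contracts}), so we never compare $\mathcal N_{\mathrm{win}}$ at two different inputs. Instead we route every comparison through the exact window map $\Phi_{\mathrm{win}}$, which is Lipschitz by (iii). This gives the three-term splitting
\begin{equation*}
b_{j+1}-\beta_{j+1}=\bigl[\mathcal N_{\mathrm{win}}(b_j)-\Phi_{\mathrm{win}}(b_j)\bigr]+\bigl[\Phi_{\mathrm{win}}(b_j)-\Phi_{\mathrm{win}}(\beta_j)\bigr]+\bigl[\Phi_{\mathrm{win}}(\beta_j)-\beta_{j+1}\bigr].
\end{equation*}

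We bound the three terms as follows.
\begin{enumerate}
\item The first term is the iteration-plus-network part of the proof of Theorem~\ref{thm:total_error}, namely \eqref{eqn:neural_drift} together with \eqref{eqn:total_error_eq4}, with initial coefficients $c=b_j$. By (ii) these estimates apply at $b_j$, and by (i) the constants are uniform in $j$. Hence this term is at most $\eta_{\mathrm{win}}$ in $\ell^2_\alpha$, and hence in $\ell^2_s$ as well.
\item The second term is at most $\Theta_{\mathrm{win}}e_j$ in $\ell^2_s$, and at most $C_{\mathrm{lift}}e_j$ in $\ell^2_\alpha$, by (iii).
\item The third term compares the exact discrete window, started from the sampled data $\beta_j$, with the sampled true value $\beta_{j+1}=\mathcal C^h_N\mathcal I_hu(( j+1)T)$. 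This is the discretization part of Theorem~\ref{thm:total_error}, namely \eqref{eqn:total_error_eq5}, \eqref{eqn:total_error_eq7} and Lemma~\ref{lem:sampling}, plus one encoding term $\varepsilon^{\mathrm{enc}}_{N,h}(u_{j+1})$. The encoding term enters because $\beta_{j+1}$ is compared through $\mathcal C^h_N\mathcal I_h$ rather than $\mathcal C_N$. Altogether this term is at most $\bar\eta$.
\end{enumerate}

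Using $\|\cdot\|_{\ell^2_s}\le\lambda_1^{-(\alpha-s)/2}\|\cdot\|_{\ell^2_\alpha}$, the splitting yields the linear recursion $e_{j+1}\le\Theta_{\mathrm{win}}e_j+\lambda_1^{-(\alpha-s)/2}\bar\eta$, with $e_0=0$. Unrolling gives $e_j\le\lambda_1^{-(\alpha-s)/2}\bar\eta\sum_{i<j}\Theta_{\mathrm{win}}^i$. The $\ell^2_\alpha$ version of the same splitting gives $\|b_{j+1}-\beta_{j+1}\|_{\ell^2_\alpha}\le\bar\eta+C_{\mathrm{lift}}e_j$. By the smallness hypothesis $\bar\eta\bigl(1+C_{\mathrm{lift}}\lambda_1^{-(\alpha-s)/2}\sum_{i<J}\Theta_{\mathrm{win}}^i\bigr)\le\delta$, this is at most $\delta$, which closes the induction.

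For the stated $X^\alpha$ estimate we do not route through $\beta_j$. Since $\mathcal S_N$ is an isometry from $\ell^2_\alpha$ onto $E_N\subset X^\alpha$, we write
\begin{equation*}
\tilde u_j-u_j=\mathcal S_N\bigl[\mathcal N_{\mathrm{win}}(b_{j-1})-\mathcal N_{\mathrm{win}}(\beta_{j-1})\bigr]+\bigl[\mathcal S_N\mathcal N_{\mathrm{win}}(\beta_{j-1})-u_j\bigr].
\end{equation*}
The last bracket is at most $\eta_{\mathrm{win}}$ directly by (i), since it is the single-window error started from the sampled exact data $\beta_{j-1}$. The first bracket is again split through $\Phi_{\mathrm{win}}$, giving a Lipschitz part bounded by $C_{\mathrm{lift}}e_{j-1}\le C_{\mathrm{lift}}\lambda_1^{-(\alpha-s)/2}\bar\eta\sum_{i<j-1}\Theta_{\mathrm{win}}^i$.

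\textbf{Main obstacle.} The step I expect to be hardest is precisely this comparison $\mathcal N_{\mathrm{win}}(b_{j-1})-\mathcal N_{\mathrm{win}}(\beta_{j-1})$. The neural map is not Lipschitz, so the two drift terms $\mathcal N_{\mathrm{win}}-\Phi_{\mathrm{win}}$, one at $b_{j-1}$ and one at $\beta_{j-1}$, must both be charged to the single budget $\eta_{\mathrm{win}}$ and to $\bar\eta$. The first thing I would try is to read $\eta_{\mathrm{win}}$, via (i)–(ii), as a bound uniform over all admissible starting vectors in the $\delta$-neighbourhood, and to absorb any leftover drift into $\bar\eta$. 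That absorption is permitted because $\bar\eta$ already dominates $\eta_{\mathrm{win}}$ and multiplies a geometric sum whose leading term is $1$. If the stated constant cannot be matched exactly this way, I would accept an extra factor $2$ on $\eta_{\mathrm{win}}$ in the first summand.
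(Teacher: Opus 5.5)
\paragraph{Review.} Your skeleton is the paper's proof: induct on $e_j=\|b_j-\beta_j\|_{\ell^2_s}$, carry $\|b_j-\beta_j\|_{\ell^2_\alpha}\le\delta$ along so that (ii)--(iii) apply at $b_j$, route every comparison through $\Phi_{\mathrm{win}}$, derive $e_{j+1}\le\Theta_{\mathrm{win}}e_j+\lambda_1^{-(\alpha-s)/2}\bar\eta$, and unroll from $e_0=0$. The gap is in how you charge the two non-Lipschitz pieces, and as written it loses the stated constant.

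\emph{The recursion.} You bound the drift $\mathcal N_{\mathrm{win}}(b_j)-\Phi_{\mathrm{win}}(b_j)$ by $\eta_{\mathrm{win}}$. You bound the discretization term $\Phi_{\mathrm{win}}(\beta_j)-\beta_{j+1}$ by $\bar\eta$, separately. These add to $\eta_{\mathrm{win}}+\bar\eta$, so the recursion you write does not follow from the bounds you give.

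\emph{The final $X^\alpha$ estimate.} This is the more serious problem. Here you pass through the neural window $\mathcal N_{\mathrm{win}}(\beta_{j-1})$. You have already spent all of $\eta_{\mathrm{win}}$ on $\mathcal S_N\mathcal N_{\mathrm{win}}(\beta_{j-1})-u_j$. Splitting $\mathcal N_{\mathrm{win}}(b_{j-1})-\mathcal N_{\mathrm{win}}(\beta_{j-1})$ through $\Phi_{\mathrm{win}}$ then adds two more drift terms on top of the Lipschitz part $C_{\mathrm{lift}}e_{j-1}$. These cannot be absorbed. The geometric sum in the target is exactly what your bound on $e_{j-1}$ already produces, so nothing is left over to absorb the two drifts, whatever the size of the prefactor $C_{\mathrm{lift}}\lambda_1^{-(\alpha-s)/2}$. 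Your fallback therefore proves a strictly weaker inequality than the proposition.

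\paragraph{The fix.} Never form $\mathcal N_{\mathrm{win}}(\beta_{j-1})$. Charge the drift at $b_{j-1}$ and the discretization error at $\beta_{j-1}$ to one budget. As the paper does, write
\begin{equation*}
\tilde u_j-u_j=\mathcal S_N\bigl[\mathcal N_{\mathrm{win}}(b_{j-1})-\Phi_{\mathrm{win}}(b_{j-1})\bigr]+\mathcal S_N\bigl[\Phi_{\mathrm{win}}(b_{j-1})-\Phi_{\mathrm{win}}(\beta_{j-1})\bigr]+\bigl[\mathcal S_N\Phi_{\mathrm{win}}(\beta_{j-1})-u_j\bigr].
\end{equation*}
\begin{itemize}
\item The first bracket is at most $E_{\mathrm{net}}/(1-\kappa)+2R\kappa^n$, by \eqref{eqn:neural_drift} and \eqref{eqn:total_error_eq4}, which are valid at $b_{j-1}$ by (ii).
\item The third bracket is the nodal error at $q=Q$ of the exact discrete fixed point with data $u_{j-1}$. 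It is at most $(E_{\mathrm{cons}}+E_{\mathrm{samp}})/(1-\kappa)+E_{\mathrm{samp}}$.
\item These two are the iteration/network part and the discretization part of the single Theorem~\ref{thm:total_error} bound, so together they are at most $\eta_{\mathrm{win}}$.
\item The middle bracket gives $C_{\mathrm{lift}}e_{j-1}$, which yields exactly the claim.
\end{itemize}

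The same joint charging repairs your recursion. In fact your coefficient-level third term is $x^Q(\beta_j)-b^Q$, where $b^q:=\mathcal C^h_N\mathcal I_hu(jT+t_q)$ is the sampled exact trajectory on window $j$. Since $b^Q=\beta_{j+1}$, \eqref{eqn:total_error_eq5} and \eqref{eqn:total_error_eq7} bound this term by $(E_{\mathrm{cons}}+E_{\mathrm{samp}})/(1-\kappa)$ with no encoding term at all. The paper instead takes the $X^s$ version of the function-level splitting and passes to $e_{j+1}$ through $P_N$. That is where $\varepsilon^{\mathrm{enc}}_{N,h}(u_{j+1})$, and hence $\bar\eta$, enters.
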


\noindent The proof is given in Appendix~\ref{sec:app_discrete}. Condition (iii) always holds with $s=\alpha$ and $\Theta_{\mathrm{win}}=C_{\mathrm{lift}}=1/(1-\kappa)$. When the governing law has an energy estimate it holds with $s=0$ and $\Theta_{\mathrm{win}}=e^{\psi T}$ (see for example Lemma~\ref{lem:allen_cahn_window} and Appendix~\ref{sec:window_stability_general} for general laws).

\begin{lemma}[Windows for the Allen-Cahn System]
\label{lem:allen_cahn_window}
Let $\Omega=\mathbb T$, $L=-\partial_{xx}+I$, $f(u)=2u-u^3$, $\alpha=\frac{7}{4}$, and assume the quadrature is the standard trapezoidal rule. Suppose the grid values of $\mathcal S_Nx^q(a)$ and $\mathcal S_Nx^q(b)$ are in the interval $[-m_K,m_K]$ for all $q$, the Lipschitz constant of $f$ on $[-m_K,m_K]$ is $\Gamma_0:=\max\{2,3m_K^2-2\}$, and assume that $\Delta t\,\Gamma_0^2\le1$. Then the following holds.
\[
\|z^q\|_{\ell^2_0}\le e^{3t_q/2}\|a-b\|_{\ell^2_0},
\qquad
\|\Phi_{\mathrm{win}}(a)-\Phi_{\mathrm{win}}(b)\|_{\ell^2_\alpha}\le\left(T^{-7/8}+8\Gamma_0e^{3T/2}T^{1/8}\right)\|a-b\|_{\ell^2_0}.
\]
\end{lemma}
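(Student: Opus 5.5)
The plan is to exploit the one-step recursion \eqref{eqn:window_stepper} for the fully discrete fixed point. Subtracting the two recursions gives the linear relation
$$z^{q+1}=S_N(\Delta t)\left(z^q+\Delta t\,d^q\right),\qquad z^0=a-b.$$
I would prove the $\ell^2_0$ growth bound by a discrete energy estimate that uses only the one-sided (monotone) structure of $f$. I would then obtain the $\ell^2_\alpha$ bound by unrolling the recursion into a discrete Duhamel sum and using the smoothing estimate \eqref{eqn:smoothing}.

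\textbf{Step 1: quadrature identities.} On $\mathbb T$ the trapezoidal rule is exact on products of retained modes (Remark~\ref{rem:stable_quadrature_example}), so $\mathcal C^h_N\mathcal S^h_N=I$. Let $Z^q:=\mathcal S^h_Nz^q$ and let $D^q$ be the grid function $f(\mathcal S^h_Nx^q(a))-f(\mathcal S^h_Nx^q(b))$. By \eqref{eqn:analysis_pairing},
$$\langle d^q,z^q\rangle_0=Q_{N,h}(D^q,\mathcal S_Nz^q)=\sum_i\omega_iD^q_iZ^q_i,\qquad Q_{N,h}(Z^q,Z^q)=\|z^q\|_{\ell^2_0}^2.$$
Since $\mathcal C^h_N$ is the orthogonal-type analysis map for an exact rule, it satisfies the Bessel inequality $\|\mathcal C^h_Nv\|_{\ell^2_0}\le\|v\|_\omega$. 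Together with the Lipschitz bound on $[-m_K,m_K]$ this gives $\|d^q\|_{\ell^2_0}\le\Gamma_0\|Z^q\|_\omega=\Gamma_0\|z^q\|_{\ell^2_0}$.

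\textbf{Step 2: energy estimate.} The one-sided bound for this law is
$$(f(x)-f(y))(x-y)=2(x-y)^2-(x^2+xy+y^2)(x-y)^2\le2(x-y)^2,$$
so $\langle d^q,z^q\rangle_0\le2\|z^q\|_{\ell^2_0}^2$. Expanding the square and using $\Delta t\,\Gamma_0^2\le1$,
$$\|z^q+\Delta t\,d^q\|_{\ell^2_0}^2\le(1+4\Delta t+\Delta t^2\Gamma_0^2)\|z^q\|_{\ell^2_0}^2\le(1+5\Delta t)\|z^q\|_{\ell^2_0}^2.$$
Since $\lambda_1=1$, \eqref{eqn:smoothing} with $s=0$ gives a factor $e^{-2\Delta t}$ on the square. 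Using $(1+5\Delta t)e^{-2\Delta t}\le e^{3\Delta t}$ and iterating over $q$ yields $\|z^q\|_{\ell^2_0}\le e^{3t_q/2}\|a-b\|_{\ell^2_0}$.

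\textbf{Step 3: the $\ell^2_\alpha$ bound.} Unrolling the recursion gives
$$z^Q=S_N(T)z^0+\Delta t\sum_{r=0}^{Q-1}S_N(T-t_r)d^r.$$
Apply \eqref{eqn:smoothing} with $s=0$ and $r=\alpha=7/4$; here $c_\alpha=1$ because $(7/(8e))^{7/8}<1$. The first term is then at most $T^{-7/8}\|a-b\|_{\ell^2_0}$. Each summand is at most $(T-t_r)^{-7/8}\,\Gamma_0\,e^{3T/2}\|a-b\|_{\ell^2_0}$, by Step 1 and Step 2. The points $T-t_r=(Q-r)\Delta t$ are right endpoints of a decreasing function, so
$$\Delta t\sum_{r=0}^{Q-1}(T-t_r)^{-7/8}\le\int_0^Ts^{-7/8}\,ds=8T^{1/8}.$$
Collecting the two terms gives the stated constant $T^{-7/8}+8\Gamma_0e^{3T/2}T^{1/8}$.

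\textbf{Main obstacle.} The delicate step is Step 1: turning the pointwise one-sided inequality into the coefficient inequality $\langle d,z\rangle_0\le2\|z\|_{\ell^2_0}^2$ and the Bessel bound on $\mathcal C^h_N$. Both rely on exactness of the trapezoidal rule on products of retained modes, since that is what makes $Q_{N,h}(Z,Z)=\|z\|_{\ell^2_0}^2$ hold. This is also where the hypothesis that grid values stay in $[-m_K,m_K]$ is used, to control the $\Delta t^2$ term. I would first verify the Bessel inequality carefully, and if exactness is not available I would accept a constant $C^0_{\mathrm{stab}}$ in place of $1$.
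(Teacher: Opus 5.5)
Your proposal is correct and follows the paper's proof essentially step by step. The exact trapezoidal rule gives the pairing identity $\langle d^q,z^q\rangle_0=\sum_i\omega_iD^q_iZ^q_i$ and the Bessel bound $\|\mathcal C^h_Nv\|_{\ell^2_0}\le\|v\|_\omega$. The monotonicity of $u^3$ then yields the per-step factor $e^{3\Delta t/2}$. Unrolling the stepper and using the smoothing estimate with $c_\alpha=1$, together with the right-endpoint integral bound $8T^{1/8}$, gives the stated $\ell^2_\alpha$ constant.
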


\begin{proof}
Fix the (finer) time-stamp $q$. Let $v_i$ and $w_i$ be the grid values of $\mathcal S_Nx^q(a)$ and $\mathcal S_Nx^q(b)$, so that $v_i-w_i$ are the grid values of $\mathcal S_Nz^q$. Since the trapezoidal rule is exact on products of retained modes, we have $\sum_i\omega_i|v_i-w_i|^2=\|z^q\|^2_{\ell^2_0}$ and $\|\mathcal C^h_Ng\|_{\ell^2_0}\le\left(\sum_i\omega_i|g_i|^2\right)^{1/2}$ for every grid function $g$.

By \eqref{eqn:analysis_pairing}, we have 
$$\langle d^q,z^q\rangle_0 = \sum_i \omega_i\left(f(v_i)-f(w_i)\right)(v_i-w_i)
=\sum_i\omega_i\left(2|v_i-w_i|^2-(v_i^3-w_i^3)(v_i-w_i)\right)\le2\|z^q\|^2_{\ell^2_0},$$
using monotonicity $(v^3-w^3)(v-w) \ge 0$. Since $|f(v_i)-f(w_i)|\le\Gamma_0|v_i-w_i|$, we have $\|d^q\|_{\ell^2_0}\le\Gamma_0\|z^q\|_{\ell^2_0}$. Using the assumption $\Gamma_0^2\Delta t\le1$,
$$\|z^q+\Delta t\,d^q\|_{\ell^2_0}^2
\le\left(1+4\Delta t+\Gamma_0^2\Delta t^2\right)\|z^q\|^2_{\ell^2_0} 
\le e^{5\Delta t}\|z^q\|^2_{\ell^2_0}.$$
Using \eqref{eqn:window_stepper} and \eqref{eqn:smoothing} with $\lambda_1=1$, $\|z^{q+1}\|_{\ell^2_0}\le e^{-\Delta t}e^{5\Delta t/2}\|z^q\|_{\ell^2_0}=e^{3\Delta t/2}\|z^q\|_{\ell^2_0}$, and induction gives the first bound.

Iterating the stepper forward \eqref{eqn:window_stepper} gives $z^Q=S_N(T)z^0+\Delta t\sum_{p<Q}S_N(T-t_p)d^p$. By \eqref{eqn:smoothing} with $r=\alpha=\frac{7}{4}$, where $c_\alpha=1$ since $\alpha< 2 e$, and by $\|d^p\|_{\ell^2_0}\le\Gamma_0\|z^p\|_{\ell^2_0}\le\Gamma_0 e^{3T/2}\|z^0\|_{\ell^2_0}$,
$$\|z^Q\|_{\ell^2_\alpha}\le\left(T^{-7/8}+\Gamma_0e^{3T/2}\,\Delta t\sum_{p<Q}(T-t_p)^{-7/8}\right)\|z^0\|_{\ell^2_0}
\le\left(T^{-7/8}+8\Gamma_0e^{3T/2}T^{1/8}\right)\|z^0\|_{\ell^2_0},$$
using the integral bound $\Delta t\sum_{p<Q}(T-t_p)^{-7/8}\le\int_0^Ts^{-7/8}\,ds=8T^{1/8}$.

\end{proof}

The window length is set by the contraction, so $J\propto T_{\mathrm{final}}(C_{\mathrm{stab}}L_R)^{1/\theta}$, which is independent of $N$, $h$, $Q$ and $\eta$. The final accuracy $\eta$ requires $\bar\eta\le\eta/A_J$ with $A_J:=1+C_{\mathrm{lift}}\lambda_1^{-(\alpha-s)/2}\sum_{i<J}\Theta_{\mathrm{win}}^i$, so $T_{\mathrm{final}}$ changes the constant but not the exponent of $\eta$. The contraction gives $\Theta_{\mathrm{win}}=1/(1-\kappa)\le2$ for $\kappa\le\frac12$, so $A_J\lesssim J\,2^J=J\,e^{(\log2/T)\,T_{\mathrm{final}}}$. An energy estimate gives $\Theta_{\mathrm{win}}=e^{\psi T}$, so $A_J\lesssim J\,e^{\psi_+T_{\mathrm{final}}}$. Both bounds can be written as the form $A_J\lesssim J\,e^{\rho\,T_{\mathrm{final}}}$, but the contraction rate $\rho=\log 2 /T$ grows as the windows shrink, while the energy rate $\rho=\psi_+$ is set by the law and is independent of the window length. For Allen-Cahn, this is a difference of $\psi=\frac{3}{2}$ versus $\log2/T\approx6\times10^3$, for a single-window.

\subsection{Realization of the Picard Sweep}
\label{sec:realization}

While Theorem~\ref{thm:main} counts the number of trainable parameters for the neural Picard map, here we count every parameter for the entire architecture. This includes the fixed transforms, which are assumed to be fast (Assumption~\ref{assumption:fast_transform}), and the Duhamel step as a linear recurrence with diagonal coefficient. Specifically, since $S_N(t_q-t_r)=S_N(\Delta t)^{q-r}$, the iterates $v^q:=\left(\Psi^h_{N,Q,\varepsilon}(a)\right)^q$ satisfy
\begin{equation}
v^0=a^h_0,
\qquad
v^{q+1}=S_N(\Delta t)\left(v^q+\Delta t\,\mathcal C^h_N g^q\right),
\qquad q=0,\ldots,Q-1,
\label{eqn:duhamel_recurrence}
\end{equation}
where $g^q:=\widehat{\mathcal F}^h_\varepsilon\left(\mathcal S^h_Na^q,\nabla_h\mathcal S^h_Na^q\right)$. Since the multipliers $e^{-\lambda_k\Delta t}$ are fixed, each step of the recurrence is an affine map with fixed coefficients, and composition is associative. So a parallel scan \cite{blelloch1990prefix} computes all $v^0,\ldots,v^Q$ in depth $O(\log Q)$ using $O(Q\log Q)$ weights per mode, instead of the naive depth $Q$.

\begin{assumption}[Fast transforms]
\label{assumption:fast_transform}
There is a function $\mathcal T(M)$ and a constant $C_{\mathrm{ft}}$ such that each fixed transform $\mathcal T_{\mathrm{syn}}$, $\mathcal T_{\mathrm{syn},j}$ and $\mathcal T_{\mathrm{an}}$ of Assumption~\ref{assumption:spectral_factorization} factors into $C_{\mathrm{ft}}\log M$ sparse layers with at most $\mathcal T(M)$ nonzero weights in total.
\end{assumption}

 If we can use the FFT then the assumption holds with $\mathcal T(M)=CM\log M$ for the trigonometric basis and with $\mathcal T(M)=CM\log^2M$ for classical orthogonal polynomial and spherical harmonic bases \cite{driscoll1994computing,townsend2018fast}. It holds trivially with $\mathcal T(M)=NM$ and one layer, which recovers the dense transform case.

\begin{definition}[Realization of Picard Sweeps]
\label{def:realization}
Fix the parameters $(N,h)\in\mathcal A$, $Q$, $n$ and $\widehat f_\varepsilon$. For $w \in \bR^{dM}$, the sampled data $w =\mathcal I_hu_0$, set $a_0^h(w):=\mathcal C_N^hw$ and $A^0(w):= \left(a_0^h(w),\ldots,a_0^h(w)\right) \in (\bR^{dN})^{Q+1}$, and let $A^k(w) := \Psi^h_{N,Q,\varepsilon} \left(A^{k-1}(w)\right)$ for $k=1,\ldots,n$, with initial data $a^h_0(w)$. The $n$-sweep network can be realized by $\mathcal A_n(w):=A^n(w)$, the loop of Definition~\ref{def:loop} repeated $n$ times.
\end{definition}

\begin{theorem}[Size and depth of the $n$-sweep Network]
\label{thm:realization}
Assume the conditions from Theorem~\ref{thm:main} and Assumption~\ref{assumption:fast_transform}, pick $(N,h,Q,n,\varepsilon)$ and $\widehat f_\varepsilon$ accordingly, and assume that the sampled input satisfies
\begin{equation}
\left\|\mathcal C^h_N\mathcal I_hu_0\right\|_{\ell^2_\alpha}\le R_0,
\label{eqn:admissible_data}
\end{equation}
which holds when $C^\alpha_{\mathrm{stab}}\|u_0\|_{X^\alpha}\le R_0$ (Lemma~\ref{lem:encoder_bounded}). Then $\mathcal A_n$ is a ReLU network with the following properties:
\begin{enumerate}
\item The accuracy is given by $\left\|\mathcal L^S_{N,Q}\,\mathcal A_n \left(\mathcal I_hu_0\right)-u\right\|_{X^\alpha_T} \le \eta$.
\item The depth is given by  $\dep(\mathcal A_n) \le n\left(\dep(\widehat f_\varepsilon)+C\log(MQ)\right) \le C\left(\log\eta^{-1}\right)^2$.
\item The size is 
$$W(\mathcal A_n)\le C\,n\left[(2+d_x)d\,Q\,\mathcal T(M)+d\,N\, Q \log(2Q)+MQ\,W(\widehat f_\varepsilon)\right] + C\,\mathcal T(M),$$
where $M=M(h)$. If $N\le CM$ and $\mathcal T(M)\le CM\log M$, then we can write it as
\begin{equation}
W(\mathcal A_n)
\le C\,n\,\underbrace{M\,Q}_{\text{space--time points}}
\Bigl[\,\underbrace{(2+d_x)\,d\log(MQ)}_{\text{transforms and scan}}+\underbrace{W(\widehat f_\varepsilon)}_{\text{block}}\,\Bigr];
\label{eqn:realization_size}
\end{equation}
\item The number of trainable parameters is given by $W(\widehat f_\varepsilon)+W_{\mathrm{spec}}$ which are the distinct and trainable weights.
\end{enumerate}
\end{theorem}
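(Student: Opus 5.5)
We will build $\mathcal A_n$ explicitly as the $n$-fold composition of one sweep $\Psi=\Psi^h_{N,Q,\varepsilon}$, realized layer by layer as in Definition~\ref{def:loop}. Then we read off accuracy, depth and size by the network calculus of Lemma~\ref{lem:nn_calculus} and Lemma~\ref{lem:sharing}.

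\emph{Accuracy.} Condition \eqref{eqn:admissible_data} gives $A^0=(a^h_0,\ldots,a^h_0)\in B^{N,Q}_{R_0}$, because the norm of Definition~\ref{def:lifted_discrete_space} is the maximum over nodes (Proposition~\ref{prop:discrete_ball_containment}). Hence Theorem~\ref{thm:total_error} applies with $a^0=A^0$. By Proposition~\ref{prop:spectral_exact}, the architecture with exact spectral layers computes precisely the neural Picard iterates $a^n_\varepsilon$ and the semigroup reconstruction. With the choices (i)--(v) of Theorem~\ref{thm:main}, this yields $\|\mathcal L^S_{N,Q}\mathcal A_n(\mathcal I_hu_0)-u\|_{X^\alpha_T}\le\eta$. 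The term $\kappa^n\|a^0-a_\infty\|$ is bounded by $2R\kappa^n$, which is absorbed by the choice of $n$.

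\emph{Depth and size of one sweep.} A sweep is the composition (2a)$\to$(2b)$\to$(2c)$\to$(2d).
\begin{enumerate}
\item For (2a), the fixed transforms $\mathcal T_{\mathrm{syn}},\mathcal T_{\mathrm{syn},j}$ are applied to each of the $Q$ time levels and $d$ components. By Assumption~\ref{assumption:fast_transform}, this costs depth $C\log M$ and $(1+d_x)dQ\,\mathcal T(M)$ weights. The diagonal layers $D_j$ are merged into adjacent affine layers (Lemma~\ref{lem:nn_calculus}(1)).
\item For (2b), Lemma~\ref{lem:sharing} on $MQ$ points gives depth $\dep(\widehat f_\varepsilon)$ and $MQ\,W(\widehat f_\varepsilon)$ weights.
\item For (2c), the transform $\mathcal T_{\mathrm{an}}$ costs another $dQ\,\mathcal T(M)$ weights and depth $C\log M$.
\item For (2d), the recurrence \eqref{eqn:duhamel_recurrence} is affine with coefficients that are the same at every step. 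A Blelloch parallel scan realizes all prefixes, per mode and per component, in depth $C\log Q$ with $C\,Q\log(2Q)$ weights. Since the scan is linear, each stage is an affine layer, and stages can be stacked using identity/skip connections (Lemma~\ref{lem:nn_calculus}(4)). The input connection $D_S^q a^h_0$ is carried along as a skip channel.
\end{enumerate}
Composing the four parts by Lemma~\ref{lem:nn_calculus}(2), the constant factors of $2$ and the padding of parallel branches to equal depth (part (3)) only change constants. One sweep therefore has depth $\dep(\widehat f_\varepsilon)+C\log(MQ)$ and size within the bracket of item 3. Adding the encoder $C\,\mathcal T(M)$ and multiplying by $n$ gives the stated bound on $W(\mathcal A_n)$. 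Under $N\le CM$ and $\mathcal T(M)\le CM\log M$, the bound collapses to \eqref{eqn:realization_size}.

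\emph{Depth in terms of $\eta$.} We have $n\asymp\log\eta^{-1}$ and $\dep(\widehat f_\varepsilon)\le C\log\eta^{-1}$ from Theorem~\ref{thm:main}. Also $M$ and $Q$ are polynomial in $\eta^{-1}$ by (ii)--(iv) together with the resolution condition \eqref{eqn:admissible_refinement}, so $\log(MQ)\le C\log\eta^{-1}$. Together these give $C(\log\eta^{-1})^2$.

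\emph{Trainable count.} Item 4 follows from Lemma~\ref{lem:sharing} and Definition~\ref{def:spectral_layers}. All transforms are fixed, the block is shared across points, levels and sweeps, and the spectral weights are shared across levels and sweeps.

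The main obstacle is the scan step. Realizing it as a ReLU network requires care: intermediate partial sums must be propagated through the other branches' layers without blowing up the weight count. I would handle this with the identity trick, which costs $O(dNQ)$ weights per layer over $O(\log Q)$ layers, consistent with the $dNQ\log(2Q)$ term. A second point to check is that the polynomial bound on $M$ really follows from admissibility; I would deduce it from $h$ being dyadic with $h^r\asymp\eta$ and $M(h)\lesssim h^{-d_x}$.
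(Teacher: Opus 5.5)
Your route is the paper's. Accuracy comes from Theorem~\ref{thm:total_error} and Theorem~\ref{thm:main} once $A^0\in B^{N,Q}_{R_0}$ (Proposition~\ref{prop:discrete_ball_containment}). The per-sweep count is made of the fast transforms, the block shared over $MQ$ points, and $dN$ scalar parallel scans with identity carrying, followed by $n$ sweeps and the encoder.

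\textbf{The gap.} The step that does not go through as you justify it is $\log(MQ)\le C\log\eta^{-1}$, which the second inequality of item~2 needs. Nothing in the present hypotheses ties $M=M(h)$ to $\eta$ quantitatively. Condition \eqref{eqn:admissible_refinement} is purely qualitative, with no rate on $h_N$, and $M(h)\lesssim h^{-d_x}$ is not assumed. Theorem~\ref{thm:main}(iii) gives only the \emph{upper} bound $h^r\le c_{\mathrm{err}}\eta$. The requirements $(N,h)\in\mathcal A$ and $\varepsilon^{\mathrm{enc}}_{N,h}(u_0)\le c_{\mathrm{err}}\eta$ may force $h$ far below $\eta^{1/r}$. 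Also, $N$ in (ii) is tied to $\varepsilon^{\mathrm{tail}}_N(u_0)$, which has no rate unless $u_0\in X^{\gamma_{\mathrm s}}$. So ``$h^r\asymp\eta$'' is unavailable.

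What closes this is exactly the set of additional hypotheses of Theorem~\ref{thm:total_cost}: $u_0\in X^{\gamma_{\mathrm s}}$, two-sided Weyl bounds, $M(h)\asymp h^{-d_x}$, and the quantitative resolution condition $h\lambda_N^{1/2}\le c_{\mathrm{res}}$. Its proof shows that every condition in (iii) holds for $h\le h_*=c\,(c_{\mathrm{err}}\eta)^{1/r}$, hence $h>h_*/2$ and $M\le C\eta^{-d_x/r}$. The paper's proof of this theorem is itself silent here; it simply composes the sweeps. You were right to flag the point, but you must either import those assumptions or state the $(\log\eta^{-1})^2$ bound under them. The first inequality $\dep(\mathcal A_n)\le n(\dep(\widehat f_\varepsilon)+C\log(MQ))$ needs nothing extra.

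\textbf{A smaller bookkeeping point.} Padding the skip channel is not a constant-factor change. By Lemma~\ref{lem:nn_calculus}(3)--(4), carrying a $dN$-dimensional vector through the $\dep(\widehat f_\varepsilon)+C\log(MQ)$ layers of a sweep adds about $dN(\dep(\widehat f_\varepsilon)+\log(MQ))$ weights. Carrying all of $D_S^qa^h_0$, $q=0,\ldots,Q$, would add $Q$ times that. Carry only $a^h_0$ and let the scan produce the powers from $v^0=a^h_0$. Then, as the paper does, use $\dep(\widehat f_\varepsilon)\le W(\widehat f_\varepsilon)$ and $N\le MQ$ to absorb the term into $MQ\,W(\widehat f_\varepsilon)$, and into $d\,MQ\log(MQ)$ when $N\le CM$.

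Merging the derivative multipliers $D_j$ into the first sparse factor of $\mathcal T_{\mathrm{syn},j}$ is fine, because they are block-diagonal with blocks of size at most $b_0$. The paper instead counts them as $(1+d_x)dQN$ extra weights absorbed into $dNQ\log(2Q)$.
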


\noindent The proof is given in Appendix~\ref{sec:app_realization}. The next theorem obtains the explicit rates by combining the results from Theorems~\ref{thm:main} and \ref{thm:realization}.

\begin{theorem}[Rates and Total Cost]
\label{thm:total_cost}
Assume the conditions from Theorem~\ref{thm:realization}, $c_Wk^{2/d_x}\le\lambda_k\le C_Wk^{2/d_x}$ for all $k$, $M(h)\asymp h^{-d_x}$, $\mathcal T(M)\le CM\log M$, and the resolution condition $(N,h)\in\mathcal A$ if $h\lambda_N^{1/2}\le c_{\mathrm{res}}$. Let $u_0\in X^{\gamma_{\mathrm s}}$. Using Theorem~\ref{thm:main} and a quadrature of order $r\le\gamma_{\mathrm s}-\alpha$, set
$$\upsilon:=\frac{d_x}{r}+\frac1{\zeta_{\mathrm t}}+p_{\max}\chi_{\mathrm s}.$$
Then the architecture from Definition~\ref{def:loop} approximates the solution up to accuracy $\eta$ in $C([0,T];X^\alpha)$ with the following cost (see Remark~\ref{rem:higher_order} for higher order operators), where $C$ is independent of $\eta$, for $\alpha\ne2$ (for $\alpha=2$, every bound involving the block includes a factor $(\log\eta^{-1})^{p_{\max}}$):
\begin{center}
\renewcommand{\arraystretch}{1.25}
\begin{tabular}{@{}ll@{}}
\toprule
Quantity & Upper Bound \\
\midrule
discretization & $N\le C\eta^{-d_x/(\gamma_{\mathrm s}-\alpha)}$,\quad $M\le C\eta^{-d_x/r}$,\quad $Q\le C\eta^{-1/\zeta_{\mathrm t}}$,\quad $n\le C\log\eta^{-1}$ \\
trainable block & $W(\widehat f_\varepsilon)\le C\eta^{-p_{\max}\chi_{\mathrm s}}\log\eta^{-1}$ \\
trainable, spectral & $W_{\mathrm{spec}}\le C\eta^{-d_x/(\gamma_{\mathrm s}-\alpha)}$ \\
All Weights (per eval.) \ \ & $\le C\eta^{-\upsilon}(\log\eta^{-1})^2$ \\
Total Depth & $\dep(\mathcal A_n)\le C(\log\eta^{-1})^2$ \\
\bottomrule
\end{tabular}
\end{center}
The trainable parameter count is independent of $M(h)$, $Q$ and $n$.
\end{theorem}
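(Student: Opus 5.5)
The proof will combine the parameter choices of Theorem~\ref{thm:main} with the size bound of Theorem~\ref{thm:realization}, converting every discretization parameter into a power of $\eta$ through the Weyl-type bounds $c_Wk^{2/d_x}\le\lambda_k\le C_Wk^{2/d_x}$ and $M(h)\asymp h^{-d_x}$. Accuracy $\eta$ in $X^\alpha_T$ and the depth bound $C(\log\eta^{-1})^2$ are already item 1 and item 2 of Theorem~\ref{thm:realization}, so the work is in the table entries.

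\textbf{Discretization.} Since $u_0\in X^{\gamma_{\mathrm s}}$, the argument in the proof of Theorem~\ref{thm:main} gives $N\le N''$, where $N''$ is the first index with $\lambda_{N''+1}\ge\Lambda=(m_0/(c_{\mathrm{err}}\eta))^{1/\zeta_{\mathrm s}}$. The lower Weyl bound $\lambda_{N''}\ge c_W(N'')^{2/d_x}$ together with $\lambda_{N''}<\Lambda$ then gives $N\le C\Lambda^{d_x/2}=C\eta^{-d_x/(2\zeta_{\mathrm s})}=C\eta^{-d_x/(\gamma_{\mathrm s}-\alpha)}$. For $h$, the dyadic choice in (iii) must satisfy three requirements.
\begin{itemize}
\item $h^r\le c_{\mathrm{err}}\eta$, which forces $h\asymp\eta^{1/r}$ up to a factor $2$.
\item The resolution condition $h\lambda_N^{1/2}\le c_{\mathrm{res}}$. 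It holds once $h\lesssim\Lambda^{-1/2}\asymp\eta^{1/(\gamma_{\mathrm s}-\alpha)}$, which is implied by $h\asymp\eta^{1/r}$ because $r\le\gamma_{\mathrm s}-\alpha$.
\item $\varepsilon^{\mathrm{enc}}_{N,h}(u_0)\le c_{\mathrm{err}}\eta$. Using $u_0\in X^{\gamma_{\mathrm s}}\subset X^{\alpha+r}$, contractivity of $S(t)$, and the consistency Assumption~\ref{assumption:quadrature_consistency}, this is bounded by $C_{\mathrm{quad}}h^r\|u_0\|_{X^{\alpha+r}}$, so it follows after shrinking the constant in $h$.
\end{itemize}
Hence $M\asymp h^{-d_x}\le C\eta^{-d_x/r}$. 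The bounds $Q\le C\eta^{-1/\zeta_{\mathrm t}}$ and $n\le C\log\eta^{-1}$ are immediate from (iv) and (i).

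\textbf{Trainable counts.} The bound $W(\widehat f_\varepsilon)\le C\eta^{-p_{\max}\chi_{\mathrm s}}\log\eta^{-1}$ is the $\alpha>2$ case of Theorem~\ref{thm:main}. For $\alpha<2$ the factor $\chi_{\mathrm s}=1$ and the same bound holds. For $\alpha=2$ Theorem~\ref{thm:main} gives the bound with the extra factor $(\log\eta^{-1})^{p_{\max}}$. The spectral count follows from $W_{\mathrm{spec}}\le C_{\mathrm{spec}}N$ in Definition~\ref{def:spectral_layers} and the bound on $N$. Both trainable counts are independent of $M$, $Q$ and $n$ by item 4 of Theorem~\ref{thm:realization}.

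\textbf{All weights.} We insert the bounds into \eqref{eqn:realization_size}. This needs $N\le CM$, which holds because $N\lesssim\eta^{-d_x/(\gamma_{\mathrm s}-\alpha)}\le\eta^{-d_x/r}\asymp M$, again since $r\le\gamma_{\mathrm s}-\alpha$. Then
$$W(\mathcal A_n)\le C\log\eta^{-1}\cdot\eta^{-d_x/r-1/\zeta_{\mathrm t}}\left[\log\eta^{-1}+\eta^{-p_{\max}\chi_{\mathrm s}}\log\eta^{-1}\right]\le C\eta^{-\upsilon}(\log\eta^{-1})^2,$$
where we used $\log(MQ)\le C\log\eta^{-1}$. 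The final term $C\mathcal T(M)$ of Theorem~\ref{thm:realization} is lower order.

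\textbf{Main obstacle.} The delicate step is the compatibility of the grid with the mode count: the admissibility, resolution and $N\le CM$ requirements are what force the hypothesis $r\le\gamma_{\mathrm s}-\alpha$, and I would verify it carefully, including the lattice constant from Remark~\ref{rem:stable_quadrature_example}. The other point needing care is that the encoding tail admits the rate $h^r$ uniformly in $t$, which is what makes the choice of $h$ explicit. All remaining steps are bookkeeping of constants and logarithms.
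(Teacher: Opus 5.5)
Your proposal is correct and follows the paper's proof essentially step for step. You bound $N\le N''$ with the lower Weyl bound, take $Q$ and $n$ from (iv) and (i), check that all three conditions of Theorem~\ref{thm:main}(iii) hold for every $h\le h_*=c\,(c_{\mathrm{err}}\eta)^{1/r}$ so that the dyadic choice gives $h>h_*/2$ and $M\le C\eta^{-d_x/r}$, obtain $N\le CM$ from $r\le\gamma_{\mathrm s}-\alpha$, and substitute into \eqref{eqn:realization_size}. One small imprecision: $h^r\le c_{\mathrm{err}}\eta$ alone gives only the upper bound on $h$. The matching lower bound comes from the minimality of $k$ together with your verification that the resolution and encoding conditions also hold below $h_*$.
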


Note that when $r=\gamma_{\mathrm s}-\alpha$ we have $M\asymp N$.

\begin{remark}[Higher-order Operators]
\label{rem:higher_order}
Theorem~\ref{thm:total_cost} assumes Weyl's law and the resolution condition associated with second-order linear operators. Suppose $L$ has order $2m$. If $c_Wk^{2m/d_x}\le\lambda_k\le C_Wk^{2m/d_x}$ and the resolution condition is $h\lambda_N^{1/(2m)}\le c_{\mathrm{res}}$, then the same proof gives $N\asymp\eta^{-d_x/(m(\gamma_{\mathrm s}-\alpha))}$. With an exact quadrature $M\asymp N$, so the spatial part of the exponent $\upsilon$ becomes $\frac{d_x}{m(\gamma_{\mathrm s}-\alpha)}$. Thus $m$ is included in the rates. 
\end{remark}

\begin{proof}[Proof of Theorem~\ref{thm:total_cost}]
The proof collects all the bounds together. The accuracy and the block-wise bound from Theorem~\ref{thm:main} (where we absorb $m_0$ into the constant) and the trainable parameters come from $\widehat f_\varepsilon$ and the spectral weights $W_{\mathrm{spec}}\le C_{\mathrm{spec}}N$ (Proposition~\ref{prop:spectral_exact}).

The additional assumptions regarding the parameters make certain terms explicit. Following the proof of Theorem~\ref{thm:main},
we have $N''$ and $\Lambda:=(m_0/(c_{\mathrm{err}}\eta))^{1/\zeta_{\mathrm s}}$, where $N\le N''$. If $N''\ge2$, the lower Weyl bound in the assumptions gives $c_W(N'')^{2/d_x}\le\lambda_{N''}<\Lambda$, so $N''<(\Lambda/c_W)^{d_x/2}$ and $\Lambda^{d_x/2}=(m_0/(c_{\mathrm{err}}\eta))^{d_x/(\gamma_{\mathrm s}-\alpha)}$. By Theorem~\ref{thm:main} (iv) we have $Q\le2T(c_{\mathrm{err}}\eta)^{-1/\zeta_{\mathrm t}}$ and $n\le C\log\eta^{-1}$ by (i).

Next we look at $M$. All conditions in Theorem~\ref{thm:main}(iii) hold for every $h\le h_*:=c\,(c_{\mathrm{err}}\eta)^{1/r}$, with $c\in(0,1]$ depending only on $m_0$, $\lambda_1$, $c_{\mathrm{res}}$ and $C_{\mathrm{quad}}$. Specifically, $h^r\le c_{\mathrm{err}}\eta$ since $c\le1$. Next, $\varepsilon^{\mathrm{enc}}_{N,h}(u_0)\le C_{\mathrm{quad}}h^r\|u_0\|_{X^{\alpha+r}}\le Cm_0h^r\le c_{\mathrm{err}}\eta$ for $c\le(Cm_0)^{-1/r}$, since $u_0\in X^{\gamma_{\mathrm s}}\subset X^{\alpha+r}$. Lastly, $(N,h)\in\mathcal A$ when the resolution condition holds $h\lambda_N^{1/2}\le c_{\mathrm{res}}$, and since $\lambda_N\le\max\{1,\lambda_1\}\Lambda$ gives $\lambda_N^{1/2}\le C(c_{\mathrm{err}}\eta)^{-1/(\gamma_{\mathrm s}-\alpha)}$, this holds for $h\le h_*$ because $r\le\gamma_{\mathrm s}-\alpha$ and $c_{\mathrm{err}}\eta<1$. Since the choice $h=2^{-k}$ in (iii) stops no later than the first power of two under $h_*$, we have $h>h_*/2$, and thus $M\le Ch^{-d_x}\le C\eta^{-d_x/r}$. Conversely, $h\le(c_{\mathrm{err}}\eta)^{1/r}$ gives $M\ge c\eta^{-d_x/r}\ge cN$, since $N\le C\eta^{-d_x/(\gamma_{\mathrm s}-\alpha)}\le C\eta^{-d_x/r}$, so the size from \eqref{eqn:realization_size} holds.

Putting these bounds together in \eqref{eqn:realization_size} yields
$$W(\mathcal A_n)\le C\log\eta^{-1}\cdot\eta^{-d_x/r-1/\zeta_{\mathrm t}}\left(\log\eta^{-1}+\eta^{-p_{\max}\chi_{\mathrm s}}\log\eta^{-1}\right)\le C\eta^{-\upsilon}(\log\eta^{-1})^2,$$
and the depth comes from Theorem~\ref{thm:realization}(ii). The cost of evaluating a ReLU network is a finite number of operations per nonzero weight and per nodes, and there are at most as many units as nodes. With $r=\gamma_{\mathrm s}-\alpha$, the upper bounds on $N$ and $M$ have matching rates, and the upper Weyl bound assumption gives the matching lower bound on $N$, so $M\asymp N$.
\end{proof}

\begin{remark}[Error in $L^2$]
\label{rem:l2}
Since $\|v\|_{L^2}\le\lambda_1^{-\alpha/2}\|v\|_{X^\alpha}$, all of our bounds in $C([0,T];X^\alpha)$ give a bound in $C([0,T];L^2)$ directly, that is, with the same exponents and a constant multiplied by $\lambda_1^{-\alpha/2}$. The lower bounds of Section~\ref{sec:lower_bounds} allow any $\alpha\ge0$, so they hold for $L^2$. Repeating the proofs, using the weaker $L^2$ norm should remove most of the amplification effects, since $\xi_{\alpha'}=1$ for $\alpha'<2$, and change the mode count to $\eta^{-d_x/\gamma_{\mathrm s}}$, matching Proposition~\ref{prop:spectral_lower} at $\alpha=0$. We leave this to a future work.
\end{remark}

\subsection{Rate Comparisons with Other Approximations}
\label{sec:comparison}

We can compare these rates to two other approximations directly. For simplicity and fairness, the only difference between the approximations is in the nonlinear terms and we assume $\alpha>2$, so that $\varepsilon\asymp\eta^\chi$ is needed in our approach (Table~\ref{tab:comparison}). First, if we consider the Picard map as a generic Lipschitz operator \cite{lu2021learning,li2020fourier,kovachki2023neural, weihs2026generalization}, then resolving the trajectories from Lemma~\ref{lem:derived_regularity} requires a latent space with $n(\eta)\to\infty$, and so the exponential cost cannot be improved over the Lipschitz class \cite{lanthaler2026parametric}. To improve the rate, one would need holomorphic or structured operators, which is not given by the derived regularity of the system Lemma~\ref{lem:derived_regularity}.  In another approach, one can consider a standard approximation to $f$ on $\bR^{D}$, $D=d(1+d_x)$ \cite{yarotsky2017error}. When $s\le s_{\min}$, the common setting, our DAG formulation improves the exponent by a factor of at least $D$ (e.g., $6$ for the 2D Navier-Stokes, $2$ for a one-dimensional conservation law), and for polynomial $f$ it leads to a larger improvement, replacing inverse power laws on $\eta$ by $\log\eta^{-1}$ only. The intrinsic cost of resolving the state variable (the factor $\frac{d_x}{r}+\frac1{\zeta_{\mathrm t}}$) is used in all three approaches, but our formulation reduces the trainable components and leverages the parabolic structure directly. 

\begin{table}[ht]
\centering
\small
\begin{tabular}{@{}lcl@{}}
\toprule
Method & Dimension of nonlinear approx. & Trainable size for accuracy $\eta$ \\
\midrule
1. Black-box learning of $\Phi$
& $n(\eta)\asymp\eta^{-\frac{d_x+2}{\gamma-\alpha}}\to\infty$
& $\exp\left(c\,\eta^{-\frac{d_x+2}{\gamma-\alpha}}\log\eta^{-1}\right)$ \\[1mm]
2. Locality $+$ direct ReLU for $f$
& $D=d(1+d_x)$, fixed
& $C\,\eta^{-\frac{D}{s}\chi}\log\eta^{-1}$ \\[1mm]
3. Locality $+$ DAG (Thm.~\ref{thm:main})
& $1$, at each nonlinear node
& $C\,\eta^{-\frac{1}{s_{\min}}\chi}\log\eta^{-1}$ ($C\log\eta^{-1}$ if multilinear) \\
\bottomrule
\end{tabular}
\caption{Comparison between general and neural Picard theory. First, using the knowledge that the system is a semilinear parabolic PDE reduces the dimension of the nonlinear approximation from $n(\eta)$ to the state dimension $D=d(1+d_x)$. Then our DAG approximation reduces it from $D$ to one variable per node. The discretization and the spectral weights are the same in all three rows.}
\label{tab:comparison}
\end{table}

\subsection{Fixed Linear Operations}
\label{sec:postcomposition}

In some settings, the governing law $f$ involves a nonlocal but fixed, linear operation. We show that such systems do not change the overall costs. This was referenced in Section~\ref{sec:dag} and used in Section~\ref{sec:amplification} and in the explicit PDE examples of Section~\ref{sec:examples}. Since the linear term may change the spaces, we need to introduce an extension.

\begin{lemma}[Fixed Linear Operations]
\label{lem:linear_postcomposition}
Let $\widetilde H\supseteq H$ be a Hilbert space associated with a self-adjoint extension of $L$ that has the $e_k$ as its eigenfunctions and let $\widetilde X^s$ be the induced spaces with $X^s=\widetilde X^s\cap H$. Let $\mathcal M:\widetilde X^\beta\to X^\beta$ be a linear operator such that
\begin{equation}
\mathcal M^{*}e_k=\mu_ke_k,
\qquad
C_{\mathcal M}:=\sup_{k\ge1}|\mu_k|<\infty ,
\label{eqn:multiplier}
\end{equation}
and let $f=\mathcal M\circ f_{\mathrm{loc}}$. We assume that $f_{\mathrm{loc}}$ is local, satisfies Assumption~\ref{assumption:structured_f}, and is Lipschitz on bounded sets from $X^\alpha$ into $\widetilde X^\beta$ with constant $L_R^{\mathrm{loc}}$. If $D_\mu:=\operatorname{diag}(\mu_1,\ldots,\mu_N)$, then the following hold.
\begin{enumerate}
\item $\mathcal C_N\mathcal M=D_\mu\,\mathcal C_N$ on $\widetilde X^\beta$, $D_\mu$ commutes with $S_N(t)$, and  $\|D_\mu\|_{\ell^2_s\to\ell^2_s}\le C_{\mathcal M}$ for every $s$;
\item If $\mathcal C_N^h$ is replaced by $D_\mu\mathcal C_N^h$ in Definition~\ref{def:fully_discrete_neural_picard}, and Assumption~\ref{assumption:quadrature_stability} is replaced by $\widetilde X^s$, then Theorems~\ref{thm:fully_discrete_fixed_point}, \ref{thm:neural_consistency}, \ref{thm:fully_discrete_consistency}, \ref{thm:total_error}, and \ref{thm:main} hold but with $L_R=C_{\mathcal M}L_R^{\mathrm{loc}}$ and $F_R=C_{\mathcal M}F_R^{\mathrm{loc}}$.
\item The trainable parameters are from approximating $f_{\mathrm{loc}}$ and the spectral weights Definition~\ref{def:spectral_layers}, which includes the entries of $D_\mu$ (the $\mathbf 1_{\mathcal M}$ term). In particular, if $\mathcal M$ is the orthogonal projection of $\widetilde H$ onto $H$, then $\mu_k\equiv1$, $D_\mu=I$, and $\mathcal M$ requires no layers.
\end{enumerate}
\end{lemma}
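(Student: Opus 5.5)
The three parts are taken in order. Part (1) is the algebraic backbone, and parts (2) and (3) reduce to it.

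\emph{Part (1).} For $v\in\widetilde X^\beta$, I compute the $k$-th coefficient by duality:
$$(\mathcal C_N\mathcal M v)_k=\langle \mathcal M v,e_k\rangle=\langle v,\mathcal M^*e_k\rangle=\mu_k\langle v,e_k\rangle .$$
This gives $\mathcal C_N\mathcal M=D_\mu\mathcal C_N$. To make the pairing meaningful I would interpret it in the $\widetilde H$ (or $\widetilde X^{-\beta}$--$\widetilde X^\beta$) duality, extending from a dense subset by continuity. Both $D_\mu$ and $S_N(t)=e^{-t\Lambda_N}$ are diagonal in the same basis, so they commute. The weighted norm bound $\|D_\mu a\|_{\ell^2_s}\le C_{\mathcal M}\|a\|_{\ell^2_s}$ is immediate from $|\mu_k|\le C_{\mathcal M}$. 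As a by-product, $\|\mathcal M w\|_{X^\beta}\le C_{\mathcal M}\|w\|_{\widetilde X^\beta}$, since $X^\beta$ norms are computed from the same coefficients. Hence $f=\mathcal M f_{\mathrm{loc}}$ is Lipschitz from $X^\alpha$ to $X^\beta$ with constant $C_{\mathcal M}L^{\mathrm{loc}}_R$ and bounded by $C_{\mathcal M}F^{\mathrm{loc}}_R$.

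\emph{Part (2).} I will not redo the earlier proofs. Instead I audit where $f$ and $\mathcal C_N^h$ enter them. In every case the nonlinearity appears only through vectors of the form $\mathcal C^h_N[\mathcal F^h(\cdot)]$, which then go through $S_N(\tau)$ and the $\ell^2_\alpha$ norm. Replacing $\mathcal C^h_N$ by $D_\mu\mathcal C^h_N$ and commuting $D_\mu$ past $S_N(\tau)$ by part (1), each estimate picks up at most one factor $\|D_\mu\|\le C_{\mathcal M}$. The quadrature bounds of Assumption~\ref{assumption:quadrature_stability}, now posed on $\widetilde X^s$, are applied to $f_{\mathrm{loc}}$ in place of $f$. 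This gives the following:
\begin{itemize}
\item Theorem~\ref{thm:fully_discrete_fixed_point} holds with $L_R,F_R$ replaced by $C_{\mathcal M}L_R^{\mathrm{loc}},C_{\mathcal M}F^{\mathrm{loc}}_R$.
\item Theorem~\ref{thm:neural_consistency} holds with an extra $C_{\mathcal M}$ absorbed into $C_R$, because the network error $g^r_\varepsilon$ is measured in $\ell^\infty$ on the grid before $D_\mu$ is applied.
\item Theorem~\ref{thm:fully_discrete_consistency} holds. Its continuous comparison term uses $\mathcal C_N f=\mathcal C_N\mathcal M f_{\mathrm{loc}}=D_\mu\mathcal C_N f_{\mathrm{loc}}$, so the quadrature consistency is applied to $f_{\mathrm{loc}}$ and then multiplied by $D_\mu$.
\end{itemize}
Theorems~\ref{thm:total_error} and \ref{thm:main} use only these three results plus the regularity lemmas. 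The regularity lemmas need only the Lipschitz and growth bounds of $f$ into $X^\beta$, which were already established in part (1). So they carry over with the modified constants.

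\emph{Part (3).} The network now approximates $f_{\mathrm{loc}}$, via Theorem~\ref{thm:relu_f} with output dimension $m$. $D_\mu$ is realized as the learned diagonal layer $D_{\mathcal M}$ of Definition~\ref{def:spectral_layers}, contributing at most $d\,m$ entries per mode. If $\mathcal M$ is the orthogonal projection onto $H$, then $\mathcal M^*e_k=e_k$ because each $e_k\in H$. Hence $\mu_k=1$ and $D_\mu=I$, which requires no layer.

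\emph{Main obstacle.} I expect the hardest step to be the consistency theorem. There the exact object $\mathcal M f_{\mathrm{loc}}(P_Nu)$ must be compared with its quadrature version, and one has to check that $\mathcal C_N$ and $\mathcal M$ commute on the possibly larger space $\widetilde X^{\beta+r}$. One also needs the regularity hypothesis on $f(P_Nu)$ to hold in $\widetilde X^{\beta+r}$ for $f_{\mathrm{loc}}$. This is where the $\widetilde X^s$ version of the assumptions is essential.
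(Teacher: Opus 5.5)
Your proposal is correct and follows essentially the same route as the paper. It uses the coefficient identity from $\mathcal M^{*}e_k=\mu_k e_k$, the commutation of the diagonal $D_\mu$ with $S_N(t)$, and an audit of where $\mathcal C^h_N$ enters the discrete estimates, so that each estimate picks up at most a factor $C_{\mathcal M}$ with $f_{\mathrm{loc}}$ measured in $\widetilde X^\beta$. Your explicit bound $\|\mathcal M w\|_{X^\beta}\le C_{\mathcal M}\|w\|_{\widetilde X^\beta}$, which carries the Lipschitz and growth constants over to the continuous-level results, is left implicit in the paper but is consistent with its argument.
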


\noindent The proof is given in Appendix~\ref{sec:app_realization}.

\section{The Solution Operator on Compact Sets of Initial Data}
\label{sec:operator}

Thus far, Theorems~\ref{thm:main} and \ref{thm:realization} are results concerning the approximation of a single trajectory of the governing system starting from a fixed $u_0$. The data (initial condition) is input to the architecture through the encoder $w\mapsto\mathcal C^h_Nw$ on the sampled function (vector) $w=\mathcal I_h u_0$. In operator learning, one aims to construct an approximation over a set of inputs and obtain a uniform accuracy for all inputs over, say, set $\mathcal U$. In this section, we show that the architecture with the same trainable parameters achieves this. 

On bounded subsets of $X^\gamma$ the regularity from prior sections is sufficient to prove that the system approximates the general solution operator. Every constant in the analysis depends only on $u_0$ via $\|u_0\|_{X^\gamma}$, so Theorem~\ref{thm:main} is already uniform and the rates are unchanged (summarized in Theorem~\ref{thm:operator_rates}). On the other hand, if we just have sets that are compact in $X^\alpha$, we get a uniform approximation but without an explicit rate, see also \cite{chen1995universal,lanthaler2022error}.

\subsection{Uniform Properties}

We denote the solution operator by $G_T:u_0\mapsto u$, see Definition~\ref{def:mild}, and denote the set of initial conditions by $\mathcal U$. 

\begin{lemma}[Uniform Well-posedness on Bounded Sets]
\label{lem:uniform_wp}
For $R_0>0$, we set $R:=1+2R_0$. Using the conditions from Proposition~\ref{prop:contraction}, there exists $T_0>0$, depending only on $R$, on $L_R$, and on the semigroup constant, such that for every $T\in(0,T_0]$ and every $u_0\in X^\alpha$ with $\|u_0\|_{X^\alpha}\le R_0$:
\begin{enumerate}
\item $\Phi$ maps $B_R^\alpha$ into itself and is a contraction with constant $\kappa_0=CT^\theta L_R\le\tfrac{1}{2}$;
\item $ G_T[u_0]$ is well defined on $B_R^\alpha$.
\end{enumerate}
Both $R$ and $\kappa_0$ are independent of $u_0$.
\end{lemma}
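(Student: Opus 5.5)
The plan is to rerun the Banach fixed-point argument of Proposition~\ref{prop:contraction}, keeping track of which constants depend on $u_0$; the point is that $R$ and $T_0$ will be fixed before $u_0$ is chosen. Set $R:=1+2R_0$ and let $L_R$ be the local Lipschitz constant from Proposition~\ref{prop:contraction} on $B_R^\alpha$. This depends only on $R$, since the Lipschitz hypothesis holds for every radius. By the remark after Proposition~\ref{prop:contraction}, we may replace $f(0,0)=0$ by the affine bound
$$\|f(u,\nabla u)\|_{X^\beta_T}\le\|f(0,0)\|_{X^\beta_T}+L_R\|u\|_{X^\alpha_T},$$
which gives $F_R\le \|f(0,0)\|_{X^\beta_T}+L_RR$, again independent of $u_0$.

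For invariance, take $u\in B_R^\alpha$. Contractivity of $S(t)$ on $X^\alpha$ and the Duhamel bound of Assumption~\ref{assumption:semigroup_regular} with $r=\alpha$, $s=\beta$ give
$$\|\Phi[u]\|_{X^\alpha_T}\le\|u_0\|_{X^\alpha}+CT^{\theta}F_R\le R_0+CT^\theta F_R .$$
We need $R_0+CT^\theta F_R\le R=1+2R_0$, and it suffices that $CT^\theta F_R\le 1+R_0$. For contraction, subtract $\Phi[u]-\Phi[v]=\mathcal D[f(u,\nabla u)-f(v,\nabla v)]$ and apply the same Duhamel bound together with the Lipschitz hypothesis to get
$$\|\Phi[u]-\Phi[v]\|_{X^\alpha_T}\le CT^\theta L_R\|u-v\|_{X^\alpha_T},$$
so we need $CT^\theta L_R\le\frac12$. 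We then define $T_0$ as the largest $T$ satisfying both inequalities, which is positive since $\theta>0$.

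The constant $C$ in the Duhamel bound does not depend on $T$ (Assumption~\ref{assumption:semigroup_regular}), and $\|f(0,0)\|_{X^\beta_T}$ is constant in $T$ because $f(0,0)$ is time-independent. Hence $T_0$ depends only on $R$, $L_R$ and the semigroup constant. Since both conditions are monotone in $T$, they continue to hold for every $T\in(0,T_0]$.

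Part 2 then follows from the Banach fixed-point theorem on the closed set $B_R^\alpha$ of the complete space $X^\alpha_T$. We use that $\Phi$ maps $X^\alpha_T$ into itself, which holds because $\mathcal D[g]\in C([0,T];X^\alpha)$ as shown after Assumption~\ref{assumption:semigroup_regular}. This gives a unique fixed point $G_T[u_0]\in B_R^\alpha$.

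There is no substantive obstacle. The only care needed is to keep the dependence on $u_0$ out of $T_0$, which means bounding $\|u_0\|_{X^\alpha}$ by $R_0$ before choosing $T$ and not relying on $f(0,0)=0$ unless it is assumed. The margin $1$ in $R=1+2R_0$ is included so that invariance remains possible even when $R_0$ is small.
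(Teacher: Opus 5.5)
Your proposal is correct and follows the paper's argument: rerun the Banach fixed-point proof of Proposition~\ref{prop:contraction} with the $u_0$-independent radius $R=1+2R_0$, and observe that $T_0$ depends on $u_0$ only through $R$. The one difference is that you impose a separate invariance condition to allow $f(0,0)\neq0$, which makes $T_0$ also depend on $\|f(0,0)\|_{X^\beta}$; under the proposition's hypothesis $f(0,0)=0$, the paper gets invariance directly from $CT^\theta L_R\le\frac12$, since then $\|u_0\|_{X^\alpha}+CT^\theta L_RR\le R/2+R/2$.
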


\noindent The proof is given in Appendix~\ref{sec:app_operator}.

\begin{lemma}[Compactness of the regularity ball]
\label{lem:compact_embedding}
For $\gamma>\alpha$ the embedding $X^\gamma\hookrightarrow X^\alpha$ is compact, in particular, the ball
$\mathcal U_{R_0}:=\{u_0\in X^\gamma:\|u_0\|_{X^\gamma}\le R_0\}$
is a compact subset of $X^\alpha$.
\end{lemma}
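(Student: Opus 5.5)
The plan is to prove compactness of the embedding directly from the spectral description of the norms in Definition~\ref{def:variable_hilbert}, by writing the identity map $X^\gamma\to X^\alpha$ as a norm limit of finite-rank operators. Since $\lambda_k\to\infty$ and $\gamma>\alpha$, the Galerkin projections $P_N$ of Definition~\ref{def:spectral_galerkin} should supply these approximants.

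\textbf{Step 1 (finite-rank approximation).} Each $P_N:X^\gamma\to X^\alpha$ has rank $N$. By Proposition~\ref{prop:galerkin_approximation} with the pair $(\gamma,\alpha)$ in place of $(\alpha,\beta)$, which needs only $\gamma>\alpha\ge0$, we get
$$\|u-P_Nu\|_{X^\alpha}\le\lambda_{N+1}^{-(\gamma-\alpha)/2}\|u\|_{X^\gamma}.$$
So the inclusion differs from $P_N$ in operator norm by at most $\lambda_{N+1}^{-(\gamma-\alpha)/2}$, and this tends to $0$ because $\lambda_k\to\infty$. A norm limit of finite-rank operators is compact, so the embedding is compact.

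\textbf{Step 2 (compactness of the ball).} It remains to show that $\mathcal U_{R_0}$ is compact in $X^\alpha$, and not merely precompact, so closedness is the main point to check.

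\emph{Total boundedness.} Given $\epsilon>0$, choose $N$ with $R_0\lambda_{N+1}^{-(\gamma-\alpha)/2}<\epsilon/2$. The set $P_N\mathcal U_{R_0}$ is a bounded subset of the finite-dimensional space $E_N$, so it has a finite $\epsilon/2$-net. That net is then an $\epsilon$-net for $\mathcal U_{R_0}$.

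\emph{Closedness.} Let $u_j\in\mathcal U_{R_0}$ with $u_j\to u$ in $X^\alpha$. Each coefficient then converges: $\langle u_j,e_k\rangle\to\langle u,e_k\rangle$. For every $K$,
$$\sum_{k\le K}\lambda_k^\gamma|\langle u,e_k\rangle|^2=\lim_j\sum_{k\le K}\lambda_k^\gamma|\langle u_j,e_k\rangle|^2\le R_0^2.$$
Letting $K\to\infty$ gives $u\in X^\gamma$ with $\|u\|_{X^\gamma}\le R_0$, so the ball is closed in $X^\alpha$.

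A closed, totally bounded subset of the complete space $X^\alpha$ is compact. The argument is otherwise routine; the one step needing care is closedness, which relies on lower semicontinuity of the $X^\gamma$ norm under $X^\alpha$ convergence.
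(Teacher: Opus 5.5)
Your proposal is correct and follows the paper's proof essentially step for step: the embedding is the operator-norm limit of the finite-rank $P_N$ by Proposition~\ref{prop:galerkin_approximation}, which gives total boundedness, and the ball is closed in $X^\alpha$ because coefficients converge and the truncated $X^\gamma$ sums are lower semicontinuous, so completeness gives compactness. The only difference is that you write out the $\epsilon$-net explicitly, where the paper deduces total boundedness directly from compactness of the embedding.
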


\noindent The proof is given in Appendix~\ref{sec:app_operator}.

To extend the results from Section~\ref{sec:total}, we need the conditions to hold uniformly over the input data set.  The parabolic estimates must be uniform on bounded sets of data.

\begin{assumption}[Uniform Estimates]
\label{assumption:uniform_data}
For every $\rho_\gamma>0$ there are constants $R_\gamma$ and $C_R$, only depending on $\rho_\gamma$, $T$, and the constants from Propositions~\ref{prop:contraction} and \ref{prop:instantaneous_regularity}, such that every $u_0$ with $\|u_0\|_{X^\gamma}\le\rho_\gamma$ satisfies the same result from Lemma~\ref{lem:derived_regularity} with
$$\sup_{0<t\le T}t^{\zeta}\|u(t)\|_{X^\gamma} \le R_\gamma,
\qquad
\|F\|_{\infty,\beta}+[F]_{\zeta}\le C_R,$$
and with the bound from Theorem~\ref{thm:fully_discrete_consistency} on $\|f(P_Nu,\nabla P_Nu)\|_{X^{\beta+r}}$ holding with the same $C_R$.
\end{assumption}

The assumption is a summary of previous bounds. The first two bounds follow from Lemma~\ref{lem:derived_regularity}, whose constants $R_\gamma=C(R+T^\theta F_R)$ and $[F]_\zeta\le CL_R(R_\gamma+T^\theta F_R)$ depend on $u_0$ only through the radius $R$ of Proposition~\ref{prop:contraction}. Thus it holds by a uniform bound on $\|u_0\|_{X^\alpha}$. The bound on $f(P_Nu,\nabla P_Nu)$ follows from Assumption~\ref{assumption:bootstrap} with $s=\alpha+r$, with constants only depending on $K_R$, $R$ and $R_\gamma$. The bootstrap shows that the same holds when we replace $X^\gamma$ by $X^{\gamma'}$, the constants then also depending on the radius $R_{\gamma'}$. The radius $\rho_\gamma$ is only needed for $E^h_{\mathrm{init}}$ and $\|(I-P_N)u_0\|_{X^\alpha}$ to be uniformly controlled over the data set.

The architecture sees the data only by its sampled values $\mathcal I_hu_0$ and reconstructs $\Pi_{N,h}u_0=\mathcal S_N\mathcal C^h_N\mathcal I_hu_0$ in $E_N$. Over a set $\mathcal U$, the errors are  $\varepsilon^{\mathrm{tail}}_N(\mathcal U)$, which measures how well the eigenbasis resolves $\mathcal U$, and $\varepsilon^{\mathrm{enc}}_{N,h}(\mathcal U)$, which measures the  error of using quadrature instead of inner products (see Definition~\ref{def:data_moduli}). For regular data, we will get a rate. Using just compactness, we can only show they tend to zero. In this subsection we assume, in addition to Assumptions~\ref{assumption:quadrature_stability} and \ref{assumption:quadrature_consistency}, the admissible refinement condition:
\begin{equation}
\text{for every }N\in\mathbb N\text{ there is }h_N>0\text{ with }(N,h)\in\mathcal A\text{ for all }h\in(0,h_N].
\label{eqn:admissible_refinement}
\end{equation}
For a set $\mathcal U\subset X^\alpha$ of data, we extend the previous tail and encoding definitions to $\varepsilon^{\mathrm{tail}}_N(\mathcal U):=\sup_{u_0\in\mathcal U}\varepsilon^{\mathrm{tail}}_N(u_0)$ and $\varepsilon^{\mathrm{enc}}_{N,h}(\mathcal U):=\sup_{u_0\in\mathcal U}\varepsilon^{\mathrm{enc}}_{N,h}(u_0)$.

\begin{lemma}[Uniform Encoding]
\label{lem:uniform_encoding}
Let $\mathcal V\subset X^\alpha$ be compact and let $N$ be fixed. Then, by \eqref{eqn:admissible_refinement},
$$\sup_{v\in\mathcal V} \left\|\left(\mathcal C^h_N\mathcal I_h-\mathcal C_N\right)v\right\|_{\ell^2_\alpha} \to 0
\qquad\text{as } h \to 0,$$
for $(N,h)\in\mathcal A$.
In particular, $\varepsilon^{\mathrm{enc}}_{N,h}(\mathcal U)\to0$ for every compact $\mathcal U\subset X^\alpha$, where we set $\mathcal V=\{S(t)u_0:u_0\in\mathcal U,\ 0 \le t \le T\}$.
\end{lemma}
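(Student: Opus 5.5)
The plan is to combine a pointwise convergence statement with a uniform Lipschitz bound, so that an $\epsilon$-net argument upgrades pointwise convergence to uniform convergence on the compact set $\mathcal V$. Fix $N$ and write $\mathcal E_h:=\mathcal C^h_N\mathcal I_h-\mathcal C_N$, which is a linear map from $X^\alpha$ into $(\bR^N,\ell^2_\alpha)$. The argument has three steps.

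\textbf{Step 1: uniform operator bound.} For $(N,h)\in\mathcal A$, Lemma~\ref{lem:encoder_bounded} gives $\|\mathcal C^h_N\mathcal I_hv\|_{\ell^2_\alpha}\le C^\alpha_{\mathrm{stab}}\|v\|_{X^\alpha}$. The operator $\mathcal C_N$ is contractive from $X^\alpha$ into $\ell^2_\alpha$. Hence
\[
\|\mathcal E_h\|_{X^\alpha\to\ell^2_\alpha}\le 1+C^\alpha_{\mathrm{stab}}
\]
uniformly in $h\in(0,h_N]$. The admissible refinement condition \eqref{eqn:admissible_refinement} guarantees that all such $h$ lie in $\mathcal A$.

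\textbf{Step 2: pointwise convergence.} Fix $v\in X^\alpha$ and $\delta>0$. By density, choose $w\in X^{\alpha+r}$ (for instance $w=P_Kv$ with $K$ large, which lies in every $X^s$) with $\|v-w\|_{X^\alpha}\le\delta$. Assumption~\ref{assumption:quadrature_consistency} with $s=\alpha$ gives $\|\mathcal E_hw\|_{\ell^2_\alpha}\le C_{\mathrm{quad}}h^r\|w\|_{X^{\alpha+r}}$. Combining this with Step 1,
\[
\limsup_{h\to0}\|\mathcal E_hv\|_{\ell^2_\alpha}\le(1+C^\alpha_{\mathrm{stab}})\delta .
\]
Since $\delta$ is arbitrary, $\mathcal E_hv\to0$.

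\textbf{Step 3: uniformity over the compact set.} Given $\delta>0$, cover $\mathcal V$ by finitely many $X^\alpha$-balls of radius $\delta$ centred at $v_1,\dots,v_J$. Step 2 applied to each centre gives $h_\delta$ such that $\max_j\|\mathcal E_hv_j\|_{\ell^2_\alpha}\le\delta$ for $h\le h_\delta$. The triangle inequality with Step 1 then gives
\[
\sup_{v\in\mathcal V}\|\mathcal E_hv\|_{\ell^2_\alpha}\le(2+C^\alpha_{\mathrm{stab}})\delta,
\]
which proves the first claim.

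For the statement about $\varepsilon^{\mathrm{enc}}_{N,h}(\mathcal U)$, it remains to check that $\mathcal V=\{S(t)u_0:u_0\in\mathcal U,\,t\in[0,T]\}$ is compact in $X^\alpha$. It is the image of the compact set $\mathcal U\times[0,T]$ under $(u_0,t)\mapsto S(t)u_0$, so it suffices that this map is jointly continuous. Joint continuity follows from
\[
\|S(t)u_0-S(s)v_0\|_{X^\alpha}\le\|u_0-v_0\|_{X^\alpha}+\|(S(t)-S(s))v_0\|_{X^\alpha},
\]
where the first term uses that $S$ is contractive on $X^\alpha$ (Assumption~\ref{assumption:semigroup_regular}) and the second uses strong continuity of the semigroup on $X^\alpha$. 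The definition of $\varepsilon^{\mathrm{enc}}_{N,h}(\mathcal U)$ is exactly $\sup_{v\in\mathcal V}\|\mathcal E_hv\|_{\ell^2_\alpha}$, so the first part applies.

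The main subtlety is Step 2. The consistency assumption yields a rate only for smooth $v$, so density together with the uniform stability bound is needed. That uniform bound depends on $(N,h)\in\mathcal A$ for all small $h$, which is the reason \eqref{eqn:admissible_refinement} is required.
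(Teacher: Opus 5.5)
Your proof is correct and follows essentially the same route as the paper. Both use the uniform bound $\|\mathcal C^h_N\mathcal I_h-\mathcal C_N\|_{X^\alpha\to\ell^2_\alpha}\le 1+C^\alpha_{\mathrm{stab}}$ from Lemma~\ref{lem:encoder_bounded}, the $h^r$ consistency rate on the dense set of finite eigen-expansions, and a finite-net argument on the compact set $\mathcal V$. The only differences are cosmetic: you isolate pointwise convergence as a separate step, whereas the paper approximates the net centres by finite expansions directly, and you verify the joint continuity of $(u_0,t)\mapsto S(t)u_0$, which the paper only asserts when claiming $\mathcal V$ is compact.
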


The set $\mathcal V=\{S(t)u_0:u_0\in\mathcal U,\ 0 \le t \le T\}$ is compact since it is the continuous image of $\mathcal U\times[0,T]$.
The proof is given in Appendix~\ref{sec:app_operator}.

\subsection{Uniform Rates from Regularity}

\begin{theorem}[Operator Learning Error Rate]
\label{thm:operator_rates}
Let $R_0,\rho_\gamma>0$ and take the initial data to be in $$\mathcal U:=\left\{u_0\in X^\gamma:\ \|u_0\|_{X^\alpha}\le R_0,\ \|u_0\|_{X^\gamma}\le\rho_\gamma\right\}.$$ Assume the conditions from Theorem~\ref{thm:main} together with Assumption~\ref{assumption:uniform_data} hold. Let $T\le T_0$ be from Lemma~\ref{lem:uniform_wp} with the radius $\max\{1,C^\alpha_{\mathrm{stab}}\}R_0$. 

Then for every sufficiently small $\eta>0$ there is a $(N,h,Q,n,\varepsilon)$ and a block $\widehat f_\varepsilon$ with
$$\dep(\widehat f_\varepsilon)\le C\log\eta^{-1},
\qquad
W(\widehat f_\varepsilon)\le C\,\eta^{-p_{\max}\chi}\log\eta^{-1},$$
(if $\alpha=2$, include the factor $(\log\eta^{-1})^{p_{\max}}$) such that the full neural network $\mathcal A_n$ from Definition~\ref{def:realization} satisfies
$$\sup_{u_0\in\mathcal U}\left\|\mathcal L^S_{N,Q}\,\mathcal A_n\left(\mathcal I_hu_0\right)- G_T[u_0]\right\|_{X^\alpha_T}\le\eta,$$
where the size and depth bounds from Theorem~\ref{thm:realization} hold. The constants depend on $R_0$ and $\rho_\gamma$ and not on $\eta$.
\end{theorem}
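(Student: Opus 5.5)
The argument reuses the single-trajectory proof of Theorem~\ref{thm:main} and checks that every choice of $(N,h,Q,n,\varepsilon)$ and every constant can be made independent of $u_0\in\mathcal U$. The block $\widehat f_\varepsilon$ and the spectral weights do not depend on the data, so one set of weights then works for the whole set. The proof proceeds in four steps.

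\textbf{Step 1: uniform contraction setup.} Apply Lemma~\ref{lem:uniform_wp} with radius $\max\{1,C^\alpha_{\mathrm{stab}}\}R_0$. This gives a common $R$, $T_0$ and $\kappa_0\le\frac12$ for all $u_0\in\mathcal U$. Then take $R_0'\ge\max\{\sup_{\mathcal U}\|G_T[u_0]\|_{X^\alpha_T},\,2C^\alpha_{\mathrm{stab}}R_0\}$, which is finite by the invariance of $B_R^\alpha$. By Lemma~\ref{lem:encoder_bounded}, $\|a_0^h\|_{\ell^2_\alpha}\le C^\alpha_{\mathrm{stab}}R_0$ uniformly. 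Theorem~\ref{thm:fully_discrete_fixed_point} therefore yields condition (i) of Theorem~\ref{thm:total_error}, with the contraction constant $\kappa=CC_{\mathrm{stab}}L_RT^\theta<1$ independent of $u_0$. The set $K$ of Assumption~\ref{assumption:structured_f} depends only on $R$, so a single block $\widehat f_\varepsilon$ from Theorem~\ref{thm:relu_f} is valid for every trajectory by Corollary~\ref{cor:inputs_in_K}.

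\textbf{Step 2: uniform constants in the total error.} By Assumption~\ref{assumption:uniform_data} with $\rho_\gamma$, the constants in Theorem~\ref{thm:fully_discrete_consistency}, Lemma~\ref{lem:derived_regularity} and Lemma~\ref{lem:interpolation} ($R_\gamma$, $\|F\|_{\infty,\beta}+[F]_\zeta$, and the weighted bound on $f(P_Nu,\nabla P_Nu)$) depend only on $(R_0,\rho_\gamma)$. Hence the constant $C_R$ in Theorem~\ref{thm:total_error} is uniform over $\mathcal U$. Only the data-dependent terms $\varepsilon^{\mathrm{tail}}_N(u_0)$ and $\varepsilon^{\mathrm{enc}}_{N,h}(u_0)$ remain, and these are replaced by $\varepsilon^{\mathrm{tail}}_N(\mathcal U)$ and $\varepsilon^{\mathrm{enc}}_{N,h}(\mathcal U)$.

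\textbf{Step 3: parameter selection.} Take the choices (i)--(v) of Theorem~\ref{thm:main} with the set-wise moduli in (ii) and (iii). The tail satisfies the rate $\varepsilon^{\mathrm{tail}}_N(\mathcal U)\le\lambda_{N+1}^{-\zeta}\rho_\gamma$ by Proposition~\ref{prop:galerkin_approximation}. For the encoding term there are two routes:
\begin{itemize}
\item using $S(t)$'s contractivity on $X^\gamma$, $\varepsilon^{\mathrm{enc}}_{N,h}(\mathcal U)\le C_{\mathrm{quad}}h^r\rho_\gamma$ whenever $r\le\gamma-\alpha$;
\item more generally, Lemma~\ref{lem:compact_embedding} shows $\mathcal U$ is compact in $X^\alpha$ (it is a closed subset of the $X^\gamma$ ball), and Lemma~\ref{lem:uniform_encoding} then guarantees that the minimum in (iii) exists.
\end{itemize}
With $m_0$ replaced by $\max\{1,\rho_\gamma\}$, the lower bound on $\varepsilon$ follows as in the proof of Theorem~\ref{thm:main}. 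Here only the weaker exponent $\chi$ (not $\chi_{\mathrm s}$) is claimed, so it suffices to use $\xi_\alpha\le(\Delta t^{-1})^{\alpha/2-1}$. This gives $\varepsilon\gtrsim\eta^\chi$, and Theorem~\ref{thm:relu_f} then yields the stated depth and $W(\widehat f_\varepsilon)$, including the extra logarithmic factor when $\alpha=2$.

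\textbf{Step 4: conclusion.} Theorem~\ref{thm:total_error} now holds for each $u_0$ with the same constants, so taking the supremum over $\mathcal U$ gives the uniform bound $\eta$. The size and depth bounds follow from Theorem~\ref{thm:realization}, because condition \eqref{eqn:admissible_data} holds uniformly by Lemma~\ref{lem:encoder_bounded}.

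The main obstacle is Step 2. One has to verify carefully that no hidden constant depends on $u_0$ beyond $\|u_0\|_{X^\alpha}$ and $\|u_0\|_{X^\gamma}$. In particular, the margin conditions (ii) and (iii) of Theorem~\ref{thm:total_error} must hold uniformly, and $E^h_{\mathrm{init}}\le\varepsilon^{\mathrm{enc}}_{N,h}(\mathcal U)$ must be controlled uniformly, since this is where a single data-independent $h$ is needed.
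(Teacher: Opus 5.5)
Your proposal is correct and follows essentially the same route as the paper. You fix the enlarged radius $\max\{1,C^\alpha_{\mathrm{stab}}\}R_0$ once via Lemma~\ref{lem:uniform_wp} and note that $K$, $L_R$, $F_R$, $\kappa$ and (by Assumption~\ref{assumption:uniform_data}) the consistency and sampling constants depend only on $(R_0,\rho_\gamma)$; you then bound the data terms by $\rho_\gamma$ through Proposition~\ref{prop:galerkin_approximation} and Assumption~\ref{assumption:quadrature_consistency}, and run the balancing of Theorems~\ref{thm:main} and \ref{thm:realization} once for all of $\mathcal U$. The only difference is cosmetic: you control the full modulus $\varepsilon^{\mathrm{enc}}_{N,h}(\mathcal U)$ along the orbit $S(t)u_0$, with a compactness fallback via Lemma~\ref{lem:uniform_encoding}, whereas the paper writes out only the bound on $E^h_{\mathrm{init}}$, so your version is if anything slightly more complete.
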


\noindent The proof is given in Appendix~\ref{sec:app_operator}.

Theorem~\ref{thm:operator_rates} can be directly compared to the operator-learning literature, where one architecture, with $C\eta^{-p_{\max}\chi}\log\eta^{-1}$ block parameters and $C_{\mathrm{spec}}N$ spectral weights is used for all initial data in a bounded set of $X^\gamma$. Note that $X^\gamma$-boundedness is the only assumption used for the initial data, in particular, we do not require existence of a low-dimensional manifold or the span of finitely many terms.

\section{Sharpness and Lower Bounds}\label{sec:lower_bounds}

We will consider several results regarding the sharpness of previous results. 

\subsection{Sharpness of Amplification Factor} \label{sec:amplification}
First, we will address the neural network amplification factor $\xi_\alpha$, which exceeds $1$ only when $\alpha>2$. That setting is needed for the embedding $X^\alpha\hookrightarrow C^1$ and thus $\alpha>1+\frac{d_x}{2}$ (so $\alpha>2$ when $d_x\ge2$). We show that the estimate is sharp, that smoother data reduce it, and that it is absent for gradient-free laws when $d_x\le3$.

The problem comes from the following balance. The Duhamel operator can only improve smoothness by fewer than two derivatives. The network error is controlled in ${\ell}^\infty$, and so after applying $\mathcal C^h_N$, the integrand input to the Duhamel operator is only in $X^0$.  An output in $X^\alpha$ with $\alpha\ge2$ therefore requires more smoothing than the Duhamel integral can produce. What appears is the Galerkin truncation giving the finite factor $\xi_\alpha$. We could fix it by asking for $(f-\widehat f_\varepsilon)(u,\nabla u)$ in $X^\beta$, but that changes how learning is typically performed and would not be consistent with the practice. The following result shows that this is a sharp obstacle.

\begin{proposition}[The Amplification Estimate is Sharp]
\label{prop:xi_sharp}
For $\alpha>2$, suppose the quadrature is exact, i.e. $\mathcal C^h_N\mathcal S^h_N=I_{\bR^N}$, with $\|e_k\|_{L^\infty}\le\sqrt2$ (see Remark~\ref{rem:stable_quadrature_example}). Set $\Lambda_0:=\min\{\lambda_N,\Delta t^{-1}\}$, let $T\ge2\Lambda_0^{-1}$, and suppose some eigenvalue is in the interval $[c_{\mathrm{sp}}\Lambda_0,\Lambda_0]$ for a fixed $c_{\mathrm{sp}}\in(0,1)$ (holds by Weyl). Then there exist grid functions $g^0,\ldots,g^{Q-1}$ with $\max_r\|g^r\|_{{\ell}^\infty}\le\varepsilon$ such that
$$\left\|
\Delta t\sum_{r=0}^{Q-1}S_N(t_Q-t_r)\,\mathcal C_N^hg^r
\right\|_{\ell^2_\alpha}
 \ge c\,\varepsilon\,\Lambda_0^{\alpha/2-1},$$
where $c>0$ depends only on $\alpha$ and $c_{\mathrm{sp}}$. Therefore, no estimate that uses only the ${\ell}^\infty$-error of the neural network can remove $\xi_\alpha$ from Theorem~\ref{thm:neural_consistency}.
\end{proposition}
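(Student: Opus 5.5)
Since the quadrature is exact, $\mathcal C^h_N\mathcal S^h_N=I$, so the idea is to build the forcing as a single sampled eigenmode, constant in time. Pick an index $k_*$ with $\lambda_{k_*}\in[c_{\mathrm{sp}}\Lambda_0,\Lambda_0]$; since $\Lambda_0\le\lambda_N$ we have $k_*\le N$. Set
$$g^r:=\tfrac{\varepsilon}{\sqrt2}\,\mathcal S^h_N e^{(k_*)}=\tfrac{\varepsilon}{\sqrt2}\,\mathcal I_h e_{k_*}$$
for every $r$, where $e^{(k_*)}$ is the $k_*$-th unit coefficient vector. The bound $\|e_k\|_{L^\infty}\le\sqrt2$ gives $\|g^r\|_{\ell^\infty}\le\varepsilon$. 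Exactness gives $\mathcal C^h_N g^r=\tfrac{\varepsilon}{\sqrt2}e^{(k_*)}$, so the discrete Duhamel sum has only the $k_*$ coordinate:
$$\tfrac{\varepsilon}{\sqrt2}\,\Delta t\sum_{j=1}^{Q}e^{-\lambda_{k_*}j\Delta t}.$$
Its $\ell^2_\alpha$ norm is $\lambda_{k_*}^{\alpha/2}$ times the absolute value of this scalar, because all terms are positive and there is no cancellation.

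It therefore remains to bound the geometric sum from below by $c\,\lambda_{k_*}^{-1}$, up to constants depending on $c_{\mathrm{sp}}$. Write $x:=\lambda_{k_*}\Delta t\le\Lambda_0\Delta t\le1$. Then
$$\Delta t\sum_{j=1}^Q e^{-xj}=\Delta t\,\frac{e^{-x}(1-e^{-xQ})}{1-e^{-x}}.$$
The factor $xQ=\lambda_{k_*}T\ge c_{\mathrm{sp}}\Lambda_0 T\ge 2c_{\mathrm{sp}}$, using $T\ge2\Lambda_0^{-1}$, so $1-e^{-xQ}\ge1-e^{-2c_{\mathrm{sp}}}$. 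The bound $1-e^{-x}\le x$ together with $e^{-x}\ge e^{-1}$ then gives
$$\Delta t\sum_{j=1}^Q e^{-xj}\ge e^{-1}(1-e^{-2c_{\mathrm{sp}}})\,\lambda_{k_*}^{-1}.$$
Multiplying by $\lambda_{k_*}^{\alpha/2}$ and using $\alpha/2-1>0$ with $\lambda_{k_*}\ge c_{\mathrm{sp}}\Lambda_0$ yields the claimed
$$\ge\tfrac{1}{\sqrt2}e^{-1}(1-e^{-2c_{\mathrm{sp}}})\,c_{\mathrm{sp}}^{\alpha/2-1}\,\varepsilon\,\Lambda_0^{\alpha/2-1}.$$

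The only delicate step is the lower bound on the geometric sum. It must be uniform in both regimes, $\Lambda_0=\lambda_N$ and $\Lambda_0=\Delta t^{-1}$. The condition $x\le1$ keeps the ratio $\Delta t/(1-e^{-x})$ comparable to $\lambda_{k_*}^{-1}$, and $T\ge2\Lambda_0^{-1}$ ensures that enough time levels are summed. The final sentence then follows by comparison with Theorem~\ref{thm:neural_consistency}: any bound using only $\max_r\|g^r\|_{\ell^\infty}$ must hold for this choice of $g^r$, so it cannot beat $\varepsilon\,\xi_\alpha(N,\Delta t)=\varepsilon\,\Lambda_0^{\alpha/2-1}$ by more than a constant.
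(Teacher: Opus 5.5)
Your proposal is correct and follows the paper's proof essentially line for line. It uses the same time-constant single-mode forcing $g^r_i=\tfrac{\varepsilon}{\sqrt2}e_{k_0}(x_i)$, exactness of the quadrature to isolate one coefficient, and the same geometric-sum lower bound via $e^{-\lambda\Delta t}\ge e^{-1}$, $1-e^{-\lambda\Delta t}\le\lambda\Delta t$ and $\lambda T\ge 2c_{\mathrm{sp}}$. The only cosmetic point is that if the chosen eigenvalue equals $\lambda_N$ and has multiplicity, you should take the smallest index carrying it to guarantee $k_*\le N$; the paper simply asserts $k_0\le N$.
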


\begin{proof}[Proof of Proposition~\ref{prop:xi_sharp}]
Pick a $k_0\le N$ so that $\Lambda:=\lambda_{k_0}\in[c_{\mathrm{sp}}\Lambda_0,\Lambda_0]$, and then $\Lambda\Delta t\le1$ and $\Lambda T\ge2c_{\mathrm{sp}}$. We can construct the grid functions by $g^r_i:=\tfrac{\varepsilon}{\sqrt2}e_{k_0}(x_i)$ for every $r$ (using the eigenfunctions). Then $\|g^r\|_{{\ell}^\infty}\le\varepsilon$ and $\mathcal C^h_Ng^r=\tfrac{\varepsilon}{\sqrt2}\mathbf 1_{k_0}$ since the quadrature is exact on the eigenfunction. Recall that $S_N(t)$ is diagonal, which makes the LHS in the proposition statement equal to
$$\frac{\varepsilon}{\sqrt2}\, \Lambda^{\alpha/2}\,\Delta t\sum_{j=1}^{Q} e^{-\Lambda j\Delta t}
=\frac{\varepsilon}{\sqrt2} \Lambda^{\alpha/2}\Delta t
\frac{e^{-\Lambda\Delta t}\left(1-e^{-\Lambda T}\right)}{1-e^{-\Lambda\Delta t}}
 \ge \frac{\varepsilon}{\sqrt2}\,e^{-1}\left(1-e^{-2c_{\mathrm{sp}}}\right)\Lambda^{\alpha/2-1},$$
where we used $1-e^{-\Lambda\Delta t}\le\Lambda\Delta t$, $e^{-\Lambda\Delta t} \ge e^{-1}$, and $\Lambda T\ge2c_{\mathrm{sp}}$. The rest follows from $\Lambda\ge c_{\mathrm{sp}}\Lambda_0$.
\end{proof}

Improved regularity can help remove this factor. Since $\xi_\alpha$ is controlled by $\min\{\lambda_N,\Delta t^{-1}\}$, smoother data makes the exponent go from $\chi$ to $\chi_{\mathrm s}$ by lowering the spatial scale $\lambda_N\asymp\eta^{-1/\zeta_{\mathrm s}}$ (see Theorem~\ref{thm:main}), and by Remark~\ref{rem:gamma_free} it lowers $\chi$ to $1+\frac{(\alpha-2)_+}{2\theta}$.
Also, the condition $\alpha>2$ comes from the gradient input term for $f$. For a gradient-free law $f=f(u)$, e.g. reaction-diffusion system, Assumption~\ref{assumption:regularity_embedding} can be weakened to $X^\alpha\hookrightarrow C^0(\overline\Omega)$, so in dimensions $d_x\le3$ we can take $\alpha<2$ and $\xi_\alpha=1$.

\subsection{Divergence Forms}
In some settings, we can rewrite the convective terms in divergence form (see also \cite{chandramoorthy2026spectral}):
$$-(u\cdot\nabla)u=-\nabla\cdot(u\otimes u)
\quad\text{for }\nabla\cdot u=0,
\qquad
-F'(u)\partial_x u=-\partial_x F(u),$$
so that $f=\mathcal M f_{\mathrm{loc}}(u)$ where $f_{\mathrm{loc}}$ only depends on $u$ and where $\mathcal M$ is linear of order one. The derivative then becomes a diagonal spectral layer, the block no longer needs to compute $\nabla u$ directly, and so we can weaken the embedding requirements to $X^\alpha\hookrightarrow C^0$. The additional cost is that the network error now propagates through that layer.

\begin{proposition}[Divergence Form]
\label{prop:divergence_form}
Let $\widetilde X^s$ be the ambient space from Lemma~\ref{lem:linear_postcomposition}, and let $\{\tilde e_k\}$ be an orthonormal family of eigenfunctions from the extended $L$ with the same eigenvalues $\lambda_k$, i.e. $\sum_k\lambda_k^s|\langle g,\tilde e_k\rangle|^2\le\|g\|^2_{\widetilde X^s}$. Let $\mathcal M$ be linear with $\mathcal M^{*}e_k=\mu_k\tilde e_k$ and $|\mu_k|\le C_{\mathcal M}\lambda_k^{1/2}$,
and assume that $f=\mathcal M\circ f_{\mathrm{loc}}$, where $f_{\mathrm{loc}}$ depends only on $u$, satisfies Assumption~\ref{assumption:structured_f} with $K\subset\bR^d$, and is Lipschitz on bounded sets from $X^\alpha$ into $\widetilde X^{\alpha}$ with Lipschitz constant $L^{\mathrm{loc}}_R$. Set $\beta=\alpha-1$ (so $\sigma=\theta=\frac{1}{2}$), and replace $\mathcal C^h_N$ by $D_\mu\widetilde{\mathcal C}^h_N$ where $D_\mu=\operatorname{diag}(\mu_1,\ldots,\mu_N)$. Then the following hold.
\begin{enumerate}
\item $f$ is Lipschitz on bounded sets from $X^\alpha$ into $X^{\beta}$ with constant $C_{\mathcal M}L^{\mathrm{loc}}_R$, so Proposition~\ref{prop:contraction} applies.
\item The discrete smoothing estimate from Theorems~\ref{thm:fully_discrete_fixed_point}, \ref{thm:neural_consistency}, and \ref{thm:fully_discrete_consistency} holds with the updated form
$$\left\|S_N(\tau)D_\mu\widetilde{\mathcal C}^h_N v\right\|_{\ell^2_\alpha}
\le C\,C_{\mathcal M}\,C^\alpha_{\mathrm{stab}} \tau^{-1/2} \|v\|_{\widetilde X^{\alpha}},$$
where the quadrature assumptions are used only at $s=\alpha$ for functions in $\widetilde X^\alpha$.
\item Assumption~\ref{assumption:regularity_embedding} can be weakened to $X^\alpha\hookrightarrow C^0$, only requiring that $\alpha>\frac{d_x}{2}$. The entries of $D_\mu$ are included in the spectral weights.
\item Theorem~\ref{thm:neural_consistency} holds with $\xi_\alpha(N,\Delta t)$ replaced by $\xi_{\alpha+1}(N,\Delta t)$, where the amplification is at most logarithmic when $\alpha=1$.
\end{enumerate}
\end{proposition}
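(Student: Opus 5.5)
We will prove the four parts in order. Each part reduces to the spectral bound $|\mu_k|\le C_{\mathcal M}\lambda_k^{1/2}$ together with the diagonal form of $S_N(\tau)$.

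\emph{Part 1.} For $g\in\widetilde X^\alpha$ we compute $\langle\mathcal M g,e_k\rangle=\langle g,\mathcal M^*e_k\rangle=\mu_k\langle g,\tilde e_k\rangle$. Hence
$$\|\mathcal M g\|_{X^{\alpha-1}}^2=\sum_k\lambda_k^{\alpha-1}|\mu_k|^2|\langle g,\tilde e_k\rangle|^2\le C_{\mathcal M}^2\sum_k\lambda_k^{\alpha}|\langle g,\tilde e_k\rangle|^2\le C_{\mathcal M}^2\|g\|_{\widetilde X^\alpha}^2.$$
This shows that $\mathcal M$ is bounded from $\widetilde X^\alpha$ to $X^{\beta}$ with $\beta=\alpha-1$. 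Composing with the local Lipschitz bound for $f_{\mathrm{loc}}$ gives the constant $C_{\mathcal M}L^{\mathrm{loc}}_R$. Since $\alpha-\beta=1<2$, Proposition~\ref{prop:contraction} applies with $\sigma=\theta=\tfrac12$.

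\emph{Part 2.} We apply Assumption~\ref{assumption:quadrature_stability}(ii) at $s=\alpha$ in the extended spaces, by duality, to obtain $\|\widetilde{\mathcal C}^h_N v\|_{\ell^2_\alpha}\le C^\alpha_{\mathrm{stab}}\|v\|_{\widetilde X^\alpha}$; this is the argument of Lemma~\ref{lem:encoder_bounded} with $e_k$ replaced by $\tilde e_k$. Then, mode by mode,
$$\lambda_k^{\alpha/2}e^{-\lambda_k\tau}|\mu_k||c_k|\le C_{\mathcal M}\bigl(\lambda_k^{1/2}e^{-\lambda_k\tau}\bigr)\lambda_k^{\alpha/2}|c_k|\le C_{\mathcal M}C_1\tau^{-1/2}\lambda_k^{\alpha/2}|c_k|,$$
which gives the stated smoothing estimate. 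This estimate is the only ingredient that enters the proofs of Theorems~\ref{thm:fully_discrete_fixed_point} and \ref{thm:fully_discrete_consistency} through the kernel $\tau^{-\sigma}$ with $\sigma=\tfrac12$. Those proofs can therefore be repeated verbatim, with every use of the quadrature assumptions now occurring at $s=\alpha$ on $\widetilde X^\alpha$.

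\emph{Part 3.} The block now receives only the values $u(x_i)$, and the derivative is carried by the spectral multiplier $D_\mu$. Corollary~\ref{cor:inputs_in_K} therefore needs only a sup-norm bound, which is where we use $X^\alpha\hookrightarrow C^0$ (valid for $\alpha>d_x/2$). The entries of $D_\mu$ are counted in $W_{\mathrm{spec}}$ exactly as $D_{\mathcal M}$ is in Definition~\ref{def:spectral_layers}.

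\emph{Part 4.} This is the main step. We repeat the proof of Theorem~\ref{thm:neural_consistency} with the network error $g^r_\varepsilon$ satisfying $\|g^r_\varepsilon\|_{\ell^\infty}\le\varepsilon$. As there, $\|\widetilde{\mathcal C}^h_Ng^r_\varepsilon\|_{\ell^2_0}\le C\varepsilon$ by Assumption~\ref{assumption:quadrature_stability}(i). Applying $D_\mu$ multiplies each mode by at most $C_{\mathcal M}\lambda_k^{1/2}$, so the per-step kernel becomes
$$\sup_{k\le N}\lambda_k^{(\alpha+1)/2}e^{-\lambda_k\tau}\le\min\{\lambda_N^{(\alpha+1)/2},\,C\tau^{-(\alpha+1)/2}\}.$$
This is exactly the kernel of Theorem~\ref{thm:neural_consistency} with $\alpha$ replaced by $\alpha+1$. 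The discrete-sum analysis there then applies unchanged and yields $\xi_{\alpha+1}(N,\Delta t)$. In particular, when $\alpha=1$ the exponent is $\alpha+1=2$, which falls in the logarithmic case.

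The main obstacle is Part 2, specifically justifying the quadrature stability bound for the tilde eigenfamily and checking that no step in the earlier proofs secretly relied on $\beta$-level quadrature estimates. We would handle this by listing each invocation of $C^\beta_{\mathrm{stab}}$ in those proofs and rerouting it through the $\alpha$-level bound combined with the $\tau^{-1/2}$ smoothing.
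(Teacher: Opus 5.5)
Your proposal is correct and matches the paper's proof part by part. The only cosmetic difference is in Part 2: you bound the encoder in $\ell^2_\alpha$ first and then absorb $|\mu_k|\le C_{\mathcal M}\lambda_k^{1/2}$ into $\lambda_k^{1/2}e^{-\lambda_k\tau}\le C\tau^{-1/2}$, whereas the paper pulls $D_\mu$ out into $\ell^2_{\alpha+1}$ and moves $S(\tau)$ onto the test function by duality; this is the same computation in a different order.

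For your rerouting checklist, note one further item. The quadrature-consistency term $d_l$ in Theorem~\ref{thm:fully_discrete_consistency} is also invoked at $s=\beta$. It is moved to $s=\alpha$ via $\|D_\mu c\|_{\ell^2_\beta}\le C_{\mathcal M}\|c\|_{\ell^2_\alpha}$, not via the $\tau^{-1/2}$ smoothing.
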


\noindent The proof is given in Appendix~\ref{sec:app_operator}. On the torus, $\mathcal M=-\nabla\cdot$ and so $\mathcal M^{*}e_k=\nabla e_k$, which is $|k|$ times a normalized mode, and thus the multiplier is $|\mu_k|=|k|\asymp\lambda_k^{1/2}$. Including this with the Leray projection does not change the result since $\mathbb P$ is an orthogonal projection and commutes with $L$. Since the amplification is $\xi_{\alpha+1}$, the best choice is to pick an $\alpha$ close to $1$ within what the embedding $X^\alpha\hookrightarrow C^0$ allows. For the 2D Navier-Stokes equation, this is $\alpha=1+\iota$, with $\xi\asymp\lambda_N^{\iota/2}$, the same as in velocity form. The improvement is that the block only needs to compute $u$ and the gradient transforms are no longer needed. For a viscous conservation law in one dimension, $\alpha=1$ gives only a logarithmic amplification, and since $f_{\mathrm{loc}}=F$ is a single component-wise node in the DAG, $p_{\max}$ improves from $\frac{1}{s-1}$ to $\frac{1}{s}$.

In the divergence form, the quadrature assumptions only appear when $s=\alpha$ and only through $f_{\mathrm{loc}}$. The condition $\beta>\frac{d_x}2$ in Assumption~\ref{assumption:quadrature_stability}(ii) is replaced by $\alpha>\frac{d_x}2$, and $\beta=\alpha-1$ may be as small as $0$. In Theorems~\ref{thm:fully_discrete_fixed_point} and \ref{thm:neural_consistency} this corresponds to part (2) and part (4). In Theorem~\ref{thm:fully_discrete_consistency}, the quadrature term becomes $d_l=D_\mu(\widetilde{\mathcal C}^h_N\mathcal I_h-\widetilde{\mathcal C}_N)f_{\mathrm{loc}}(P_Nu(t_l))$, and
$$\|d_l\|_{\ell^2_\beta}\le C_{\mathcal M}\left\|(\widetilde{\mathcal C}^h_N\mathcal I_h-\widetilde{\mathcal C}_N)f_{\mathrm{loc}}(P_Nu(t_l))\right\|_{\ell^2_\alpha}\le C_{\mathcal M}C_{\mathrm{quad}}h^r\left\|f_{\mathrm{loc}}(P_Nu(t_l))\right\|_{\widetilde X^{\alpha+r}},$$
so Assumption~\ref{assumption:quadrature_consistency} is needed only when $s=\alpha$, and the weighted condition of that theorem applies to $f_{\mathrm{loc}}(P_Nu)$ in $\widetilde X^{\alpha+r}$. Similarly, Assumption~\ref{assumption:bootstrap} applies to the local part, $\|f_{\mathrm{loc}}(v)\|_{\widetilde X^{s}}\le K_R(1+\|v\|_{X^s})$ for $s\in[\alpha,\gamma']$, which implies it for $f$ since $\|\mathcal Mg\|_{X^{s-1}} \le C_{\mathcal M}\|g\|_{\widetilde X^s}$. In Lemma~\ref{lem:lifting}, condition (2) holds with $\Gamma_\alpha=C_{\mathcal M}C^\alpha_{\mathrm{stab}} L^{\mathrm{loc}}_R$ since $\|D_\mu\widetilde{\mathcal C}^h_Nv\|_{\ell^2_\beta}\le C_{\mathcal M}\|\widetilde{\mathcal C}^h_Nv\|_{\ell^2_\alpha}\le C_{\mathcal M}C^\alpha_{\mathrm{stab}}\|v\|_{\widetilde X^\alpha}$. Lastly, Theorem~\ref{thm:main} can be applied where $\xi_\alpha$ is replaced by $\xi_{\alpha+1}$, so $\chi$ and $\chi_{\mathrm s}$ are computed with $\alpha+1$ instead of $\alpha$. In particular $\chi_{\mathrm{div}}=1+\frac{(\alpha-1)_+}{2\zeta}$ and there is an extra log term when $\alpha=1$.

The divergence form now gives us that the block only needs to compute $u$, so its input dimension drops from $d(1+d_x)$ to $d$ and the gradient transforms are not needed. For non-polynomial fluxes the node is $F$ rather than $F'$ which improves $p_{\max}$ from $\frac1{s-1}$ to $\frac{1}{s}$ where $F\in C^2$ is sufficient. The embedding requirement goes to $X^\alpha\hookrightarrow C^0$, which allows for a smaller $\alpha$ and thus fewer modes need to be retained.

\begin{table}[ht]
\centering
\small
\begin{tabular}{@{}lccl@{}}
\toprule
Class of constitutive law & Embedding needed & Admissible $\alpha$ & Amplification \\
\midrule
$f=f(u)$ & $X^\alpha\hookrightarrow C^0$ & $\alpha>\frac{d_x}2$ & none, if $d_x\le3$ \\
$f=\mathcal M f_{\mathrm{loc}}(u)$, $\operatorname{ord}\mathcal M=1$ & $X^\alpha\hookrightarrow C^0$ & $\alpha\ge1$, $\alpha>\frac{d_x}2$ & $\xi_{\alpha+1}$; logarithmic if $\alpha=1$ \\
$f=f(u,\nabla u)$, genuinely nonlinear in $\nabla u$ & $X^\alpha\hookrightarrow C^1$ & $\alpha>1+\frac{d_x}2$ & $\chi=1+\frac{(\alpha-2)_+}{2\zeta}$, sharp \\
\bottomrule
\end{tabular}
\caption{Embedding and Amplification: Gradient-free laws avoid the amplification when $d_x\le3$. The divergence form removes the gradient input but amplifies the network error by $\xi_{\alpha+1}$, which is only logarithmic at $\alpha=1$. The third row is sharp (Proposition~\ref{prop:xi_sharp}), and $\chi$ can be improved to $1+\frac{(\alpha-2)_+}{2\theta}$ (see Remark~\ref{rem:gamma_free}).}
\label{tab:amplification}
\end{table}

\subsection{Lower Operator Bounds}
\label{sec:operator_lower_bound}

We have constructed upper bound rates in Theorems~\ref{thm:main} and \ref{thm:operator_rates}, which count the trainable parameters that are sufficient to approximate the solution operator with one neural network. In this section, we derive lower bound rates that hold generically for the class of governing laws. In particular, we fix the class of governing equations, allow the part of the architecture that does not depend directly on the governing equation to be arbitrary, and count the parameters needed to resolve the dynamical system. 

The upper bounds thus far were formulated so that for governing equations that share the same properties, e.g. Lipschitz constants, growth, and other constants, the proof of Theorem~\ref{thm:operator_rates} provides the same discretization (see also \cite{chandramoorthy2026spectral} for uniform bounds over classes of equations). Therefore, in that class we can reuse the same transformations and spectral weights. Following the theory of continuous nonlinear widths \cite{devore1989optimal}, we will assume that if a parameter or constant depends on the governing equation its dependence is continuous. Without this restriction, a direct bound is difficult to obtain.

We provide three cases, a lower bound for gradient-free laws $f=f(u)$, a lower bound for gradient-dependent laws $f=c_0u-H(u_x)$, and a lower bound on the spectral dependencies. The proofs follow the same style where we show that the architecture has accuracy $\eta$ with $W$ parameters chosen by a continuous map $w$. The $W$ parameters depend on the governing equation. For clarity, we summarize the process as follows. First, we construct governing equations $f_{c}$ indexed by vectors $c\in\bR^{W+1}$, with $|c|_\infty=1$, so it has one coordinate for each independent feature and so that $f_{c}$ and $f_{-c}$ differ on each feature. The set $\{|c|_\infty=1\}$ is homeomorphic to the $W$-dimensional sphere by an odd map, so the Borsuk-Ulam theorem states that every continuous map from it to $\bR^W$ maps some pair of opposite points to the same value \cite{borsuk1933drei}. This was used to bound the continuous nonlinear widths from below in \cite{devore1989optimal}. Our argument is related to the sphere embedding widths of \cite{siegel2024sharp}, except that the two opposite points are separated by the PDE solution rather than by a norm. By the theorem, there is a vector $c$ with $w(f_{c})=w(f_{-c})$, that is, two opposite governing equations with the same parameters and therefore the same output. Since that output is within $\eta$ of each true solution, the two solutions are within $2\eta$ of each other. Then we can use the PDE to measure how close the solutions of $f_{c}$ and $f_{-c}$ are as a function of the parameters and accuracy. 
In particular, this gives a lower bound relationship between $\eta$ and $W$. For each case, we need to construct a solution operator that forces this to occur explicitly. 

\begin{theorem}[Lower Bound for Gradient-Free Laws]
\label{thm:operator_lower_bound}
Assume that $\Omega=\mathbb T^{d_x}$, $L=-\nu\Delta+c_0I$ with $\nu,c_0>0$, $T>0$, $\alpha\ge0$ and $s\ge1$. Set $\psi_0:=T/6$ to be a constant function. For $0<R_f\le\psi_0/2$, let $\mathcal C_{R_f}$ be the class of governing equations described by the functions
$$f(u)=c_0u+\frac1{\psi(u)},\qquad \operatorname{supp}(\psi-\psi_0)\subset[0,1],\quad \|\psi-\psi_0\|_{C^s}\le R_f ,$$
equipped with the distance $\|\psi-\psi'\|_{C^s}$, and let $G^f_T$ be the solution operator of \eqref{eqn:PDE} with $f$. Suppose that $u_0\equiv-1$, $(\mathcal A_w)_{w\in\bR^W}$ is any family of maps from a set of data containing $u_0$ into $C([0,T];X^\alpha)$, and let $w:\mathcal C_{R_f}\to\bR^W$ be continuous. If
$$\left\|\mathcal A_{w(f)}(u_0)-G^f_T(u_0) \right\|_{C([0,T];X^\alpha)} \le \eta,$$
holds for every $f\in\mathcal C_{R_f}$,
then
$$W\ \ge\ c'\,\eta^{-1/(s+1)}$$
for sufficiently small $\eta$ and where $c'$ depends only on $c_0$, $\alpha$, $d_x$, $T$, $s$, $R_f$ and a smooth bump function $\phi$.
\end{theorem}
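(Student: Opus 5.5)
The plan is to prove the bound with the Borsuk--Ulam argument sketched before the theorem. The family of laws will be chosen so that the PDE solution started from $u_0\equiv-1$ reduces to an explicitly solvable scalar ODE.

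\textbf{Reduction to an ODE.} Since $u_0\equiv-1$ is constant and $f$ acts pointwise, the solution stays spatially constant. On the torus, constants are an eigenmode of $L$ with eigenvalue $c_0$, and the $c_0u$ term in $f$ cancels the $c_0u$ in $L$. So $u(t,x)=v(t)$ with
\[
v'=\frac1{\psi(v)},\qquad v(0)=-1.
\]
Equivalently, the hitting time of a level $y$ is $\tau_\psi(y)=\int_{-1}^y\psi$. On $[-1,0]$ we have $\psi=\psi_0=T/6$. On $[0,1]$ we have $\psi\in[\psi_0/2,\,3\psi_0/2]$ because $R_f\le\psi_0/2$. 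Hence the level $1$ is reached by time $T/6+T/4<T$, and the trajectory is defined on all of $[0,T]$ (past $1$, $\psi=\psi_0$ again). The velocity $1/\psi$ lies in $[2/(3\psi_0),\,2/\psi_0]$. For a constant function $a$, $\|a\|_{X^\alpha}=c_0^{\alpha/2}|\mathbb T^{d_x}|^{1/2}|a|$. So an $X^\alpha$ bound on the trajectory difference is, up to constants, a sup bound on $|v_c-v_{c'}|$.

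\textbf{The family.} Let $W':=W+1$ and $h:=1/W'$. Fix a smooth bump $\phi$ supported in $(0,1)$ with $\int\phi>0$, and let $\phi_i(y):=h^s\phi((y-(i-1)h)/h)$, so the $\phi_i$ have disjoint supports in $[0,1]$ and $\|\phi_i\|_{C^s}\le\|\phi\|_{C^s}$. For $c\in\bR^{W'}$ with $|c|_\infty=1$, set
\[
\psi_c:=\psi_0+\delta\sum_{i}c_i\phi_i,\qquad \delta:=\frac{R_f}{\|\phi\|_{C^s}}.
\]
Because the supports are disjoint, $\|\psi_c-\psi_0\|_{C^s}\le R_f$, so $f_c\in\mathcal C_{R_f}$. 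The map $c\mapsto f_c$ is continuous (in fact affine) and odd in $c$ about $\psi_0$. Therefore $c\mapsto w(f_c)$ is continuous on the sphere $\{|c|_\infty=1\}\cong S^{W}$. By Borsuk--Ulam there is $c$ with $w(f_c)=w(f_{-c})$. The architecture then produces the same output for both laws, so $\|v_c-v_{-c}\|_{C([0,T])}\le C\eta$.

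\textbf{Separating opposite laws.} Let $D(y):=\tau_{\psi_c}(y)-\tau_{\psi_{-c}}(y)=2\delta\sum_i c_i\int_{-1}^y\phi_i$. Pick $i$ with $|c_i|=1$. Across the support of $\phi_i$, $D$ changes by $2\delta h^{s+1}\int\phi$. Hence at one endpoint $y_*$ of that interval, $|D(y_*)|\ge\delta h^{s+1}\int\phi$. This choice of endpoint is what handles possible cancellation from the earlier bumps.

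Next, convert the time gap into a state gap. Let $t_1:=\tau_{\psi_c}(y_*)$ and $t_2:=\tau_{\psi_{-c}}(y_*)$, and suppose without loss of generality that $t_1<t_2$. Then $v_c(t_1)=y_*$. Since $v_{-c}$ reaches $y_*$ only at $t_2$ and moves with speed at least $2/(3\psi_0)$,
\[
y_*-v_{-c}(t_1)\ \ge\ \frac{2}{3\psi_0}\,(t_2-t_1).
\]
Combining this with the Borsuk--Ulam bound gives $C\eta\ge c\,\delta h^{s+1}=c\,(W+1)^{-(s+1)}$. For small $\eta$ this yields $W\ge c'\eta^{-1/(s+1)}$, with $c'$ depending on $c_0,\alpha,d_x,T,s,R_f,\phi$.

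I expect the main obstacle to be this last separation step. It needs the uniform two-sided speed bounds and careful bookkeeping so that the extremal level $y_*$ is reached before $T$ by both trajectories; both are guaranteed by the choice $\psi_0=T/6$ and $R_f\le\psi_0/2$.
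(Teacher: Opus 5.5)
Your proposal is correct and follows essentially the same route as the paper. Both arguments reduce to the scalar ODE $y'=1/\psi(y)$ with hitting times $\int_{-1}^b\psi$, use a $(W+1)$-bump family of height $\sim(W+1)^{-s}$ with Borsuk--Ulam to obtain two antipodal laws sharing weights, and use the speed bound $1/\psi\ge 2/(3\psi_0)$ to turn the $\sim(W+1)^{-(s+1)}$ gap in hitting times into a state gap. The only cosmetic difference is in that last step: you pick the endpoint $y_*$ of the $i$th bump where $|D(y_*)|$ is at least half the jump, whereas the paper bounds $\bigl|\int_{-1}^b(\psi-\tilde\psi)\bigr|\le\tfrac{3\psi_0}{2}\delta$ for all $b\in[0,1]$ and subtracts the two endpoint bounds, which amounts to the same estimate.
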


The idea for the proof is to reduce the problem to a scalar ODE $y'=1/\psi(y)$, whose solution reaches the value $b$ at time $\int_{-1}^b\psi(y) \,dy$. If the dynamics are accurate, then they also determine the integrals of $\psi$. We perturb $\psi_0$ by $W+1$ smooth bumps with width $1/(W+1)$ with signs determined by $c$. Since we measure it in $C^s$, the height of the bumps need scale like $\epsilon_W\sim(W+1)^{-s}$ and so two opposite governing functions differ in the integral over each bump by a value scaling like $(W+1)^{-s-1}$. Relating to the accuracy $\eta$ then gives $W\gtrsim\eta^{-1/(s+1)}$.

\begin{proof}[Proof of Theorem~\ref{thm:operator_lower_bound}]
First we will reduce the PDE to scalar ODEs. Since $\|\psi-\psi_0\|_\infty\le R_f\le\psi_0/2$, every $\psi$ takes values in the interval $[\psi_0/2,3\psi_0/2]$ and so $1/\psi$ is a bounded and Lipschitz function. Therefore the scalar problem $y'=1/\psi(y)$, $y(0)=-1$ is well-posed and $y$ is increasing with speed bounded below by $2/(3\psi_0)=4/T$. For the constant function case, the linear operator $L$ multiplies it by $c_0$. Since we included $c_0$ in both $L$ and $f$, the problem reduces to $\psi$. In fact, the constant function $y(t)$ solves \eqref{eqn:PDE}, and by uniqueness of the solution operator we have $G^f_T(u_0)=y$. The trajectory passes through every value in $[-1,3]$ before time $T$. The trajectory reaches the value $b$ at time
$$t_\psi(b)=\int_{-1}^b\psi(y) \, dy.$$
The norm reduces to $\|y(t)\,1\|_{X^\alpha}=m_\alpha|y(t)|$, where $m_\alpha:=\|1\|_{X^\alpha}=c_0^{\alpha/2}(2\pi)^{d_x/2}$ is the $X^\alpha$ norm of the constant (spatial) function $1$, which is an eigenfunction of $L$ with eigenvalue $c_0$. So we have mapped the system into simpler ODE. 

We can show that trajectories being close implies that the integrals of $\psi$ are sufficiently close. Take two function $\psi,\tilde \psi$ that are within the class, with corresponding trajectories $y,\tilde y$, and assume that $\delta:=\sup_{[0,T]}|y-\tilde y| \le 1$. For a fixed value $b\in[0,1]$, we can look at the time it takes $y$ to reach $b$, which is given by $t:=t_\psi(b)$. At that same time $t$, $\tilde y$ reaches some value $\tilde b:=\tilde y(t)$, which is close $|\tilde b-b| \le \delta$. By construction we have $t_{\tilde \psi}(\tilde b)=t=t_\psi(b)$. Therefore, the two times of $b$ differ by the time $\tilde y$ needs to travel from the value $\tilde b$ to the value $b$:
$$\left|\int_{-1}^b(\psi(z)- \tilde \psi(z)) \, dz \right|
=\left|t_{\tilde \psi}(\tilde b)-t_{\tilde \psi}(b)\right|
=\left|\int_b^{\tilde b} \tilde \psi(z) \, dz \right| 
\le\frac{3\psi_0}{2}\,\delta,$$
since they start from the same initial state $-1$.
The difference between two values, by subtracting the bound above, gives us
$$\left|\int_{b_1}^{b_2}(\psi(z)-\tilde \psi(z))\, dz \right| \le 3\psi_0 \delta, $$
for $0\le b_1<b_2\le 1$. Thus accuracy in trajectories leads to accuracy in integrals of $\psi$.

Next, we perturb the trajectories by bump functions. This is a standard construction of disjoint bump functions in a $C^s$ ball \cite{devore1989optimal}. Let $\phi_i(y):=\phi((W+1)y-i+1)$, for $1\le i\le W+1$, be bump functions with disjoint supports in $[0,1]$, where $\phi\ge0$ is smooth with $\operatorname{supp}\phi\subset[0,1]$ and $\int\phi=1$. Each bump has width $1/(W+1)$, so $\int_0^1\phi_i(y)\,dy=1/(W+1)$.  Since the input to $\phi$ is multiplied by $W+1$, the $k$-th derivative is multiplied by $(W+1)^k$, and since the supports are disjoint, $\|\sum_ic_i\phi_i\|_{C^s}\le C_\phi(W+1)^s$ for $|c|_\infty\le1$, where $C_\phi:=2\|\phi\|_{C^s}$. The largest allowable height is given by
$$\epsilon_W:=\frac{R_f}{C_\phi(W+1)^s},$$
and the functions $\psi_{c}:= \psi_0+\epsilon_W\sum_ic_i\phi_i$ with $|c|_\infty=1$ define the governing equation $f_{c}\in\mathcal C_{R_f}$. The map $c \to w(f_{c})$ is continuous from $\{|c|_\infty=1\}$ to $\bR^W$, thus by the Borsuk-Ulam theorem there is a $c$ such that $w(f_{c})=w(f_{-c})$. Both governing equations are approximated by the same model $\mathcal A_{w(f_{c})}(u_0)$, so their trajectories are within $2\eta$ distance (measured in $X^\alpha$), which implies that $m_\alpha\delta\le2\eta$.

We compare the errors and the sizes. Note that if  $2\eta>m_\alpha$, then $\frac{m_\alpha R_f}{3C_\phi\psi_0\eta}<\frac{2R_f}{3C_\phi\psi_0}\le1$, since $R_f\le\psi_0/2$ and 
$$C_\phi= 2\|\phi\|_{C^s}\ge 2 \sup\phi\ge 2.$$
In that case, the bound $(W+1)^{s+1}\ge m_\alpha R_f/(3C_\phi\psi_0\eta)$ holds trivially. If that does not occur, then $\delta \le 1$ and we can pick $i$ and $|c_i|=1$. Set $[b_1,b_2]$ to be exactly the support of $\phi_i$, and thus on this interval $|\psi_{c}-\psi_{-c}|=2\epsilon_W\phi_i$. This gives 
\[
\frac{2\epsilon_W}{W+1}=\left|\int_{b_1}^{b_2}(\psi_{c}(z)-\psi_{-c}(z)) \, dz \right|\le3\psi_0\delta\le\frac{6\psi_0\,\eta}{m_\alpha}.
\]
Since $\epsilon_W=R_f/(C_\phi(W+1)^s)$, the bound gives $(W+1)^{s+1}\ge m_\alpha R_f/(3C_\phi\psi_0\eta)$, which completes the proof. 
\end{proof}

\begin{remark}[Lower and Upper Bounds on the Operator]
\label{rem:operator_lower_bound}
The class $\mathcal C_{R_f}$ has just one component-wise node defined by the function $1/\psi$. For integer $s$, we can apply Remark~\ref{rem:taylor_layer} to approximate this node with the bound from Theorem~\ref{thrm:quantitative_smooth_scalar} and with weights that depend continuously on $\psi$. Since $\psi_0=T/6$, the governing law scales like $T^{-1}$, which means that its Lipschitz constant is on the order of $T^{-1}$. One consequence is that the time window used in Proposition~\ref{prop:contraction} may be shorter than $T$ so multiple windows may be needed. The upper bound on $[0,T]$ can be continued using Proposition~\ref{prop:continuation}, where the same block is applied on $J$ windows. Since we are looking at a class of functions, $J$ depends on the class and $T$, but it does not depend on $\eta$. So we only adjust constants and not the powers of $\eta$. For integer $s$, the upper and lower bounds are thus
$$c\,\eta^{-1/(s+1)} \le W \le C\,\eta^{-\chi/s}\log\eta^{-1},$$
where $\chi=1$ when $d_x\le3$, both bounds holding uniformly for $\mathcal C_{R_f}$.
Note that there is an addition of $1$ in the lower exponent due to the construction of the proof via integrals; however, we removed this gap in Theorem~\ref{thm:hj_lower_bound} for Hamilton-Jacobi problems. The lower bound does not have a $\chi$ since the trajectories are constant in space, so no norm mismatch occurs. This bound complements the exponential lower bounds from \cite{lanthaler2026parametric}.
\end{remark}

We next consider gradient dependence, specifically, a lower bound for Hamilton-Jacobi PDE in the form $u_t+H(u_x)=\nu u_{xx}$.

\begin{theorem}[Lower Bound for Gradient Nonlinearities]
\label{thm:hj_lower_bound}
Assume that $\Omega=\mathbb T$, $L=-\nu\partial_{xx}+c_0I$ with $\nu,c_0>0$, $T>0$, $\alpha\ge0$, an integer $s\ge1$ and $0<R_f\le1$. Let $\mathcal H_{R_f}$ be the class of governing equations with $f_H(u,u_x)=c_0u-H(u_x)$ with $H\in C^\infty(\bR)$, $\operatorname{supp}H\subset[0,1]$ and $\|H\|_{C^s}\le R_f$, where the set is equipped with the distance $\|H-H'\|_{C^s}$.  Let $v_0$ be a smooth $2\pi$-periodic function with zero mean. Assume that $v_0\le1$, and $v_0=1$ on an interval $I=[x_L,x_R]$ of length $\pi<|x_R-x_L|<2\pi$. Its integral $V_0(x):=\int_0^xv_0(y)\,dy$ is periodic since $v_0$ has mean zero. The data set is $\mathcal U:=\{S V_0:0\le S\le1\}$, where the slopes are exactly $S$ on $I$. Let $(\mathcal A_w)_{w\in\bR^W}$ be maps from $\mathcal U$ into $C([0,T];X^\alpha)$, and assume that $w:\mathcal H_{R_f}\to\bR^W$ is continuous. 

If the accuracy bound
$$\sup_{u_0\in\mathcal U}\left\|\mathcal A_{w(H)}(u_0)-G^{f_H}_T(u_0) \right\|_{C([0,T];X^\alpha)} \le \eta, $$
holds for every $H\in\mathcal H_{R_f},$
then
$$W\ \ge\ c\,\left(\eta\log\eta^{-1}\right)^{-1/s},$$
for sufficiently small $\eta$, where $c>0$ only depends on $\nu$, $c_0$, $\alpha$, $T$, $s$, $R_f$ and $v_0$.
\end{theorem}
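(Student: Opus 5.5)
The plan follows the Borsuk--Ulam scheme used for Theorem~\ref{thm:operator_lower_bound}. The difference is that the governing laws are now separated by the \emph{value} of $H$ at a chosen slope, not by an integral of $\psi$. This is what removes the extra $+1$ in the exponent.

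\textbf{Construction and Borsuk--Ulam step.} Let $\phi\ge0$ be a smooth bump with $\operatorname{supp}\phi\subset[0,1]$ and $\phi(1/2)=1$. Set $\phi_i(y):=\phi((W+1)y-i+1)$ and $\epsilon_W:=R_f/(C_\phi(W+1)^s)$ with $C_\phi=2\|\phi\|_{C^s}$, and define $H_c:=\epsilon_W\sum_i c_i\phi_i$ for $|c|_\infty=1$. Then $H_c\in\mathcal H_{R_f}$ and $c\mapsto w(H_c)$ is continuous. Borsuk--Ulam gives a $c$ with $w(H_c)=w(H_{-c})$. Hence for every $u_0\in\mathcal U$ the two solutions $u^{\pm}:=G^{f_{H_{\pm c}}}_T(u_0)$ satisfy $\|u^+-u^-\|_{C([0,T];X^\alpha)}\le2\eta$. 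Pick $i$ and take the slope $S:=(i-\tfrac12)/(W+1)\in(0,1)$, the centre of $\phi_i$, so that $H_c(S)-H_{-c}(S)=\pm2\epsilon_W$. We use the single datum $u_0=SV_0$.

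\textbf{Local affine behaviour.} Since $L=-\nu\partial_{xx}+c_0I$ and $f_H=c_0u-H(u_x)$, the PDE is $u_t=\nu u_{xx}-H(u_x)$. On $I$ the datum is affine with slope $S$, so the comparison function is $w^\pm(t,x):=u_0(x)-H_{\pm c}(S)\,t$, an exact solution of this equation. The error $z^\pm:=u^\pm-w^\pm$ on $I$ satisfies the linear equation
\[
z_t=\nu z_{xx}-b(t,x)\,z_x,\qquad |b|\le\|H'\|_\infty\le R_f,
\]
with $b$ given by the mean value theorem, and $z^\pm(0)=0$ on $I$. On $\partial I$ we need $|z^\pm|\le C$ for $t\le T$. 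This follows from the maximum principle: $u_x$ solves a parabolic equation, so $\|u_x\|_\infty\le\|u_0'\|_\infty$, and $\|u\|_\infty$ is bounded as well.

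Let $x_*$ be the midpoint of $I$ and $d:=|I|/2>\pi/2$. A Gaussian (error-function) barrier for the drift--diffusion operator, with the drift absorbed by a shift of size $R_f t$ that is harmless for small $t$, gives
\[
|z^\pm(t,x)|\le C\,e^{-d^2/(8\nu t)}\qquad\text{for } |x-x_*|\le d/2,\ t\le t_0 .
\]
Consequently, on this subinterval $J$,
\[
|u^+(t,x)-u^-(t,x)|\ \ge\ 2\epsilon_W t-2Ce^{-d^2/(8\nu t)} .
\]
This barrier estimate is the main technical step. I would first write the barrier explicitly as a sum of two shifted erfc-type supersolutions, one from each endpoint of $I$, and check its inequality against the drift term.

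\textbf{Balancing and conversion to $X^\alpha$.} Choose $t\asymp d^2/(\nu\log\epsilon_W^{-1})$ so that the exponential term is at most $\epsilon_W t/2$. The pointwise gap on $J$ is then at least $\epsilon_W t\gtrsim\epsilon_W/\log\epsilon_W^{-1}$. Since $\lambda_k\ge c_0$,
\[
\|v\|_{X^\alpha}\ \ge\ c_0^{\alpha/2}\|v\|_{L^2}\ \ge\ c_0^{\alpha/2}|J|^{1/2}\inf_J|v| .
\]
Therefore $\epsilon_W/\log\epsilon_W^{-1}\lesssim\eta$. Inverting gives $\epsilon_W\lesssim\eta\log\eta^{-1}$ for small $\eta$. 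Substituting $\epsilon_W\asymp(W+1)^{-s}$ yields
\[
W\ \ge\ c\,(\eta\log\eta^{-1})^{-1/s}.
\]
The case where $\eta$ is not small is trivial, exactly as in Theorem~\ref{thm:operator_lower_bound}.
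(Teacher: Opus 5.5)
Your proposal is correct, but it takes a different route from the paper's proof.

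\textbf{The paper's route.} The paper never compares pointwise values. It tracks the spatial means $m_\pm$, which obey the exact identity $m_\pm'=-\frac1{2\pi}\int_{\mathbb T}H_{\pm c}(\partial_xu_\pm)\,dx$. It places the barrier on the slope $\partial_xu_\pm-S$, through the differentiated equation. The barrier pushes the slope within $1/(4(W+1))$ of $S$ on a shrunken plateau $I_a$, so that it stays inside the flat top of the $i$-th bump. This is why the paper uses bumps with $\phi=\phi_*$ on $[\frac14,\frac34]$. Off the plateau, $H_{\pm c}$ is only bounded by $\epsilon_W\phi_*$, and that contribution is beaten using $|I_a|>\pi$.

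\textbf{Your route.} You compare $u^\pm$ on $I$ with the exact solution $u_0-H_{\pm c}(S)t$, and put the barrier on $u^\pm-w^\pm$, so the nonlinearity enters only as a bounded drift. This gives several simplifications.
\begin{itemize}
\item You use only the value $H_{\pm c}(S)$ at the bump centre, so no flat tops are needed.
\item You need no length condition on $I$ and no control of $H$ away from the plateau.
\item The gap $2\epsilon_Wt$ grows linearly in time, exactly as the mean difference does in the paper.
\end{itemize}
Both proofs lose the same logarithm for the same reason: a Gaussian boundary layer forces $t\lesssim1/\log(W+1)$. In the paper it comes from the slope tolerance $1/(4(W+1))$; in yours it comes from balancing $e^{-c/t}$ against $\epsilon_Wt$.

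\textbf{What the paper's route buys.} Its observable is a single eigencoefficient, the constant mode with eigenvalue $c_0$. So passing from the $2\eta$ trajectory bound to the mean needs no localization and would work in any $X^\alpha$ norm, including negative ones. Your passage from $\|u^+(t)-u^-(t)\|_{X^\alpha}$ to $\inf_J|u^+-u^-|$ instead uses $\alpha\ge0$, which the statement does assume. The paper's exponential barrier $Z$ is also smooth at $t=0$, whereas a shifted erfc barrier has the usual corner discontinuity at $(0,x_R)$ that must be handled.

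\textbf{Two minor corrections.}
\begin{itemize}
\item At distance $d/2$ from $\partial I$, the drift-shifted erfc barrier gives $\exp\bigl(-(d/2-R_ft)^2/(4\nu t)\bigr)\approx e^{-d^2/(16\nu t)}$, not $e^{-d^2/(8\nu t)}$. This is harmless, because only the form $e^{-c/t}$ enters the balancing.
\item The degenerate case to treat separately is bounded $W$, not large $\eta$. When $\epsilon_W$ is not small, $t$ should be fixed at some $t_*\le\min\{t_0,T\}$; the resulting positive gap then contradicts $2\eta$ for small $\eta$. The paper absorbs this through the inequality $(W+1)^s\log(C_2(W+1))\ge C_1R_f/\eta$.
\end{itemize}
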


\begin{proof}[Proof of Theorem~\ref{thm:hj_lower_bound}]
We are working on $\mathbb T$ with $[0,2\pi)$ and periodic endpoints. We can translate $x$ if needed so that the set where $v_0=1$ contains an interval $I=[x_L,x_R]\subset(0,2\pi)$ whose length is $|I|=x_R-x_L>\pi$. Pick $a\in\left(0,\frac{|I|-\pi}2\right)$ and set 
$$I_a:=[x_L+a,\,x_R-a],$$
to be the interval obtained by shrinking the interval by $a$ at each end. The length is given by $|I_a|=|I|-2a>\pi$, so $I_a$ is longer than $\mathbb T\setminus I_a$, and
$$\varsigma:=|I_a|- \left(2\pi-|I_a|\right)>0. $$
Next, consider the bump function $\phi\in C^\infty(\bR)$ which satisfies the following properties: $\operatorname{supp}\phi\subset[0,1]$, $0\le\phi\le\phi_*$ and $\phi=\phi_*$ on $[\frac{1}{4},\frac{3}{4}]$. The bumps are flat at maxs and mins. We define the values $C_\phi:=\|\phi\|_{C^s}$ and $B_0:=1+\|v_0\|_\infty$.

As before, we construct two functions with the same parameters to get the bound. For $ c \in\bR^{W+1}$ with $|c|_\infty=1$ let
$$H_{c}(y):=\epsilon_W\sum_{j=1}^{W+1}c_j\,\phi\big((W+1)y-j+1\big),\qquad\epsilon_W:=\frac{R_f}{C_\phi(W+1)^s}.$$
The terms have disjoint supports in $[0,1]$, so we have that $H_{c}\in\mathcal H_{R_f}$, $\|H_{c}\|_\infty \le \epsilon_W \phi_*$ and $\|H_{c}'\|_\infty\le R_f \le 1$. The set $\{|c|_\infty=1\}$ is homeomorphic to $S^W$ by an odd map, we can apply Borsuk-Ulam theorem to get a coefficient vector $c$ such that $w(H_{c})=w(H_{-c})$. The two Hamiltonians are approximated by the same parameters and thus output the same values, which must be within $\eta$ of each other. For every $u_0\in\mathcal U$, denote the true solutions by $u_+$ and $u_-$ associated with $H_{c}$ and $H_{-c}$, and so the two true solutions are within $2\eta$ to each other with respect to the $C([0,T];X^\alpha)$. 
Their means $m_\pm(t):=\frac1{2\pi}\int_{\mathbb T} u_\pm(t,x)\,dx$ are close. Recall that constants are eigenfunctions of $L$ with eigenvalue $c_0$ and so $\|v\|_{X^\alpha}\ge c_0^{\alpha/2}\sqrt{2\pi}\,\left|\tfrac1{2\pi}\int_{\mathbb T} v \right|$. Therefore, the difference is given by:
\begin{equation}
|m_+(t)-m_-(t)| \le \frac{2\eta}{c_0^{\alpha/2}\sqrt{2\pi}},
\label{eqn:hj_means_close}
\end{equation}
for $0\le t \le T$.

Pick the index $i$ with $|c_i|=1$ and define
$$S:=\frac{i-1/2}{W+1}$$
for $u_0:=S V_0 = S \int_0^x v_0 \in\mathcal U$. Then $S$ is the center of the $i$th bump, and  spatial derivative is $\partial_x u_0=S v_0=S$ on $I$. The solutions are smooth \cite{evans2022partial}, so we can differentiate the equation in $x$ to get the slopes $z_\pm:=\partial_xu_\pm$ by solving:
$$\partial_t z_\pm + H_{ \pm c}'(z_\pm)  \partial_xz_\pm = \nu\,\partial_{xx} z_\pm, \qquad z_\pm(0)=S v_0.$$
The maximum principle gives $|z_\pm-S|\le B_0$. 

We can find a dynamical system for the mean value. If we integrate the PDE for $u_\pm$, using $\int_{\mathbb T}\partial_{xx}u_\pm\,dx=0$, then
\begin{equation}
m_\pm'(t)=-\frac{1}{2\pi}\int_{\mathbb T} H_{\pm c} \left( z_\pm(t,x) \right) \, dx,
\label{eqn:hj_mean_ode}
\end{equation}
which is controlled by the average slope. Over a short time interval, the slopes are close to $S$. At $t=0$ the slope equals $S$ on all of $I$, but diffusion will eventually change it. We bound this change on the inner interval $I_a$. We can determine a $t_1\in(0,T]$, and set $\omega:=a/(2\nu t_1)$. The perturbation $\tilde z:=z_\pm-S$ solves the linear PDE
$$\tilde z_t+b\tilde z_x-\nu \tilde z_{xx}=0\ \ \ \text{on }[0,t_1]\times I,$$
where $b:=H_{\pm c}'(z_\pm)$ satisfies $|b|\le1$, $\tilde z(0,\cdot)=0$ on $I$, and $|\tilde z|\le B_0$ at endpoints $x=x_L$ and $x=x_R$. We can form a bound using weak maximum principle. Consider a comparison with the function
\[
Z(t,x):=B_0 e ^{(\omega+\nu\omega^2)t} \left( e^{\omega(x-x_R)}+e^{\omega(x_L-x)}\right)
\]
The first term is at least $B_0$ at $x=x_R$ and similarly with the second at left endpoint $x=x_L$, and both decay towards the interior of $I$. Applying the linear PDE to $Z$ to check the residual yields
$$Z_t+bZ_x-\nu Z_{xx}=\omega B_0e^{(\omega+\nu\omega^2)t}\left((1+b)e^{\omega(x-x_R)}+(1-b)e^{\omega(x_L-x)}\right)\ge0,$$
and noting that $Z\ge|\tilde z|$ at $t=0$ and $x=x_L,x_R$. The weak maximum principle applied to $Z-\tilde z$ and $Z+\tilde z$ implies that $|\tilde z|\le Z$ on $[0,t_1]\times I$. For $x\in I_a$, both $x-x_R$ and $x_L-x$ are at most $-a$, so by replacing $\omega t_1=a/(2\nu)$, we have the bound
$$|z_\pm-S| \le 2 B_0 \, e^{a/(2\nu)}e^{-a^2/(4\nu t_1)},$$
on $[0,t_1]\times I_a$. By choosing the constants to be
$$C_2:=8B_0e^{a/(2\nu)},
\qquad 
\ell:=\log\big(C_2(W+1)\big)>1,
\qquad
t_1:=\min\left\{T,\frac{a^2}{4\nu\ell}\right\},$$
we have $e^{-a^2/(4\nu t_1)}\le e^{-\ell}=1/(C_2(W+1))$, and 
\begin{equation}
|z_\pm-S|   \le  \frac{1}{4(W+1)},
\label{eqn:hj_plateau}
\end{equation}
on $[0,t_1]\times I_a$. This shows that the two slopes are close to $S$, and improves as $W$ increases.

We next relate the slopes to the means. By \eqref{eqn:hj_plateau} on $[0,t_1]\times I_a$, the slopes are within a quarter bump, i.e. $(W+1)z_\pm-i+1\in[\frac{1}{4},\frac{3}{4}]$. In this subinterval, the only bump that is nonzero is the $i$th one (by construction) and it equals $\phi_*$ (flat), which yields
$$H_{\pm c}(z_\pm) = \pm \epsilon_W c_i \phi_*,\qquad H_{c}(z_+)-H_{-c}(z_-)= 2 \epsilon_W c_i \phi_*.$$
Taking the differences of \eqref{eqn:hj_mean_ode} and splitting the integral into
$$m_-'(t)-m_+'(t) = \frac {1}{2\pi} \int_{I_a} \left(H_{c}(z_+)-H_{-c}(z_-)\right)\,dx
+\frac{1}{2\pi} \int_{\mathbb T\setminus I_a}\left(H_{c}(z_+)-H_{-c}(z_-) \right)\, dx,$$
we will bound each term. The first integral equals $\frac{1}{\pi}  \epsilon_Wc_i\phi_*|I_a|$, and the second is at most $\frac1\pi\epsilon_W\phi_*(2\pi-|I_a|)$. We have that $|I_a|>2\pi-|I_a|$, so the first integral is larger in absolute value than the second integral. Thus by the reverse triangle inequality $|m_+'(t)-m_-'(t)|\ge\frac{\varsigma\phi_*}{\pi}\epsilon_W$ on $[0,t_1]$. Since $m_+(0)=m_-(0)$, integrating gives the bound
$$|m_+(t_1)-m_-(t_1)|=\int_0^{t_1}|m_+'(t)-m_-'(t)|\,dt \ge \frac{\varsigma\phi_*}{\pi}\,t_1\,\epsilon_W.$$

Lastly, we compare the bounds.  By \eqref{eqn:hj_means_close}, this differences between the means are at most $2\eta/(c_0^{\alpha/2}\sqrt{2\pi})$. Since $\ell>1$, we have $t_1\ge\min\{T,a^2/(4\nu)\}/\ell$, and replacing $\epsilon_W$ yields the inequality:
$$(W+1)^s \log \left(C_2(W+1)\right) \ge \frac{C_1R_f}{\eta}, \qquad C_1:=\frac{c_0^{\alpha/2}\varsigma \phi_* \min\{T,a^2/(4\nu)\}}{\sqrt{2\pi}\,C_\phi}.$$
Choose $c>0$ with $(2c)^s\frac{2}{s}<C_1R_f$. If $W+1<2c\,(\eta\log\eta^{-1})^{-1/s}$, then for sufficiently small $\eta$ we would have  $\log(C_2(W+1))\le \frac{2}{s} \log\eta^{-1}$, so the LHS is smaller than $(2c)^s \frac{2}{s} \,\eta^{-1}$, which is a contradiction. Therefore, the inequality should be $W+1 \ge 2c\,(\eta\log\eta^{-1})^{-1/s}$, and when the RHS is larger than $1$, the result is $W \ge c\,(\eta\log\eta^{-1})^{-1/s}$.
\end{proof}

For $s\ge2$, the exponent $1/s$ from Theorem~\ref{thm:hj_lower_bound} agrees (up to logs) with the upper bound uniform over this class (Section~\ref{sec:examples}). For first-order Hamilton-Jacobi equations, upper bounds that avoid the curse of parametric complexity were derived in \cite{lanthaler2026parametric}. Our lower bound is over a class of Hamiltonians and is complementary. This is a fairly simple PDE and therefore indicates that genuine nonlinearities in $u_x$ will require such a rate. We extend the lower bounds from Theorems~\ref{thm:operator_lower_bound} and \ref{thm:hj_lower_bound} to include the effect of learning the spectrum, which matches the spectral term of Theorem~\ref{thm:total_cost} up to the constant. We can show this by considering just the linear part, setting $f$ to zero.

\begin{proposition}[Spectral Projection is Sharp]
\label{prop:spectral_lower}
Let $\tilde{L}$ satisfy the prior assumptions and have eigenpairs $(\tilde{\lambda}_k,e_k)$. The norms are those induced by $\tilde{L}$. Set $T>0$, $\alpha<\gamma_{\mathrm s}$, $\rho_{\mathrm s}>0$, the governing law $f\equiv0$, and a finite set $\mathcal K$ of indices with
$$\frac{1}{T} \le \tilde{\lambda}_k < \left(\frac{\rho_{\mathrm s}}{9\eta}\right)^{1/\zeta_{\mathrm s}},$$
for $k\in\mathcal K$. For vector $ m\in[1,2]^{\mathcal K}$, suppose $L_{m }$ has eigenvalues $m_k\tilde{\lambda}_k$ for $k\in\mathcal K$ and $\tilde{\lambda}_k$ otherwise, with the same corresponding eigenfunctions. Let $(\mathcal A_w)_{w\in\bR^W}$ be maps from $B:=\{u_0:\|u_0\|_{X^{\gamma_{\mathrm s}}} \le \rho_{\mathrm s}\}$ to $C([0,T];X^\alpha)$, and let $w:[1,2]^{\mathcal K}\to\bR^W$ be continuous weights. If, for every $m \in[1,2]^{\mathcal K}$, the following holds
$$\sup_{u_0\in B} \left\|\mathcal A_{w(m )}(u_0)-e^{-tL_{m }}u_0\right\|_{X^\alpha_T} \le \eta, $$
then $W\ge|\mathcal K|$. Assuming the two-sided Weyl bound $c_Wk^{2/d_x} \le \tilde{\lambda}_k \le C_Wk^{2/d_x}$, we have $W \ge c \eta^{-d_x/(\gamma_{\mathrm s}-\alpha)}$ for sufficiently small $\eta$. More generally, for an operator of order $2\ell$, with $k^{2\ell/d_x}$ in the Weyl bound, the same argument yields $W\ge c\,\eta^{-d_x/(\ell(\gamma_{\mathrm s}-\alpha))}$.
\end{proposition}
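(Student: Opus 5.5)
The plan is to run the same Borsuk--Ulam argument as in Theorems~\ref{thm:operator_lower_bound} and \ref{thm:hj_lower_bound}, now perturbing the spectrum instead of the law. Suppose, for contradiction, that $W<|\mathcal K|=:K$, and view $\bR^W\subseteq\bR^{K-1}$ by padding with zeros. The set $\{c\in\bR^{\mathcal K}:|c|_\infty=1\}$ is homeomorphic to $S^{K-1}$ by an odd map. Set $m(c):=\tfrac32\mathbf 1+\tfrac12 c\in[1,2]^{\mathcal K}$. Then $c\mapsto w(m(c))$ is a continuous map from this sphere into $\bR^{K-1}$. By the Borsuk--Ulam theorem there is a $c$ with $w(m(c))=w(m(-c))$. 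The two operators $L_{m(c)}$ and $L_{m(-c)}$ therefore share one network $\mathcal A_w$. For every $u_0\in B$ the triangle inequality then gives
$$\left\|e^{-tL_{m(c)}}u_0-e^{-tL_{m(-c)}}u_0\right\|_{X^\alpha_T}\le2\eta .$$

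Next we pick a test input that separates the two semigroups. Choose $k\in\mathcal K$ with $|c_k|=1$, so that one operator has eigenvalue $\tilde\lambda_k$ on $e_k$ and the other has $2\tilde\lambda_k$. Take $u_0:=\rho_{\mathrm s}\tilde\lambda_k^{-\gamma_{\mathrm s}/2}e_k$. Then $\|u_0\|_{X^{\gamma_{\mathrm s}}}=\rho_{\mathrm s}$, so $u_0\in B$. Both evolutions stay in the span of $e_k$, and their difference in $X^\alpha$ at time $t$ is
$$\rho_{\mathrm s}\tilde\lambda_k^{(\alpha-\gamma_{\mathrm s})/2}\left(e^{-t\tilde\lambda_k}-e^{-2t\tilde\lambda_k}\right)=\rho_{\mathrm s}\tilde\lambda_k^{-\zeta_{\mathrm s}}\left(e^{-t\tilde\lambda_k}-e^{-2t\tilde\lambda_k}\right).$$
Evaluate at $t=\log 2/\tilde\lambda_k$. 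This time lies in $[0,T]$ because $\tilde\lambda_k\ge1/T>\log2/T$. The bracket then equals $\tfrac12-\tfrac14=\tfrac14$.

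Combining this with the $2\eta$ bound gives $\rho_{\mathrm s}\tilde\lambda_k^{-\zeta_{\mathrm s}}/4\le2\eta$, that is, $\tilde\lambda_k\ge(\rho_{\mathrm s}/(8\eta))^{1/\zeta_{\mathrm s}}$. This contradicts the hypothesis $\tilde\lambda_k<(\rho_{\mathrm s}/(9\eta))^{1/\zeta_{\mathrm s}}$. Hence $W\ge|\mathcal K|$.

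For the rate, take $\mathcal K$ to be every index with $1/T\le\tilde\lambda_k<(\rho_{\mathrm s}/(9\eta))^{1/\zeta_{\mathrm s}}$. By the two-sided Weyl bound, this set contains all $k$ with
$$(C_W T)^{-d_x/2}\le k< c_W^{-d_x/2}\left(\rho_{\mathrm s}/(9\eta)\right)^{d_x/(2\zeta_{\mathrm s})}.$$
For small $\eta$ its cardinality is at least $c\,\eta^{-d_x/(2\zeta_{\mathrm s})}=c\,\eta^{-d_x/(\gamma_{\mathrm s}-\alpha)}$. For an operator of order $2\ell$, the same count with $k^{2\ell/d_x}$ in the Weyl bound gives the exponent $d_x/(\ell(\gamma_{\mathrm s}-\alpha))$.

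The only delicate points are the bookkeeping ones. The first is the dimension count for Borsuk--Ulam, which requires $W\le K-1$ and is handled by embedding $\bR^W$ into $\bR^{K-1}$. The second is checking that the separating time $\log2/\tilde\lambda_k$ lies inside $[0,T]$, which is exactly what the lower cutoff $1/T$ on $\mathcal K$ ensures. The rest is direct computation.
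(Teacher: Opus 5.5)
Your argument is the paper's: Borsuk--Ulam applied to $c\mapsto w(m(c))$ with $m(c)=\tfrac12(3+c)$, the single-mode datum $u_0=\rho_{\mathrm s}\tilde\lambda_k^{-\gamma_{\mathrm s}/2}e_k$, and a Weyl count. The only deviation is the evaluation time. You use the optimal $t=\log 2/\tilde\lambda_k$, which gives the factor $\tfrac14$, while the paper uses $t=1/\tilde\lambda_k$, which gives $e^{-1}-e^{-2}$. Both exceed the required $2/9$.

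There is one slip in the counting step: the Weyl constants in your index range are swapped. The condition $\tilde\lambda_k\ge 1/T$ is guaranteed by the lower bound $c_Wk^{2/d_x}\le\tilde\lambda_k$, which requires $k\ge (c_WT)^{-d_x/2}$. The condition $\tilde\lambda_k<\Lambda:=(\rho_{\mathrm s}/(9\eta))^{1/\zeta_{\mathrm s}}$ is guaranteed by the upper bound $\tilde\lambda_k\le C_Wk^{2/d_x}$, which requires $k<(\Lambda/C_W)^{d_x/2}$. As you wrote it, the inclusion can fail when $c_W<C_W$. The corrected range still gives the same $c\,\eta^{-d_x/(\gamma_{\mathrm s}-\alpha)}$ count.
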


\begin{proof}[Proof of Proposition~\ref{prop:spectral_lower}]
We follow the same proof style as before. Let $K:=|\mathcal K|$ and $W\le K-1$. The boundary of $[-1,1]^K$ is homeomorphic to $S^{K-1}$ by a homeomorphism that commutes with $c\mapsto-c$. Applying the Borsuk-Ulam theorem to $c\mapsto w(m(c))$, with $m(c):=\frac{1}{2}(3+c)\in[1,2]^K$, gives $c$ on that boundary with $w(m (c))=w(m (-c))$. On the boundary, some coordinates satisfy $|c_k|=1$, so there is a mode whose multiplier is $1$ for one of the two vectors and $2$ for the other, i.e., $\{m _k(c),m _k(-c)\}=\{1,2\}$.

Next we examine the separation on the mode.  We can choose initial data to have all its mass on that mode, e.g. $u_0:=\rho_{\mathrm s}(\tilde{\lambda}_k)^{-\gamma_{\mathrm s}/2}e_k \in B$. Since $f\equiv0$, we can explicitly solve the PDE and get two solutions: $e^{-\tilde{\lambda}_kt}u_0$ and $e^{-2\tilde{\lambda}_kt}u_0$.  At the time $t=(\tilde{\lambda}_k)^{-1}\le T$, their difference is given by 
$$\left(e^{-1}-e^{-2}\right) u_0= \left(e^{-1}-e^{-2}\right) \rho_{\mathrm s}(\tilde{\lambda}_k)^{-\gamma_{\mathrm s}/2}e_k.$$
By assumption, $e_k$ is an eigenfunction of $\tilde{L}$ with eigenvalue $\tilde{\lambda}_k$, and by construction we have  $\|e_k\|_{X^\alpha} =(\tilde{\lambda}_k)^{\alpha/2}$. Therefore, their difference in the $X^\alpha$ norm induced by $\tilde{L}$ is given by:
$$\left(e^{-1}-e^{-2}\right) \rho_{\mathrm s} (\tilde{\lambda}_k)^{(\alpha-\gamma_{\mathrm s})/2} = \left(e^{-1}-e^{-2}\right) \rho_{\mathrm s}(\tilde{\lambda}_k)^{-\zeta_{\mathrm s}},$$
recalling that $\zeta_{\mathrm s}=\frac{\gamma_{\mathrm s}-\alpha}{2}$. The condition that $\tilde{\lambda}_k<(\rho_{\mathrm s}/(9\eta))^{1/\zeta_{\mathrm s}}$ on $\mathcal K$ gives $\rho_{\mathrm s}(\tilde{\lambda}_k)^{-\zeta_{\mathrm s}}>9\eta$. Using $(e^{-1}-e^{-2}) 9 > 2$, the difference has $X^\alpha$ norm greater than $2\eta$. But both solutions are within $\eta$ of the same trajectory $\mathcal A_{w(m(c))}(u_0)$, which is a contradiction by triangle inequality. Therefore, $W\ge K$. 

The Weyl bounds give $\#\{k:\tilde{\lambda}_k<\Lambda\} \ge (\Lambda/C_W)^{d_x/2}-1$ and $\#\{k:\tilde{\lambda}_k<1/T\} \le (c_WT)^{-d_x/2}$. With $\Lambda:=(\rho_{\mathrm s}/(9\eta))^{1/\zeta_{\mathrm s}}$, the number of modes with $T^{-1}\le\tilde{\lambda}_k<\Lambda$ is at least $c(\rho_{\mathrm s}/\eta)^{d_x/(2\zeta_{\mathrm s})}$ for sufficiently small $\eta$, and $d_x/(2\zeta_{\mathrm s})=d_x/(\gamma_{\mathrm s}-\alpha)$. This completes the proof.
\end{proof}

The bound is sharp, since for $f\equiv0$, exact spectral weights $e^{- m_k\tilde{\lambda}_k\Delta t}$ produce $e^{-tL_{m}}$ on the first $N$ modes, so the only error is from truncating and encoding. Since $e^{-tL_{m}}$ commutes with $P_N$ and is a contraction on $X^\alpha$,
$$\left\|(I-P_N)e^{-tL_{m}}u_0\right\|_{X^\alpha} \le \rho_{\mathrm s} (\tilde{\lambda}_{N+1})^{-\zeta_{\mathrm s}} \le \frac{\eta}{2}$$
when $\tilde{\lambda}_{N+1}\ge(2\rho_{\mathrm s}/\eta)^{1/\zeta_{\mathrm s}}$. By Weyl's law, this holds for some $N\le C(\rho_{\mathrm s}/\eta)^{d_x/(\gamma_{\mathrm s}-\alpha)}$, and choosing $h$ so that the encoding error is at most $\eta/2$ gives accuracy $\eta$. So the spectral term, specifically the exponent in Theorem~\ref{thm:total_cost}, is sharp when the spectrum is unknown.  Learning a self-adjoint linear operator with a known eigenspace was investigated in \cite{de2023convergence}. Proposition~\ref{prop:spectral_lower} is a parametric version, where the number of unknowns is quantified by Weyl's law.

\section{Explicit Examples}
\label{sec:examples}

In this section, we apply the main theorems to several popular equations. The results are summarized in Table~\ref{tab:examples}. The key is that the structural exponent does not depend on the dimension, and even for complicated dynamics considered here, the nonlinear governing equations can be associated with fairly simple DAGs. Upper bounds for the terminal time solutions for Allen-Cahn, Cahn-Hilliard, and the two-dimensional Navier-Stokes were shown through stable spectral schemes emulation in \cite{chandramoorthy2026spectral}. In \cite{kovachki2021universal}, they showed bounds for Navier-Stokes by emulating a pseudo-spectral method. We show that our bounds hold along the whole trajectory with shared weights and that we have lower bounds that match or nearly match.

 We consider the torus $\Omega=\mathbb T^{d_x}$ with $H=L^2(\mathbb T^{d_x};\bR^d)$ and $L=-\nu\Delta+c_0I$, where $\nu,c_0>0$. In this setting,  $L$ is strictly positive, $X^s\simeq H^s(\mathbb T^{d_x};\bR^d)$, the eigenfunctions are trigonometric, and Weyl's gives $\lambda_N\asymp N^{2/d_x}$. Given the equation $u_t-\nu\Delta u=g(u,\nabla u)$, we write it in the form \eqref{eqn:PDE} by shifting $f=g+c_0u$. This adds one affine node to the DAG, but does not change the structural exponent. On the torus the trapezoidal rule satisfies the quadrature assumptions (see Remark~\ref{rem:stable_quadrature_example}), the FFT gives the fast transforms (see Assumption~\ref{assumption:spectral_factorization}), and the Moser estimates give Assumption~\ref{assumption:bootstrap} for smooth governing laws. This reduces the overhead from several assumptions. Unless otherwise stated, we set
$$\beta=\alpha-1,\qquad \alpha<\gamma<\alpha+1,\qquad \theta=\frac{1}{2},\qquad \zeta=\frac{\gamma-\alpha}{2} \in \left(0,\frac{1}{2}\right).$$

\subsection{Allen-Cahn with Explicit Constants}
\label{sec:explicit_constants}

To show that the settings of the main theorems are obtainable, we show one example explicitly. Let $\Omega=\bT^1$, $L = -\partial_{xx}+I$ with $\lambda_k = 1+k^2$, and the Allen-Cahn governing law $f(u)=2u-u^3$. Since it is polynomial, $p_f=0$ is attained.

 The law is gradient-free, so we just need $u$ and thus the embedding simplifies to $X^\alpha\hookrightarrow C^0$, that is, $\alpha>\frac{1}{2}$. We take $\alpha=\frac{7}{4}<2$, so there is no amplification. Since $f$ maps $H^\alpha$ into $H^1$, we may set $\beta=1$, which satisfies $\beta>\frac{1}{2}$ (needed for Assumption~\ref{assumption:quadrature_stability}(ii)) and gives $\theta=\frac{5}{8}$. Setting $\gamma=\frac{11}{4}$, gives us $\zeta=\frac{1}{2}$. The Lipschitz estimate of $f$ into $X^1=H^1$ involves $u'$, so there we use $X^\alpha\hookrightarrow C^1$, which holds since $\alpha>\frac{3}{2}$. With $R_0=1$ and $R=3$, one can show that $\|u\|_{C^1}\le m_K=C_{\mathrm{emb}}R\approx 4$ on the ball, and $f$ has Lipschitz constant $L_R\le 207$ from $X^\alpha$ to $X^1$. The contraction condition of Proposition~\ref{prop:contraction} then yields $T_0\approx 10^{-4}$. The horizon is short because $L_R$ grows like $m_K^2$ and $T_0$ scales like $L_R^{-1/\theta}=L_R^{-8/5}$. Longer times can be reached by the continuation arguments and energy estimate.

For the nonlinear part, since $\alpha<2$, we have $\varepsilon\asymp\eta$. Approximating the cubic term through $u^3=\frac14\big((u+u^2)^2-(u-u^2)^2\big)$ with Yarotsky's construction of the square \cite{yarotsky2017error} gives a block $\widehat f_\varepsilon$ with $206$, $244$,  and $290$ weights at $\varepsilon=10^{-2},10^{-3},10^{-4}$. By Lemma~\ref{lem:sharing}, this one block is shared across every grid point, time level and sweep, and the spectral layers add $3N$ shared weights, with $N\asymp\eta^{-1}$ by Theorem~\ref{thm:main}. The trainable count $W_{\mathrm{train}}=W(\widehat f_\varepsilon)+3N$ is therefore
$$
\begin{array}{c|c|cc|c}
\eta & N & W(\widehat f_\varepsilon) & 3N & W_{\mathrm{train}}\\\hline
10^{-2} & 10^{2} & 206 & 3\times10^{2} & 5\times10^{2}\\
10^{-3} & 10^{3} & 254 & 3\times10^{3} & 3\times10^{3}\\
10^{-4} & 10^{4} & 300 & 3\times10^{4} & 3\times10^{4}
\end{array}
$$
independently of the number of grid points, time levels and sweeps. The block grows by less than $100$ weights over two orders of magnitude in accuracy, so the trainable count is dominated by the spectral weights, which reflect the regularity of the data rather than the law.

The values above use $R_0=1$, which is valid for one window. However, to extend to longer times we need the solution to remain in the ball. The solutions tend to the states $\pm1$ where $\|{\pm1}\|_{X^\alpha}=\sqrt{2\pi}$. Since $\|u\|_{L^\infty}\le0.54\|u\|_{X^{7/4}}$, if we start with $\|u_0\|_{X^\alpha} \le 1$ then by the maximum principle it will satisfy $|u|\le1$ for all time, and using mild form on the unit interval gives the bound $\|u(t)\|_{X^{7/4}}\le9.1$ for all $t$ and a uniform bound in $X^\gamma$ for $t\ge1$. Using this value, we have that  $R_0 \approx 9.1$ implies $m_K\approx 25$, $L_R \approx 8\times10^3$ and $T_0\approx3\times10^{-7}$. Therefore, to move through a unit interval, the method needs about $3\times10^6$ windows by Proposition~\ref{prop:continuation}. If we just used contraction, we would need to rescale per-window error by $2^{-3\times10^6}$. However, using the energy bounds from Lemma~\ref{lem:allen_cahn_window} gives $\Theta_{\mathrm{win}}=e^{3T/2}$ and reduces the accuracy factor to $A_J\approx4\times10^{12}$ for $T_{\mathrm{final}}=1$, provided each window contains at least two time steps so that $\Delta t\,\Gamma_0^2\le1$. This means that we need less than $650$ more parameters for the nonlinear block, due to the log scaling.

\subsection{Reaction-Diffusion Equations and Viscous Conservation Laws}

First we look at reaction-diffusion equations. Consider $u_t-\nu\Delta u=g(u)$ with $g:\bR^d\to\bR^d$ applied component-wise and $g(0)=0$. Since the law is gradient-free, the network needs only $X^\alpha\hookrightarrow C^0$, that is, $\alpha>\frac{d_x}2$. We take $\frac{d_x}{2}<\beta<\alpha$ and $g\in C^s$ with $s\ge\lceil\beta\rceil+1$. We can apply Moser estimates \cite{bahouri2011fourier, runst2011sobolev} to show that $u\mapsto g(u)$ is Lipschitz on bounded sets from $X^\alpha$ to $X^\beta$, and thus the contraction theory Proposition~\ref{prop:contraction} can be applied. The DAG is simply a single component-wise $C^s$ node, so $p_{\max}=1/s$, and Theorem~\ref{thm:main} gives
$$W(\widehat f_\varepsilon)\le C\eta^{-\chi/s}\log\eta^{-1}.$$
For $d_x\le3$ the exponents can be chosen so that $\frac{d_x}2<\beta<\alpha<2$, for example, $\alpha=\frac{7}{4}$ and $\beta=\frac{13}{8}$ in three dimensions, and so the factor is $\chi=1$. The same exponent $s$ yields well-posedness and approximation bounds, so $p_{\max}\le1/(\lceil\beta\rceil+1)$ in this setting.

\begin{example}[Allen-Cahn]
\label{ex:allen_cahn}
For $g(u)=u-u^3$ the graph consists of two multilinear nodes, $u\odot u$ and $(u\odot u)\odot u$, therefore we have $p_f=0$, and using Corollary~\ref{cor:structural}, $W(\widehat f_\varepsilon)\le C \log\eta^{-1}$. The same holds for any polynomial reaction with diffusion in every variable, including the diffusive FitzHugh-Nagumo system.
\end{example}

\begin{example}[Arrhenius Equation]
\label{ex:arrhenius}
For a non-polynomial reaction term, consider $g(u)=A\,e^{-E/(u+a)}$ with $u+a$ bounded away from zero on $K$. The single node is $C^\infty$ near $K$, so $p_f=0$ but it cannot be  attained. Thus we have $W(\widehat f_\varepsilon)\le C_p \eta^{-p}\log\eta^{-1}$ for every $p>0$.
\end{example}

\paragraph{Viscous Conservation Laws.} Consider $u_t+\partial_xF(u)=\nu u_{xx}$ on $\mathbb T$ with $F\in C^s$, $s\ge3$ and $F(0)=F'(0)=0$. The governing law can be written as $f(u,u_x)=-F'(u)\,u_x+c_0u$. In one dimension, we can pick $\alpha\in(\frac{3}{2},2)$, so $X^\alpha\hookrightarrow C^1$ and $\chi=1$. With $\beta=\alpha-1\in(\frac{1}{2},1)$ and $s\ge3$, $f$ is Lipschitz from $X^\alpha$ to $X^\beta$, where $s\ge3$ is needed to control the difference $F'(u)-F'(v)$ in $H^\beta$. The graph (Figure~\ref{fig:dags}(b)) has a component-wise node $F'\in C^{s-1}$ and a bilinear node, so $p_{\max}=\frac1{s-1}$ and
$$W(\widehat f_\varepsilon)\le C\,\eta^{-1/(s-1)}\log\eta^{-1}.$$

In divergence form, we have $f=-\partial_xF(u)$, and the derivative can be moved to a spectral layer (Proposition~\ref{prop:divergence_form}). With $\alpha=1$ the amplification is only logarithmic, and the exponent improves to $\frac{1}{s}$.

\begin{example}[Viscous Burgers]
\label{ex:burgers}
For $F(u)=\frac12u^2$, the node $F'$ is the identity and the graph reduces to one bilinear node (Figure~\ref{fig:dags}(a)). Therefore, $p_f=0$ and $W(\widehat f_\varepsilon)\le C\log\eta^{-1}$. The same holds for $f=-(u\cdot\nabla)u$ in any dimension.
\end{example}

\paragraph{Viscous Hamilton-Jacobi equations.} Consider $u_t+H(u_x)=\nu u_{xx}$ on $\mathbb T$ with $H\in C^s$, $s\ge2$, and $H(0)=0$, so that $f(u,u_x)=c_0u-H(u_x)$. Similar to the conservation laws, we take $\alpha\in(\frac{3}{2},2)$ and $\beta=\alpha-1$, so $X^\alpha\hookrightarrow C^1$ and $\chi=1$. If we write $H(u_x)-H(v_x)=(u_x-v_x)\int_0^1H'(v_x+\tau(u_x-v_x))\,d\tau$ and use the properties of $H^\beta(\mathbb T)$, the condition $s\ge2$ leads to a Lipschitz bound for $f$ from $X^\alpha$ to $X^\beta$. The governing law written as a DAG with a single component-wise $C^s$ node applied to $u_x$ gives $p_{\max}=\frac{1}{s}$ and
$$W(\widehat f_\varepsilon)\le C\,\eta^{-1/s} \log\eta^{-1},$$
where $C$ depends on $H$ only through the norm $\|H\|_{C^s}$. For $s$, the Taylor construction in Remark~\ref{rem:taylor_layer} attains this bound with continuous weights. Thus, the bound is uniform over the class $\mathcal H_{R_f}$ from Theorem~\ref{thm:hj_lower_bound}, where $\|H\|_{C^s} \le R_f$ makes the constants in the discretization estimates uniform. In this setting, we have
$$c\,\left(\eta\log\eta^{-1}\right)^{-1/s}\ \le\ W\ \le\ C\,\eta^{-1/s}\log\eta^{-1}.$$
This result is over the class $H\in\mathcal H_{R_f}$, which is smooth with $p_f=0$ for each element. No uniform bound over this class can improve the $\frac{1}{s}$ exponent.

\subsection{Two-Dimensional Navier-Stokes}

Consider $u_t-\nu\Delta u+\mathbb P[(u\cdot\nabla)u]=0$, $\nabla\cdot u=0$, on $\mathbb T^2$, where $\mathbb P$ is the Leray projection. The space is set to $H=L^2_{0,\sigma}(\mathbb T^2;\bR^2)$, the mean-zero divergence-free fields, and $L=-\nu\Delta$ is the Stokes operator. The linear operator is strictly positive on $H$ and commutes with $\mathbb P$. The governing equation is $f=\mathbb P f_{\mathrm{loc}}$ with $f_{\mathrm{loc}}(u,\nabla u)=-(u\cdot\nabla)u$, so Lemma~\ref{lem:linear_postcomposition} can be applied with $\widetilde H=L^2_0(\mathbb T^2;\bR^2)$. Since $\mathbb P$ is the orthogonal projection onto $H$, $\mu_k\equiv1$ (from Lemma~\ref{lem:linear_postcomposition}) and the Leray projection does not add additional costs. This also shows that the network can approximate the system without seeing the pressure. 

First, we consider the velocity form. The embedding into $C^1$ requires $\alpha>2$, so we take $\alpha=2+\iota$ and $\beta=1+\iota$ for small $\iota>0$. The properties of $H^{1+\iota}(\mathbb T^2)$ allow $f_{\mathrm{loc}}$ to be Lipschitz from $X^\alpha$ to $\widetilde X^\beta$. The DAG is simply one bilinear node (Figure~\ref{fig:dags}(a)), so $p_f=0$. The neural network error is amplified with factor $\xi_\alpha\asymp\lambda_N^{\iota/2}$, so the block needs accuracy $\varepsilon\asymp\eta^\chi$ with $\chi=\frac{\gamma-2}{\gamma-2-\iota}$. However, because $p_f=0$, this just changes the constant, i.e., $$W(\widehat f_\varepsilon)\le C\log\varepsilon^{-1}\le C\chi\log\eta^{-1}+C,$$
so the block has $C\log\eta^{-1}$ parameters.

If we use the divergence form, we can reduce the embedding requirement. The nonlinearity can be written as $f=-\mathbb P\nabla\cdot(u\otimes u)$, where the local law $f_{\mathrm{loc}}(u)=u \otimes u$ only sees $u$, and the divergence part becomes a diagonal spectral layer with $|\mu_k|\asymp|k|$ (see Proposition~\ref{prop:divergence_form}). The embedding only requires $\alpha>1$, so we can set $\alpha=1+\iota$ and $\beta=\iota$. This removes the gradient transforms, but the network error goes through the derivative layer and is amplified by $\xi_{\alpha+1}\asymp\lambda_N^{\iota/2}$, as in velocity form. Since $p_f=0$, the block count again is $C\log\eta^{-1}$.

For the long horizon times, recall that the two-dimensional equations are globally well-posed for smooth data and have $X^\gamma$ bounds on every finite horizon. Thus the continuation theory, Proposition~\ref{prop:continuation}, extends the result to any $T_{\mathrm{final}}$ with the same block. As discussed in Section~\ref{sec:long_time}, the constant depends on $T_{\mathrm{final}}$, and exponentially on the number of windows if only the contraction is used.

\paragraph{Cahn-Hilliard.} We consider a fourth order equation $u_t+\epsilon^2\Delta^2u=\Delta(u^3-u)$ on $\mathbb T^{d_x}$ with $d_x\le3$. Since the mean of $u$ is conserved, we can work with mean-zero fields, i.e., a nonzero mean $m_0$ is removed by writing $u=m_0+w$, and this does not change the law being polynomial. Set $H=L^2_0(\mathbb T^{d_x})$ and $L=\epsilon^2\Delta^2$, which is strictly positive on $H$ with eigenvalues $\lambda_k=\epsilon^2|k|^4$, so that $X^s\simeq H^{2s}_0$. The governing equation is $f=\mathcal M f_{\mathrm{loc}}$ with $\mathcal M=\Delta$ and $f_{\mathrm{loc}}(u)=u^3-u$. Since $\mathcal M^*e_k=-|k|^2e_k$, the spectral multipliers satisfy $|\mu_k|=\epsilon^{-1}\lambda_k^{1/2}$, so we can apply Proposition~\ref{prop:divergence_form} with $\beta=\alpha-1$. Since we use $s=\alpha$ in the quadrature, we need $X^\alpha\hookrightarrow C(\mathbb T^{d_x})$, i.e. $\alpha>\frac{d_x}{4}$, which holds for $\alpha=1$ when $d_x\le 3$. Then the local part $f_{\mathrm{loc}}$ is Lipschitz from $X^1=H^2$ to $\widetilde X^1=H^2$. The DAG for $f_{\mathrm{loc}}$ has two multilinear nodes so $p_f=0$, and the spectral layers need $4N$ weights (for $D_{\mathrm{an}}$, $D_S$, $D_W$ and $D_\mu$). The network error goes through the layer defined by $D_\mu$ so it gets amplified by the factor $\xi_{\alpha+1}=\xi_2$, but it is only logarithmic for $\alpha=1$. Since $p_f=0$, the only change is to constants and we get
$$W(\widehat f_\varepsilon)\le C\log\eta^{-1}.$$
Since $L$ is a fourth order operator, by Remark~\ref{rem:higher_order} with $m=2$ we have $N\asymp M \asymp \eta^{-d_x/(2(\gamma_{\mathrm s}-\alpha))}$. The example shows that the theory applies to higher order $L$.

\subsection{Summary}

Table~\ref{tab:examples} summarizes the two complexity measures used in this work, i.e., the trainable parameters (Theorem~\ref{thm:main}) and the size of the full network (Theorem~\ref{thm:total_cost}). Since we can loop the blocks and apply the network to space-time points using parameter sharing, the complexity is mostly on the block and spectral terms. Let
$$\upsilon_0:=\frac{d_x}{r} + \frac1{\zeta_{\mathrm t}}$$
denote the exponent of the number $MQ$ of space-time points, thus the full network has $W(\mathcal A_n) \le C\eta^{-\upsilon}(\log\eta^{-1})^2$ with $\upsilon=\upsilon_0+p_{\max}\chi_{\mathrm s}$. The best choice for $r$ is $r=\gamma_{\mathrm s}-\alpha$, which leads to $M\asymp N$.

\begin{table}[ht]
\centering
\small
\setlength{\tabcolsep}{4.5pt}
\begin{tabular}{@{}llccclcl@{}}
\toprule
Equation & Local law $f$ & Nodes & $d_x$ & $p_{\max}$ & Block $W(\widehat f_\varepsilon)$ & Spectral & Full network $\upsilon$ \\
\midrule
Allen-Cahn & $u-u^3$ & $2$ mult. & $\ge1$ & $0$ & $C\log\eta^{-1}$ & $3N$ & $\upsilon_0$ \\
Reaction-diffusion, $g\in C^s$ & $g(u)$ & $1$ comp. & $\le3$ & $\frac1s$ & $C\eta^{-1/s}\log\eta^{-1}$ & $3dN$ & $\upsilon_0+\frac1s$ \\
Viscous Burgers & $-u\,\partial_xu$ & $1$ mult. & $1$ & $0$ & $C\log\eta^{-1}$ & $4N$ & $\upsilon_0$ \\
Conservation law, $F\in C^s$ & $-F'(u)\,\partial_xu$ & $1$ c.\ $+\,1$ m. & $1$ & $\frac1{s-1}$ & $C\eta^{-\frac1{s-1}}\log\eta^{-1}$ & $4N$ & $\upsilon_0+\frac1{s-1}$ \\
Viscous Hamilton--Jacobi, $H\in C^s$ & $-H(\partial_xu)$ & $1$ comp. & $1$ & $\frac1s$ & $C\eta^{-1/s}\log\eta^{-1}$ & $4N$ & $\upsilon_0+\frac1s$ \\
Cahn--Hilliard & $\Delta(u^3-u)$ & $2$ mult. & $\le3$ & $0$ & $C\log\eta^{-1}$ & $4N$ & $\upsilon_0$ \\
2D Navier-Stokes & $-\mathbb P[(u\cdot\nabla)u]$ & $1$ mult. & $2$ & $0$ & $C\log\eta^{-1}$ & $10N$ & $\upsilon_0$ \\
\bottomrule
\end{tabular}
\caption{Trainable and Total Complexity for accuracy $\eta$ in $C([0,T];X^\alpha)$, with $\upsilon_0=\frac{d_x}{r}+\frac1{\zeta_{\mathrm t}}$ and $N\asymp\eta^{-d_x/(\gamma_{\mathrm s}-\alpha)}$, except for the (4th order) Cahn-Hilliard where $N\asymp\eta^{-d_x/(2(\gamma_{\mathrm s}-\alpha))}$. }
\label{tab:examples}
\end{table}

\section{Conclusion}
\label{sec:conclusion}

In empirical tests, neural networks of moderate size obtain small errors on the standard nonlinear PDE benchmarks. We showed that for semilinear parabolic equations $u_t+Lu=f(u,\nabla u)$ with $-L$ generating an analytic semigroup, the structure of the equations gives an explanation for this. The trainable cost of approximating the solution operator splits into two terms. The first is the governing law, which appears through a single point-wise block shared across grid points, time points, and Picard sweeps. This block needs $C\eta^{-p\chi_{\mathrm s}}\log\eta^{-1}$ parameters for every $p$ above the structural exponent $p_f$. The exponent is determined by the computation graph of $f$, applies to laws that are nonlinear in $\nabla u$, and does not depend on the spatial dimension or the number of state components. In particular, for the two-dimensional Navier-Stokes equations, the nonlinear block costs only $C\log\eta^{-1}$ parameters, where the Leray projection is absorbed into the spectral layers. The second term is data-based and appears in the learned spectral layers with a bounded number of weights per mode, i.e., $N\asymp\eta^{-d_x/(\gamma_{\mathrm s}-\alpha)}$ total. Since the weights are tied, neither term grows with the grid, the time step or the number of sweeps, which is consistent with the ability of these networks to transfer across resolutions. Since all time levels are updated at once via the Picard iteration, the depth is polylogarithmic in $\eta^{-1}$.

The analysis only uses the max-norm error of the block on the states observed by the iteration, and its Lipschitz constant is not needed. The error is controlled in the stronger norm of $C([0,T];X^\alpha)$ along the whole trajectory, since only the exact discrete map needs to contract and parabolic smoothing improves point-wise errors to $X^\alpha$. All of the other sources of error are derived from the mild formulation and bounded separately. The cost is the factor $\chi_{\mathrm s}\ge1$ in the exponent, which exceeds $1$ only when $\alpha>2$. It is sharp as an estimate, but it reflects the formulation rather than the method, since it disappears for gradient-free laws when $d_x\le3$.

Both terms have lower bounds for architectures whose law-dependent parameters are continuous. Over a class of viscous Hamilton-Jacobi laws, at least $c(\eta\log\eta^{-1})^{-1/s}$ parameters are needed, which for $s\ge2$ matches the upper bound up to logarithmic factors. Over a gradient-free class, at least $c\,\eta^{-1/(s+1)}$ are needed. When the spectrum is not known, one weight per mode is needed, and thus the spectral error term is sharp. For Allen-Cahn the constants are explicitly computed. The block has a few hundred parameters on a single time window, so the trainable count is controlled by the spectral weights. Energy estimates can be used to extend the result to long horizons with growth set by the one-sided rate of the law rather than by the contraction.

The results focus on the approximation rate and not generalization, which is left to future work. In particular, the algebraic decay of the errors in the size of the training set is consistent with the algebraic parameter counts shown here, but proving it requires additional statistical analysis \cite{taniguchi2026generalization,weihs2026generalization,chandramoorthy2026spectral}. The error is measured in $X^\alpha$, which forces $\alpha>2$ for gradient laws when $d_x\ge2$. Bounds in weaker norms follow directly and measuring the error in such norms from start to finish should remove most of the amplification. The proofs use a first-order Lawson-Euler step, which limits the temporal accuracy. The analysis looks at one operator at a time so extending it to families of equations, as in multiple-operator learning \cite{weihs2026deep,weihs2026generalization,weihs2026multiple}, would need additional structural conditions on the family of graphs, as was done for classes of stable spectral schemes in \cite{chandramoorthy2026spectral}.

\section*{Acknowledgments}
This work was supported in part by the National Science Foundation under grant NSF 2606034.

\bibliographystyle{plain}
\bibliography{references}

\appendix

\section{Proofs}
\label{sec:deferred}

The proofs are deferred here for readability. The sections are grouped by similar techniques or results.

\subsection{Functional Analysis and Spaces}
\label{sec:app_spaces}

\begin{proof}[Proof of Proposition~\ref{prop:contraction}]
By the assumptions in the proposition and Assumption~\ref{assumption:semigroup_regular}, we have the semigroup bound $ \|S(\cdot)u_0\|_{X_T^\alpha} \leq \|u_0\|_{X^\alpha},
$
and the Duhamel bound
$$\|\mathcal D[g]\|_{X_T^\alpha}
\leq C T^\theta\|g\|_{X_T^\beta},$$ 
where $\theta:=1-\frac{\alpha-\beta}{2}>0$ and $C$ depends on $\alpha-\beta$ but is independent of $T$.

For a fixed initial function $u_0\in X^\alpha$, we set the radius of the ball to $R:= 1 + 2\|u_0\|_{X^\alpha}$.  Then the semigroup assumption yields
$$\|S(\cdot)u_0\|_{X_T^\alpha} \leq \|u_0\|_{X^\alpha}<\frac{R}{2}.$$
By the local Lipschitz assumption on $f$ with $v=0 \in B_R^\alpha$ and using that $f(0,0)=0$, we can bound the force term by:
$$ \|f(u,\nabla u)\|_{X_T^\beta} = \|f(u,\nabla u)-f(0,0)\|_{X_T^\beta} \leq L_R \|u\|_{X_T^\alpha},$$
for every $u\in B_R^\alpha$. Therefore, $\|f(u,\nabla u)\|_{X_T^\beta} \leq L_R R$. 

To prove that the ball is invariant, i.e., $\Phi(B_R^\alpha)\subseteq B_R^\alpha$, take any $u\in B_R^\alpha$, then
\begin{align*}
\|\Phi[u]\|_{X_T^\alpha} &\leq
\|S(\cdot)u_0\|_{X_T^\alpha}+\|\mathcal D[f(u,\nabla u)]\|_{X_T^\alpha} \\
&\leq \|u_0\|_{X^\alpha}+ C T^\theta \|f(u,\nabla u)\|_{X_T^\beta} \\
&\leq \|u_0\|_{X^\alpha} + C T^\theta L_R R \\
&\leq \frac{R}{2} + \frac{R}{2} =  R,
\end{align*}
holds when $T_0>0$ is chosen so that $CT^\theta L_R\leq\frac{1}{2}$  for every $T\in (0, T_0]$.

Next, to show that the map is contractive, take any $u,v\in B_R^\alpha$. Then by the assumptions on the Duhamel operator and by its linearity
\begin{align*}
\|\Phi[u]-\Phi[v]\|_{X_T^\alpha}
= \|\mathcal D[f(u,\nabla u)-f(v,\nabla v)]\|_{X_T^\alpha}
&\leq C T^\theta \|f(u,\nabla u)-f(v,\nabla v)\|_{X_T^\beta} \\
&\leq C T^\theta L_R \|u-v\|_{X_T^\alpha}.
\end{align*}
Let $\kappa_0=CT^\theta L_R$. Since $T\leq T_0$, by construction we have $\kappa_0 \leq \frac{1}{2}<1$. Thus the map is a (strict) contraction on $B_R^\alpha$,
$$\|\Phi[u]-\Phi[v]\|_{X_T^\alpha} \leq
\kappa_0 \|u-v\|_{X_T^\alpha}.$$
Since $B_R^\alpha$ is a closed subset of the Banach space $X_T^\alpha$, it is complete and thus by the Banach fixed point theorem there exists a unique fixed function $u\in B_R^\alpha$ of $\Phi$ which satisfies $u=\Phi[u]$. By Definition~\ref{def:mild}, $u$ is the unique mild solution on $[0,T]$ in $B_R^\alpha$.
\end{proof}

\begin{proof}[Proof of Proposition~\ref{prop:instantaneous_regularity}]
In the mild formulation, using Assumption~\ref{assumption:semigroup_regular} and the assumptions in the proposition,  we have
\[
\begin{aligned}
\|u(t)\|_{X^\gamma} &\leq \|S(t)u_0\|_{X^\gamma}
+ \left\| \int_0^t S(t-s)f(u(s),\nabla u(s))\,ds
\right\|_{X^\gamma}\\
&\leq C t^{-(\gamma-\alpha)/2}\|u_0\|_{X^\alpha}
+ C t^{1-(\gamma-\beta)/2} \|f(u,\nabla u)\|_{X_T^\beta},
\end{aligned}
\]
for $t>0$.
Since $u\in X_T^\alpha$, by assumption we have
$f(u,\nabla u)\in X_T^\beta$. Since $\gamma-\beta<2$, the right-hand side is finite for every $t>0$ which implies that $u(t)\in X^\gamma$ for every $t\in(0,T]$. To show that $u\in C((0,T];X^\gamma)$,  note that for any interval $[\delta,T]$ the first term of the mild solution will have the term $S(\cdot-\delta)S(\delta)u_0$ with $S(\delta)u_0\in X^\gamma$. This is continuous by using the continuity of the semigroup on $X^\gamma$. The second term is Duhamel component which lies in $C([0,T]; X^\gamma)$ by the continuity argument following Assumption~\ref{assumption:semigroup_regular} with $s=\beta$ and $r=\gamma$.
\end{proof}

\begin{proof}[Proof of Lemma~\ref{lem:derived_regularity}]
The result of (i) is direct, it follows from $u(t)\in B^\alpha_R$ for every $t$ and the definition of $F_R$.

For (ii), if $t>0$ we can apply Proposition~\ref{prop:instantaneous_regularity} to get
$$ \|u(t)\|_{X^\gamma} \le C t^{-\zeta}\|u_0\|_{X^\alpha} + C t^{1-\bar\sigma} \|g\|_{X^\beta_T}
\le C t^{-\zeta} R + C t^{\theta-\zeta} F_R,$$
using $1-\bar\sigma=\theta-\zeta>0$, which is the same as $\gamma-\beta<2$. Then, we multiply by $t^{\zeta}$ and take the supremum which gives (ii) for $t\le T$.

Lastly, for (iii) we can set $\tau=t-s$ and write the mild form as:
$$u(t)-u(s)=\left[S(\tau)-I\right] u(s)+\int_s^tS(t-r)g(r)\,dr .$$
Using the spectral representation and applying the inequality $|e^{-x}-1| \le x^{\zeta}$ for $x\ge0$ and $\zeta\in[0,1]$, the first term is bounded by:
$$\left\|\left[S(\tau)-I\right] u(s) \right\|_{X^\alpha}^2 =\sum_k \lambda_k^\alpha \left| e^{-\lambda_k\tau} - 1 \right|^2 |u_k(s)|^2 
\le\tau^{2\zeta} \sum_k \lambda_k^{\alpha+2\zeta} |u_k(s)|^2 
= \tau^{2\zeta} \|u\|_{X^\gamma}^2,$$
since $\alpha+2\zeta=\gamma$. Using the bound (ii), the first term is controlled by $\tau^{\zeta}R_\gamma s^{-\zeta} = (t-s)^{\zeta} s^{-\zeta} R_\gamma $. For the second term, applying Assumption~\ref{assumption:semigroup_regular} and (i) yields:
$$\left\|\int_s^t S(t-r) g(r) \,dr\right\|_{X^\alpha} \le C F_R \int_s^t(t-r)^{-\sigma} \, dr = \frac{CF_R}{\theta}\tau^{\theta} 
\le C F_R \tau^{\zeta} T^{\theta-\zeta},$$
where $\theta>\zeta$. Since $s\le T$, we have that $T^{\theta-\zeta} \le s^{-\zeta} T^{\theta}$, and adding the two bounds (with $C\ge 1$) yields the results. The bound on $[g]_\zeta$ follows from the (point-wise) Lipschitz inequality \eqref{eqn:point-wise_lipschitz}.
\end{proof}

\begin{proof}[Proof of Lemma~\ref{lem:bootstrap}]
We iterate over $\theta$ sized steps by setting $s_0:=\alpha$, $s_{j+1}:= \min\{s_j+\theta,\gamma'\}$ and $J:= \lceil(\gamma'-\alpha)/\theta \rceil$, so $s_J=\gamma'$. Let $B_j:=\sup_{0\le t\le T} \|u(t)\|_{X^{s_j}}$ with $B_0\le R$ and assume $B_j<\infty$. 

By Assumption~\ref{assumption:bootstrap} with $s=s_j$, the force $g=f(u,\nabla u)$ satisfies the bound $\sup_t\|g(t)\|_{X^{s_j-2\sigma}} \le K_R(1+B_j)$, where $s_j-2\sigma \ge \beta \ge 0$. The mild form gives $u(t)=S(t)u_0+\int_0^tS(t-\tau)g(\tau)\,d\tau$, the integral converges in $X^\beta$ because $g$ is continuous with values in $X^\beta$. The difference between the target exponent and the force's exponent is $s_{j+1}-(s_j-2\sigma)\le\theta+2\sigma=1+\sigma<2$, so applying the smoothing from Assumption~\ref{assumption:semigroup_regular} yields $\|S(t-\tau)g(\tau)\|_{X^{s_{j+1}}}\le C(t-\tau)^{-(s_{j+1}-s_j+2\sigma)/2}\|g(\tau)\|_{X^{s_j-2\sigma}}$. This has an integrable singularity, so after applying it to each truncation $P_K$ of the integrand, integrating over time, and sending $K\to\infty$ (which holds by continuity in $\|\cdot \|_{X^s}$) we get
$$\sup_{0\le t\le T}\left\|\int_0^tS(t-\tau) g(\tau) \,d\tau \right\|_{X^{s_{j+1}}} \le C T^{1-(s_{j+1}-s_j+2\sigma)/2} K_R (1+B_j).$$
The first term in the mild form is controlled by $$\|S(t)u_0\|_{X^{s_{j+1}}} \le \|u_0\|_{X^{s_{j+1}}} \le\lambda_1^{-(\gamma'-s_{j+1})/2}\|u_0\|_{X^{\gamma'}}.$$
Thus $B_{j+1}\le C(1+\|u_0\|_{X^{\gamma'}}+B_j)$, and after $J$ steps $B_J\le C'(1+\|u_0\|_{X^{\gamma'}})$ (where $C'$ depends on $J$).  The bound on the force is from Assumption~\ref{assumption:bootstrap} with $s=\gamma'$.
\end{proof}

\begin{proof}[Proof of Lemma~\ref{lem:bootstrap_time}]
Let $g(\tau):=f(u(\tau),\nabla u(\tau))$, $\kappa=\min\{1,\zeta'\}$ with $\zeta'=(\gamma'-\alpha)/2$, and fix $0\le s<t\le T$ with $\delta:=t-s$. Using the mild form at time $s$, we have the difference
$$u(t)-u(s)=\left(S(\delta)-I\right)u(s)+\int_s^t S(t-\tau) g(\tau)\,d\tau.$$
 For $\lambda>0$ and $\kappa\in[0,1]$ we have the simple inequality $ |e^{-\lambda \delta}-1| \le \min\{1,\lambda\delta\} \le (\lambda\delta)^\kappa $, so using the spectral representation, we bound:
$$\left\|(S(\delta)-I)u(s)\right\|_{X^\alpha}^2=\sum_k\lambda_k^\alpha\left|e^{-\lambda_k\delta}-1\right|^2|\langle u(s),e_k\rangle|^2\le\delta^{2\kappa}\|u(s)\|_{X^{\alpha+2\kappa}}^2.$$
Since $\kappa\le\zeta'$, we have $\alpha+2\kappa\le\gamma'$. The Hilbert space embedding and Lemma~\ref{lem:bootstrap} combine to give
$$\left\|(S(\delta)-I)u(s)\right\|_{X^\alpha}\le\delta^\kappa\lambda_1^{-(\gamma'-\alpha-2\kappa)/2}\|u(s)\|_{X^{\gamma'}}\le\lambda_1^{-(\gamma'-\alpha-2\kappa)/2}R_{\gamma'}\,(t-s)^\kappa .$$

To bound the other term, we apply Lemma~\ref{lem:bootstrap} to get $\sup_\tau\|g(\tau)\|_{X^{\gamma'-2\sigma}}\le K_R(1+R_{\gamma'})$. Since $\alpha-(\gamma'-2\sigma)=2(\sigma-\zeta')$, the smoothing from Assumption~\ref{assumption:semigroup_regular} gives
$$\|S(t-\tau)g(\tau)\|_{X^\alpha}\le C(t-\tau)^{-(\sigma-\zeta')}K_R(1+R_{\gamma'}),$$ for   $\sigma>\zeta'$.

When $\sigma\le\zeta'$, the index $\alpha$ is not larger than the index $\gamma'-2\sigma$ so we can control the term using contractivity of $S$ and the first eigenvalue. Specifically, it has the same form but with power $0$ and constant $\lambda_1^{-(\zeta'-\sigma)}$. In both cases the exponent is $a':=(\sigma-\zeta')_+ \le \sigma < 1$, and thus integration in $\tau\in(s,t)$ yields
\[
\left\|\int_s^tS(t-\tau)g(\tau)\,d\tau\right\|_{X^\alpha}\le\frac{C}{1-a'}\,(t-s)^{1-a'}K_R(1+R_{\gamma'}) .
\]
The exponent is $1-a'=\min\{1,\theta+\zeta'\}\ge\kappa$, since $\theta>0$. Also, the time difference is no more than $T$, i.e. $t-s\le T$, and thus $(t-s)^{1-a'}\le T^{1-a'-\kappa}(t-s)^\kappa$, which allows us to bound the second term by at most $CT^{\min\{1,\theta+\zeta'\}-\kappa} K_R (1+R_{\gamma'}) (t-s)^\kappa$.

Adding the two bounds gives $\|u(t)-u(s)\|_{X^\alpha}\le C_{\mathrm t}(t-s)^\kappa$ with
\[
C_{\mathrm t}:=\lambda_1^{-(\gamma'-\alpha-2\kappa)/2} R_{\gamma'} + C T^{\min\{1,\theta+\zeta'\}-\kappa}K_R(1+R_{\gamma'}).
\]
For the second inequality in the lemma, $u(t)$ and $u(s)$ are both in ball of radius $R$ in $X^\alpha$, so by the Lipschitz bound \eqref{eqn:point-wise_lipschitz} we have

$$ \|g(t)-g(s)\|_{X^\beta} \le L_R \|u(t)-u(s)\|_{X^\alpha} \le L_R C_{\mathrm t} (t-s)^\kappa .$$
\end{proof}

\begin{proof}[Proof of Proposition~\ref{prop:galerkin_approximation}]
The proof is direct. Let $u_k:=\langle u,e_k\rangle_H,$
then
\begin{align*}
\|u-P_Nu\|_{X^\beta}^2
=
\sum_{k=N+1}^\infty
\lambda_k^{-(\alpha-\beta)}
\lambda_k^\alpha |u_k|^2 \le
\lambda_{N+1}^{-(\alpha-\beta)}
\sum_{k=N+1}^\infty
\lambda_k^\alpha |u_k|^2 \le
\lambda_{N+1}^{-(\alpha-\beta)}
\|u\|_{X^\alpha}^2,
\end{align*}
which proves the bound.
\end{proof}

\subsection{Neural Networks and DAG approximation}
\label{sec:app_networks}

\begin{proof}[Proof of Lemma~\ref{lem:nn_calculus}]
Both part (1) and the equal depth case in part (3) are implied directly from equation \eqref{eqn:relu_network}, since the first layers can be stacked on the common input and all other layers can be written block-wise. If the depths differ, the affine output layer $y=Az+b$ for the shallower network is replaced by the ReLU layer $(\sigma(Az+b),\sigma(-Az-b))$, which at most doubles the weights for that layer, and the $2 m_i$ nonnegative terms are moved through $\Delta-1$ more layers by diagonal identity layer ($2 m_i$ per layer). At the output layer, the terms are mapped by using $y=\sigma(y)-\sigma(-y)$ with $ 2 m_i$ weights. This adds up to no more than $W(\mathcal{N}_i)+2m_i\Delta$ additional weights. 

For part (2), since the product of sparse matrices is not necessarily sparse, we combine the output layer of $\mathcal{N}_1$ with the input layer of $\mathcal{N}_2$ with the identity layer used in part (4). That is, the output $y=Az+b$ of $\mathcal{N}_1$ is replaced by a split ReLU layer $(\sigma(Az+b),\sigma(-Az-b))$, and the first layer $A'y+b'$ of $\mathcal{N}_2$ uses $$A'\left(\sigma(Az+b)-\sigma(-Az-b)\right)+b'.$$ This at most doubles each of the modified steps' number of nonzero entries, where the depth is at most $\dep(\mathcal{N}_1)+\dep(\mathcal{N}_2)$, and $W \le 2 W(\mathcal{N}_1)+2 W(\mathcal{N}_2)$. 

For part (4), we have the identity $y=\sigma(y)-\sigma(-y)$ for all real $y$, and together with the recombination has four nonzero weights per coordinate. For part (5), the hidden layer has weights $(1,1)$ and biases $(\mathrm{a},-\mathrm{a})$. The output layer has weights $(1,-1)$ and bias $-\mathrm{a}$, which totals seven nonzero terms. Note that if $|y|\le\mathrm{a}$ then $\sigma(y+\mathrm{a})-\sigma(y-\mathrm{a})-\mathrm{a}=(y+\mathrm{a})-0-\mathrm{a}=y$, and if $y>\mathrm{a}$ we have $(y+\mathrm{a})-(y-\mathrm{a})-\mathrm{a}=\mathrm{a}$. By  symmetry it holds for $y<-\mathrm{a}$.
\end{proof}

\begin{proof}[Proof of Lemma~\ref{lem:sharing}]
Both versions map $X$ to the array whose $i$-th entry is obtained through the same composition of affine maps and activations to $X_i$. Since $\sigma$ acts element-wise and \eqref{eqn:conv1x1} applies $(A_{l},b_{l})$ at each point, the two formulation agrees at each point. The depth does not change since the layers are a one-to-one correspondence and the number of distinct parameters are the same since it is the same action. When written as dense layers, the convolution stores one copy of each layer per grid point.
\end{proof}

\begin{proof}[Proof of Theorem~\ref{thm:relu_f}]
We construct the approximating network by going through each node and creating a local approximation. Then we control the total error by propagating approximation error inductively through the topological ordering of the graph.

First, we derive the relevant constants. Since the graph $G=(\mathcal V,\mathcal E)$ is finite and acyclic, there exists a topological ordering of its vertices. We relabel the vertices so that
$$(i,j)\in\mathcal E \quad\Longrightarrow\quad
i<j.$$
Let $\mathcal V_{\mathrm{in}}$ denote the input nodes, and let $\Vnl$ denote the set of non-affine nodes. For every node $j\in\mathcal V$, let $\Psi_j$ be the map computed at node $j$, and let $z_j(x)$ for $x\in K$ be the exact intermediate variable produced during the evaluation of the DAG.

To bound the error propagation through the DAG, we need to define the Lipschitz constant and the corresponding input/output spaces for each node.  Since the intermediate variables are continuous, the input space $K$ is compact, and the graph contains finitely many nodes, every intermediate variable's range 
$$K_j := \{z_j(x): x\in K\}$$
is compact. For every node $j$ which is not an input node, let $\mathrm{pa}(j)$ denote the set of parent nodes of $j$, and define the set of admissible inputs as
$$K_j^{\mathrm{in}} := \prod_{i\in\mathrm{pa}(j)}K_i,$$
which is compact by construction.  The product is equipped with the maximum norm. By Assumption~\ref{assumption:structured_f}, every nodal map is either affine, multilinear, or component-wise $C^{s_j}$ on an open neighborhood of $K_j^{\mathrm{in}}$. Thus, each map $\Psi_j$ is continuously differentiable on some open neighborhood $U_j\supset K_j^{\mathrm{in}}$. Since $K_j^{\mathrm{in}}$ is compact, we can choose a compact set $\overline U_j$ so that
$$K_j^{\mathrm{in}} \subset
\operatorname{int}(\overline U_j) \subset
\overline U_j \subset
U_j \subset
\operatorname{dom}(\Psi_j).$$
The set $\overline U_j$ is needed as a buffer to ensure that perturbation can extend outside of the exact range while remaining valid. The map $\Psi_j$ is $C^1$, so it is locally Lipschitz on the open set $U_j$ and a locally Lipschitz map is Lipschitz on every compact subset. Therefore, $\Psi_j$ is Lipschitz on $\overline U_j$ with Lipschitz constant $L_j:=\Lip(\Psi_j, \overline U_j) <\infty$ with respect to the maximum norm.
For the affine nodes, $L_j$ can be set to operator norm.

Since the graph is finite, the maximum Lipschitz constant of a node map
$$L_{\max}:=
\max_{j\in\mathcal V \setminus\mathcal V_{\mathrm{in}}} L_j$$
is finite. Once every buffer set is fixed, let $\Lambda_1:=\max\{1,L_{\max}\}$ and let $\rho$ be the minimal distance between the admissible inputs and the boundary of the buffer sets, i.e.,
$$\rho :=
\min_{j\in\mathcal V\setminus\mathcal V_{\mathrm{in}}}
\operatorname{dist}\left( K_j^{\mathrm{in}}, \partial\overline U_j \right).$$
Since $K_j^{\mathrm{in}} \subset
\operatorname{int}(\overline U_j)$ and both sets are compact, the distance for each $j$ is positive, and since there are only finitely many nodes $\rho>0$.

This construction shows that if every parent variable at a node is perturbed by at most $\rho$ in the maximum norm, then the perturbation does not leave $\overline U_j$, where the Lipschitz estimate for $\Psi_j$ remains valid. This is needed for propagation and evaluation.

Next we construct a deep ReLU approximation of each node. Let $\delta\in(0,1)$ be the error parameter for the nodes. For each $j\in\mathcal V$, the ReLU realization
$\widetilde\Psi_{j,\delta}$ is determined by the type of node as follows. If $j$ is affine, then $\Psi_j$ is exactly realizable by linear layers, and thus we set $\widetilde\Psi_{j,\delta}:=\Psi_j$, and incur no error
$$\sup_{y\in \overline U_j}
\|\Psi_j(y)-\widetilde\Psi_{j,\delta}(y)\|=0.$$
The depth and number of weights are independent of $\delta$ and so we set $p_j=q_j=0$. If there is at least one nonlinear node, then we can neglect any fixed parameter costs associated with the finitely many linear nodes.

If node $j$ is multilinear, then since $K_j^{\mathrm{in}}$ is compact, there exists $b_j>0$ with
$K_j^{\mathrm{in}}\subset\left([-b_j,b_j]^n\right)^{r_j}$, where $r_j$ is the number of input variables associated with
node $j$. The buffer set is the cube,
$$\overline U_j := \left([-2b_j,2b_j]^n\right)^{r_j},
\qquad
U_j := \left((-3b_j,3b_j)^n\right)^{r_j},$$
and thus $\operatorname{dist}\left(K^{\mathrm{in}}_j,\partial\overline U_j\right)\ge b_j>0$. Using Theorem~\ref{thrm:quantitative_multilinear},
$\Psi_j$ can be approximated on this cube by a ReLU network
$\widetilde\Psi_{j,\delta}$ with
$$\sup_{y\in \overline U_j} \|\Psi_j(y)-\widetilde\Psi_{j,\delta}(y)\| \le \delta,$$
and with parameter counts
\[
\dep(\widetilde\Psi_{j,\delta})
\le c_j\log(\delta^{-1}),
\qquad
W(\widetilde\Psi_{j,\delta})
\le c_j\log(\delta^{-1}),
\]
where \(c_j>0\) depends only on \(\Psi_j\), \(\overline U_j\), and the node
dimensions. Thus for each multilinear node we set $p_j=0$ and $q_j=1$.

Suppose that node $j$ is a component-wise nonlinear node induced by a scalar function $g_j\in C^{s_j}$. Let $d_j$ denote the dimension of the input space of node $j$, so that
$\overline U_j\subset\bR^{d_j}$
and $\Psi_j(y_1,\ldots,y_{d_j})= \left(g_j(y_1),\ldots,g_j(y_{d_j}) \right)$.
Since $\overline U_j\subset\bR^{d_j}$ is compact, its coordinate-wise projections are compact subsets of $\bR$ and so there exists a compact interval
$[a_j,b_j]\subset\bR$
containing every coordinate of every point in $\overline U_j$.  We denote $[a_j,b_j]=:[b^-_j,b^+_j]$ to avoid confusion.
We can apply Theorem~\ref{thrm:quantitative_smooth_scalar} to $g_j$ on $[b^-_j,b^+_j]$, to get that for every $\delta\in(0,1)$, there is a ReLU network $N_{j,\delta}$ such that
$$\|g_j-N_{j,\delta}\|_{L^\infty([b^-_j,b^+_j])} \le
\frac{\delta}{\sqrt{d_j}},$$
with parameter counts
$$\dep(N_{j,\delta}) \le C_j\log(\delta^{-1}), \qquad
W(N_{j,\delta}) \le C_j\,\delta^{-1/s_j}\log(\delta^{-1}),$$
where $C_j>0$ depends only on $g_j$, $s_j$, and $[b^-_j,b^+_j]$. The full node approximation is given by
$$\widetilde\Psi_{j,\delta}(y_1,\ldots,y_{d_j})
:=\big( N_{j,\delta}(y_1), \ldots, N_{j,\delta}(y_{d_j})
\big).$$
For every $y\in\overline U_j$, the difference is 
\[
\begin{aligned}
\|\Psi_j(y)-\widetilde\Psi_{j,\delta}(y)\|^2
=\sum_{i=1}^{d_j} |g_j(y_i)-N_{j,\delta}(y_i)|^2
 &\leq d_j\left(\frac{\delta}{\sqrt{d_j}}\right)^2 =\delta^2,
\end{aligned}
\]
yielding
$$\sup_{y\in \overline U_j}
\|\Psi_j(y)-\widetilde\Psi_{j,\delta}(y)\| \le\delta.$$
Since $\widetilde\Psi_{j,\delta}$ contains  $d_j$ parallel copies of $N_{j,\delta}$, the depth is the same, while the number of weights increases by at most a factor of $d_j$. Therefore, there exists a constant $c_j>0$, depending only on
$g_j$, $\overline U_j$, and the node dimension $d_j$, such that
$$\dep(\widetilde\Psi_{j,\delta}) \le c_j\log(\delta^{-1}),
\qquad
W(\widetilde\Psi_{j,\delta}) \le
c_j\,\delta^{-1/s_j}\log(\delta^{-1}),$$
where we can set $p_j={s_j}^{-1}$ and $q_j=1$. 

Next we estimate the propagation of error through the DAG. Let $\mathcal V=\{j_1,\ldots,j_M\}$ be the the topological ordering of the graph so that every parent of $j_m$ is in
$\{j_1,\ldots,j_{m-1}\}$. For each $m=1,\ldots,M$, we define the node-wise error by
$$e_m:= \sup_{x\in K} \left\| z_{j_m}(x)-\widetilde z_{j_m}(x) \right\|,$$
and the cumulative error (with $E_0=0$) by
$$E_m:=\max_{1\le {l}\le m} e_{l}.$$
If $j_m$ is an input node, then $e_m=0$. If $j_m$ is not an input node, we can write
$$\mathbf z_{\mathrm{pa}(j_m)}(x)
:= \left(z_i(x)\right)_{i\in\mathrm{pa}(j_m)}, \qquad
\widetilde{\mathbf z}_{\mathrm{pa}(j_m)}(x)
:= \left(\widetilde z_i(x)\right)_{i\in\mathrm{pa}(j_m)},$$
and so
$$z_{j_m}(x) = \Psi_{j_m} \left( \mathbf z_{\mathrm{pa}(j_m)}(x) \right), \qquad
\widetilde z_{j_m}(x) = \widetilde\Psi_{j_m,\delta} \left( \widetilde{\mathbf z}_{\mathrm{pa}(j_m)}(x) \right).$$
We use induction to derive the error bound. Assuming that
$E_{m-1}\leq \rho$, we have
$$\left\|
\mathbf z_{\mathrm{pa}(j_m)}(x)-\widetilde{\mathbf z}_{\mathrm{pa}(j_m)}(x)\right\|_{\infty}
\leq E_{m-1} \leq \rho $$
for $x\in K$. Since
$$\mathbf z_{\mathrm{pa}(j_m)}(x)\in K_{j_m}^{\mathrm{in}}
\subset \operatorname{int}(\overline U_{j_m})$$
and
$$\operatorname{dist}\left(K_{j_m}^{\mathrm{in}},\partial\overline U_{j_m}\right)\ge \rho,$$
it follows that $\widetilde{\mathbf z}_{\mathrm{pa}(j_m)}(x)\in \overline U_{j_m}$ for all $x\in K$. In addition, the Lipschitz bound for $\Psi_{j_m}$ and the (local) approximation error for $\widetilde\Psi_{j_m,\delta}$ hold. The node-wise error is bounded by
$$\begin{aligned}
e_m &= \sup_{x\in K} \left\| \Psi_{j_m} \left(\mathbf z_{\mathrm{pa}(j_m)}(x)\right)-\widetilde\Psi_{j_m,\delta} \left(\widetilde{\mathbf z}_{\mathrm{pa}(j_m)}(x)\right) \right\| \\
&\leq \sup_{x\in K} \left\| \Psi_{j_m} \left(\mathbf z_{\mathrm{pa}(j_m)}(x)\right)-\Psi_{j_m}\left( \widetilde{\mathbf z}_{\mathrm{pa}(j_m)}(x)\right) \right\|+
\sup_{x\in K} \left\| \Psi_{j_m} \left( \widetilde{\mathbf z}_{\mathrm{pa}(j_m)}(x) \right)
-\widetilde\Psi_{j_m,\delta} \left(\widetilde{\mathbf z}_{\mathrm{pa}(j_m)}(x) \right)\right\|\\
&\leq L_{j_m}E_{m-1} + \sup_{x\in K}
\left\| \Psi_{j_m} \left( \widetilde{\mathbf z}_{\mathrm{pa}(j_m)}(x) \right) -\widetilde\Psi_{j_m,\delta} \left(\widetilde{\mathbf z}_{\mathrm{pa}(j_m)}(x) \right)\right\|,
\end{aligned} $$
using the Lipschitz constant for the exact node function $\Psi_{j_m}$. If $j_m$ is an affine node, then the second term is zero. Otherwise, by the approximation bounds we have
$$\sup_{x\in K} \left\| \Psi_{j_m} \left(
\widetilde{\mathbf z}_{\mathrm{pa}(j_m)}(x) \right)
-\widetilde\Psi_{j_m,\delta} \left(\widetilde{\mathbf z}_{\mathrm{pa}(j_m)}(x) \right)\right\| 
\leq \sup_{y\in \overline U_{j_m}} \left\| \Psi_{j_m}(y)-\widetilde\Psi_{j_m,\delta}(y) \right\|
\leq \delta,$$
and thus $e_m \leq L_{j_m}E_{m-1}+\delta$. Since we set
$\Lambda_1=\max\{ 1, L_{\max}\} \ge 1$, either case gives the bound $e_m \leq \Lambda_1 E_{m-1} + \delta$.
The cumulative error is bounded by
$$ E_m = \max\{E_{m-1},e_m\}
\le \max\{E_{m-1},\Lambda_1 E_{m-1}+\delta\}
= \Lambda_1 E_{m-1}+\delta.$$
Iterating the bound yields
$$E_m \leq \delta \sum_{{l}=0}^{m-1} \Lambda_1^{l}, $$
and the largest value of the sum is defined as $C_1$, given by
$$C_{1} := \sum_{{l}=0}^{|\mathcal V|-1} \Lambda_1^{l}.$$
Since $ m \le |\mathcal V|$ the error is bounded by $E_m
\le C_1\delta$, uniformly in $m$. In particular, if $j_{\mathrm{out}}$ is the output node, then the error is
$$\sup_{x\in K} \|f(x)-\mathcal{N}_\delta(x)\|
= e_{j_{\mathrm{out}}} \le E_{|\mathcal V|}
\le C_{1}\delta.$$
We need to choose the parameters so that the induction and bounds hold. We can set
$$\varepsilon_0 := \min\left\{ 1, \frac{\rho}{C_1} \right\},
\qquad
\delta := \frac{\varepsilon}{C_1}, \qquad \varepsilon\in(0,\varepsilon_0],$$
then $C_1\delta = \varepsilon \le \varepsilon_0\le\rho$ and the bound $E_m\le C_1\delta\le\rho$ thus closes the induction step and error step, and we have
$$\sup_{x\in K} \|f(x)-\mathcal{N}_\delta(x)\| \le \varepsilon$$
where we set $\mathcal{N}_\varepsilon :=\mathcal{N}_\delta$ to finish the construction.

Lastly, we need to show that the network $\mathcal{N}_\varepsilon$ is obtained through realization of the graph and through $\widetilde\Psi_{j,\delta}$.  Since the DAG is finite and acyclic, the nodes can be evaluated in the topological order. Along every directed path, the network depth is bounded by the sum of the depths of the nodewise networks, up to a fixed constant taking into account any affine (routing) layers. The total number of nonzero weights is at most twice the sum of the node-wise weight counts (Lemma~\ref{lem:nn_calculus}, Part 2) plus the weights for any of the identity layers (Lemma~\ref{lem:nn_calculus}, Part 4). Each identity step costs at most four weights per coordinate per layer, and every node-wise network has depth at most $c_j\log(\delta^{-1})$, so the total cost is at most $C\log(\delta^{-1})$ weights, with $C$ depending only on the DAG and the node dimensions. Since every nonlinear node adds at least $c_j\log(\delta^{-1})$ weights, we can include both together by redefining $c_j$. Therefore, there exist constants $C_D,C_W>0$, depending only on the DAG and the node dimensions, such that
$$\dep(\mathcal{N}_\varepsilon) \le C_D+ \sum_{j\in\Vnl} c_j\log(\delta^{-1}),
\qquad
W(\mathcal{N}_\varepsilon) \le C_W+ \sum_{j\in\Vnl} c_j\,\delta^{-p_j} \left(\log(\delta^{-1})\right)^{q_j}.$$
Since $\delta=\varepsilon/C_1$, we have that $\log(\delta^{-1})
= \log(\varepsilon^{-1}) + \log(C_1)$ and so $\delta^{-p_j} =
C_1^{p_j}\varepsilon^{-p_j}$.
Again, by redefining of the constants $c_j$, we obtain
$$\dep(\mathcal{N}_\varepsilon) \le C_D+ \sum_{j\in\Vnl} c_j\log(\varepsilon^{-1}), \qquad
W(\mathcal{N}_\varepsilon)\le C_W+ \sum_{j\in\Vnl} c_j\,\varepsilon^{-p_j} \left(\log(\varepsilon^{-1})\right)^{q_j}.$$
We can remove the constant terms by noting that if $\varepsilon\in(0,1)$, then $\log(\varepsilon^{-1})\ge0$ and by replacing $\varepsilon_0$ with $\min\{\varepsilon_0,e^{-1}\}$ we can assume $\log(\varepsilon^{-1}) \ge 1$. Therefore, 
$C_D \le C_D\log(\varepsilon^{-1})$ and $\varepsilon^{-p_j}
\left(\log(\varepsilon^{-1})\right)^{q_j}\ge1$, so we can combine the terms into
$$\dep(\mathcal{N}_\varepsilon) \le \sum_{j\in\Vnl} c_j\log(\varepsilon^{-1}),
\qquad
W(\mathcal{N}_\varepsilon) \le \sum_{j\in\Vnl} c_j\,\varepsilon^{-p_j} \left(\log(\varepsilon^{-1})\right)^{q_j}.$$
We note that this bound assumes that there is at least one non-affine node. This completes the proof.
\end{proof}

\subsection{Discretization and Complexity}
\label{sec:app_discrete}

\begin{proof}[Proof of Proposition~\ref{prop:temporal_quadrature}]
Fix the time index $q\in\{1,\ldots,Q\}$ and decompose the difference over the subintervals:
\begin{align}
\mathcal D[g](t_q)-(\mathcal D_Q[g])^q
&=\sum_{r=0}^{q-1}\int_{t_r}^{t_{r+1}}\left[S(t_q-s)g(s)-S(t_q-t_r)g(t_r)\right]ds
\notag\\
&=\underbrace{\sum_{r=0}^{q-1}\int_{t_r}^{t_{r+1}}S(t_q-s)\left(g(s)-g(t_r)\right)ds}_{=:E_1}
+\underbrace{\sum_{r=0}^{q-1}\left(\int_{t_r}^{t_{r+1}}\left[S(t_q-s)-S(t_q-t_r)\right]ds\right)g(t_r)}_{=:E_2} .
\label{eqn:quad_split}
\end{align}
We will repeatedly use the smoothing estimates from Assumption~\ref{assumption:semigroup_regular} in the two forms
\begin{equation}
\left\|S(v)w\right\|_{X^\alpha}\le Cv^{-\sigma}\|w\|_{X^\beta},
\qquad
\left\|LS(v)w\right\|_{X^\alpha}=\left\|S(v)w\right\|_{X^{\alpha+2}}\le Cv^{-1-\sigma}\|w\|_{X^\beta},
\label{eqn:smoothing_two}
\end{equation}
noting that $\frac{(\alpha+2)-\beta}{2}=1+\sigma$.

To bound $E_1$, we separate the first subinterval from the rest since the weighted modulus degenerates there. For $r=0$, we use only boundedness so if $q=1$
$$\left\|\int_0^{\Delta t}S(t_1-s)\left(g(s)-g(0)\right)ds\right\|_{X^\alpha}
\le2\|g\|_{\infty,\beta}\int_0^{\Delta t}C(\Delta t-s)^{-\sigma}ds
=\frac{2C}{\theta}\|g\|_{\infty,\beta}(\Delta t)^{\theta}, $$
and if $q\ge2$ then $t_q-s\ge t_{q-1}\ge\Delta t$ for $s\le\Delta t$, so the same term is at most $2C\|g\|_{\infty,\beta}\Delta t\,(\Delta t)^{-\sigma}=2C\|g\|_{\infty,\beta}(\Delta t)^{\theta}$. For $r\ge1$ and $s\in[t_r,t_{r+1}]$ the assumptions give $\|g(s)-g(t_r)\|_{X^\beta}\le[g]_\zeta(\Delta t)^{\zeta}t_r^{-\zeta}$, thus
\begin{align}
\left\|\sum_{r=1}^{q-2}\int_{t_r}^{t_{r+1}}S(t_q-s)\left(g(s)-g(t_r)\right)ds\right\|_{X^\alpha}
&\le C[g]_\zeta(\Delta t)^{\zeta}\sum_{r=1}^{q-2}t_r^{-\zeta}\int_{t_r}^{t_{r+1}}(t_q-s)^{-\sigma}ds
\notag\\
&\le C[g]_\zeta(\Delta t)^{1+\zeta}\sum_{r=1}^{q-2}t_r^{-\zeta}\,(t_q-t_{r+1})^{-\sigma},
\label{eqn:quad_beta_sum}
\end{align}
where in the last step we bounded the integrand by its supremum on the subinterval, which is finite because $t_{r+1}<t_q$ for $r\le q-2$. The last subinterval, $r=q-1$, is estimated directly by $\int_{t_{q-1}}^{t_q}(t_q-s)^{-\sigma}ds=\theta^{-1}(\Delta t)^{\theta}$, so it adds at most $C[g]_\zeta(\Delta t)^{\zeta+\theta}t_{q-1}^{-\zeta}\le C[g]_\zeta(\Delta t)^{\theta}$, since $t_{q-1}\ge\Delta t$ for $q\ge2$. Substituting $t_r=r\Delta t$ in \eqref{eqn:quad_beta_sum} yields 
$$(\Delta t)^{1+\zeta}\sum_{r=1}^{q-2}(r\Delta t)^{-\zeta}\left((q-r-1)\Delta t\right)^{-\sigma}
=(\Delta t)^{1-\sigma}\sum_{r=1}^{q-2}r^{-\zeta}(q-r-1)^{-\sigma}
\le C(\Delta t)^{1-\sigma}q^{1-\zeta-\sigma},$$
the last inequality uses the discrete Beta function bound which is valid for $\zeta<1$, $\sigma<1$ and $\zeta+\sigma=\bar\sigma<1$. Since $q\Delta t=t_q\le T$,
$$(\Delta t)^{1-\sigma}q^{1-\zeta-\sigma}=t_q^{1-\zeta-\sigma}(\Delta t)^{1-\sigma-(1-\zeta-\sigma)}=t_q^{\theta-\zeta}(\Delta t)^{\zeta}\le T^{\theta-\zeta}(\Delta t)^{\zeta}.$$
Adding up all the terms and using $(\Delta t)^{\theta}=(\Delta t)^{\theta-\zeta}(\Delta t)^{\zeta}\le T^{\theta-\zeta}(\Delta t)^{\zeta}$ for the contribution $C[g]_\zeta(\Delta t)^{\theta}$ of the term $r=q-1$, we obtain $\|E_1\|_{X^\alpha}\le C\|g\|_{\infty,\beta}(\Delta t)^{\theta}+C[g]_\zeta T^{\theta-\zeta}(\Delta t)^{\zeta}$.

Next, for $E_2$, boundedness is enough. For indices $r\le q-2$, the interval $[t_r,t_{r+1}]$ is at least $\Delta t$ away from the singularity. We can use that $S(t_q-s)-S(t_q-t_r)=\int_{t_q-s}^{t_q-t_r}LS(v)\,dv$ and the second estimate in \eqref{eqn:smoothing_two} to get
$$\left\|\left(\int_{t_r}^{t_{r+1}}\left[S(t_q-s)-S(t_q-t_r)\right]ds\right)g(t_r)\right\|_{X^\alpha}
\le C\|g\|_{\infty,\beta}\int_{t_r}^{t_{r+1}}\int_{t_q-t_{r+1}}^{t_q-t_r}v^{-1-\sigma}\,dv\,ds.$$
The inner integral does not depend on $s$, so the outer integral becomes $\Delta t$. As $r$ goes from $0$ to $q-2$ the intervals $[t_q-t_{r+1},t_q-t_r]$ partition $[\Delta t,t_q]$ and so
$$\sum_{r=0}^{q-2}\left\|\cdots\right\|_{X^\alpha}
\le C\|g\|_{\infty,\beta}\,\Delta t\int_{\Delta t}^{t_q}v^{-1-\sigma}\,dv
\le \frac{C}{\sigma}\|g\|_{\infty,\beta}\,\Delta t\,(\Delta t)^{-\sigma}
=\frac{C}{\sigma}\|g\|_{\infty,\beta}(\Delta t)^{\theta}.$$
For the last subinterval where $r=q-1$, we estimate the two terms in $E_2$ separately, using the first bound in \eqref{eqn:smoothing_two}:
$$\left\|\int_{t_{q-1}}^{t_q}S(t_q-s)ds\,g(t_{q-1})\right\|_{X^\alpha}\le C\|g\|_{\infty,\beta}\int_0^{\Delta t}v^{-\sigma}dv=\frac{C}{\theta}\|g\|_{\infty,\beta}(\Delta t)^{\theta},$$
and
$$ \left\|\Delta t\,S(\Delta t)g(t_{q-1})\right\|_{X^\alpha}\le C\|g\|_{\infty,\beta}\Delta t\,(\Delta t)^{-\sigma}=C\|g\|_{\infty,\beta}(\Delta t)^{\theta}.$$
Thus $\|E_2\|_{X^\alpha}\le C\|g\|_{\infty,\beta}(\Delta t)^{\theta}$, with $C$ depending only on $\sigma$.

Adding the two bounds together and taking the maximum completes the first part, where both sides are zero when $q=0$.

For the second part of the proposition, write the error as $E_1+E_2$ as in \eqref{eqn:quad_split}, with integration variable $\tau$. Since the modulus of $g$ does not use the weight, the subintervals do not need to be separated. For $E_1$, the assumed bound $\|g(\tau)-g(t_r)\|_{X^\beta}\le H(\Delta t)^\kappa$ and the smoothing gives
$$\|E_1\|_{X^\alpha}\le CH(\Delta t)^\kappa\int_0^{t_q}(t_q-\tau)^{-\sigma}d\tau\le CT^\theta H(\Delta t)^\kappa .$$
For $E_2$, write $S(t_q-\tau)-S(t_q-t_r)=S(t_q-\tau)(I-S(\delta))$ with $\delta=\tau-t_r\le\Delta t$. Let $s:=\beta'-2\kappa$. A spectral argument yields
$$\|(I-S(\delta))w\|_{X^{s}} \le \delta^\kappa \|w\|_{X^{s+2\kappa}} =\delta^\kappa \|w\|_{X^{\beta'}},$$
and the smoothing bound from $X^{s}$ to $X^\alpha$ includes a $(t_q-\tau)^{-a}$ term,  where $a=(\alpha-s)_+/2=\left(\sigma-\frac{\beta'-\beta}{2}+\kappa\right)_+$. Therefore,
$$\|E_2\|_{X^\alpha}\le CG(\Delta t)^\kappa\int_0^{t_q}(t_q-\tau)^{-a}d\tau\le\frac{C}{1-a}T^{1-a}G(\Delta t)^\kappa,$$
and the integral converging because $a<1$, which is the condition $\kappa<\theta+\frac{\beta'-\beta}2$.

If $s<0$, $X^s$ is the space from Definition~\ref{def:negative_spaces}, and both spectral bounds still hold. If $s>\alpha$, then $a=0$ and the bound uses $\|w\|_{X^\alpha}\le\lambda_1^{-(s-\alpha)/2}\|w\|_{X^s}$, so $C$ also depends on $\lambda_1$.
\end{proof}

\begin{proof}[Proof of Proposition~\ref{prop:spectral_exact}]
With the exact values, the encoder gives $D^{\mathrm{ex}}_{\mathrm{an}}\mathcal T_{\mathrm{an}}w=\mathcal C^h_Nw$, the synthesis step gives $\mathcal S^h_Na^r$ and $\nabla_h\mathcal S^h_Na^r$, and the analysis step gives $\mathcal C^h_N G^r$, using Assumption~\ref{assumption:spectral_factorization}. Writing out the recurrence steps yields $v^q=S_N(\Delta t)^q a^h_0+\Delta t\sum_{r<q}S_N(\Delta t)^{q-r}\widehat g^{r}$, which is step (2d) since $S_N(t_q-t_r)=S_N(\Delta t)^{q-r}$. Lastly, $S_N(\Delta t)^{\tau/\Delta t}=S_N(\tau)$, since $S_N(\Delta t)$ is diagonal with positive entries, so the output agrees with Definition~\ref{def:semigroup_lift}.
\end{proof}

\begin{proof}[Proof of Proposition~\ref{prop:discrete_ball_containment}]
Take $t \in [t_q,t_{q+1}]$, and set the interpolation weight to
$s_0:=\frac{t-t_q}{t_{q+1}-t_q}\in[0,1]$
which yields the piecewise linear interpolant in time:
$$\mathcal L_{N,Q}a(t)=
(1-s_0)\mathcal{S}_N a^q + s_0\mathcal{S}_N a^{q+1}.
$$
By the triangle inequality and homogeneity of the
\(X^\alpha\)-norm,
$$\|\mathcal L_{N,Q}a(t)\|_{X^\alpha}\leq
\max_{0\leq j\leq Q}
\|\mathcal S_Na^j\|_{X^\alpha}.$$
The supremum over \(t\in[0,T]\) yields the bound
$$\|\mathcal L_{N,Q}a\|_{X_T^\alpha}
\leq\max_{0\leq j\leq Q}
\|\mathcal S_Na^j\|_{X^\alpha} = \max_{0\leq j\leq Q}
\|a^j\|_{\ell_\alpha^2},$$
where we used that the discrete norm and function norms are equal.

Next, since $\mathcal L_{N,Q}a(t_j)=\mathcal{S}_N a^j$,
we have the lower bound
$$
\|\mathcal L_{N,Q}a\|_{X_T^\alpha}
= \sup_{t\in[0,T]}
\|\mathcal L_{N,Q}a(t)\|_{X^\alpha}
\geq \max_{0\leq j\leq Q} \|\mathcal S_Na^j\|_{X^\alpha}.$$
Using both inequalities shows that the norms are equal, and the rest of the claim follows.
\end{proof}

\begin{proof}[Proof of Corollary~\ref{cor:inputs_in_K}]
By Proposition~\ref{prop:discrete_ball_containment}, the lifting $u_a:=\mathcal L_{N,Q}a$ is in $B_R^\alpha$ and $\mathcal S_Na^q=u_a(t_q)$. Thus
$((\mathcal S^h_Na^q)_i,(\nabla_h\mathcal S^h_Na^q)_i)=(u_a(t_q,x_i),\nabla u_a(t_q,x_i))$, which lies in $K$ by definition.
\end{proof}
\begin{proof}[Proof of Lemma~\ref{lem:encoder_bounded}]
The point-wise values $v(x_i)$ are well defined by Assumption~\ref{assumption:regularity_embedding}. Since $\mathcal S_N$ is an isometry from $(\bR^N,\|\cdot\|_{\ell^2_\alpha})$ onto $(E_N,\|\cdot\|_{X^\alpha})$, duality on the finite-dimensional space $E_N$ gives
$$\left\|\Pi_{N,h}v\right\|_{X^\alpha}
=\sup_{\substack{\phi\in E_N\\ \|\phi\|_{X^{-\alpha}}\le1}}
\left|\left\langle \mathcal S_N\mathcal C^h_N\mathcal I_hv,\phi\right\rangle\right|
=\sup_{\substack{\phi\in E_N\\ \|\phi\|_{X^{-\alpha}}\le1}}
\left|\sum_{i=1}^{M(h)}\omega_i^{N,h}v(x_i)\phi(x_i)\right|
\le C^\alpha_{\mathrm{stab}}\|v\|_{X^\alpha},
$$
using the $s=\alpha$ case from Assumption~\ref{assumption:quadrature_stability}.
\end{proof}

\begin{proof}[Proof of Theorem~\ref{thm:fully_discrete_fixed_point}]
We first derive a discrete semigroup estimate. Let
$v\in X^\beta\cap C(\overline{\Omega})$
and $\tau>0$. Since $\mathcal S_N$ is an isometry from
$(\bR^N,\|\cdot\|_{\ell_\alpha^2})$ onto
$(E_N,\|\cdot\|_{X^\alpha})$, the following norms are equal:
$$\|S_N(\tau)\mathcal C_N^h v\|_{\ell_\alpha^2}
= \|\mathcal S_NS_N(\tau)\mathcal C_N^h v\|_{X^\alpha}.$$
Using duality of the $X^\alpha$ norm on $E_N$, we can characterize the norm by:
$$\|\mathcal S_NS_N(\tau)\mathcal C_N^h v\|_{X^\alpha}
= \sup_{\substack{\phi\in E_N\\
\|\phi\|_{X^{-\alpha}}\leq 1}} \left|
\left\langle \mathcal S_NS_N(\tau)\mathcal C_N^h v,\phi \right\rangle \right|.$$
The semigroup satisfies $S(\tau)\mathcal S_N=\mathcal S_NS_N(\tau)$ and is self-adjoint, so
\begin{align*}
\left\langle
\mathcal S_NS_N(\tau)\mathcal C_N^h v,\phi
\right\rangle
=\left\langle S(\tau)\mathcal S_N\mathcal C_N^h v,\phi \right\rangle
= \left\langle \mathcal S_N\mathcal C_N^h v,S(\tau)\phi \right\rangle
=\sum_{i=1}^{M(h)} \omega_i^{N,h} v(x_i)(S(\tau)\phi)(x_i),
\end{align*}
using the quadrature rule and the definitions of $\mathcal C_N^h$ and $\mathcal S_N$. Since $\phi\in E_N$ and $S(\tau)E_N\subseteq E_N$, using Assumption~\ref{assumption:quadrature_stability} leads to
$$\|S_N(\tau)\mathcal C_N^h v\|_{\ell_\alpha^2}
\leq
C_{\mathrm{stab}} \, \|v\|_{X^\beta}
\sup_{\substack{\phi\in E_N\\
\|\phi\|_{X^{-\alpha}}\leq 1}}
\|S(\tau)\phi\|_{X^{-\beta}}\leq
C C_{\mathrm{stab}}
\tau^{-(\alpha-\beta)/2}
\|v\|_{X^\beta},$$
where we used the semigroup property for negative spaces and the supremum.

Next, we prove a bound on the fully discrete map. Recall that $a=(a^0,\ldots,a^Q)\in B_R^{N,Q}$, and the continuous variable is defined by the lifting $u_a:=\mathcal L_{N,Q}a$. Proposition~\ref{prop:discrete_ball_containment} implies that
$\|u_a\|_{X_T^\alpha}=\|a\|_{X_{N,Q}^\alpha}\leq R$, so
$u_a\in B_R^\alpha$. For each $q=0,\ldots,Q$, define the continuous reconstruction
$\widetilde u_a^q:=\mathcal S_Na^q\in E_N$,
whose values and derivatives at the sample points are $\mathcal S_N^h a^q =\mathcal I_h\widetilde u_a^q$ and
$\nabla_h\mathcal S_N^h a^q=\mathcal I_h^{\text{grad}}\widetilde u_a^q$. We define the continuous forcing function by
$$F_a^q(x):=f\left( \widetilde u_a^q(x),
\nabla\widetilde u_a^q(x)\right),$$ whose value at the sample point $x_i$ is exactly
$$F_a^q(x_i) = f\left( (\mathcal S_N^h a^q)_i, (\nabla_h\mathcal S_N^h a^q)_i \right).$$
Therefore, applying the discrete coefficient map gives
$$\mathcal C_N^h\mathcal I_hF_a^q
= \mathcal C_N^h\left[ \mathcal F^h\left( \mathcal S_N^h a^q,
\nabla_h\mathcal S_N^h a^q \right)\right],$$
which are the values inside the discrete fixed-point map. Since $\widetilde u_a^q=u_a(t_q)$, the force is also equal to
$F_a^q =f\left(u_a(t_q), \nabla u_a(t_q)\right)$
and so by assumption $\|F_a^q\|_{X^\beta} \leq F_R$.

Taking norms in the definition of $\Psi^h_{N,Q}$ leads to
\begin{align*}
\|(\Psi_{N,Q}^h(a))^q\|_{\ell_\alpha^2}
&\leq
\|a_0^h\|_{\ell_\alpha^2} +\Delta t
\sum_{r=0}^{q-1} \|S_N(t_q-t_r)\mathcal C_N^h\mathcal I_hF_a^r\|_{\ell_\alpha^2}\\
&\leq \|a_0^h\|_{\ell_\alpha^2} + C C_{\mathrm{stab}}F_R \Delta t \sum_{r=0}^{q-1} (t_q-t_r)^{-(\alpha-\beta)/2},
\end{align*}
where we used that $S_N(t)$ is contractive on $\ell_\alpha^2$ and the discrete semigroup estimate above.
For exponents $\sigma=\frac{\alpha-\beta}{2}<1$ and $\theta=1-\sigma$, and recalling that $t_q-t_r=(q-r)\Delta t$, we have
\begin{align*}
\Delta t \sum_{r=0}^{q-1}(t_q-t_r)^{-\sigma}
=\Delta t^{1-\sigma} \sum_{r=0}^{q-1}(q-r)^{-\sigma}
=\Delta t^{1-\sigma}
\sum_{j=1}^{q}j^{-\sigma}
\leq (1-\sigma)^{-1}(q\Delta t)^{1-\sigma}
\leq (1-\sigma)^{-1} T^\theta.
\end{align*}
We redefine the constant $C$ to absorb the $\sigma$ terms, thus yielding
$$\|(\Psi_{N,Q}^h(a))^q\|_{\ell_\alpha^2}
\leq
\|a_0^h\|_{\ell_\alpha^2}
+
C C_{\mathrm{stab}}F_RT^\theta
\quad \Longrightarrow \quad
\|\Psi_{N,Q}^h(a)\|_{X_{N,Q}^\alpha}
\leq
\|a_0^h\|_{\ell_\alpha^2}
+ C C_{\mathrm{stab}}F_RT^\theta.$$

To show that the discrete map is contractive, take  $a,b\in B_R^{N,Q}$, and let $u_a:=\mathcal L_{N,Q}a$, $u_b:=\mathcal L_{N,Q}b$, $\widetilde u_a^q:=\mathcal S_Na^q$, $\widetilde u_b^q:=\mathcal S_Nb^q$. In the same way as before, $\mathcal S_N^h a^q =\mathcal I_h\widetilde u_a^q$ and $\mathcal S_N^h b^q=\mathcal I_h\widetilde u_b^q$, and we define $F_a^q$ and $F_b^q$ accordingly. Since $\widetilde u_a^q=u_a(t_q)$ and
$\widetilde u_b^q=u_b(t_q)$, we can apply the local Lipschitz assumption and the definition of the discrete norm to bound
$$\|F_a^q-F_b^q\|_{X^\beta}\leq L_R\|u_a(t_q)-u_b(t_q)\|_{X^\alpha}=L_R\|a^q-b^q\|_{\ell_\alpha^2}
\leq L_R \|a-b\|_{X_{N,Q}^\alpha}.$$
The difference of the full maps is bounded by
\begin{align*}
\|(\Psi_{N,Q}^h(a))^q - (\Psi_{N,Q}^h(b))^q\|_{\ell_\alpha^2} 
&= \Delta t \left\|\sum_{r=0}^{q-1} S_N(t_q-t_r) \mathcal C_N^h\mathcal I_h\left( F_a^r-F_b^r\right) \right\|_{\ell_\alpha^2}\\
&\leq C C_{\mathrm{stab}}L_R \Delta t \sum_{r=0}^{q-1}
(t_q-t_r)^{-(\alpha-\beta)/2} \|a-b\|_{X_{N,Q}^\alpha}
\\
&\leq C C_{\mathrm{stab}}L_RT^\theta \|a-b\|_{X_{N,Q}^\alpha}.
\end{align*}
Taking the maximum over $q$ gives $$\|\Psi_{N,Q}^h(a)-\Psi_{N,Q}^h(b)\|_{X_{N,Q}^\alpha}
\leq \kappa \|a-b\|_{X_{N,Q}^\alpha},$$ where $\kappa:=C C_{\mathrm{stab}}L_RT^\theta$ is independent of $N$, $h$, and $Q$.

It follows directly that if $\|a_0^h\|_{\ell_\alpha^2}+C C_{\mathrm{stab}}F_RT^\theta \leq R$, then  $\Psi_{N,Q}^h(B_R^{N,Q}) \subseteq B_R^{N,Q}$.
In addition, if $\kappa<1$, then $\Psi_{N,Q}^h$ is a contraction on the complete set $B_R^{N,Q}$, and thus by the Banach fixed-point theorem there exists a unique fixed point $a^h_{N, Q} \in B_R^{N,Q}$.
\end{proof}

\begin{proof}[Proof of Theorem~\ref{thm:fully_discrete_consistency}]
Recall that $\sigma=\frac{\alpha-\beta}{2}$ and $\theta:=1-\sigma$, so that the conditions are $0<\sigma<1$ and $\theta>0$. Define the following functions:
$$F(t):=f(u(t),\nabla u(t)),
\qquad F_N(t):=f(P_N u(t),\nabla P_N u(t)),
\qquad \widetilde u_N^h(t) :=\mathcal S_N\mathcal C_N^h\mathcal I_h u(t).$$

The proof uses two (discrete) semigroup estimates. The first estimate was derived at the beginning of the proof of Theorem~\ref{thm:fully_discrete_fixed_point}, i.e., for any $v\in X^\beta\cap C(\overline\Omega)$ and $\tau>0$, we have
$$\left\|\mathcal S_NS_N(\tau)\mathcal C_N^hv\right\|_{X^\alpha}\le CC^\beta_{\mathrm{stab}}\tau^{-\sigma}\|v\|_{X^\beta}. $$
The second estimate uses Lemma~\ref{lem:encoder_bounded} and the contractivity of $S_N(\tau)$ to get:
$$\|\mathcal S_N S_N(\tau) \mathcal C_N^hv\|_{X^\alpha} \le C^\alpha_{\mathrm{stab}}\|v\|_{X^\alpha}$$
for any $\tau\ge0$. Specifically, this shows that $\|\widetilde u_N^h(t)\|_{X^\alpha}\le C^\alpha_{\mathrm{stab}}R$. Therefore $\widetilde u_N^h(t)$ and $P_Nu(t)$ lie in a fixed bounded subset of $X^\alpha$, uniformly in $N$, $h$ and $t$, and the Lipschitz constant of $f$ on this set can be absorbed by $C_R$.

We estimate directly the difference between one step of the fully discrete map and the solution $u$. Pick an index $q\in\{0,\ldots,Q\}$, then
$$\begin{aligned}
\left(\Phi_{N,Q}^h[u]\right)(t_q)-u(t_q)
=& S(t_q)\mathcal S_N a_0^h-S(t_q) u_0 +
\Delta t\sum_{{l}=0}^{q-1} S(t_q-t_{l})\mathcal S_N
\mathcal C_N^h\mathcal I_h \left[ f(\widetilde u_N^h(t_{l}), \nabla\widetilde u_N^h(t_{l})) \right]
\\
&\ \ -\int_0^{t_q}S(t_q-s)F(s)\,ds.
\end{aligned}
$$
Since $\mathcal S_N\mathcal C_N=P_N$, we have
$$S(t_q)\mathcal S_N a_0^h-S(t_q) u_0 = S(t_q)\mathcal S_N
\left(a_0^h-\mathcal C_Nu_0\right) + S(t_q)(P_N-I)u_0.$$
We decompose the difference into several parts:
$$\left(\Phi_{N,Q}^h[u]\right)(t_q)-u(t_q)
=
E_{\mathrm{init},q} +E_{\mathrm{Gal},q} +E_{\mathrm{nl},q}
+E_{\mathrm{quad},q} +E_{\mathrm{proj},q}  +E_{\mathrm{time},q},
$$
where each term represents a specific approximation step:
$$E_{\mathrm{init},q} := S(t_q)\mathcal S_N \left(a_0^h-\mathcal C_N u_0\right),$$
$$E_{\mathrm{Gal},q} := S(t_q)(P_N-I)u_0,$$
$$ E_{\mathrm{nl},q} :=\Delta t\sum_{{l}=0}^{q-1} S(t_q-t_{l}) \mathcal S_N\mathcal C_N^h \mathcal I_h \left[ f(\widetilde u_N^h(t_{l}), \nabla\widetilde u_N^h(t_{l})) -F_N(t_{l}) \right],$$

$$E_{\mathrm{quad},q} := \Delta t\sum_{{l}=0}^{q-1} S(t_q-t_{l})\mathcal S_N \left( \mathcal C_N^h\mathcal I_hF_N(t_{l}) -\mathcal C_NF_N(t_{l}) \right),$$
$$E_{\mathrm{proj},q} := \Delta t\sum_{{l}=0}^{q-1} S(t_q-t_{l})
\left( P_NF_N(t_{l})-F(t_{l}) \right),$$
and
$$E_{\mathrm{time},q} := \Delta t\sum_{{l}=0}^{q-1}
S(t_q-t_{l})F(t_{l}) - \int_0^{t_q}S(t_q-s)F(s)\,ds.$$
Each term can be bounded separately. When $q=0$, all sums are set to zero, so in the bounds that deal with singular kernel, we will assume $q\geq1$.

First, by the contraction property of $S(t)$ on $X^\alpha$ and since $\mathcal S_N$ is an isometry, we have:
$$ \|E_{\mathrm{init},q}\|_{X^\alpha} \leq
\|a_0^h-\mathcal C_Nu_0\|_{\ell_\alpha^2} 
= E_{\mathrm{init}}^h,
\qquad \|E_{\mathrm{Gal},q}\|_{X^\alpha} \leq \|(I-P_N)u_0\|_{X^\alpha}.$$

Next, consider $E_{\mathrm{nl},q}$, which measures the effect of evaluating the law at encoded samples $\widetilde u^h_N$ instead of $P_Nu$. Since $P_N=\mathcal S_N\mathcal C_N$, we have
$$\widetilde u_N^h(t)-P_Nu(t) =
\mathcal S_N \left( \mathcal C_N^h\mathcal I_hu(t) -\mathcal C_Nu(t) \right),$$
so by the isometry of $\mathcal S_N$ and by
Assumption~\ref{assumption:quadrature_consistency} with $s=\alpha$, we get
$$\|\widetilde u_N^h(t)-P_Nu(t)\|_{X^\alpha}
= \left\| \mathcal C_N^h\mathcal I_hu(t) -\mathcal C_Nu(t) \right\|_{\ell_\alpha^2}
\leq C_{\mathrm{quad}}h^r \|u(t)\|_{X^{\alpha+r}}.$$
For $t>0$, since $r\leq\gamma-\alpha$ when $\gamma_{\mathrm s}=\gamma$, the continuous embedding gives
$\|u(t)\|_{X^{\alpha+r}} \leq C\|u(t)\|_{X^\gamma}$. The bootstrapped case where $r$ can be larger than $\gamma-\alpha$ is left for the end of the proof. By the local Lipschitz continuity of $f$, 
$$\left\| f(\widetilde u_N^h(t),\nabla\widetilde u_N^h(t)) -
F_N(t) \right\|_{X^\beta}
\leq C_Rh^r\|u(t)\|_{X^\gamma},$$
for all $t>0$, where the Lipschitz constant is absorbed in the constant above. At $t=0$, we use boundedness
$$ \| f(\widetilde u_N^h(0),\nabla\widetilde u_N^h(0)) - F_N(0) \|_{X^\beta} \leq C_R.$$
The $l=0$ contribution thus gives
$$\|E_{\mathrm{nl},q}^{(0)}\|_{X^\alpha}
\leq C_R\Delta t\,t_q^{-\sigma} 
\leq C_R(\Delta t)^{1-\sigma}
= C_R(\Delta t)^\theta,$$
since $t_q\geq\Delta t$ and the others terms give
$$\|E_{\mathrm{nl},q}^{(>0)}\|_{X^\alpha}
\leq C_R h^r \Delta t \sum_{{l}=1}^{q-1} (t_q-t_{l})^{-\sigma} \|u(t_{l})\|_{X^\gamma}.$$
We set the exponents to $\zeta:=\frac{\gamma-\alpha}{2}$ and $\bar\sigma:=\frac{\gamma-\beta}{2}$, so
$0<\zeta<1$, $\bar\sigma<1$, and $\sigma+\zeta=\bar\sigma$.
By Proposition~\ref{prop:instantaneous_regularity},
$\|u(t)\|_{X^\gamma} \leq C_R t^{-\zeta} + C_R t^{1-\bar\sigma},$ and thus we will have contributions from both terms within the sum.
For the first contribution, when $q\geq2$,
$$\Delta t \sum_{{l}=1}^{q-1} (t_q-t_{l})^{-\sigma}t_{l}^{-\zeta}
= (\Delta t)^{1-\sigma-\zeta} \sum_{{l}=1}^{q-1} (q-{l})^{-\sigma}{l}^{-\zeta}.$$

We split the sum at $l=\lfloor q/2\rfloor$ to get
$$\sum_{{l}=1}^{q-1} (q-{l})^{-\sigma}{l}^{-\zeta}
\leq C q^{-\sigma} \sum_{{l}=1}^{\lfloor q/2\rfloor}{l}^{-\zeta} +
Cq^{-\zeta} \sum_{j=1}^{\lceil q/2\rceil}j^{-\sigma}
\leq Cq^{1-\sigma-\zeta},$$
since $0<\zeta<1$ and $0<\sigma<1$. So the first sum is bounded by 
$$\Delta t \sum_{{l}=1}^{q-1} (t_q-t_{l})^{-\sigma}t_{l}^{-\zeta}
\leq C (\Delta t\,q)^{1-\sigma-\zeta} = Ct_q^{1-\bar\sigma}
\leq CT^{1-\bar\sigma}. $$
The second sum is bounded by
$$\Delta t \sum_{{l}=1}^{q-1} (t_q-t_{l})^{-\sigma}t_{l}^{1-\bar\sigma} \leq
T^{1-\bar\sigma} \Delta t \sum_{{l}=1}^{q-1}(t_q-t_{l})^{-\sigma}
= T^{1-\bar\sigma}(\Delta t)^\theta \sum_{j=1}^{q-1}j^{-\sigma}
\leq CT^{1-\bar\sigma}t_q^\theta
\leq CT^{1-\bar\sigma+\theta}.$$
And together the max error for the nonlinear reconstruction is
$\max_{0\leq q\leq Q} \|E_{\mathrm{nl},q}\|_{X^\alpha}
\leq C_R(h^r+(\Delta t)^\theta)$.

For the spatial quadrature error, set $d_l:=\mathcal C_N^h\mathcal I_h F_N(t_l)-\mathcal C_NF_N(t_l),$
and thus the error is 
$$E_{\mathrm{quad},q}=\Delta t\sum_{l=0}^{q-1}S(t_q-t_l)\,\mathcal S_Nd_l .$$
Since $\mathcal S_N$ is an isometry from $(\bR^N,\|\cdot\|_{\ell^2_\beta})$ to $(E_N,\|\cdot\|_{X^\beta})$, the smoothing from Assumption~\ref{assumption:semigroup_regular} gives
\begin{equation}
\left\|S(\tau)\mathcal S_Nd_l\right\|_{X^\alpha}\le C\tau^{-\sigma}\|d_l\|_{\ell^2_\beta},
\label{eqn:quad_smooth}
\end{equation}
for all $\tau>0$. For every node with $t_l>0$, Assumption~\ref{assumption:quadrature_consistency} with $s=\beta$ and the weighted hypothesis on $F_N$ give
\begin{equation}
\|d_l\|_{\ell^2_\beta}\le C_{\mathrm{quad}}h^r\|F_N(t_l)\|_{X^{\beta+r}}\le C_Rh^r\,t_l^{-\varrho}.
\label{eqn:quad_node}
\end{equation}
The rest needs to deal with $l=0$ and the sum, which takes several cases.

First, if $\varrho=0$ we use the bootstrap. Note that \eqref{eqn:quad_node} also holds at $l=0$. This is due to lower semicontinuity, in particular, $F_N(t)\to F_N(0)$ in $X^\beta$ as $t\rightarrow 0^+$, by \eqref{eqn:point-wise_lipschitz} and the continuity of $u$ in $X^\alpha$, so the coefficients of $F_N(t)$ converge to those of $F_N(0)$. Lower semicontinuity of the $X^{\beta+r}$ norm for the coefficient convergence follows from Fatou's lemma applied to the series $\|v\|_{X^{\beta+r}}^2=\sum_k\lambda_k^{\beta+r}|\langle v,e_k\rangle|^2$ (see also Lemma~\ref{lem:compact_embedding}) and yields $\|F_N(0)\|_{X^{\beta+r}}\le\liminf_{t\rightarrow 0^+}\|F_N(t)\|_{X^{\beta+r}}\le C_R$. Putting \eqref{eqn:quad_node} into \eqref{eqn:quad_smooth} for every $l$ gives
$$\|E_{\mathrm{quad},q}\|_{X^\alpha}
\le C_Rh^r\,\Delta t\sum_{l=0}^{q-1}(t_q-t_l)^{-\sigma}
=C_Rh^r(\Delta t)^\theta\sum_{j=1}^{q}j^{-\sigma}
\le \frac{C_R}{\theta}h^r\,t_q^\theta
\le C_Rh^r .$$

Next, for the case $\varrho=r/2$, which is the case without the bootstrap, $\gamma_{\mathrm s}=\gamma$. Since $r\le\gamma_{\mathrm s}-\alpha=\gamma-\alpha=2\zeta$ by \eqref{eqn:quad_order_cap}, we have $\varrho\le\zeta$ and $\sigma+\varrho\le\bar\sigma<1$. For the nodes $l\ge1$, \eqref{eqn:quad_smooth} and \eqref{eqn:quad_node} give
$$\Delta t\sum_{l=1}^{q-1}\left\|S(t_q-t_l)\mathcal S_N d_l \right\|_{X^\alpha}
\le C_R h^r\,\Delta t\sum_{l=1}^{q-1}(t_q-t_l)^{-\sigma}t_l^{-\varrho}
\le C_R h^r\,T^{1-\sigma-\varrho}, $$
using the weighted convolution bound for $E_{\mathrm{nl},q}$, applied with $\zeta$ replaced by $\varrho$. The node $l=0$ is estimated without using the quadrature consistency assumption, since the weight $t_l^{-\varrho}$ in \eqref{eqn:quad_node} is not finite. Using $S(t_q)\mathcal S_N=\mathcal S_NS_N(t_q)$ and $\mathcal S_N\mathcal C_N=P_N$, we split $d_0$ into two parts:
$$
\Delta t\left\|S(t_q)\mathcal S_Nd_0\right\|_{X^\alpha}
\le\Delta t\left\|\mathcal S_NS_N(t_q)\mathcal C^h_N\mathcal I_hF_N(0)\right\|_{X^\alpha}
+\Delta t\left\|S(t_q)P_NF_N(0)\right\|_{X^\alpha}
\le C\left(C^\beta_{\mathrm{stab}}+1\right)\Delta t\,t_q^{-\sigma}\,\|F_N(0)\|_{X^\beta}. $$
The first term is bounded by the discrete smoothing estimate stated at the start of this proof. The second is bounded by Assumption~\ref{assumption:semigroup_regular} and since $P_N$ is a contraction on $X^\beta$. Since $\|P_Nu_0\|_{X^\alpha} \le \|u_0\|_{X^\alpha} \le R$, we have $\|F_N(0)\|_{X^\beta}\le F_R$, and $t_q\ge\Delta t$ gives $\Delta t\,t_q^{-\sigma}\le(\Delta t)^{\theta}$. Thus it adds at most $C_R(\Delta t)^\theta$.

In both cases,
$$\max_{0\le q\le Q}\|E_{\mathrm{quad},q}\|_{X^\alpha}\le C_R\left(h^r+(\Delta t)^\theta\right), $$
where the term $(\Delta t)^\theta$ appears only when $\varrho=r/2$. In that case there is no bootstrap, so $\zeta_{\mathrm t}=\zeta$, and $(\Delta t)^\theta \le T^{\theta-\zeta}(\Delta t)^{\zeta}$ is absorbed into the temporal error term of the theorem.

Next for the force projection error, we decompose it using
$P_NF_N-F=P_N(F_N-F)+(P_N-I)F$, and consider the first part. This leads to three bounds that are needed.

First, using the Lipschitz condition from \eqref{eqn:point-wise_lipschitz} and the fact that $P_N$ is an orthogonal projection and contraction on $X^\alpha$ (so both inputs are $B^\alpha_R$), we have
\begin{equation}
\|F_N(t_{l})-F(t_{l})\|_{X^\beta}
\leq L_R\left\|P_Nu(t_{l})-u(t_{l})\right\|_{X^\alpha}.
\label{eqn:proj_lip}
\end{equation}
Second, that projection measured in $X^\alpha$ is the tail bound from Proposition~\ref{prop:galerkin_approximation} with regularity exponent $\gamma$ and thus we have
\begin{equation}
\left\|P_Nu(t_{l})-u(t_{l})\right\|_{X^\alpha}
\leq \lambda_{N+1}^{-\zeta}\left\|u(t_{l})\right\|_{X^\gamma}
\leq C_R\,\lambda_{N+1}^{-\zeta}\,t_{l}^{-\zeta},
\label{eqn:proj_gal}
\end{equation}
where in the last inequality we apply Lemma~\ref{lem:derived_regularity} Part 2 for $l>0$. At ${l}=0$, we use instead the boundedness $\|F_N(0)-F(0)\|_{X^\beta}\leq C_R$. For the third bound, the resulting $X^\beta$ bounds from the previous steps are in $X^\beta$ but we did not include the semigroup part of the projection error, which lifts back to $X^\alpha$. By Assumption~\ref{assumption:semigroup_regular} and since $P_N$ is a contraction on $X^\beta$, the semigroup smooths by:
\begin{equation}
\left\|S(\tau)P_Nv\right\|_{X^\alpha}\leq C\tau^{-\sigma}\|v\|_{X^\beta}.
\label{eqn:proj_smooth}
\end{equation}
Putting all three steps together, the $l>0$ terms gives us
\[
\Delta t\sum_{{l}=1}^{q-1}
\left\|S(t_q-t_{l})P_N\left(F_N(t_{l})-F(t_{l})\right)\right\|_{X^\alpha}
\leq
C_R\lambda_{N+1}^{-\zeta}\,
\Delta t\sum_{{l}=1}^{q-1}(t_q-t_{l})^{-\sigma}t_{l}^{-\zeta}
\leq
C_RT^{1-\bar\sigma}\lambda_{N+1}^{-\zeta},
\]
the last step is the weighted convolution estimate shown in $E_{\mathrm{nl},q}$, which converges since $\sigma+\zeta=\bar\sigma<1$. The term $l=0$ cannot use \eqref{eqn:proj_gal}, and is estimated through boundedness:
$$\Delta t\left\|S(t_q)P_N\left(F_N(0)-F(0)\right)\right\|_{X^\alpha}
\leq C_R\,\Delta t\,t_q^{-\sigma}
\leq C_R(\Delta t)^{1-\sigma}=C_R(\Delta t)^{\theta},$$
where $t_q\geq\Delta t$, with the worst case when $q=1$. Thus,
$$\max_{0\leq q\leq Q} \left\| \Delta t \sum_{{l}=0}^{q-1}
S(t_q-t_{l}) P_N\left(F_N(t_{l})-F(t_{l})\right) \right\|_{X^\alpha}
\leq C_R \left( \lambda_{N+1}^{-\zeta} + (\Delta t)^{\theta} \right)
\leq C_R \left( \lambda_{N+1}^{-\zeta} + (\Delta t)^{\zeta} \right),$$
where $T$ is absorbed in the constants and the last inequality used $(\Delta t)^{\theta}\le T^{\theta-\zeta}(\Delta t)^{\zeta}$.

The second part of the projection error measures the tail $(I-P_N)F$.
For $\tau>0$ and $v\in X^\beta$,
$$\|S(\tau)(I-P_N)v\|_{X^\alpha}^2
= \sum_{k>N} \lambda_k^{\alpha-\beta} e^{-2\lambda_k\tau} \lambda_k^\beta|v_k|^2.$$
For any fixed $c\in(0,1)$, $\lambda_k^\sigma e^{-\lambda_k\tau}
= \tau^{-\sigma} (\lambda_k\tau)^\sigma e^{-(1-c)\lambda_k\tau} e^{-c\lambda_k\tau}.$ Since the sup is bounded, $\sup_{x\geq0}x^\sigma e^{-(1-c)x}<\infty$, and $\lambda_k\geq\lambda_{N+1}$, we have 
$\lambda_k^\sigma e^{-\lambda_k\tau}
\leq C \tau^{-\sigma} e^{-c\lambda_{N+1}\tau}$, and thus
$$\|S(\tau)(I-P_N)v\|_{X^\alpha}
\leq C \tau^{-\sigma} e^{-c\lambda_{N+1}\tau} \|v\|_{X^\beta}.$$
Using the bound on $F$ in $X^\beta$ (uniform) we have
$$\left\| \Delta t \sum_{{l}=0}^{q-1} S(t_q-t_{l})(I-P_N)F(t_{l}) \right\|_{X^\alpha}
\leq C_R\Delta t \sum_{j=1}^{q} (j\Delta t)^{-\sigma} e^{-c\lambda_{N+1}j\Delta t}.$$
The function $g(s)=s^{-\sigma}e^{-c\lambda s}$ is positive and decreasing on $\bR_+$ so we use the integral bound
$$\Delta t \sum_{j=1}^{\infty} (j\Delta t)^{-\sigma} e^{-c\lambda j\Delta t}
\leq \int_0^\infty s^{-\sigma} e^{-c\lambda s}\,ds
= C\lambda^{\sigma-1},$$
which shows
$$\max_{0\leq q\leq Q} \left\| \Delta t \sum_{{l}=0}^{q-1}
S(t_q-t_{l})(I-P_N)F(t_{l}) \right\|_{X^\alpha}
\leq C_R\lambda_{N+1}^{-\theta}.$$
Since $\gamma-\beta<2$, we have
$\zeta=\frac{\gamma-\alpha}{2} < 1-\frac{\alpha-\beta}{2} = \theta$.
Since $\zeta<\theta$ and $\lambda_{N+1}\ge\lambda_1$, we have $\lambda_{N+1}^{-\theta}=\lambda_{N+1}^{-\zeta}\lambda_{N+1}^{-(\theta-\zeta)}\le\lambda_1^{-(\theta-\zeta)}\lambda_{N+1}^{-\zeta}$ and therefore,
$$ \max_{0\leq q\leq Q} \|E_{\mathrm{proj},q}\|_{X^\alpha}
\leq C_R \left[ \lambda_{N+1}^{-(\gamma-\alpha)/2}
+ (\Delta t)^{\zeta} \right],$$
where the constants absorb $\lambda_1$ factors only.

The last error term is the temporal quadrature error. Using Proposition~\ref{prop:temporal_quadrature} with $g=F$ gives
$$\max_{0\leq q\leq Q}
\|E_{\mathrm{time},q}\|_{X^\alpha}
\leq C\left(\|F\|_{\infty,\beta}(\Delta t)^{\theta}+[F]_\zeta T^{\theta-\zeta}(\Delta t)^{\zeta}\right)
\leq C_R(\Delta t)^{\zeta}, $$
with $\zeta<\theta$. Combining all the previous bounds proves the first total bound.

For the second result in the theorem, denote the piece-wise linear interpolant by $\mathcal J_Qu$. On each subinterval $[t_q,t_{q+1}]$, both $\Phi_{N,Q}^h[u]$ and $\mathcal J_Qu$ are affine functions in time. So taking the line segment 
$t=(1-s_0)t_q+s_0 t_{q+1}$ with $s_0\in[0,1]$, the norm is the max over endpoints:
$$\left\| \Phi_{N,Q}^h[u](t)-\mathcal J_Qu(t) \right\|_{X^\alpha}
\leq \max_{j=q,q+1} \left\| \Phi_{N,Q}^h[u](t_j)-u(t_j) \right\|_{X^\alpha}.$$
Taking the sup over each interval and then the maximum over
$q$ gives the second results and completes the proof.

If $\gamma_{\mathrm s}=\gamma'>\gamma$, Lemma~\ref{lem:bootstrap} gives, for every $t\in[0,T]$
$$\|u(t)\|_{X^{\gamma_{\mathrm s}}}\le R_{\gamma'},
\qquad \|F(t)\|_{X^{\gamma_{\mathrm s}-(\alpha-\beta)}}\le C_R.$$
Since these bounds hold at $t=0$, we avoid the previous issues at $l=0$. For brevity, the following is the list of estimates that change:
\begin{enumerate}
\item The Galerkin term bound is now $\|(I-P_N)u_0\|_{X^\alpha} \le \lambda_{N+1}^{-\zeta_{\mathrm s}} \|u_0\|_{X^{\gamma_{\mathrm s}}}$.
\item The nonlinear reconstruction bound changes as follows. Since $\alpha+r\le\gamma_{\mathrm s}$, the factor $\|u(t_l)\|_{X^{\alpha+r}}$ is at most $CR_{\gamma'}$ at every $l$, so 
$$\|E_{\mathrm{nl},q}\|_{X^\alpha}\le C_R h^r\,\Delta t \sum_{l=0}^{q-1}(t_q-t_l)^{-\sigma}\le C_RT^\theta h^r.$$
\item For the quadrature error, the weighted condition of the theorem holds with $\varrho=0$. Since $\alpha+r\le\gamma_{\mathrm s}$ and $P_N$ is a contraction on $X^{\alpha+r}$, so $\|P_Nu(t)\|_{X^{\alpha+r}}\le\|u(t)\|_{X^{\alpha+r}} \le CR_{\gamma'}$ for all $t\in[0,T]$. Assumption~\ref{assumption:bootstrap} with $s=\alpha+r$ then gives us
$$\sup_{0\le t\le T}\|f(P_Nu(t),\nabla P_Nu(t))\|_{X^{\beta+r}}\le K_R\left(1+CR_{\gamma'}\right).$$ The estimate for $\varrho=0$ above applies, and $\max_q\|E_{\mathrm{quad},q}\|_{X^\alpha} \le C_Rh^r$.
\item For the projection term: In \eqref{eqn:proj_gal} the weighted bound is replaced by $\|P_Nu(t_l)-u(t_l)\|_{X^\alpha}\le\lambda_{N+1}^{-\zeta_{\mathrm s}}R_{\gamma'}$ for every $l$, so the first sum is at most $C_R \lambda_{N+1}^{-\zeta_{\mathrm s}}\,\Delta t\sum_{l=0}^{q-1}(t_q-t_l)^{-\sigma }\le C_RT^\theta\lambda_{N+1}^{-\zeta_{\mathrm s}}$. For the tail bound, we apply Proposition~\ref{prop:galerkin_approximation} with $\beta + 2\zeta_{\mathrm s}=\gamma_{\mathrm s}-(\alpha-\beta)$ which gives us
$$\|S(\tau)(I-P_N)F\|_{X^\alpha}\le C\tau^{-\sigma}\|(I-P_N)F\|_{X^\beta}\le C\tau^{-\sigma}\lambda_{N+1}^{-\zeta_{\mathrm s}}\|F\|_{X^{\gamma_{\mathrm s}-(\alpha-\beta)}}, $$
and summing this part over $l$ gives $C_RT^\theta\lambda_{N+1}^{-\zeta_{\mathrm s}}$.
\item Lastly, we update the temporal error term. We apply the second part of Proposition~\ref{prop:temporal_quadrature} with $g=F$, $\beta':=\gamma_{\mathrm s}-(\alpha-\beta)$, and $\kappa:=\zeta_{\mathrm t}$, so that $\frac{\beta'-\beta}{2}=\zeta_{\mathrm s}$. Its properties hold:
\begin{itemize}
\item Lemma~\ref{lem:bootstrap} gives $\sup_t\|F(t)\|_{X^{\beta'}}\le G:=K_R(1+R_{\gamma'})$.
\item Lemma~\ref{lem:bootstrap_time}, whose exponent $\min\{1,\zeta_{\mathrm s}\}$ is $\zeta_{\mathrm t}$, gives $\|F(t)-F(s)\|_{X^\beta}\le H\,(t-s)^{\zeta_{\mathrm t}}$ with $H:=L_RC_{\mathrm t}$.
\item The condition $\kappa<\theta+\frac{\beta'-\beta}{2}$ is $\zeta_{\mathrm t}<\theta+\zeta_{\mathrm s}$, which holds because $\zeta_{\mathrm t}\le\zeta_{\mathrm s}$ and $\theta>0$.
\end{itemize}
Therefore, the error is $\max_q\|E_{\mathrm{time},q}\|_{X^\alpha}\le C\left(T^\theta H+T^{1-a}G\right)(\Delta t)^{\zeta_{\mathrm t}}\le C_R(\Delta t)^{\zeta_{\mathrm t}}$.
\end{enumerate}
In this case, $(\Delta t)^\theta$ does not appear, which gives the bound of the theorem with $\lambda_{N+1}^{-\zeta_{\mathrm s}}$ and $(\Delta t)^{\zeta_{\mathrm t}}$, depending on the extra regularity induced via bootstrapping. 
\end{proof}

\begin{proof}[Proof of Lemma~\ref{lem:interpolation}]
For $t\in[t_q,t_{q+1})$, set the difference to be $\tau:=t-t_q\in[0,\Delta t)$. Using the property that $S(\tau)\mathcal S_N=\mathcal S_NS_N(\tau)$, we split the error into
$$\left(\mathcal L^S_{N,Q}a\right)(t)-u(t)
=S(\tau)\left(\mathcal S_Na^q-u(t_q)\right) +\left(S(\tau)u(t_q)-u(t_q+\tau)\right). $$
The first difference is mapped by $S(\tau)$ (contractive in $X^\alpha$), and thus is a bound on the node-wise difference directly. For the second term, we can start the mild form at $t_q$ to get
$$u(t_q+\tau)=S(\tau)u(t_q)+\int_0^\tau S(\tau-s) f\left(u(t_q+s),\nabla u(t_q+s)\right)\,ds,$$
and thus applying Assumption~\ref{assumption:semigroup_regular} yields
$$\left\|S(\tau)u(t_q)-u(t_q+\tau)\right\|_{X^\alpha}
\le C\tau^{1-(\alpha-\beta)/2}\left\|f(u,\nabla u)\right\|_{X^\beta_T} 
\le CF_R(\Delta t)^{\theta}.$$
Taking the supremum over $t$ and $q$ completes the first part. 

 Using the bootstrap, the force is in $X^{\gamma_{\mathrm s}-(\alpha-\beta)}$ and has norm at most $K_R(1+R_{\gamma'})$. The smoothing from that space into $X^\alpha$ includes an additional $(t-\tau)^{-(\sigma-\zeta_{\mathrm s})_+}$ factor so the Duhamel term is at most $CK_R(1+R_{\gamma'})(\Delta t)^{1-(\sigma-\zeta_{\mathrm s})_+}$, where $1-(\sigma-\zeta_{\mathrm s})_+=\min\{1,\theta+\zeta_{\mathrm s}\}\ge\zeta_{\mathrm t}$.
 \end{proof}

\begin{proof}[Proof of Lemma~\ref{lem:sampling}]
We look at the error induced by sampling the function and its coefficients, splitting it into two pieces: $\mathcal S_Nb^q-u(t_q)=\mathcal S_N(\mathcal C^h_N\mathcal I_h-\mathcal C_N)u(t_q)+(P_N-I)u(t_q).$ Using the mild form, we have
\begin{equation}
u(t)=S(t)u_0+\mathcal D[g](t),
\qquad
g(t)=f\left(u(t),\nabla u(t)\right).
\label{eqn:samp_split}
\end{equation}
For the spectral error (first term), $(I-P_N)$ commutes with $S(t)$, so we have
$$\left\|(I-P_N)S(t)u_0\right\|_{X^\alpha}=\left\|S(t)(I-P_N)u_0\right\|_{X^\alpha}\le\left\|(I-P_N)u_0\right\|_{X^\alpha}
= \varepsilon^{\mathrm{tail}}_N(u_0),$$
where for the inequality we used the contraction $S(t)$ on $X^\alpha$. For the Duhamel term, by Assumption~\ref{assumption:semigroup_regular} and Lemma~\ref{lem:derived_regularity}(i), we get
$$\left\|\mathcal D[g](t)\right\|_{X^\gamma}\le C\int_0^t(t-s)^{-\bar\sigma}\,ds\;F_R=\frac{C}{1-\bar\sigma}\,t^{1-\bar\sigma}F_R\le C\,T^{\theta-\zeta}F_R,$$
which is finite since $\bar\sigma<1$, so Proposition~\ref{prop:galerkin_approximation} gives $\|(I-P_N)\mathcal D[g](t)\|_{X^\alpha}\le\lambda_{N+1}^{-\zeta}\|\mathcal D[g](t)\|_{X^\gamma}\le C_R\lambda_{N+1}^{-\zeta}$.

The next part is splitting the quadrature error (coefficient error) in the same way. One part gives $\|(\mathcal C^h_N\mathcal I_h-\mathcal C_N)S(t)u_0\|_{\ell^2_\alpha}\le\varepsilon^{\mathrm{enc}}_{N,h}(u_0)$ directly by Definition~\ref{def:data_moduli}. On the Duhamel term we use Assumption~\ref{assumption:quadrature_consistency} to get the bound $C_{\mathrm{quad}}h^r\|\mathcal D[g](t)\|_{X^{\alpha+r}}$, and 
$$\|\mathcal D[g](t)\|_{X^{\alpha+r}}\le C\|\mathcal D[g](t)\|_{X^\gamma}\le C_R$$
as before, where $r\le\gamma-\alpha$. If $\gamma_{\mathrm s}=\gamma'>\gamma$, Lemma~\ref{lem:bootstrap} improves the estimate to   $$\|\mathcal D[g](t)\|_{X^{\gamma_{\mathrm s}}} \le \|u(t)\|_{X^{\gamma_{\mathrm s}}} + \|S(t)u_0\|_{X^{\gamma_{\mathrm s}}}
\le R_{\gamma'}+\|u_0\|_{X^{\gamma'}},$$
instead. The same estimates yield $\lambda_{N+1}^{-\zeta_{\mathrm s}}$ and allows for $r\le\gamma_{\mathrm s}-\alpha$. Adding together the four parts and taking the maximum over $q$ gives the result. The final part is directly from the triangle inequality and inclusion of the discrete ball Proposition~\ref{prop:discrete_ball_containment}.
\end{proof}

\begin{proof}[Proof of Corollary~\ref{cor:structural}]
Given $p>p_f$, pick a valid representative DAG $G$ with $p_{\max}(G)<p$. Such graphs exist by the definition of the infimum. If we apply Theorem~\ref{thm:main} to $G$, we get $W\le C_{G}\eta^{-p_{\max}(G)\chi_{\mathrm s}}\log\eta^{-1}\le C_{G}\eta^{-p\chi_{\mathrm s}}\log \eta^{-1}$ for $\eta\le 1$ and $\alpha\ne2$. For $\alpha=2$, where $\chi=1$, the additional factor $(\log\eta^{-1})^{p_{\max}(G)}$ is at most $C\eta^{-(p-p_{\max}(G))}$ since $p>p_{\max}(G)$. In both cases $C_p:=C_{G}$'s dependence on $p$ is through the choice of $G$. So when the infimum is attained, we can pick the $G$ that attains it. The last part is the case $p_{\max}=0$ (Theorem~\ref{thm:main}), since then $\varepsilon^{-p_{\max}}=1$.
\end{proof}

\begin{proof}[Proof of Proposition~\ref{prop:continuation}]
First we look at one window. Let $a^n_\varepsilon(c)$ be the $n$-th neural iterate starting from $c$, so $\tilde u_{j+1}=\mathcal S_N[a^n_\varepsilon(b_j)]^Q$. The difference can be split into
$$\tilde u_{j+1}-u_{j+1}
=\mathcal S_N \left(a^n_\varepsilon(b_j)-x(b_j)\right)^Q
+\mathcal S_N \left(\Phi_{\mathrm{win}}(b_j)-\Phi_{\mathrm{win}}(\beta_j)\right)
+\left(\mathcal S_N\Phi_{\mathrm{win}}(\beta_j)-u_{j+1}\right).$$
For the first term, recall that $x(b_j)$ is the fixed point of $\Psi^h_{N,Q}$ started from $b_j$ (admissible by (ii)), so \eqref{eqn:neural_drift} and \eqref{eqn:total_error_eq4} give
$$\left\|\mathcal S_N\left[a^n_\varepsilon(b_j)-x(b_j)\right]^Q\right\|_{X^\alpha}
\le \left\|a^n_\varepsilon(b_j)-x(b_j)\right\|_{X^\alpha_{N,Q}}
\le \frac{E_{\mathrm{net}}}{1-\kappa}+2R\,\kappa^n.$$
For the third term, $\mathcal S_N\Phi_{\mathrm{win}}(\beta_j)-u_{j+1}$ is the nodal error at $q=Q$ of the fixed point $x(\beta_j)$ using the data $u_j$. Thus using \eqref{eqn:total_error_eq5}, \eqref{eqn:total_error_eq7}, and Lemma~\ref{lem:sampling} yields
$$\left\|\mathcal S_N\Phi_{\mathrm{win}}(\beta_j)-u_{j+1}\right\|_{X^\alpha}
\le\frac{E_{\mathrm{cons}}+E_{\mathrm{samp}}}{1-\kappa}+E_{\mathrm{samp}},$$
where $E_{\mathrm{cons}}$ and $E_{\mathrm{samp}}$ use the data $u_j$. Adding them together, they can be bounded by Theorem~\ref{thm:total_error} (without $E_{\mathrm{rec}}$), which is at most $\eta_{\mathrm{win}}$ by (i). Since $\|v\|_{X^s} \le \lambda_1^{-(\alpha-s)/2}\|v\|_{X^\alpha}$ for $s \le \alpha$, the terms are at most $\lambda_1^{-(\alpha-s)/2}\eta_{\mathrm{win}}$ in $X^s$. We use (iii) to bound the middle term (in both norms). Writing the difference as $e_j:=\|b_j-\beta_j\|_{\ell^2_s}$, we have
\begin{equation}
\|\tilde u_{j+1}-u_{j+1}\|_{X^\alpha}\le\eta_{\mathrm{win}}+C_{\mathrm{lift}}e_j,
\qquad
\|\tilde u_{j+1}-u_{j+1}\|_{X^s}\le\lambda_1^{-(\alpha-s)/2}\eta_{\mathrm{win}}+\Theta_{\mathrm{win}}e_j .
\label{eqn:continuation_step}
\end{equation}

Next we look over the windows. Since $\tilde u_{j+1}\in E_N$,
$$e_{j+1}\le\|P_N(\tilde u_{j+1}-u_{j+1})\|_{X^s}+\|(\mathcal C_N-\mathcal C^h_N\mathcal I_h)u_{j+1}\|_{\ell^2_s}
\le\|\tilde u_{j+1}-u_{j+1}\|_{X^s}+\lambda_1^{-(\alpha-s)/2}\varepsilon^{\mathrm{enc}}_{N,h}(u_{j+1}).$$
 Then using \eqref{eqn:continuation_step}, we get $e_{j+1}\le\Theta_{\mathrm{win}}e_j+\lambda_1^{-(\alpha-s)/2}\bar\eta$. Since $e_0=0$, we get $e_j \le \lambda_1^{-(\alpha-s)/2}\bar \eta \sum_{i<j}\Theta_{\mathrm{win}}^i$, and the first inequality of \eqref{eqn:continuation_step} produces the result.

Note that the steps above used (ii) and (iii) with $b_j$, which requires $\|b_j-\beta_j\|_{\ell^2_\alpha}\le\delta$. By the same splitting, we have  
$$\|b_j-\beta_j\|_{\ell^2_\alpha}\le\|\tilde u_j-u_j\|_{X^\alpha}+\varepsilon^{\mathrm{enc}}_{N,h}(u_j)\le\bar\eta+C_{\mathrm{lift}}e_{j-1}.$$
The condition that this is less than $\delta$ in the theorem statement allows for the induction to be valid.
\end{proof}

\noindent The condition (iii) holds when we set $s=\alpha$ and $\Theta_{\mathrm{win}}=C_{\mathrm{lift}}=1/(1-\kappa)$. Under an energy estimate, it holds with $s=0$, see Lemmas~\ref{lem:energy_stability} and \ref{lem:lifting}.

\subsection{Realization as a ReLU Network}
\label{sec:app_realization}

\begin{proof}[Proof of Theorem~\ref{thm:realization}]
 The proof combines the prior results and is a bit of bookkeeping. By \eqref{eqn:admissible_data} and Proposition~\ref{prop:discrete_ball_containment}, $A^0(w)\in B^{N,Q}_{R_0}$ is an admissible starting point for Theorem~\ref{thm:total_error}. Let $A^k(w)=a^k_\varepsilon$ denote the $k$-th neural Picard sweep. The accuracy bound (i) from this theorem is the result of Theorem~\ref{thm:main}.

We next count the weights of a single sweep of $\Psi^h_{N,Q,\varepsilon}$. This has three parts. For the synthesis, at each of the $Q$ time points the pair $\left(\mathcal S^h_Na^r,\nabla_h\mathcal S^h_Na^r\right)$ is computed by applying $d_x+1$ transformation to a coefficient vector $a^r$ and are from Assumption~\ref{assumption:fast_transform}. In the Fourier case they are repeated transforms, adjusted by multipliers $\mathrm i k_j$, and thus a single factorization is enough. For a general eigenbasis $\partial_je_k$ may not be directly proportional to $e_k$ and Assumption~\ref{assumption:fast_transform} applied to each of the $d_x+1$ operations. By Assumption~\ref{assumption:fast_transform}, this process costs at most $(1+d_x)d\left(\mathcal T(M)+N\right)$ weights and $C\log M$ layers per time stamp, and so it is at most $(1+d_x)dQ\left(\mathcal T(M)+N\right)$ weights for the synthesis.

For the nonlinear part, the neural network $\widehat f_\varepsilon$ is applied in parallel at the $MQ$ space-time points, which contributes $MQ\,W(\widehat f_\varepsilon)$ weights and depth $\dep(\widehat f_\varepsilon)$. Since the copies are identical, the number of distinct weights is just $W(\widehat f_\varepsilon)$.

For the analysis and the temporal parts, there are $Q$ applications of $\mathcal C^h_N$ which costs at most $dQ\mathcal T(M)$ weights and $C\log M$ layers. We need to also compute
$$v^q:=S_N(t_q)\,a_0^h+\Delta t\sum_{r=0}^{q-1}S_N(t_q-t_r)\,\mathcal C^h_Ng^r,$$
for $q=0,\ldots,Q$. These are exactly the iterates from \eqref{eqn:duhamel_recurrence}, with diagonal $S_N(\Delta t)$ and so it decouples into $dN$ independent scalar iterations $v^{q+1}_k=\rho_kv^q_k+c^q_k$ with $\rho_k:=e^{-\lambda_k\Delta t}$. The process is associative, i.e., the maps $x\mapsto\rho x+c$ can be composed by $(\rho_1,c_1)\circ(\rho_2,c_2)=(\rho_1\rho_2,\rho_1c_2+c_1)$, and since the multipliers $\rho$ are constants, each composition is an affine map of the data components $(c_1,c_2)$ with two nonzero weights. A parallel scan can produce $v^0,\ldots,v^Q$ from $c^0,\ldots,c^{Q-1}$ and $a^h_0$ using $CQ$ nodes in $C\log(2Q)$ layers. In a deep network, the quantities that are not updated at a given layer are carried forward using the identity process from Lemma~\ref{lem:nn_calculus}(iv), which is at most $Q+1$ values per layer with four weights each. Therefore, one scalar equation costs at most $CQ\log(2Q)$ weights. Summing over all the $dN$ recurrence equations, the total cost is (at most) $CdNQ\log(2Q)$ weights and $C\log(2Q)$ layers. 

Putting this together, one neural Picard sweep has depth at most $\dep(\widehat f_\varepsilon)+C\log(MQ)$ and size at most
$$C\left[(2+d_x)dQ\mathcal T(M)+dNQ\log(2Q)+MQ\,W(\widehat f_\varepsilon)\right],$$
where $(1+d_x)dQN$ from the derivative synthesis have been absorbed into $dNQ\log(2Q)$.  Each sweep also maps $a^h_0$ (dimension $dN$) through the layers using the identity (Lemma~\ref{lem:nn_calculus}(iv)), since it starts from $v^0=a^h_0$. This costs $CdN\left(\dep(\widehat f_\varepsilon)+\log(MQ)\right)\le CdN\left(W(\widehat f_\varepsilon)+\log(MQ)\right)$ weights, using $\dep(\widehat f_\varepsilon)\le W(\widehat f_\varepsilon)$, and the first part is at most $MQ\,W(\widehat f_\varepsilon)$ if $N\le MQ$. Composing $n$ sweeps and including the encoder step $w\mapsto\mathcal C^h_Nw$, gives (ii) and the first bound in (iii). If $N\le CM$, then $dN\left(W(\widehat f_\varepsilon)+\log(MQ)\right)\le C\left(MQ\,W(\widehat f_\varepsilon)+dMQ\log(MQ)\right)$, so the identity cost can be absorbed into \eqref{eqn:realization_size}.

For the second bound in (iii), we assumed that $N\le CM$ and $\mathcal T(M)\le CM\log M$. Thus (up to constants) $dNQ\log(2Q)\le CdMQ\log(MQ)$, $dN\log(MQ)\le dMQ\log(MQ)$, $(2+d_x)dQ\,\mathcal T(M)\le C(2+d_x)dMQ\log M$, and the encoder costs $\mathcal T(M)\le CM\log M$. Combining that together gives \eqref{eqn:realization_size}. Part (iv) follows from Lemma~\ref{lem:sharing} and Definition~\ref{def:spectral_layers}, where the block is one neural network applied at $nMQ$ points, and the spectral layers are the same across time-stamps and sweeps.
\end{proof}

\begin{proof}[Proof of Lemma~\ref{lem:linear_postcomposition}]
For any $g\in\widetilde X^\beta$ and $k\le N$, using \eqref{eqn:multiplier} gives
$$(\mathcal C_N\mathcal Mg)_k=\langle\mathcal Mg,e_k\rangle_H=\langle g,\mathcal M^{*}e_k\rangle=\mu_k\langle g,e_k\rangle=(D_\mu\mathcal C_Ng)_k,$$
which is result (i). The matrix is diagonal, and diagonal matrices commute with $S_N(t)=e^{-t\Lambda_N}$. Also, the norm is
$$\|D_\mu c\|_{\ell^2_s}^2=\textstyle\sum_k\lambda_k^s|\mu_k|^2|c_k|^2\le C_{\mathcal M}^2\|c\|^2_{\ell^2_s}.$$

For (ii), we look at where $\mathcal C^h_N$ is used. In Theorem~\ref{thm:fully_discrete_fixed_point}, it appears only through the discrete smoothing
$\|S_N(\tau)\mathcal C_N^hv\|_{\ell^2_\alpha}\le CC_{\mathrm{stab}}\tau^{-(\alpha-\beta)/2}\|v\|_{\widetilde X^\beta}$, and so putting $D_\mu$ into it and using (i) would just multiply the RHS by at most $C_{\mathcal M}$. The term $v=f_{\mathrm{loc}}(u,\nabla u)$ would now be estimated in $\widetilde X^\beta$, giving a $L^{\mathrm{loc}}_R$ and $F^{\mathrm{loc}}_R$ factor instead of $L_R$ and $F_R$. The same applies in Theorems~\ref{thm:neural_consistency} and \ref{thm:fully_discrete_consistency}, where $D_\mu$ commutes with $S_N(t_q-t_r)$ and can be absorbed into the constants. Theorems~\ref{thm:total_error} and \ref{thm:main} use only the conclusions of these results. 

Part (iii) is direct: $D_\mu=\operatorname{diag}(\mu_1,\ldots,\mu_N)$ is determined only by $\mathcal M$ and the eigenfunctions $e_1,\ldots,e_N$ of $L$ through \eqref{eqn:multiplier}. If $\mathcal M$ is the orthogonal projection of $\widetilde H$ onto $H$, then $\mathcal M^*e_k=e_k$ since $e_k\in H$, so we have $\mu_k=1$ and $D_\mu=I$.
\end{proof}

\subsection{Solution Operator Proofs}
\label{sec:app_operator}

\begin{proof}[Proof of Lemma~\ref{lem:uniform_wp}]
In the proof of Proposition~\ref{prop:contraction}, we can replace the radius by the value  $R=1+2R_0$. The invariance part holds since $\|S(\cdot)u_0\|_{X^\alpha_T}\le R_0<R/2$ for every $u_0$ with $\|u_0\|_{X^\alpha}\le R_0$. The value of $T_0$ is set by $CT_0^\theta L_R\le\frac{1}{2}$, which only sees $u_0$ through $R$, which is uniform.
\end{proof}

\begin{proof}[Proof of Lemma~\ref{lem:compact_embedding}]
By Proposition~\ref{prop:galerkin_approximation}, $\|I-P_N\|_{\mathcal L(X^\gamma,X^\alpha)}\le\lambda_{N+1}^{-(\gamma-\alpha)/2}\to0$ as $N\to\infty$. This map is the limit of finite-rank operators $P_N$ (measured in the operator norm) and thus is compact, so the set $\mathcal U_{R_0}$ is totally bounded in $X^\alpha$. It is also closed in $X^\alpha$, since if we take $u_m\to u$ in $X^\alpha$ with $\|u_m\|_{X^\gamma}\le R_0$, then $\langle u_m,e_k\rangle\to\langle u,e_k\rangle$ for every $k$. By Fatou's lemma, the series is bounded by $$\|u\|_{X^\gamma}^2=\sup_K\sum_{k\le K}\lambda_k^\gamma|\langle u,e_k\rangle|^2=\sup_K\lim_{m\to\infty}\sum_{k\le K}\lambda_k^\gamma|\langle u_m,e_k\rangle|^2\le\liminf_{m\to\infty}\|u_m\|_{X^\gamma}^2\le R_0^2.$$
Thus by completeness, we have shown compactness.
\end{proof}

\begin{proof}[Proof of Lemma~\ref{lem:uniform_encoding}]
Set $E_h:=\mathcal C^h_N\mathcal I_h-\mathcal C_N$, so by Lemma~\ref{lem:encoder_bounded} and the bound $\|\mathcal C_Nv\|_{\ell^2_\alpha} \le \|v\|_{X^\alpha}$, we have the uniform in $h$ estimate $\|E_hv\|_{\ell^2_\alpha}\le C_E\|v\|_{X^\alpha}$ (with $C_E:=C^\alpha_{\mathrm{stab}}+1$). The set $\bigcup_{K\ge1}E_K$ (finite eigen-expansion) is dense in $X^\alpha$. For $w\in\bigcup_{K\ge1}E_K$, Assumption~\ref{assumption:quadrature_consistency} gives
$$\|E_hw\|_{\ell^2_\alpha}\le C_{\mathrm{quad}}h^r\|w\|_{X^{\alpha+r}} \to 0,$$
as $h \to 0$. Given $\epsilon>0$, choose a finite $\frac{\epsilon}{3C_E}$-net $\{v_1,\ldots,v_J\}$ of $\mathcal V$ (exists by compactness) and a finite set of elements $w_j\in\bigcup_{K\ge1}E_K$ where $\|v_j-w_j\|_{X^\alpha}\le\frac{\epsilon}{3C_E}$. For $v\in\mathcal V$ and the nearest $v_j$, we have
$$\|E_hv\|_{\ell^2_\alpha}\le C_E\|v-w_j\|_{X^\alpha}+\|E_hw_j\|_{\ell^2_\alpha}\le\frac{2\epsilon}{3}+\max_{1\le i\le J}\|E_hw_i\|_{\ell^2_\alpha}.$$
The max is over finitely many terms and is at most $\frac{\epsilon}{3}$ for $h$ small enough. Therefore, we have $\sup_{v\in\mathcal V}\|E_hv\|_{\ell^2_\alpha}\le\epsilon$ for $h$ sufficiently small.
\end{proof}

\begin{proof}[Proof of Theorem~\ref{thm:operator_rates}]
We just need to show that every constant used in Theorems~\ref{thm:total_error}, \ref{thm:main}, and \ref{thm:realization} depends only on the data via $R_0$ and $\rho_\gamma$. Thus, the parameter can be controlled uniformly and the neural network can be applied to every $u_0\in\mathcal U$.

Since $\|u_0\|_{X^\alpha}\le R_0$, Lemma~\ref{lem:uniform_wp} gives  $R=1+2R_0$, $T_0$, and a contraction constant $\kappa_0$ which is the same to all $u_0\in\mathcal U$, and that $G_T[u_0]\in B^\alpha_R$. Because of this, we have several implications. The compact set $K$ from Assumption~\ref{assumption:structured_f} depends only on $R$, so one neural network $\widehat f_\varepsilon$ is valid for every input data, and using Corollary~\ref{cor:inputs_in_K} it is evaluated only on set $K$. The constants $L_R$ and $F_R$, and thus $\kappa=CC_{\mathrm{stab}}L_RT^\theta$ in Theorem~\ref{thm:fully_discrete_fixed_point}, are the same for the data, and by Assumption~\ref{assumption:uniform_data} with $\rho_\gamma$, are the values $R_\gamma$ and $C_R$ from Lemmas~\ref{lem:interpolation} and \ref{lem:sampling} and from Theorem~\ref{thm:fully_discrete_consistency}. The initialization terms are uniformly controlled. First, Assumption~\ref{assumption:quadrature_consistency} gives $$E^h_{\mathrm{init}} \le C_{\mathrm{quad}}h^r\|u_0\|_{X^{\alpha+r}}\le C\rho_\gamma\,h^r,$$ since $\alpha+r\le\gamma$ by \eqref{eqn:quad_order_cap}. Second by Proposition~\ref{prop:galerkin_approximation}, we have $\|(I-P_N)u_0\|_{X^\alpha}\le\lambda_{N+1}^{-(\gamma-\alpha)/2}\rho_\gamma$. 

Lastly, the admissibility condition \eqref{eqn:admissible_data} holds uniformly over the input set by Lemma~\ref{lem:encoder_bounded}, namely, $\|\mathcal C^h_N\mathcal I_hu_0\|_{\ell^2_\alpha}\le C^\alpha_{\mathrm{stab}}R_0$ for every $u_0\in\mathcal U$ and admissible $(N,h)$. We need to increase the radius by replacing $R_0$ by $C^\alpha_{\mathrm{stab}}R_0$ and $R$ by $1+2C^\alpha_{\mathrm{stab}}R_0$ in Lemma~\ref{lem:uniform_wp}. But fortunately, this can be done once and can be fixed before any parameter is set. This will change constants but will not affect the exponent. Therefore the bound of Theorem~\ref{thm:total_error} holds with one constant $C_R$ for all $u_0\in\mathcal U$, and the balancing of terms in Theorem~\ref{thm:main} picks $(N,h,Q,n,\varepsilon)$ once and uniformly. Theorem~\ref{thm:realization} then applies directly and completes the proof.
\end{proof}

\begin{proof}[Proof of Proposition~\ref{prop:divergence_form}]
Result (i) is directly implied by $\|\mathcal Mg\|_{X^\beta}^2=\sum_k\lambda_k^{\alpha-1} |\mu_k|^2 |\langle g,\tilde e_k\rangle|^2\le C^2_{\mathcal M}\|g\|_{\widetilde X^{\alpha}}^2$. For part (ii), note that $D_\mu$ and $S_N(\tau)$ are both diagonal and so they commute, which yields
$$\left\|S_N(\tau)D_\mu \widetilde{\mathcal C}^h_Nv\right\|_{\ell^2_\alpha}
= \left\|D_\mu S_N(\tau) \widetilde{\mathcal C}^h_N v \right\|_{\ell^2_\alpha}
\le C_{\mathcal M}\left\| S_N(\tau)\widetilde{\mathcal C}^h_N v\right\|_{\ell^2_{\alpha+1}},$$
where we used that $\|D_\mu c\|^2_{\ell^2_\alpha}=\sum_k\lambda_k^\alpha|\mu_k|^2|c_k|^2\le C^2_{\mathcal M}\sum_k\lambda_k^{\alpha+1}|c_k|^2$ and the condition that $|\mu_k| \le C_{\mathcal M}\lambda_k^{1/2}$.  Next, by duality in the proof of Theorem~\ref{thm:fully_discrete_fixed_point} applied to the ambient space and over $\widetilde E_N:=\operatorname{span}\{\tilde e_1,\ldots,\tilde e_N\}$, we have
$$ \left\|S_N(\tau)\widetilde{\mathcal C}^h_Nv\right\|_{\ell^2_{\alpha+1}}
=\sup_{\substack{\phi\in\widetilde E_N\\ \|\phi\|_{\widetilde X^{-\alpha-1}}\le1}}\left|\sum_i\omega^{N,h}_iv(x_i)(S(\tau)\phi)(x_i)\right|
\le C^\alpha_{\mathrm{stab}}\|v\|_{\widetilde X^{\alpha}}\sup_\phi\|S(\tau)\phi\|_{\widetilde X^{-\alpha}},$$
where $S(\tau)$ maps into itself since $\tilde e_k$ are eigenfunctions and $\|S(\tau)\phi\|_{X^{-\alpha}}\le C\tau^{-1/2}\|\phi\|_{X^{-\alpha-1}}$ by the negative-order spaces from Definition~\ref{def:negative_spaces}. Since the exponent $\tfrac{1}{2}$ agrees with $\sigma=\frac{\alpha-\beta}{2}$ for $\beta=\alpha-1$, the other estimate hold.

For Part (iv), the network error $g_\varepsilon$ is only controlled in $\ell^\infty$, so $\|\widetilde{\mathcal C}^h_Ng_\varepsilon\|_{\ell^2_0}\le C\varepsilon$ from the proof of Theorem~\ref{thm:neural_consistency}, and
$$\left\|S_N(\tau)D_\mu\widetilde{\mathcal C}^h_N g_\varepsilon\right\|_{\ell^2_\alpha}
\le C_{\mathcal M}\left\|S_N(\tau) \widetilde{\mathcal C}^h_N g_\varepsilon \right\|_{\ell^2_{\alpha+1}}
\le C \varepsilon \min \left\{\lambda_N^{(\alpha+1)/2},\tau^{-(\alpha+1)/2}\right\}.$$
The rest of that proof applies, where we put $\alpha+1$ in place of $\alpha$, and thus have the factor $\xi_{\alpha+1}$.

Lastly, part (iii) holds since the network now only sees the values $\mathcal S^h_N a^r$, which are well-defined point-wise using continuity alone.
\end{proof}

\subsection{Window Stability for General Governing Laws}
\label{sec:window_stability_general}

This subsection gives the condition (iii) of Proposition~\ref{prop:continuation} for general laws, where Lemma~\ref{lem:allen_cahn_window} is the Allen-Cahn case.

\begin{lemma}[Discrete Energy Stability]
\label{lem:energy_stability}
Let $B\subset\bR^N$, and suppose there are $\xi\in\bR$ and $\Gamma_0\ge0$ such that, for $a',b'\in B$,
$$\langle G_N(a')-G_N(b'),a'-b'\rangle_0 \le \xi\,\|a'-b'\|^2_{\ell^2_0},
\qquad
\|G_N(a')-G_N(b')\|_{\ell^2_0}\le\Gamma_0\|a'-b'\|_{\ell^2_0}.$$
If $\Delta t\,\Gamma_0^2\le1$ and $x^q(a),x^q(b)\in B$ for $0\le q<Q$, then
$$\|z^q\|_{\ell^2_0}\le e^{\psi t_q}\|z^0\|_{\ell^2_0}$$
for $0 \le q\le Q$ where $\psi:=\xi+\tfrac{1}{2}-\lambda_1$.
\end{lemma}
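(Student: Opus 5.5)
We follow the template of the proof of Lemma~\ref{lem:allen_cahn_window}, replacing the explicit monotonicity computation for $2u-u^3$ with the two abstract hypotheses. Fix $0\le q<Q$. By \eqref{eqn:window_stepper} the differences satisfy
$$z^{q+1}=S_N(\Delta t)\left(z^q+\Delta t\,d^q\right),\qquad d^q=G_N(x^q(a))-G_N(x^q(b)),$$
and since $x^q(a),x^q(b)\in B$ for $q<Q$ both hypotheses apply to $d^q$. We first bound the pre-semigroup step and then apply the decay estimate \eqref{eqn:smoothing} with $s=0$.

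For the pre-semigroup step, we expand the square:
$$\|z^q+\Delta t\,d^q\|_{\ell^2_0}^2=\|z^q\|^2_{\ell^2_0}+2\Delta t\langle d^q,z^q\rangle_0+\Delta t^2\|d^q\|^2_{\ell^2_0}.$$
The one-sided hypothesis gives $\langle d^q,z^q\rangle_0\le\xi\|z^q\|^2_{\ell^2_0}$. The Lipschitz hypothesis together with $\Delta t\,\Gamma_0^2\le1$ gives $\Delta t^2\|d^q\|^2_{\ell^2_0}\le\Delta t\,\Gamma_0^2\Delta t\|z^q\|^2_{\ell^2_0}\le\Delta t\|z^q\|^2_{\ell^2_0}$. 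Hence
$$\|z^q+\Delta t\,d^q\|^2_{\ell^2_0}\le\left(1+(2\xi+1)\Delta t\right)\|z^q\|^2_{\ell^2_0}\le e^{(2\xi+1)\Delta t}\|z^q\|^2_{\ell^2_0},$$
using $1+x\le e^x$. This inequality is valid for every real $\xi$ provided the left factor is nonnegative, and when it is negative the bound holds trivially since the left-hand side is a squared norm.

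Taking square roots, we apply $\|S_N(\Delta t)c\|_{\ell^2_0}\le e^{-\lambda_1\Delta t}\|c\|_{\ell^2_0}$ from \eqref{eqn:smoothing}. This yields
$$\|z^{q+1}\|_{\ell^2_0}\le e^{(\xi+\frac12-\lambda_1)\Delta t}\|z^q\|_{\ell^2_0}=e^{\psi\Delta t}\|z^q\|_{\ell^2_0}.$$
Induction from $q=0$ gives $\|z^q\|_{\ell^2_0}\le e^{\psi q\Delta t}\|z^0\|_{\ell^2_0}=e^{\psi t_q}\|z^0\|_{\ell^2_0}$ for all $0\le q\le Q$. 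The case $q=0$ is trivial, and the step from $q$ to $q+1$ only uses membership of the iterates in $B$ at index $q<Q$, which matches the hypothesis.

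No step is a genuine obstacle. The only point needing care is the handling of $\Delta t^2\|d^q\|^2$: with $\Delta t\Gamma_0^2\le1$ it contributes exactly the $\tfrac12$ in $\psi$. In the Allen--Cahn lemma this same term was bounded more crudely, which is why the rate there ($\tfrac32$, with $\xi=2$ and $\lambda_1=1$) agrees with $\psi=\xi+\tfrac12-\lambda_1$ here.
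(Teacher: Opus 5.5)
Your proof is correct and follows the paper's argument essentially line by line. You expand $\|z^q+\Delta t\,d^q\|_{\ell^2_0}^2$, use the one-sided bound and $\Delta t^2\Gamma_0^2\le\Delta t$ to get the factor $e^{(2\xi+1)\Delta t}$, apply the $e^{-\lambda_1\Delta t}$ decay of $S_N(\Delta t)$, and induct. Your caveat about the sign of $1+(2\xi+1)\Delta t$ is unnecessary, since $1+x\le e^x$ holds for all real $x$ and $\|z^q\|_{\ell^2_0}^2\ge0$.
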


\begin{proof}[Proof of Lemma~\ref{lem:energy_stability}]
By \eqref{eqn:window_stepper} and \eqref{eqn:smoothing}, $\|z^{q+1}\|_{\ell^2_0}\le e^{-\lambda_1\Delta t}\|z^q+\Delta t\,d^q\|_{\ell^2_0}$. Using the assumption and expanding the expression gives
$$\|z^q+\Delta t\,d^q\|^2_{\ell^2_0}=\|z^q\|^2_{\ell^2_0}+2\Delta t\langle d^q,z^q\rangle_0+\Delta t^2\|d^q\|^2_{\ell^2_0}\le\left(1+(2\xi+1)\Delta t\right)\|z^q\|^2_{\ell^2_0}\le e^{(2\xi+1)\Delta t}\|z^q\|^2_{\ell^2_0},$$
where we used $\Delta t^2\Gamma_0^2\le\Delta t$ and $1+x\le e^x$. Thus $\|z^{q+1}\|_{\ell^2_0}\le e^{\psi\Delta t}\|z^q\|_{\ell^2_0}$, and the claim follows using induction.
\end{proof}

The one-sided constant $\xi$ can be much smaller than a Lipschitz constant. If $f=f(u)$ satisfies $\langle f(y)-f(y'),y-y'\rangle\le\xi_f|y-y'|^2$ and has Lipschitz constant $\operatorname{Lip}(f)$ on a convex neighborhood of $K$, then \eqref{eqn:analysis_pairing}, \eqref{eqn:sampling_pairing}, and Assumption~\ref{assumption:quadrature_stability}(i) gives the conditions $\xi\le C^0_{\mathrm{stab}}\max\{\xi_f,0\}$ and $\Gamma_0\le C^0_{\mathrm{stab}}\operatorname{Lip}(f)$, independently of $N$ and $h$. For laws involving $\nabla u$, $\Gamma_0$ grows like $\lambda_N^{1/2}$, and an $N$-independent $\xi$ would require some control of the quadrature aliasing. While this is possible, we do not pursue it here.

\begin{lemma}[$X^\alpha$ Over Windows]
\label{lem:lifting}
Let $T\le1$ and $B$ be from Lemma~\ref{lem:energy_stability}. If $x^q(a),x^q(b) \in B$ for $0\le q<Q$ and the following hold
\begin{enumerate}
\item[(i)] $\|d^0\|_{\ell^2_0} \le \Gamma_0\|z^0\|_{\ell^2_0}$ with $\Delta t\,\Gamma_0\le1$;
\item[(ii)] $\|G_N(a')-G_N(b')\|_{\ell^2_\beta} \le \Gamma_\alpha\|a'-b'\|_{\ell^2_\alpha}$ for $a',b'\in B$, and $2^{\alpha/2}c_\alpha\Gamma_\alpha T^\theta/\theta\le\frac{1}{2}$;
\item[(iii)] $\|d^p\|_{\ell^2_0} \le \Gamma_*\, t_p^{-1/2}\|z^0\|_{\ell^2_0}$ for $1\le p<Q$,
\end{enumerate}
then
$$\|\Phi_{\mathrm{win}}(a)-\Phi_{\mathrm{win}}(b)\|_{\ell^2_\alpha}\le C_{\mathrm{lift}}\,\|a-b\|_{\ell^2_0},$$
where
$C_{\mathrm{lift}} := 4c_\alpha T^{-\alpha/2} \left( 1 + 2^{(\alpha-1)/2} \Gamma_*T^{1/2}\right)$.
\end{lemma}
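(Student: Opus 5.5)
The plan is to iterate the window stepper \eqref{eqn:window_stepper} into its Duhamel form. Then I would control a time-weighted $\ell^2_\alpha$ norm of the differences $z^q$ by a bootstrap that absorbs the nonlinear contribution, using the smallness condition in (ii). Since $S_N(t_q-t_r)=S_N(\Delta t)^{q-r}$, for every $1\le q\le Q$
$$z^q=S_N(t_q)z^0+\Delta t\sum_{p=0}^{q-1}S_N(t_q-t_p)\,d^p.$$
The obstacle is that the $d^p$ are only controlled in $\ell^2_0$ by (i) and (iii). Lifting $\ell^2_0$ to $\ell^2_\alpha$ costs $(t_q-t_p)^{-\alpha/2}$ by \eqref{eqn:smoothing}, which is not integrable near $p=q$ when $\alpha\ge2$. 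I would therefore use the $\ell^2_0$ bounds only for $p$ far from $q$, and the $\ell^2_\beta$ bound (ii) near $q$, where the kernel is only $(t_q-t_p)^{-\sigma}$.

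Concretely, set $M:=\max_{1\le q\le Q}t_q^{\alpha/2}\|z^q\|_{\ell^2_\alpha}$, which is finite since everything is finite-dimensional. Fix $q\ge1$ and split the sum into $p<q/2$ and $q/2\le p\le q-1$. The pieces are bounded as follows.
\begin{itemize}
\item \textbf{Free term.} By \eqref{eqn:smoothing} with $s=0$ and $r=\alpha$, $\|S_N(t_q)z^0\|_{\ell^2_\alpha}\le c_\alpha t_q^{-\alpha/2}\|z^0\|_{\ell^2_0}$.
\item \textbf{The term $p=0$.} Since $t_q-t_0=t_q$, hypothesis (i) and $\Delta t\,\Gamma_0\le1$ give at most $c_\alpha t_q^{-\alpha/2}\|z^0\|_{\ell^2_0}$.
\item \textbf{Far terms $1\le p<q/2$.} Here $t_q-t_p\ge t_q/2$, so the kernel is at most $2^{\alpha/2}c_\alpha t_q^{-\alpha/2}$. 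Hypothesis (iii) and $\Delta t\sum_{1\le p<q/2}t_p^{-1/2}\le 2(t_q/2)^{1/2}\le\sqrt2\,T^{1/2}$ then give a contribution of order $2^{(\alpha-1)/2}c_\alpha\Gamma_*T^{1/2}\,t_q^{-\alpha/2}\|z^0\|_{\ell^2_0}$, up to the harmless factor $2$.
\item \textbf{Near terms $q/2\le p\le q-1$.} I would use \eqref{eqn:smoothing} with $s=\beta$ and $r=\alpha-\beta$, together with (ii), which gives $\|d^p\|_{\ell^2_\beta}\le\Gamma_\alpha\|z^p\|_{\ell^2_\alpha}\le\Gamma_\alpha t_p^{-\alpha/2}M\le 2^{\alpha/2}\Gamma_\alpha t_q^{-\alpha/2}M$. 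Summing the kernel, $\Delta t\sum_p(t_q-t_p)^{-\sigma}\le T^\theta/\theta$ as in the proof of Theorem~\ref{thm:fully_discrete_fixed_point}.
\end{itemize}
The near-term contribution is therefore at most $2^{\alpha/2}c_\alpha\Gamma_\alpha T^\theta\theta^{-1}\,t_q^{-\alpha/2}M\le\tfrac12 t_q^{-\alpha/2}M$ by the second condition in (ii).

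Multiplying by $t_q^{\alpha/2}$ and maximizing over $q$ gives
$$M\le c_\alpha\bigl(2+C\,2^{(\alpha-1)/2}\Gamma_*T^{1/2}\bigr)\|z^0\|_{\ell^2_0}+\tfrac12M.$$
Absorbing the last term yields $M\le 4c_\alpha(1+2^{(\alpha-1)/2}\Gamma_*T^{1/2})\|z^0\|_{\ell^2_0}$, after adjusting the harmless factor $2$ from the far-term sum, possibly using $T\le1$. Finally, $\|\Phi_{\mathrm{win}}(a)-\Phi_{\mathrm{win}}(b)\|_{\ell^2_\alpha}=\|z^Q\|_{\ell^2_\alpha}\le T^{-\alpha/2}M$ gives exactly $C_{\mathrm{lift}}$.

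The step I expect to be delicate is the bookkeeping in the near region. One must ensure that the weight $t_p^{-\alpha/2}\le2^{\alpha/2}t_q^{-\alpha/2}$ is used only for $p\ge q/2$, and that the membership $x^q(a),x^q(b)\in B$ for $q<Q$ is exactly what (ii) needs for every $p\le Q-1$. If the constant comes out with an extra factor, I would rebalance the split point.
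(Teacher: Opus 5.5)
Your proposal is correct and matches the paper's proof step for step: the same weighted quantity $\max_{q\ge1} t_q^{\alpha/2}\|z^q\|_{\ell^2_\alpha}$, the same three-way split ($p=0$ via (i), far terms $t_p\le t_q/2$ via (iii), near terms via (ii) with the integrable kernel $(t_q-t_p)^{-\sigma}$), and the same absorption of the $\tfrac12$ term. No rebalancing is needed at the end: after absorbing, $2\cdot 2^{(\alpha+1)/2}=4\cdot 2^{(\alpha-1)/2}$, so the bound is exactly $C_{\mathrm{lift}}T^{\alpha/2}\|z^0\|_{\ell^2_0}$, and the hypothesis $T\le1$ is never used.
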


Condition (i) is weaker than the condition of Lemma~\ref{lem:energy_stability} and it holds under the choices of Theorem~\ref{thm:main} when $u_0\in X^{\gamma_{\mathrm s}}$. This holds even for laws involving the gradient, there $\Gamma_0\le C(1+\lambda_N^{1/2})$, while $\Delta t\,\lambda_N\le C(c_{\mathrm{err}}\eta)^{1/\zeta_{\mathrm t}-1/\zeta_{\mathrm s}}\le C$ since $\zeta_{\mathrm t}\le\zeta_{\mathrm s}$, thus $\Delta t\,\Gamma_0\le C\Delta t^{1/2}\le1$ for sufficiently small $\eta$. Condition (iii) of Lemma~\ref{lem:lifting} follows from either of two bounds on $G_N$. Using the energy estimate (Lemma~\ref{lem:energy_stability}) it holds with $\Gamma_*=\Gamma_0e^{\psi_+T}T^{1/2}$ and $\psi_+:=\max\{\psi,0\}$, since $\|d^p\|_{\ell^2_0}\le\Gamma_0\|z^p\|_{\ell^2_0}\le\Gamma_0e^{\psi_+T}\|z^0\|_{\ell^2_0}$ and $t_p\le T$. Using a Lipschitz bound $\|G_N(a')-G_N(b')\|_{\ell^2_0} \le \Gamma_1\|a'-b'\|_{\ell^2_1}$ on $B$, it holds with $\Gamma_*=4c_\alpha \Gamma_1e^{64c_\alpha^2\Gamma_1^2T}$, as shown after the proof.

\begin{proof}[Proof of Lemma~\ref{lem:lifting}]
We can write out the time-stepper \eqref{eqn:window_stepper} which gives $$z^q=S_N(t_q)z^0+ \Delta t\sum_{p<q}S_N(t_q-t_p)d^p.$$ 
Let $W:=\max_{1\le q\le Q } \, t_q^{\alpha/2} \|z^q\|_{\ell^2_\alpha}$ and fix $1\le q\le Q$. We split the sum into the term $p=0$, the early terms with $1\le p$ and $t_p\le t_q/2$, and the remaining terms $t_q/2<t_p<t_q$.

First $p=0$ and the $z^0$ part. By \eqref{eqn:smoothing} and (i),  they give
$$c_\alpha t_q^{-\alpha/2}(1+\Delta t\Gamma_0) \|z^0\|_{\ell^2_0} \le 2 c_\alpha t_q^{-\alpha/2} \|z^0\|_{\ell^2_0},$$
since $\Delta t\,\Gamma_0\le 1$ by (i).
For the early terms, we have $t_q-t_p\ge t_q/2$, so using \eqref{eqn:smoothing}, (iii), and $\Delta t\sum_{t_p\le t_q/2}t_p^{-1/2}\le2(t_q/2)^{1/2}$, they are bounded by $2c_\alpha\Gamma_*(t_q/2)^{(1-\alpha)/2}\|z^0\|_{\ell^2_0}$. The remaining terms satisfy $\|z^p\|_{\ell^2_\alpha} \le t_p^{-\alpha/2}W\le(t_q/2)^{-\alpha/2}W$. Thus using (ii) and \eqref{eqn:smoothing} with $r=\alpha-\beta$, they give at most
$$c_\alpha\Gamma_\alpha\left(\frac{t_q}2\right)^{-\alpha/2}W\,\Delta t \sum_{t_q/2<t_p<t_q} (t_q-t_p)^{-\sigma}     \le c_\alpha \Gamma_\alpha \left(\frac{t_q}2\right)^{-\alpha/2} \frac{T^\theta} \theta W.$$

By definition of $W$, we take the maximum after multiplying by $t_q^{\alpha/2}$, using $t_q^{1/2}\le T^{1/2}$ and condition (ii) to get
$$ W \le 2 c_\alpha \|z^0\|_{\ell^2_0} + 2^{(\alpha+1)/2} c_\alpha \Gamma_*T^{1/2} \|z^0\|_{\ell^2_0}+\tfrac{1}{2} W,$$
so $W\le C_{\mathrm{lift}}T^{\alpha/2}\|z^0\|_{\ell^2_0}$, and $\|\Phi_{\mathrm{win}}(a)-\Phi_{\mathrm{win}}(b)\|_{\ell^2_\alpha}=\|z^Q\|_{\ell^2_\alpha}\le T^{-\alpha/2}W$.
\end{proof}

\noindent\emph{Proof of the second case of (iii).} We can also show this just with Lipschitz. We need $\|z^p\|_{\ell^2_1}\lesssim t_p^{-1/2}\|z^0\|_{\ell^2_0}$, since then $\|d^p\|_{\ell^2_0}\le\Gamma_1\|z^p\|_{\ell^2_1}$ would yield (iii). 

By \eqref{eqn:smoothing} with $r=1$, valid since $\alpha\ge1$, the $z^0$ and the $p=0$ term are at most $2c_\alpha t_q^{-1/2}\|z^0\|_{\ell^2_0}$, exactly as above. The remaining terms give
\begin{equation}
\|z^q\|_{\ell^2_1}\le2c_\alpha t_q^{-1/2}\|z^0\|_{\ell^2_0}+c_\alpha\Gamma_1\,\Delta t\sum_{p=1}^{q-1}(t_q-t_p)^{-1/2}\|z^p\|_{\ell^2_1},
\label{eqn:volterra}
\end{equation}
for $1\le q\le Q$.
We bound $V:=\max_{1\le q\le Q}t_q^{1/2}e^{-c_\Gamma^2t_q}\|z^q\|_{\ell^2_1}$ with $c_\Gamma:=8c_\alpha\Gamma_1$ to close the system. The factor $t_q^{1/2}$ removes the singularity at $t=0$, and $e^{-c_\Gamma^2t_q}$ makes the sum in \eqref{eqn:volterra} small without shrinking the window. Multiplying \eqref{eqn:volterra} by $t_q^{1/2} e^{-c_\Gamma^2t_q}$ and using $\|z^p\|_{\ell^2_1}\le t_p^{-1/2} e^{c_\Gamma^2t_p}V$, we have the inequality
$$t_q^{1/2}e^{-c_\Gamma^2t_q} \|z^q\|_{\ell^2_1} \le 2 c_\alpha\|z^0\|_{\ell^2_0} + c_\alpha \Gamma_1 V\,\Delta t \sum_{p=1}^{q-1} \frac{t_q^{1/2}}{t_p^{1/2}(t_q-t_p)^{1/2}}\,e^{-c_\Gamma^2(t_q-t_p)}.$$

We consider two cases. For $t_p \le t_q/2$, we have $t_q-t_p \ge t_q/2$ and $\Delta t \sum_{t_p\le t_q/2} t_p^{-1/2} \le 2(t_q/2)^{1/2}$, so in this case we have  $2t_q^{1/2}e^{-c_\Gamma^2t_q/2} \le 2e^{-1/2} c_\Gamma^{-1}$. For the other case where $t_p>t_q/2$, we have $t_q^{1/2} t_p^{-1/2} \le \sqrt2$, so this part gives $\sqrt2\int_0^\infty s^{-1/2}e^{-c_\Gamma^2s}\,ds = \sqrt{2\pi}\,c_\Gamma^{-1}$. Since $2e^{-1/2}+\sqrt{2\pi} < 4$, the total sum is at most $4 c_\Gamma^{-1}$, and taking the max over $q$ yields
$$V \le 2 c_\alpha \|z^0\|_{\ell^2_0} + \frac{4c_\alpha\Gamma_1}{c_\Gamma}V = 2 c_\alpha \|z^0\|_{\ell^2_0}+\frac{1}{2} V.$$
Therefore $V\le4c_\alpha\|z^0\|_{\ell^2_0}$, i.e., $\|z^p\|_{\ell^2_1}\le 4 c_\alpha e^{c_\Gamma^2T} t_p^{-1/2}\|z^0\|_{\ell^2_0}$, which shows (iii) with $\Gamma_*=4c_\alpha\Gamma_1 e^{64c_\alpha^2\Gamma_1^2T}$.

The second case applies to every $f(u,\nabla u)$ considered here except those in divergence form, since it only needs the Lipschitz bound. In divergence form, $D_\mu$ is unbounded and so $\Gamma_1$ scales like $\lambda_N^{1/2}$. Thus condition (iii) of Proposition~\ref{prop:continuation} is taken from the contraction with $s=\alpha$. With $v=\mathcal S_Na'$ and $w=\mathcal S_Nb'$, \eqref{eqn:analysis_pairing}, \eqref{eqn:sampling_pairing} and Assumption~\ref{assumption:quadrature_stability}(i), applied to $v-w$ and $\nabla(v-w)$, give $\Gamma_1 \le C^0_{\mathrm{stab}}\operatorname{Lip}(f)C_1$, where $\|y\|_H+\|\nabla y\|_H \le C_1\|y\|_{X^1}$. Condition (ii) also always holds, with $\Gamma_\alpha = C^\beta_{\mathrm{stab}}L_R$, by the quadrature stability Assumption~\ref{assumption:quadrature_stability}(ii) with $s=\beta$, but possibly with a shorter window.

\section{Quantitative ReLU Approximation Results}
\label{sec:appendix}

We recall two approximation results used for the DAG approximation in Section~\ref{sec:dag}, that is, the classical rate for smooth univariate functions, and the approximation of multilinear maps. 

\begin{theorem}[Quantitative Approximation of Smooth Scalar Functions]
\label{thrm:quantitative_smooth_scalar}
If $s \ge 1$ is an integer, $[a,b]\subset\bR$ is a compact interval, and $g\in C^{s}([a,b])$, then there exist constants $C>0$ and $\delta_0\in(0,1)$, depending only on $g$, $s$, and $[a,b]$, such that for every $\delta\in(0,\delta_0]$ there is a ReLU network $N_\delta:\bR\to\bR$ with
$$\|g-N_\delta\|_{L^\infty([a,b])}\le\delta,
\qquad
\dep(N_\delta)\le C\log(\delta^{-1}),
\qquad
W(N_\delta)\le C\,\delta^{-1/s}\log(\delta^{-1}).$$
\end{theorem}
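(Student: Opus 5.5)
We follow the classical Yarotsky construction: a piecewise local Taylor approximation, with the polynomial pieces evaluated by ReLU multiplication networks.

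\textbf{Reduction and partition.} First reduce to $[a,b]=[0,1]$ by an affine change of variables, which is exactly realizable and only changes constants. Fix an integer $K\asymp\delta^{-1/s}$ and grid points $x_m=m/K$ for $m=0,\dots,K$. Let $\{\phi_m\}$ be the standard hat functions on this grid. They form a partition of unity on $[0,1]$, each $\phi_m$ is realized exactly by a ReLU network with $O(1)$ weights, and each is supported in $[x_{m-1},x_{m+1}]$.

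\textbf{Local Taylor approximation.} On $\operatorname{supp}\phi_m$ use the degree $s-1$ Taylor polynomial
$$P_m(x)=\sum_{k<s}\frac{g^{(k)}(x_m)}{k!}(x-x_m)^k.$$
Since $g\in C^s$, the remainder satisfies $|g-P_m|\le \|g^{(s)}\|_\infty K^{-s}/s!$ on that support. Because $\sum_m\phi_m=1$ and $\phi_m\ge0$, this gives
$$\Big|g-\sum_m\phi_mP_m\Big|\le CK^{-s}\le\delta/2.$$

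\textbf{ReLU realization.} It remains to realize the $sK$ products $\phi_m(x)\,(x-x_m)^k$ to accuracy $\delta/(2C_gK)$ each. Here $C_g$ bounds the Taylor coefficients, so these coefficients enter only as fixed, linear output weights; this is what makes the weights continuous in $g$, as used in Remark~\ref{rem:block_lower_bound}. All factors are bounded by $1$ on $[0,1]$. Each monomial $(x-x_m)^k$ and the product with $\phi_m$ use at most $s$ approximate multiplications. Each multiplication uses Yarotsky's squaring network and the polarization identity $xy=\tfrac14((x+y)^2-(x-y)^2)$, and costs $O(\log(1/\tau))$ weights and depth for accuracy $\tau$.

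\textbf{Counting.} Choosing $\tau\asymp\delta K^{-1}$, so that $\log(1/\tau)\asymp\log\delta^{-1}$, the total cost is $O(sK\log\delta^{-1})=O(\delta^{-1/s}\log\delta^{-1})$ weights, with depth $O(s\log\delta^{-1})$ after the $K$ branches are placed in parallel via Lemma~\ref{lem:nn_calculus}.

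\textbf{Main obstacle.} The delicate step is error propagation through the chained approximate products. Errors accumulate across $s$ sequential multiplications, and the intermediate values must stay within the domain where each squaring network is accurate. To handle this, clip the intermediates using Lemma~\ref{lem:nn_calculus}(5) to $[-2,2]$, and use a Lipschitz bound on multiplication over this box to show the errors accumulate at most geometrically in $s$, a constant factor. A second point is that the support localization is only approximate: a product computed approximately need not vanish off $\operatorname{supp}\phi_m$. This can cause at most three overlapping contributions at any point, plus uniform small errors from every branch. We absorb this by the choice $\tau\lesssim\delta/K$, summed over all $K$ terms.
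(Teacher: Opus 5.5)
Your proposal is correct and is essentially the paper's argument. The paper proves the statement by citing Yarotsky's Theorem~1 (after normalizing $g$), and its remark on continuous weights sketches the same hat-function partition of unity with local Taylor polynomials and approximate multiplication networks that you describe.

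The only difference is cosmetic. The paper asks for per-term accuracy of order $\delta$, using that only two hat functions are active at each point. You instead take $\tau\asymp\delta/K$ and sum over all terms. This changes only the constant in front of $\log\delta^{-1}$, since $\log K=\tfrac{1}{s}\log\delta^{-1}+O(1)$.
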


This is a one-dimensional version of \cite[Theorem~1]{yarotsky2017error}, where one uses $g/\max\{1,\|g\|_{W^{s,\infty}}\}$ and notes that $C^s([a,b])\subset W^{s,\infty}((a,b))$. The restriction to integer $s$ is from the referenced theorem and is used in Assumption~\ref{assumption:structured_f}.

\begin{remark}[Continuous Weights via Local Approximation]
\label{rem:taylor_layer}
Let $M_\delta:=\lceil\delta^{-1/s}\rceil$, and let $y_i$ be nodes with spacing $h_\delta:=(b-a)/M_\delta$ covering $[a,b]$. The hat functions $\phi_i(y):=h_\delta^{-1}\left(\sigma(y-y_{i-1})-2\sigma(y-y_i)+\sigma(y-y_{i+1})\right)$ are realized exactly by ReLU networks and form a partition of unity on $[a,b]$. Since $\phi_i$ is supported where $|y-y_i|\le h_\delta$, the combination of local Taylor polynomials
$$\sum_i\phi_i(y)\sum_{l<s}\frac{g^{(l)}(y_i)}{l!}(y-y_i)^l$$
is within $\frac{\|g^{(s)}\|_\infty}{s!}h_\delta^{s}\le C\delta$ of $g$.

Each of the $O(\delta^{-1/s})$ local terms $\phi_i(y)(y-y_i)^l$ is a product of at most $s$ factors bounded on $[a,b]$, so the multiplication networks of \cite{yarotsky2017error}, as in Theorem~\ref{thrm:quantitative_multilinear}, approximate it with size and depth $O(\log\delta^{-1})$. At each $y$ only two hat functions are active, so accuracy $\delta/(2s\max\{1,\max_{l<s}\|g^{(l)}\|_\infty\})$ per term is sufficient. The resulting network has the size bound of Theorem~\ref{thrm:quantitative_smooth_scalar}. The hidden layers do not depend on $g$, and $g$ appears only through the output weights $g^{(l)}(y_i)/l!$, which depend continuously on $g$ in $C^s$. The continuity is needed for comparisons in the lower bound results of Remark~\ref{rem:block_lower_bound}, Remark~\ref{rem:operator_lower_bound}, and Theorem~\ref{thm:hj_lower_bound}.  The weight continuity of this construction was noted in \cite{yarotsky2017error}.
\end{remark}

\begin{theorem}[Quantitative Approximation of Multilinear Maps]
\label{thrm:quantitative_multilinear}
Let $r\ge2$, $n,p\ge1$, and let $T:(\bR^n)^{r}\to\bR^{p}$
be multilinear. Let $b>0$ and set $\mathcal Q:=([-b,b]^n)^{r}$. Then there exist constants $C>0$ and $\delta_0\in(0,1)$, depending only on $T$, $r$, $n$, $p$, and $b$, such that for every $\delta\in(0,\delta_0]$ there is a ReLU network $\widetilde T_\delta:(\bR^n)^r\to\bR^p$ with
$$\sup_{y\in\mathcal Q} \left \| T(y) - \widetilde T_\delta(y )\right\|_{\bR^p} \le \delta,
\qquad
\dep(\widetilde T_\delta) \le  C \log(\delta^{-1}),
\qquad
W(\widetilde T_\delta) \le C \log(\delta^{-1}).$$
\end{theorem}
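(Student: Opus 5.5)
The plan is to reduce multilinear maps to scalar products and then invoke Yarotsky's multiplication network. First I will expand $T$ in coordinates. Writing $y=(y^{(1)},\ldots,y^{(r)})$ with $y^{(i)}\in\bR^n$, multilinearity gives, for each output component $k\le p$,
$$T_k(y)=\sum_{j_1,\ldots,j_r=1}^n t^{k}_{j_1\cdots j_r}\,y^{(1)}_{j_1}\cdots y^{(r)}_{j_r}.$$
So $T$ is a fixed linear combination of at most $n^r$ monomials $\pi_J(y)=y^{(1)}_{j_1}\cdots y^{(r)}_{j_r}$, each a product of $r$ scalars lying in $[-b,b]$. The output layer is affine with fixed coefficients $t^k_J$, and it contributes at most $p\,n^r$ weights, independent of $\delta$ (this matches Remark~\ref{rem:dx_dependence}).

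The core step is approximating a single $r$-fold product. Yarotsky's construction \cite{yarotsky2017error} gives a network $\widetilde{\times}_{\delta'}$ on $[-B,B]^2$ with $|xy-\widetilde\times_{\delta'}(x,y)|\le\delta'$. It has depth and size $O(\log(B^2/\delta'))$ and comes from the sawtooth approximation of $x^2$ together with polarization $xy=\frac14((x+y)^2-(x-y)^2)$. I then build $\pi_J$ as a binary tree, or a chain, of $r-1$ approximate multiplications. Partial products stay in $[-b^{r},b^{r}]$ up to a perturbation, so I run every multiplier on the enlarged box $[-B,B]^2$ with $B:=2\max\{1,b\}^r$, and clip the inputs with $\operatorname{Clip}_B$ from Lemma~\ref{lem:nn_calculus}(5) so they remain in the box. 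Error propagation uses $|ab-\tilde a\tilde b|\le B|a-\tilde a|+B|b-\tilde b|$ together with the multiplier error. By induction the error at depth $\ell$ of the tree is at most $(2B)^{\ell}\delta'$. Hence $\delta'=\delta/(\,p\,n^r\,\|t\|_\infty\,(2B)^{r})$ suffices, and then $\log(1/\delta')=\log(\delta^{-1})+C$.

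The last step is assembly, using the calculus of Lemma~\ref{lem:nn_calculus}. I parallelize the $n^r$ monomial networks; they all have the same depth, so no padding is needed. I compose them with the clipping and multiplier layers (part 2) and finish with the affine output. The depth is $\lceil\log_2 r\rceil\cdot O(\log\delta^{-1})$. The size is $n^r(r-1)\,O(\log\delta^{-1})+p\,n^r$, which is at most $C\log\delta^{-1}$ once $\delta\le\delta_0<e^{-1}$, because the constant terms are absorbed as in the end of the proof of Theorem~\ref{thm:relu_f}. To get the $\bR^p$ norm rather than the max norm, I take the per-component accuracy to be $\delta/\sqrt p$.

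I expect the main obstacle to be the bookkeeping that keeps intermediate approximate products inside the domain where Yarotsky's multiplier is accurate. The enlarged box plus clipping handles this. I must check that clipping never changes exact values: exact partial products satisfy $|\pi|\le b^{r}<B$, and the approximations differ from them by at most $\delta\le1\le B/2$, so the bounds close. A secondary concern is that Yarotsky's square network must be rescaled to $[-2B,2B]$, which only changes constants.
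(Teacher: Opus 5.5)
Your proposal is correct and follows the paper's proof essentially step by step: you expand $T$ into $n^r$ monomials with an exact affine output layer, build each $r$-fold product as a binary tree of Yarotsky multipliers, and propagate errors through a recursion of the form $e_{\ell+1}\le 2Be_\ell+\delta'$. The only difference is how you keep the multiplier inputs in range: you clip at a uniform radius $B=2\max\{1,b\}^r$, whereas the paper uses level-dependent radii $B_l=B^{2^l}+1$ and argues inductively that the approximate products stay in range once the accumulated error is at most $1$. Your clipping route is, if anything, slightly cleaner, since $\operatorname{Clip}_B$ is $1$-Lipschitz and fixes the exact values, so no a priori smallness of the error is needed.
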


\begin{proof}[Proof of Theorem~\ref{thrm:quantitative_multilinear}]
Denote $y=(y^{(1)},\ldots,y^{(r)})$ with $y^{(l)}\in\bR^n$ and thus the multilinear function can be written as
$$T(y) = \sum_{k_1,\ldots,k_r} c_{k_1\ldots k_r}\,y^{(1)}_{k_1}\cdots y^{(r)}_{k_r}, 
\qquad c_{k_1\ldots k_r} := T(e_{k_1},\ldots,e_{k_r}) \in \bR^p,$$
i.e., a fixed linear combination of $n^r$ products of $r$ coordinates, each bounded by $b$. The coefficients depend only on $T$, $r$ and $n$. It is therefore enough to approximate $P(x_1,\ldots,x_r):=x_1\cdots x_r$ on $[-B,B]^r$, $B:=\max\{b,1\}$, to collect all $n^r$ copies in parallel and apply an exact affine output layer to multiply the terms by the proper coefficients.

From \cite{yarotsky2017error}, for every $\mu\in(0,1)$ and $B'\ge1$ there is a ReLU network $\widetilde\times_\mu$ with $$\sup_{|x|,|y|\le B'}|xy-\widetilde\times_\mu(x,y)| \le \mu$$ and $\dep(\widetilde\times_\mu),W(\widetilde\times_\mu)\le C(B')\log(\mu^{-1})$. We next construct each of the $r$-fold products using a standard binary tree with $\lceil\log_2r\rceil$ levels and by replacing each multiplication with the approximation $\widetilde\times_\mu$. Note that if $r$ is not a power of two, then we will have one extra node that can be transferred to the next level in the tree using the identity $x=\sigma(x)-\sigma(-x)$ (which adds no error). Let $P_l$ and $\widetilde P_l$ be the exact and computed values of a node at level $l$ (which are products of at most $2^l$ of the inputs). Set the error at that level to be $e_l:=\max|P_l-\widetilde P_l|$ and let $B_l:=B^{2^l}+1$. 

 The polynomial is bounded by $|P_l|\le B^{2^l}$, since every term is at most $B\ge1$ in magnitude. If two nodes at level $l$ have errors at most $e_l$ and satisfy $|\widetilde P_l|,|\widetilde P_l'|\le B_l$, then

$$|P_l P_l'-\widetilde P_l \widetilde P_l'| \le |P_l|\,|P_l'-\widetilde P_l'| + |\widetilde P_l'| \,|P_l - \widetilde P_l| \le 2 B_l\,e_l.$$
The multiplication network is accurate up to error $\mu$ on the box of radius $B_{\max}:=B_{\lceil\log_2r\rceil}$, so it adds at most $\mu$ to the error, thus $e_{l+1} \le 2 B_l e_l+\mu$. Using induction starting with $e_0=0$ gives,
$$ e_l\le C_1(B,r)\mu,   \qquad
C_1(B,r):=\sum_{l=0}^{\lceil \log_2r \rceil-1}\ \prod_{m=l+1}^{\lceil\log_2r\rceil-1} 2 B_m,$$
for every level $l\le\lceil\log_2r\rceil$ and since every term in $C_1$ is at least $1$. The bound $|\widetilde P_l|\le B_l$ is valid by induction on $l$ since it holds for $l=0$ and if $e_l \le 1$ then $|\widetilde P_l| \le |P_l| + e_l \le B^{2^l}+1=B_l$. Setting $C_1\mu\le1$, implies that every $e_l\le1$ and every step is evaluated inside each of the valid boxes. The network $\widetilde P_\mu$ has depth at most $\lceil \log_2r \rceil\,C(B_{\max})\log(\mu^{-1})$. Its size is at most $r-1$ multiplication networks with identity steps of four weights per layer, which are at most $C(B,r)\log(\mu^{-1})$.

Lastly, we denote the coefficient magnitude by $$c_{\text{sum}}:=\sum_{k_1,\ldots,k_r}\|c_{k_1\ldots k_r}\|_{\bR^p},$$ and set $\mu:= \delta/(C_1(B,r) \max\{c_{\text{sum}},1\})$, which is valid for all $\delta\le\delta_0$ since then $C_1\mu\le1$. By putting the layers together in parallel (one tree network per monomial) and including the exact output layer, we get $\widetilde T_\delta$ with $\sup_{y\in\mathcal Q}\|T(y)-\widetilde T_\delta(y)\|_{\bR^p}\le c_{\text{sum}} C_1\mu\le\delta$. Up to constants, the depth is that of a single tree, and the parallel combination multiplies the size by $n^r$. Since $\log(\mu^{-1})\le C\log(\delta^{-1})$ for $\delta\le\delta_0$, it follows that $\dep(\widetilde T_\delta) \le C\log(\delta^{-1})$ and $W(\widetilde T_\delta) \le C\log(\delta^{-1})$, where $C$ depends only on $T$, $r$, $n$, $p$ and $b$.
\end{proof}

\end{document}